\DeclareMathOperator*{\argmin}{arg\,min}
\newcommand{\Norm}[1]{\left\Vert#1\right\Vert}
\newcommand{\norm}[1]{\Vert#1\Vert}
\newcommand{\Abs}[1]{\left|#1\right|}
\newcommand{\abs}[1]{|#1|}
\newcommand{\set}[1]{\left\{#1\right\}}
\newcommand{\Real}{\mathbb R}
\def\ev{\mathbf e}
\def\Av{\mathbf A}
\def\Bv{\mathbf B}
\newcommand{\Ac}{\mathcal{A}}
\newcommand{\Bc}{\mathcal{B}}
\newcommand{\Cc}{\mathcal{C}}
\newcommand{\Dc}{\mathcal{D}}
\newcommand{\Ec}{\mathcal{E}}
\newcommand{\Lc}{\mathcal{L}}
\newcommand{\Mc}{\mathcal{M}}
\newcommand{\Nc}{\mathcal{N}}
\newcommand{\Qc}{\mathcal{Q}}
\newcommand{\Rc}{\mathcal{R}}
\newcommand{\Sc}{\mathcal{S}}
\newcommand{\Uc}{\mathcal{U}}
\newcommand{\Eb}{\mathbb{E}}
\newcommand{\Nb}{\mathbb{N}}
\newcommand{\Pb}{\mathbb{P}}
\newcommand{\Sb}{\mathbb{S}}
\newcommand{\n}{\frac{1}{n}}
\renewcommand{\hat}{\widehat}
\newtheorem{theorem}{Theorem}
\newtheorem{assumption}{Assumption}
\newtheorem{intassumption}{Assumption}
\numberwithin{intassumption}{assumption}
\newtheorem{lemma}[theorem]{Lemma}
\newtheorem{remark}{Remark}
\newtheorem{proposition}[theorem]{Proposition}
\newtheorem{corollary}{Corollary}
\title{Inference for Projection Parameters in Linear Regression:\\ beyond $d = o(n^{1/2})$}
\author[1]{Woonyoung Chang}
\author[1]{Arun Kumar Kuchibhotla}
\author[2]{Alessandro Rinaldo}
\affil[1]{Department of Statistics \& Data Science, Carnegie Mellon University}
\affil[2]{Department of Statistics \& Data Science, The University of Texas at Austin}
\date{\today}
\begin{document}

\maketitle
\begin{abstract}
    We consider the problem of inference for projection parameters in linear regression with increasing dimensions. This problem has been studied under a variety of assumptions in the literature. The classical asymptotic normality result for the least squares estimator of the projection parameter only holds when the dimension $d$ of the covariates is of a smaller order than $n^{1/2}$, where $n$ is the sample size. Traditional sandwich estimator-based Wald intervals are asymptotically valid in this regime. In this work, we propose a bias correction for the least squares estimator and prove the asymptotic normality of the resulting debiased estimator. Precisely, we provide an explicit finite sample Berry Esseen bound on the Normal approximation to the law of the linear contrasts of the proposed estimator normalized by the sandwich standard error estimate. Our bound, under only finite moment conditions on covariates and errors, tends to 0 as long as $d = o(n^{2/3})$ up to the polylogarithmic factors. Furthermore, we leverage recent methods of statistical inference that do not require an estimator of the variance to perform asymptotically valid statistical inference and that leads to a sharper miscoverage control compared to Wald's. We provide a discussion of how our techniques can be generalized to increase the allowable range of $d$ even further.
\end{abstract}
\section{Introduction}
Linear regression is a fundamental statistical tool that has been widely used in various fields of research. The classical literature on linear regression studies the ordinary least square (OLS) estimation has focused primarily on the well-specified case, where the underlying truth postulates the linear relation between the response variable and covariates. As elucidated in works in an assumption-lean framework \citep{berk2019assumption,vansteelandt2022assumption}, although the model assumption sometimes takes account of the pre-knowledge, the usage of model assumptions is dishonest when used because of the mathematical convenience. As real data often possess a highly nonlinear structure, relying on model assumptions and considering them as representing ground truth in inference may be problematic. Compared to the popularity of parametric regression in both practical studies in statistics and econometrics, the theoretically established properties of regression coefficients in an assumption-lean framework have just started to gain attraction. To this effort, some approaches start with a traditional estimator of a parameter indexing a parametric regression model and then characterize what estimand the estimator converges to, without assuming that the model is true \citep{kuchibhotla2020berry, berk2019assumption}. In particular, we focus on misspecified linear regression models comprised of a $d$-dimensional random vector of covariates $X$ and a scalar random variable $Y$. If $\Pb_{X,Y}$ admits the second moment, then the conditional expectation $\Eb[Y|X]$, which is not necessarily linear, is the best $L^2$ approximation to $Y$ among functions of $X$. It is well known that the best linear $L_2$ approximation to $Y$ is the linear function $\beta^\top X$ is well-defined where the coefficients $\beta=\beta(\Pb_{X,Y})$ is given by
\begin{eqnarray*}
    \beta = \argmin_{\theta\in\Real^d}\Eb[(Y-\theta^\top X)^2].
\end{eqnarray*} Provided that the population Gram matrix $\Sigma = \Eb_X[XX^\top]$ is invertible, the solution is unique and given by the vector of \emph{projection} parameters \citep{buja2019models1,buja2019models2},
\begin{equation*}
    \beta = \Sigma^{-1}\Gamma,
\end{equation*} where $\Gamma=\Eb[XY]$. The projection parameter is traditionally estimated using the ordinary least square estimator (OLSE). Suppose that we observe a sample of $n$ i.i.d observations $(X_1,Y_1),\ldots,(X_n,Y_n)$ from $\Pb_{X,Y}$. Then, the OLSE is defined as
\begin{eqnarray*}
    \hat\beta = \argmin_{\theta\in\Real^d}\frac{1}{n}\sum_{i=1}^n(Y_i-\theta^\top X_i)^2.
\end{eqnarray*} Provided that the sample Gram matrix $\hat\Sigma:=n^{-1}\sum_{i=1}^nX_iX_i^\top$ is invertible with probability 1, then the OLSE is well-defined and can be expressed as 
\begin{eqnarray*}
    \hat\beta = \hat\Sigma^{-1}\hat\Gamma,
\end{eqnarray*} where $\hat\Gamma = n^{-1}\sum_{i=1}^n X_iY_i$.

\subsection{Related Works}
\paragraph{Asymptotic Normality and Berry Esseen bound for the Least Square Estimator}
In fixed-dimensional settings, the OLSE has been the subject of large sample theory and conventional Berry-Esseen bounds, demonstrated by \cite{van2000asymptotic} and \cite{pfanzagl1973accuracy}, respectively. With increasing dimensions, \cite{bickel1983bootstrapping} established the asymptotic normality of the least squares and the consistency of bootstrap, requiring $d=o(\sqrt{n})$ under a well-specified linear model with fixed covariates and homogeneous errors. Additionally, a series of papers by \cite{portnoy1984asymptotic,portnoy1985asymptotic,portnoy1986asymptotic,portnoy1988asymptotic} showcased various consistency and asymptotic normality properties for estimators, including M-estimators and maximum likelihood estimators. Translating these findings into our context mandates $d=o(\sqrt{n})$ to ensure the $\sqrt{n}$-consistency of the OLSE within a well-specified model under a set of arguably strong assumptions on the data-generating process. In a more general setting, albeit postulating at least a partially linear model, the asymptotic normality of (non-)studentized OLSE has been investigated with additional regularity conditions on covariates and errors \citep{DONALD199430,cattaneo2015treatment,cattaneo2018alternative,cattaneo2018inference}. This reference list is far from complete. Some of these works pertain to the regime $d=o(n)$ and occasionally allow the boundary case of $d=O(n)$.

An initial step toward the misspecified linear model with minimal assumptions on covariates and errors was made in \cite{mammen1993bootstrap}. They were concerned with linear contrasts of the regression coefficients and asserted the consistency of both the resampling bootstrap and the wild bootstrap in estimating the regression coefficient with increasing dimensions. They claimed that $d=o(n^{3/4})$ is sufficient for consistent resampling bootstrap and that $d=o(\sqrt{n})$ for the wild bootstrap. While their work provides useful intuition regarding the asymptotic behavior of the OLSE under model misspecification, their findings rely on heuristics and do not explicitly present the convergence rate. \cite{rinaldo2019bootstrapping} formulated the Berry Esseen bound for general nonlinear statistics under the misspecified setting. Their analysis, specifically applied to projection parameters, established a uniform Berry Esseen bound for entry-wise asymptotic normality of the Ordinary Least Square Estimator (OLSE), converging to $0$ under $d=o(n^{1/5})$, disregarding polylogarithmic factors (refer to Theorem~2 therein). More recently, \cite{kuchibhotla2020berry} derived a novel finite sample bound for the studentized OLSE under finite moment assumptions on covariates and errors. They obtained a uniform Berry Esseen bound for entry-wise asymptotic normality of the OLSE, with their bound scaling as $(d/n)^{1/3}+d/\sqrt{n}$, disregarding polylogarithmic factors and under sufficient moment conditions on covariates and errors; See Theorem~10 of \cite{kuchibhotla2020berry}. Consequently, they required $d=o(\sqrt{n})$ for the bound to vanish, facilitating the construction of simultaneous confidence intervals for the projection parameter coefficients under such dimension range. Notably, these intervals can exhibit a width of the order $1/\sqrt{n}$. It should be emphasized that the parameters of interest in this article are linear contrasts of the projection parameters which are different from those addressed in \cite{rinaldo2019bootstrapping} and \cite{kuchibhotla2020berry}.

\paragraph{De-biasing of the Least Square Estimator} 

Under misspecified linear models, extensive attention has focused on scenarios where $d=o(\sqrt{n})$, with limited exploration beyond this scaling despite the OLSE remaining well-defined for $d\leq n$. In cases where $d\gg \sqrt{n}$, the OLSE demonstrates a bias of order $d/\sqrt{n}$ \citep{mammen1993bootstrap,cattaneo2019two}, impeding valid inference on regression parameters without de-biasing. To tackle this, \cite{cattaneo2019two} proposed the bias-corrected $M$-estimator via the Jackknife method, ensuring consistency and asymptotic normality of the two-step point estimate under $d=O(\sqrt{n})$, presenting bootstrap-based inferential methods as well.

The term ``de-biasing" typically refers to a correction applied to the original regularized estimator, often necessary to resist the curse of dimensionality or to enhance prior knowledge about the geometric/intrinsic structure of data. Common examples in linear regression include the LASSO, ridge, and SLOP estimators. It is worth emphasizing that ``de-biasing" here aims to address the ``bias" mainly induced by model misspecification at the population level rather than a specific regularization technique. 

\paragraph{Variance Estimation} In the context of linear regression, the inflating variance of regression coefficient estimates along with increasing dimension has prompted efforts to propose robust covariance estimates capable of accommodating dimensionality and potential heteroscedasticity. The degree-of-freedom-corrected covariance estimator, introduced by \cite{mackinnon1985some}, has served as a foundation for subsequent modifications within well-specified linear models, known as HC-class variance estimators. For comprehensive reviews of these estimators, we refer our readers to \cite{long2000using} and \cite{mackinnon2012thirty}. Recent additions to the HC class include the HCK estimator proposed by \cite{cattaneo2018inference} and the HCA estimator proposed by \cite{jochmans2022heteroscedasticity}. However, in both cases, consistency of variance estimate requires a dimensionality constraint of $d=O(n^{1/2-\epsilon})$ in misspecified linear models, albeit under different settings. Under a very similar setting to ours,  \cite{kuchibhotla2020berry} noted that a dimensionality requirement of $d=o(n^{1/2})$ appears to be unavoidable for the consistency of the sandwich estimator for the OLSE variance (see Lemma~8 of \cite{kuchibhotla2020berry}).

\subsection{Contributions}
In this paper, we employ an assumption-lean framework to offer a finite-sample distribution approximation for the projection parameter, imposing minimal assumptions. Specifically, our focus centers on estimating and inferring a scalar parameter $\theta:=c^\top\beta$ in a predetermined direction $c\in\Real^d$. The main contributions of this study are summarized as follows.
\begin{itemize}
    \item We propose a bias-corrected least square estimator for the projection parameter. Theorem~\ref{thm:3} presents the Berry Esseen bound for the approximation to the adjusted Normal distribution of the unnormalized linear contrast of the proposed estimator, and the bound is sharp and roughly scales like
    \begin{eqnarray}\label{eq:sec1:1}
        \frac{1}{\sqrt{n}}+\frac{d^{3/2}}{n},
    \end{eqnarray} provided that $q_x\geq 12$ and $q\geq 4$, where $q_x$ and $q$ represent the finite moments of the covariate $X$ and the error $Y-X^\top\beta$, respectively; See also Corollary~\ref{cor:1}. Consequently, achieving the vanishing bound necessitates the scaling $d = o(n^{2/3})$, strictly embracing the traditional requirement of $d=o(n^{1/2})$. Our result, to the best of our knowledge, presents the sharpest bound under the (possibly) misspecified linear model while relying on the weakest assumptions regarding the true data distribution.
    
    \item While the Berry Esseen bound presented in Theorem~\ref{thm:3} involves the unknown parameter for the asymptotic variance of the proposed estimator, Section~\ref{sec:4} outlines at least two inference methods for constructing (non-)asymptotically valid confidence intervals. These methods do not necessitate estimating the asymptotic variance, thereby enabling us to utilize the sharp Berry Esseen bound from Theorem~\ref{thm:3}. Our approach involves inferential techniques based on resampling and sample splitting, including HulC \citep{kuchibhotla2021hulc}, $t-$statistic based inference \citep{ibragimov2010t, lam2022cheap}, and wild bootstrap \citep{wu1986jackknife, mammen1993bootstrap}. We also illustrate these proposed methods through empirical examples in Section~\ref{sec:5}.
    
    \item Our main result, Theorem~\ref{thm:5}, establishes a finite-sample Berry-Esseen bound for a studentized linear contrast of the bias-corrected estimator under the assumptions of finite moments for both the covariates and the errors. Our bound, disregarding the polylogarithmic factors, roughly scales as follows:
    \begin{eqnarray}\label{eq:sec1:2}
    \frac{1}{n^{1/3}} +\sqrt{\frac{d}{n}}+ \frac{d^{3/2}}{n},
    \end{eqnarray} provided that $q_x\geq 12$ and $q\geq 4$; See also Corollary~\ref{cor:2}. Our bound immediately leads to the validity of the Wald confidence interval for a linear contrast of the projection parameters as outlined in Corollary~\ref{cor:3}.

    The slower convergence rate outlined in \eqref{eq:sec1:2} compared to that in \eqref{eq:sec1:1} is solely due to the additional convergence rate introduced by the sandwich variance estimator. However, it is noteworthy that the consistency of the sandwich variance estimator does not demand any additional dimension scaling beyond $d=o(n^{2/3})$. Moreover, as per Theorem~\ref{thm:4}, the sandwich variance estimator attains consistency if $d=o(\min\{n^{1-1/q_x-1/q},n^{1-2/q_x-2/(3q)}\})$. To the best of our knowledge, our result on the consistency of the sandwich variance estimator allows the broadest range of dimension scaling in assumption lean linear regression literature. 

    \item One of the key quantities in our analysis is $\Dc_\Sigma = \norm{\Sigma^{-1/2}\hat\Sigma\Sigma^{-1/2}-I_d}_{\rm op}$, representing the difference between the population and the sample gram matrix in the operator norm. We introduce a new concentration inequality for $\Dc_\Sigma$ under finite moment assumptions on covariates ($q_x\geq 4$), detailed in Proposition~\ref{prop:6}. This result is inspired by recent findings on the universal spectral properties of random matrices, relating them to Gaussian random matrices with identical mean and covariance in their entries \citep{brailovskaya2023universality}.  Our bound for $\Dc_\Sigma$ matches the sharpness (up to polylogarithmic factors) of the one provided in \cite{tikhomirov2018sample}, regarded as the sharpest to date. Notably, our bound avoids the necessity for a `large' $d$ to ensure a `high' probability concentration of the sample Gram matrix. Section~\ref{sec:conc_ineq} outlines the historical context of $\Dc_\Sigma$ and offers a comparison of concentration inequalities in this regard.

\end{itemize}

\subsection{Organization}
The remainder of this paper is organized as follows: Section~\ref{sec:2.1} introduces the problem setup and notations, while Section~\ref{sec:2.2} outlines the distributional assumptions on the data generating process. Our primary result on the Berry Esseen bound for the distribution approximation of the law of the studentized linear contrast of the proposed estimator is presented in Section~\ref{sec:3}. Section~\ref{sec:4} details several inferential methods for the projection parameter based on the bootstrap and sample splitting. Numerical results are provided in Section~\ref{sec:5}. The Appendix includes all proofs, technical lemmas, and additional numerical results.
\section{Problem setup and Assumptions}\label{sec:2}
\subsection{Projection parameters}\label{sec:2.1}
Let $(X_1, Y_1),\ldots, (X_n, Y_n)\sim(X,Y)\in\Real^d\times\Real$ be an independent sample of $n$ observations. The \emph{projection parameter} $\beta$ is defined as
\begin{equation}\label{eq:projection_parameters}
    \beta=\beta_n:=\argmin_{\theta\in\Real^d}\frac{1}{n}\sum_{i=1}^n\Eb[(Y_i-X_i^\top\theta)^2].
\end{equation} The minimizer $\beta$ in \eqref{eq:projection_parameters} solves the linear equation that
\begin{equation}\label{eq:projection_parameters_2}
    \frac{1}{n}\sum_{i=1}^n\Eb[X_i(Y_i-X_i^\top\beta)]=0.
\end{equation} If the population gram matrix $\Sigma = \Sigma_n:= n^{-1}\sum_{i=1}^n\Eb[X_iX_i^\top]$ is invertible, the projection parameter is well-defined and can be uniquely expressed as $\beta = \Sigma^{-1}\Gamma$ where $\Gamma=\Gamma_n:=n^{-1}\sum_{i=1}^n\Eb[X_iY_i]$. In the case where the observations follow the linear model $Y= X^\top\beta^*+\epsilon$ with $\Eb[\epsilon|X]=0$, the projection parameter corresponds to the model parameter, i.e., $\beta=\beta^*$. On the contrary, if the underlying truth exhibits a possibly non-linear structure, then the projection parameter gives the best linear approximation $X^\top\beta$ to $Y$ with respect to the joint distribution of $(X,Y)$. For a detailed discussion on the projection parameter and its interpretation, see \cite{buja2019models1,buja2019models2}.

The projection parameter is estimated using the ordinary least square estimator (OLSE) which follows the traditional definition that 
\begin{equation*}
    \hat\beta = \hat\beta_n:=\argmin_{\theta\in\Real^d}\frac{1}{n}\sum_{i=1}^n(Y_i-X_i^\top\theta)^2.
\end{equation*} If the sample population matrix $\hat\Sigma=\hat\Sigma_n:=n^{-1}\sum_{i=1}^n X_iX_i^\top$ is invertible with probability 1, then the OLSE is well defined and can be written as $\hat\beta = \hat\Sigma^{-1}\hat\Gamma$ where $\hat\Gamma=\hat\Gamma_n:=n^{-1}\sum_{i=1}^n X_iY_i$.

\paragraph{Additional Notations} For two real numbers $a$ and $b$, let $a\vee b=\max\{a,b\}$, $a\wedge b=\min\{a,b\}$ and $a_+=\max\{a,0\}$. Let $I_p$ be the $p\times p$ identity matrix.  We let $1_p$ be the $p$-dimensional vector of ones and $0_p$ be of zeros. For any $x\in\Real^d$, we write $\norm{x}_2=\sqrt{x^\top x}$. In addition with a positive definite matrix $A\in\Real^{d\times d}$, we denote the scaled Euclidean norm as $\norm{x}_A = \sqrt{x^\top Ax}$. We let $\norm{\cdot}_{\rm op}$ be the operator norm of the matrices. The unit sphere in $\Real^d$ is $\Sb^{d-1}=\set{\theta\in\Real^d:\norm{\theta}_2=1}$. The $j$:th canonical basis of $\Real^d$ is written as $\ev_j$ for all $j=1,\ldots,d$. Convergence in distribution is denoted by $\xrightarrow{D}$ and convergence in probability by $\xrightarrow{P}$. We write the smallest eigenvalue and the largest eigenvalue of the square matrices as $\lambda_{\min}(\cdot)$ and $\lambda_{\max}(\cdot)$, respectively. The cumulative distribution function of the standard Normal distribution is denoted as $\Phi(\cdot)$ and its $k$:th derivative as $\Phi^{(k)}(\cdot)$ for $k\in\Nb$. Finally, we adopt the conventional asymptotic notations. Given two non-negative sequences of reals $\{a_n\}$ and $\{b_n\}$, we write $a_n = O(b_n)$ if there exists a constant $C\in(0,\infty)$ and $N\in\Nb$ such that for any $n\geq N$, $a_n\leq C b_n$, and  $a_n=o(b_n)$ if $a_n/b_n\to0$ as $n\to\infty$. We sometimes write $a_n\lesssim b_n$ for $a_n=O(b_n)$, and $a_n\ll b_n$ for $a_n=o(b_n)$. Furthermore, for a sequence of random variables $\set{\gamma_n}$ and a non-zero sequence $\{c_n\}$, we write $\gamma_n = o_P(c_n)$ if $\abs{\gamma_n/c_n}\xrightarrow{P}0$ as $n\to\infty$, and $\gamma_n = O_P(c_n)$ if for any $\epsilon>0$, there exists $C_\epsilon\in(0,\infty)$ and $N_\epsilon\in\Nb$ such that $\Pb(\abs{\gamma_n/c_n}>C_\epsilon)<\epsilon$ for all $n\geq N_\epsilon$.

\subsection{Assumptions}\label{sec:2.2}
Our main results are built upon the following assumptions, primarily emphasizing minimal assumptions on the data generating process. These include moment conditions on both the response and covariates, thereby encompassing a wide range of distributions, such as heavy-tailed distributions and discrete distributions.


\begin{assumption}\label{asmp:2}
    There exists some $q\geq2$ and a constant $K_y>0$ satisfying that
    \begin{equation*}
        \left(\Eb[|Y_i-X_i^\top\beta|^q]\right)^{1/q}\leq K_y,
    \end{equation*} for $i=1,\ldots,n$.
\end{assumption}

Assumption~\ref{asmp:2} imposes a moment condition solely on the error, allowing for heteroscedastic errors that may arbitrarily depend on the covariates and even accommodate heavy-tailed errors. This assumption is prevalent in recent literature on heteroskedastic linear regression \citep{mourtada2022distribution,kuchibhotla2022least,han2019convergence,mendelson2016upper}. Additionally, it is often assumed that the conditional variance of the error is uniformly bounded almost everywhere, described by 
\begin{equation}\label{eq:cond:1}
    0<\inf_x v^2(x)\leq \sup_x v^2(x) <\infty,
\end{equation} where $v^2(x)=\Eb[(Y-X^\top\beta)^2|X=x]$. Here, $\inf_x$ and $\sup_x$ are essential infimum and essential supremum, respectively, with respect to the marginal distribution of $X$.  Given \eqref{eq:cond:1}, our results in the following section remain valid even under the relaxed moment assumptions on the covariates.

\stepcounter{assumption}
\begin{intassumption}\label{asmp:3.a} 
    There exists some $q_x\ge2$ and a constant $K_x\geq1$ satisfying that
    \begin{equation*}
        \left(\Eb[|u^\top\Sigma^{-1/2}X_i|^{q_x}]\right)^{1/q_x}\leq K_x,
    \end{equation*} for $i=1,\ldots,n$ and $u\in\Sb^{d-1}$.
\end{intassumption}

\begin{intassumption}\label{asmp:3.b} There exists a constant $K_x>0$ such that
\begin{equation*}
    \Eb\left[\exp\left(\frac{|u^\top\Sigma^{-1}X_i|^2}{K_x^2}\right)\right]\leq2,
\end{equation*} for all $i=1,\ldots,n$ and $u\in\Sb^{d-1}$.
\end{intassumption}

\begin{intassumption}\label{asmp:3.c}
    There exists some $q_x\ge2$ and a constant $K_x\geq1$ satisfying that
    \begin{equation*}
\left(\Eb[|u^\top\Sigma^{-1/2}X_i|^{q_x}]\right)^{1/q_x}\leq K_x,
    \end{equation*} 
    for $i=1,\ldots,n$ and $u\in\Sb^{d-1}$. Furthermore, $\Sigma^{-1/2}X_i\in\Real^d$, $i=1,\ldots,n$, have $d$ independent entries.
\end{intassumption}
Assumption~\ref{asmp:3.a} implies $L^{q_x}-L^2$ equivalence for one-dimensional marginals of covariates. This is a significant weakening of sub-Gaussianity in Assumption~\ref{asmp:3.b}. It is common to impose $q_x\geq 4$ when studying the linear regression with increasing dimensions \citep{oliveira2016lower,catoni2016pac, mourtada2022distribution, mourtada2022exact,vavskevivcius2023suboptimality} even though we sometimes require a higher moment condition to analyze a higher order expansion of the least square estimate. 

Furthermore, Assumption~\ref{asmp:3.a} implies the bounded-ness of $q_x$:th moment of the $\Sigma^{-1}$-norm of the random vector $X_i$. To illustrate, note that $\norm{X_i}_{\Sigma^{-1}}^2=\sum_{j=1}^d(\ev_j^\top\Sigma^{-1/2}X_i)^2$ for all $i=1,\ldots,n$. By applying Jensen's inequality, we have:
\begin{equation*}
    \left(\frac{\norm{X_i}_{\Sigma^{-1}}^2}{d}\right)^{q_x/2}\leq\frac{1}{d}\sum_{j=1}^d\left(\ev_j^\top\Sigma^{-1/2}X_i\right)^{q_x},
\end{equation*} which holds deterministically as long as $q_x\geq2$. Consequently, it leads to
\begin{equation*}
    \Eb\left[\left(\frac{\norm{X_i}_{\Sigma^{-1}}}{\sqrt{d}}\right)^{q_x}\right]\leq \frac{1}{d}\sum_{j=1}^d\Eb\left[(\ev_j^\top\Sigma^{-1/2}X_i)^{q_x}\right]\leq \max_{1\leq j \leq d}\Eb\left[(\ev_j^\top\Sigma^{-1/2}X_i)^{q_x}\right]\leq K_x^{q_x},
\end{equation*} where the last inequality follows from Assumption~\ref{asmp:3.a}. This also emphasizes that the appropriate scaling of $\norm{X_i}_{\Sigma^{-1}}$ is proportional to $\sqrt{d}$ as the dimension $d$ grows. 

Assumption~\ref{asmp:3.c} assumes that $\Sigma^{-1/2}$-normalized covariates have independent entries, in addition to the moment conditions. Under Assumption~\ref{asmp:3.c}, we can achieve a more precise control over the $L^2$-norm of covariates \citep{rio2017constants} and the sample Gram matrix \citep{srivastava2013covariance,mendelson2014singular}.


\begin{assumption}\label{asmp:4} Let
\begin{equation*}
    V:={\rm Var}\left[\frac{1}{\sqrt{n}}\sum_{i=1}^n X_i(Y_i-X_i^\top\beta)\right]={\rm Var}\left[X_1(Y_1-X_1^\top\beta)\right].
\end{equation*}There exist constants $0< \underline{\lambda} \leq \overline{\lambda} <\infty$ satisfying that
    \begin{equation*}
        \underline{\lambda}\leq \lambda_{\rm min}(\Sigma^{-1/2}V\Sigma^{-1/2}) \leq \lambda_{\rm max}(\Sigma^{-1/2}V\Sigma^{-1/2}) \leq \overline{\lambda}.
    \end{equation*}
\end{assumption}

Assumption~\ref{asmp:4} ensures that both $V$ and $\Sigma$ scale in a similar order, serving as the only technical assumption in our paper. This condition appears inevitable to prevent the asymptotic variance of the least square estimator from inflating or vanishing. Specifically, noting that $V=\Eb[XX^\top(Y-X^\top\beta)^2] = \Eb[XX^\top\Eb[(Y-X^\top\beta)^2|X]]$ the condition~\eqref{eq:cond:1} is sufficient for Assumption~\ref{asmp:4} to be held.

\section{Methods and Results}\label{sec:3}
\subsection{Approximation of the Sample Gram Matrix and Bias Characterization}\label{sec:3.1}
In this section, we present an approximation of the distribution of $\sqrt{n}c^\top(\hat\beta-\beta)$ for any $c\in\Real^d\setminus\{0_d\}$. To be presented results are mostly invariant to the scaling of $c\in\Real^d$. However, we sometimes consider those with $\norm{c}_{\Sigma^{-1}}=1$, which gives a proper normalization, $\sqrt{n}c^\top(\hat\beta-\beta)=\sqrt{n}\tilde c^\top\Sigma^{1/2}(\hat\beta-\beta)$ for some $\tilde c\in\Sb^{d-1}$.

Our analysis starts with the least square estimate $\hat\beta$, particularly by approximating the sample Gram matrix $\hat\Sigma$ using Taylor series expansion. We note
\begin{eqnarray*}
    \hat\Sigma^{-1}&=&\Sigma^{-1} +\hat\Sigma^{-1}(\Sigma-\hat\Sigma)\Sigma^{-1}\\
    &\approx&\Sigma^{-1} +\Sigma^{-1}(\Sigma-\hat\Sigma)\Sigma^{-1}.
\end{eqnarray*} 
(See Lemma 33 of~\cite{kuchibhotla2020berry}.)
Consequently, we get the following approximation of $\hat\beta-\beta$:
\begin{eqnarray}
    \hat\beta - \beta &=& \hat\Sigma^{-1}\frac{1}{n}\sum_{i=1}^n X_i(Y_i-X_i^\top\beta) \nonumber\\
    &\approx& \Sigma^{-1}\frac{1}{n}\sum_{i=1}^n X_i(Y_i-X_i^\top\beta)\label{eq:2.2.1}\\
    &&+\,\Sigma^{-1}(\Sigma-\hat\Sigma)\Sigma^{-1}\frac{1}{n}\sum_{i=1}^n X_i(Y_i-X_i^\top\beta)\label{eq:2.2.2}.
\end{eqnarray}
The term \eqref{eq:2.2.1} is the first-order approximation and is the average of influence function---with respect to the square loss---evaluated at independent data points. Using the conventional large sample approximation for the case when $d = o(\sqrt{n})$, the variability of $\hat\beta$ can be well-represented only through the term \eqref{eq:2.2.1}, and the second order term \eqref{eq:2.2.2} or higher order terms are negligible. In particular, \eqref{eq:2.2.1} is responsible for the asymptotic normality of $\sqrt{n}(\hat\beta-\beta)$ when $d=o(\sqrt{n})$. Here, however, the term \eqref{eq:2.2.2} may create a non-vanishing bias when $d\gg \sqrt{n}$. To see this, we note that
\begin{eqnarray}
    \Sigma^{-1}(\Sigma-\hat\Sigma)\Sigma^{-1}\frac{1}{n}\sum_{i=1}^n X_i(Y_i-X_i^\top\beta) &=& \left(\frac{1}{n}\sum_{i=1}^n \Sigma^{-1}(\Sigma-X_iX_i^\top)\Sigma^{-1}\right)\left(\frac{1}{n}\sum_{i=1}^n X_i(Y_i-X_i^\top\beta)\right)\label{eq:2.2.3.0}\\
    &=&\frac{1}{n^2}\sum_{i=1}^n \Sigma^{-1}X_i(Y_i-X_i^\top\beta)\label{eq:2.2.3}\\
    &&-\frac{1}{n^2}\sum_{i=1}^n \Sigma^{-1}X_i(Y_i-X_i^\top\beta)\norm{X_i}_{\Sigma^{-1}}^2\label{eq:2.2.4}\\
    &&+\frac{1}{n^2}\sum_{1\leq i\neq j\leq n} \Sigma^{-1}(\Sigma-X_iX_i^\top)\Sigma^{-1}X_j(Y_j-X_j^\top\beta)\label{eq:2.2.5}.
\end{eqnarray} The quantities in \eqref{eq:2.2.3} and \eqref{eq:2.2.4} are resulted from multiplying the individual terms in \eqref{eq:2.2.3.0} with the same indices, while the quantity \eqref{eq:2.2.5} corresponds to the cross-index product. It is noteworthy that \eqref{eq:2.2.3} can be absorbed into the average of influence functions in \eqref{eq:2.2.1} as it has an additional scaling factor $1/n$. The quantity \eqref{eq:2.2.5} is a mean-zero degenerate $U$-statistic of order $2$ up to a scaling factor that converges to 1 as $n\to\infty$. Therefore, the only quantity that may have a non-zero mean is \eqref{eq:2.2.4} and we denote this by
\begin{equation}\label{eq:bias_true}
    \Bc :=-\frac{1}{n^2}\sum_{i=1}^n\Sigma^{-1}X_i(Y_i-X_i^\top\beta)\norm{X_i}_{\Sigma^{-1}}^2.
\end{equation} Under the control of $c^\top\Sigma^{-1}X_i$ and $Y_i-X_i^\top\beta$ using Assumption~\ref{asmp:3.a} and \ref{asmp:2}, respectively, the quantity $c^\top\Bc$ scales like $O(d/n)$ due the $\norm{X_i}_{\Sigma^{-1}}^2$ factor. Consequently, $\sqrt{n}c^\top\Bc$ yields a non-degenerate bias when $d\gg\sqrt{n}$. By manually removing the bias, a linear contrast $c^\top(\hat\beta-\Bc-\beta)$ can be approximated by the sum of the average of influence functions and the degenerate second-order $U$-statistics as
\begin{equation*}
    c^\top(\hat\beta-\Bc-\beta)~\approx~\frac{1}{n}\sum_{i=1}^nc^\top\psi(X_i,Y_i)+\frac{1}{n(n-1)}\sum_{1\leq i\neq j\leq n}c^\top\phi(X_i,Y_i,X_j,Y_j),
\end{equation*} where 
\begin{equation}\label{eq:psi_and_phi}
    \psi(x,y) = \left(1+\frac{1}{n}\right)\Sigma^{-1}x(y-x^\top\beta)\quad\mbox{and}\quad \phi(x,y,x',y') = \left(1-\frac{1}{n}\right) \Sigma^{-1}(\Sigma-xx^\top)\Sigma^{-1}x'(y'-{x'}^\top\beta).
\end{equation}  
It is well known that $U$-statistics of order $k\geq 2$ are asymptotically normally distributed, and there has been a vast literature related to Normal approximations and the rates of convergence for $k$-order $U$-statistics. We refer the readers to \cite{bentkus2009normal}. In order to state our first theorem, we define
\begin{eqnarray}
    \sigma_c^2 &=& {\rm Var}\left[c^\top\Sigma^{-1}X(Y-X^\top\beta)\right]=c^\top\Sigma^{-1}V\Sigma^{-1} c,\label{eq:2.2.6}\\
    \kappa_c &=& n^{-5/2}\binom{n}{2}\Eb\left[\{c^\top\psi(X_1,Y_1)\}\{c^\top\psi(X_2,Y_2)\}\{c^\top\phi(X_1,Y_1,X_2,Y_2)\}\right]/\sigma_c^3,\label{eq:2.2.7}
\end{eqnarray} for any $c\in\Real^d\setminus\{0_d\}$. It should be emphasized that the parameter $\kappa_c$ is invariant to the scaling of $c\in\Real^d\setminus\{0_d\}$. That is, $\kappa_c = \kappa_{\alpha c}$ for $\alpha\neq 0$. In addition, recall that $\Phi(x)$ is the cumulative distribution function of the standard Normal distribution and $\Phi^{(3)}(x)$ is the third order derivative of $\Phi(x)$.

Theorem~\ref{thm:1} states a Berry-Essen bound for the Normal approximation of a linear contrast $c^\top(\hat\beta-\Bc-\beta)$ with known bias $\Bc$.

\begin{theorem}\label{thm:1}
Suppose that Assumption~\ref{asmp:2} holds for $q>3$ and that Assumption \ref{asmp:4} holds.
\begin{itemize}
    \item[(i)] Suppose further that Assumption~\ref{asmp:3.a} holds for $q_x\geq 4$ and $s:=(1/q+1/q_x)^{-1}\geq3$, and assume $d+2\log(2n)\leq n/(18K_x)^2$. Then, there exists a constant $C=C(q,q_x,K_x,K_y,\overline{\lambda},\underline{\lambda})$ such that
    \begin{eqnarray*}
        &&\sup_{c\in\Real^d\setminus\{0_d\}}\sup_{t\in\Real}\Abs{\Pb\left[\sqrt{n}\left\{c^\top(\hat \beta-\Bc-\beta)\right\}\leq t\right]-\left\{\Phi\left(\frac{t}{\sigma_c}\right)+\kappa_c\Phi^{(3)}\left(\frac{t}{\sigma_c}\right)\right\}}\\
        &\leq&  \frac{C}{\sqrt{n}}+C\left\{\left(\frac{d\log^3(2n)}{n^{4/5-8/(5q_x)}}\right)^{5q_x/(2q_x+8)}+\frac{d^{3/2}\log(2n)}{n}\right\} \left(1\vee\sqrt{\frac{\log (2n)}{d}}\right). 
    \end{eqnarray*}

    \item[(ii)] Suppose further that Assumption~\ref{asmp:3.b} holds. Then, there exists a constant $C=C(q,K_x,K_y,\overline{\lambda},\underline{\lambda})$ such that
    \begin{eqnarray*}
        &&\sup_{c\in\Real^d\setminus\{0_d\}}\sup_{t\in\Real}\Abs{\Pb\left[\sqrt{n}\left\{c^\top(\hat \beta-\Bc-\beta)\right\}\leq t\right]-\left\{\Phi\left(\frac{t}{\sigma_c}\right)+\kappa_c\Phi^{(3)}\left(\frac{t}{\sigma_c}\right)\right\}}\\
        &\leq& C\left(\frac{1}{\sqrt{n}}+\frac{d^{3/2}}{n}+\frac{d\log (2n)}{n}\right).
    \end{eqnarray*}
    \item[(iii)] Suppose further that Assumption~\ref{asmp:3.c} holds for $q_x> 4$ and $s\geq3$, and assume $d+2\log(2n)\leq n/(18K_x)^2$. Then, there exists a constant $C=C(q,q_x,K_x,K_y,\overline{\lambda},\underline{\lambda})$ such that
    \begin{eqnarray*}
        &&\sup_{c\in\Real^d\setminus\{0_d\}}\sup_{t\in\Real}\Abs{\Pb\left[\sqrt{n}\left\{c^\top(\hat \beta-\Bc-\beta)\right\}\leq t\right]-\left\{\Phi\left(\frac{t}{\sigma_c}\right)+\kappa_c\Phi^{(3)}\left(\frac{t}{\sigma_c}\right)\right\}}\\
        &\leq&  \frac{C}{\sqrt{n}}+C\left\{\left(\frac{d^{3/2}\log^3(2n)}{n}\right)^{q_x/(q_x+4)} +\frac{d^{3/2}\log(2n)}{n}\right\}\left(1\vee\sqrt{\frac{\log (2n)}{d}}\right). 
    \end{eqnarray*}

    Furthermore, if $(3/q_x+1/q)^{-1}\geq 2$, then there exists a constant $C=C(q,q_x,K_x,K_y,\overline{\lambda},\underline{\lambda})$ such that
    \begin{eqnarray*}
        &&\sup_{c\in\Real^d\setminus\{0_d\}}\sup_{t\in\Real}\Abs{\Pb\left[\sqrt{n}\left\{c^\top(\hat \beta-\beta)\right\}\leq t\right]-\left\{\Phi\left(\frac{t}{\sigma_c}\right)+\kappa_c\Phi^{(3)}\left(\frac{t}{\sigma_c}\right)\right\}}\\
        &\leq&  C\sqrt{\frac{d}{n}}+C\left\{\left(\frac{d^{3/2}\log^3(2n)}{n}\right)^{q_x/(q_x+4)} +\frac{d^{3/2}\log(2n)}{n}\right\}\left(1\vee\sqrt{\frac{\log (2n)}{d}}\right).
    \end{eqnarray*}
\end{itemize}

\end{theorem}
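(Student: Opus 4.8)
The plan is to derive the bias-free statement from the bias-corrected bound already established in the first display of part~(iii), paying for the removal of $\Bc$ with two controlled perturbations: a deterministic shift coming from $\Eb[\Bc]$ and a small stochastic fluctuation coming from $\Bc-\Eb[\Bc]$. Decompose
\[
\sqrt n\,c^\top(\hat\beta-\beta)=W_0+a+B,\qquad W_0:=\sqrt n\,c^\top(\hat\beta-\Bc-\beta),\ \ a:=\sqrt n\,c^\top\Eb[\Bc],\ \ B:=\sqrt n\,c^\top(\Bc-\Eb[\Bc]),
\]
with $a$ deterministic and $\Eb[B]=0$. By the scale invariance noted after \eqref{eq:2.2.7} it suffices to treat $\norm{c}_{\Sigma^{-1}}=1$, so that $\sigma_c^2\ge\underline\lambda$; moreover, a direct estimate of \eqref{eq:2.2.7} using Assumption~\ref{asmp:4} gives $|\kappa_c|=O(\sqrt{d/n})$, so the approximating law $F_0(t):=\Phi(t/\sigma_c)+\kappa_c\Phi^{(3)}(t/\sigma_c)$ has density bounded by an absolute constant $C'$, uniformly in $c$. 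Part~(iii) gives $\sup_t|\Pb[W_0\le t]-F_0(t)|\le E_{\mathrm{iii}}$ with $E_{\mathrm{iii}}$ the right-hand side appearing there. A routine smoothing step (split $\{W_0+a+B\le t\}$ according to $\{|B|\le\delta\}$, then use Markov's inequality for $\Eb[B^2]$ and the Lipschitz bound on $F_0$) gives, for every $\delta>0$,
\[
\sup_{c}\sup_{t}\bigl|\Pb[\sqrt n\,c^\top(\hat\beta-\beta)\le t]-F_0(t)\bigr|\ \le\ E_{\mathrm{iii}}+C'|a|+C'\delta+\frac{\Eb[B^2]}{\delta^2}.
\]
It remains to bound $|a|$ and $\Eb[B^2]$ uniformly over $\norm{c}_{\Sigma^{-1}}=1$ and to optimize $\delta$.

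\emph{The deterministic shift.} Since the sample is i.i.d.\ and $\Eb[X_1(Y_1-X_1^\top\beta)]=0$ by \eqref{eq:projection_parameters_2}, one has $|a|\le n^{-1/2}\norm{\Eb[Z\,\epsilon\,\norm{Z}_2^2]}_2$, where $Z:=\Sigma^{-1/2}X_1$ and $\epsilon:=Y_1-X_1^\top\beta$. The crux is the estimate
\[
\norm{\Eb[Z\,\epsilon\,\norm{Z}_2^2]}_2\ \lesssim\ \sqrt d,
\]
which is much sharper than the triangle-inequality bound $O(d)$ and is where Assumption~\ref{asmp:3.c} is essential. I would argue as follows. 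Because $\Eb[Z\epsilon]=\Sigma^{-1/2}\Eb[X_1(Y_1-X_1^\top\beta)]=0$, the $j$-th coordinate of $\Eb[Z\epsilon\norm{Z}_2^2]$ equals $\Eb[R_j\epsilon]$, where $R_j:=Z_j\norm{Z}_2^2-\Eb[Z_j^2\norm{Z}_2^2]\,Z_j$ is the $L^2$-residual of $Z_j\norm{Z}_2^2$ after projection onto $\mathrm{span}\{Z_1,\dots,Z_d\}$ (here independence of the coordinates of $Z$ and $\Eb[ZZ^\top]=I_d$ are used; the at most one coordinate with $\Eb[Z_j]\ne0$ contributes only $O(1)$ and is harmless). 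Writing $G:=(\Eb[R_jR_k])_{j,k=1}^d$, Cauchy--Schwarz yields
\[
\norm{\Eb[Z\epsilon\norm{Z}_2^2]}_2^2=\sum_{j=1}^d\Eb[R_j\epsilon]^2\ \le\ \norm{G}_{\mathrm{op}}\,\Eb[\epsilon^2]\ \le\ K_y^2\,\norm{G}_{\mathrm{op}}.
\]
A direct moment computation using independence of the coordinates of $Z$ gives $\Eb[R_j^2]=O(d)$ and, for $j\ne k$, $\Eb[R_jR_k]=2\,\Eb[Z_j^3]\Eb[Z_k^3]$ (the sixth moments involved are finite, as $(3/q_x+1/q)^{-1}\ge2$ forces $q_x\ge6$). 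Hence $G$ equals a diagonal matrix with entries $O(d)$ plus the rank-one matrix $2mm^\top$ with $m=(\Eb[Z_j^3])_j$ and $\norm{m}_2^2\le d\,K_x^6$ by Assumption~\ref{asmp:3.a}; therefore $\norm{G}_{\mathrm{op}}\lesssim d$ and $|a|\lesssim\sqrt{d/n}$.

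\emph{The fluctuation and assembly.} Writing $B=-n^{-3/2}\sum_{i=1}^n(\xi_i-\Eb[\xi_i])$ with i.i.d.\ $\xi_i=(c^\top\Sigma^{-1}X_i)\,\epsilon_i\,\norm{X_i}_{\Sigma^{-1}}^2$, we get
\[
\Eb[B^2]\ \le\ n^{-2}\,\Eb\!\bigl[(c^\top\Sigma^{-1}X_1)^2\,\epsilon_1^2\,\norm{X_1}_{\Sigma^{-1}}^4\bigr]\ \lesssim\ \frac{d^2}{n^2},
\]
where the last inequality is Hölder with exponents $q_x/2,\,q/2,\,q_x/4$ (whose reciprocals sum to $6/q_x+2/q\le1$, which is exactly the hypothesis $(3/q_x+1/q)^{-1}\ge2$), combined with Assumption~\ref{asmp:3.a} applied to $c^\top\Sigma^{-1}X_1$ and to $\norm{X_1}_{\Sigma^{-1}}/\sqrt d$, Assumption~\ref{asmp:2} applied to $\epsilon_1$, and $\norm{c}_{\Sigma^{-1}}=1$. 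Choosing $\delta=\Eb[B^2]^{1/3}\asymp(d/n)^{2/3}$ makes $C'\delta+\Eb[B^2]/\delta^2=O((d/n)^{2/3})=O(\sqrt{d/n})$, using $d\le n$ (which follows from $d+2\log(2n)\le n/(18K_x)^2$). Combining with $|a|\lesssim\sqrt{d/n}$, with $E_{\mathrm{iii}}$, and with $1/\sqrt n\le\sqrt{d/n}$ collapses the total to
\[
C\sqrt{\frac dn}+C\left\{\left(\frac{d^{3/2}\log^3(2n)}{n}\right)^{q_x/(q_x+4)}+\frac{d^{3/2}\log(2n)}{n}\right\}\left(1\vee\sqrt{\frac{\log(2n)}{d}}\right),
\]
which is the asserted bound.

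The main obstacle is the estimate $\norm{\Eb[Z\epsilon\norm{Z}_2^2]}_2\lesssim\sqrt d$: a coordinate-by-coordinate triangle inequality yields only $O(d)$, and the gain of $\sqrt d$ genuinely uses the independence of the coordinates of $\Sigma^{-1/2}X_1$ together with the orthogonality $\Eb[\Sigma^{-1/2}X_1(Y_1-X_1^\top\beta)]=0$. This is exactly why the bias-free refinement needs Assumption~\ref{asmp:3.c} rather than only Assumption~\ref{asmp:3.a}, and why its bound carries $\sqrt{d/n}$ where the bias-corrected bound in part~(iii) carries $1/\sqrt n$.
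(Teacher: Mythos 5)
There is a genuine gap: your proposal proves only the final ``Furthermore'' display of part~(iii), and it does so \emph{conditionally} on the first display of part~(iii), which you take as ``already established.'' The theorem, however, consists of four bounds, and the substantive work lies in the three you do not address. In the paper's proof one must (a) expand $\hat\Sigma^{-1}$ to second order and decompose $\hat\beta-\beta=\Uc+\Bc+\Rc$, where $\Uc$ is a sum of a linear average and a degenerate second-order $U$-statistic and $\Rc$ is the cubic remainder; (b) invoke a Bentkus-type Edgeworth expansion for $\Uc$ (Lemma~\ref{lem:11}) to produce the adjusted limit $\Phi(t/\sigma_c)+\kappa_c\Phi^{(3)}(t/\sigma_c)$ with error $O(1/\sqrt n+d/n)$ --- this is where the correction term $\kappa_c$ and the leading $C/\sqrt n$ come from; and (c) control $\Pb(|\sqrt n\,c^\top\Rc|>\epsilon)$ separately under each of Assumptions~\ref{asmp:3.a}, \ref{asmp:3.b}, \ref{asmp:3.c}, which requires the concentration of $\Dc_\Sigma=\|\Sigma^{-1/2}\hat\Sigma\Sigma^{-1/2}-I_d\|_{\rm op}$ and the lower-tail bound on $\lambda_{\min}(\Sigma^{-1/2}\hat\Sigma\Sigma^{-1/2})$, and is precisely what generates the assumption-dependent terms such as $(d\log^3(2n)/n^{4/5-8/(5q_x)})^{5q_x/(2q_x+8)}$. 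None of this appears in your write-up, so as a proof of Theorem~\ref{thm:1} it is incomplete.

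The piece you do carry out --- removing the bias correction in part~(iii) --- is correct and is essentially the paper's own argument (Lemma~\ref{lem:12} plus the smoothing step in the proof of Theorem~\ref{thm:1}): both approaches reduce to showing $|\Eb[\sqrt n\,c^\top\Bc]|\lesssim\sqrt{d/n}$ and ${\rm Var}(\sqrt n\,c^\top\Bc)\lesssim d^2/n^2$, the latter by the same H\"older application under $(3/q_x+1/q)^{-1}\ge2$. Your route to the mean bound, via the $L^2$-projection residuals $R_j$ and the operator norm of their Gram matrix, is more elaborate than the paper's, which simply writes $\Eb[c^\top\Bc]$ as a covariance and applies Cauchy--Schwarz together with ${\rm Var}(\|X\|_{\Sigma^{-1}}^2)=\sum_j\Eb[Z_j^4]-d\le d(K_x^4-1)$ under the independent-entries assumption; both exploit the same two ingredients (the normal equations $\Eb[Z\epsilon]=0$ and coordinate independence), and your observation that at most one coordinate of $Z$ can have nonzero mean is a correct way to dispose of the off-diagonal terms. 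So the marginal content is fine, but you still owe the proof of (i), (ii), and the first display of (iii).
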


\begin{remark}\label{rmk:1}
    The conventional Berry Esseen type inequality implies the proximity between the distribution of the law of the estimator and the Normal distribution. In contrast, we show a distribution approximation to the ``adjusted'' Normal distribution, allowing a more precise representation of the approximation. The degree of adjustment is determined by the parameter $\kappa_c$, which turns out to scale as $O(\sqrt{d/n})$ (see Lemma~\ref{lem:11}). 
    Hence, we can enhance the Berry-Esseen bound for the Normal approximation by incorporating an additional term of $O(\sqrt{d/n})$ to the existing bounds in Theorem~\ref{thm:1}, but yielding a slower convergence rate. More importantly, under the well-specified linear model, i.e., $\Eb[Y|X]=X^\top\beta$, then it follows that $\kappa_c=0$ from its definition in \eqref{eq:2.2.7}. Consequently, in this case, the presented Berry-Esseen bound in Theorem~\ref{thm:1} can be employed for the Normal approximation as well.
\end{remark}

\begin{remark}\label{rmk:2}
    The last part of Theorem~\ref{thm:1} says that the OLSE does not necessarily require de-biasing under Assumption~\ref{asmp:3.c} with a sufficiently large finite moment of covariates and errors. To provide an intuition for this, let us consider the bias term $c^\top\Bc$ in \eqref{eq:bias_true}, which scales with its expected value $\Eb[c^\top\Bc]$. By examining the definition of the projection parameter, we can express $\Eb[c^\top\Bc]$ as the covariance between two random variables:
    \begin{eqnarray*}
    \Eb[c^\top\Bc] &=& -\frac{1}{n}\Eb [c^\top\Sigma^{-1}X(Y-X^\top\beta)\norm{X}_{\Sigma^{-1}}^2]\\
    &=&-\frac{1}{n}{\rm Cov}\left(c^\top\Sigma^{-1}X(Y-X^\top\beta), \norm{X}_{\Sigma^{-1}}^2\right).
    \end{eqnarray*}
    An application of Cauchy Schwarz's inequality yields that
    \begin{equation}\label{eq:rmk2.1}
    \Abs{\Eb[c^\top\Bc]}\leq\frac{1}{n}\left({\rm Var}\left[c^\top\Sigma^{-1}X(Y-X^\top\beta)\right]\right)^{1/2} \left({\rm Var}\left[ \norm{X}_{\Sigma^{-1}}^2\right]\right)^{1/2}.
    \end{equation}
    The leading variance term on the right-hand side of \eqref{eq:rmk2.1}, under the finite moment assumptions of covariates and errors, is $O(1)$. Meanwhile, the quantity ${\rm Var}[ \norm{X}_{\Sigma^{-1}}^2]$ generally scales as $\Eb[\norm{X}_{\Sigma^{-1}}^4]=O(d^2)$, resulting in $|\Eb[c^\top\Bc]|=O(d/n)$. However, when an additional assumption of independent entries of covariates, i.e., Assumption~\ref{asmp:3.c}, is satisfied, $\norm{X}_{\Sigma^{-1}}^2$ roughly follows $\chi^2_d$ distribution, thereby leading to ${\rm Var}[ \norm{X}_{\Sigma^{-1}}^2]=O(d)$. This significantly reduces the magnitude of the right-hand side of \eqref{eq:rmk2.1} to $O(\sqrt{d}/n)$. Consequently, the expected bias (when scaled by $\sqrt{n}$) is negligible if $d=o(n)$.
\end{remark}

\begin{remark}
    Regarding the results under Assumptions \ref{asmp:3.a} and \ref{asmp:3.c}, we conjecture that the dependence on logarithmic factors of our bound is suboptimal. The presence of seemingly spurious logarithmic factors mainly stems from the concentration inequality we employ for the sample Gram matrix, described in Proposition~\ref{prop:6} in Section~\ref{sec:conc_ineq}. Although Theorem 1.1 of \cite{tikhomirov2018sample} partially addresses our query when $q_x>4$, its application to our context (see Proposition~\ref{prop:tik}) requires further supposition: $d\to\infty$ as $n\to\infty$.
\end{remark}

Theorem~\ref{thm:1} gives the Berry Essen bound for the linear contrast of the \emph{properly-debiased} projection parameters under three different distributional assumptions on covariates. The first part of Theorem~\ref{thm:1} regards the most general case which relies only on finite-moment assumptions. Ignoring the polylogarithmic factors, the Berry-Esseen bound tends to zero as long as $$d=o\left(n^{\min\{2/3,4/5-8/(5q_x)\}}\right),\quad n\to\infty.$$ If $q_x\geq12$, this reduces to $d=o(n^{2/3})$ and meets the dimension requirement for the vanishing Berry Esseen bound under Assumption~\ref{asmp:3.b} of sub-Gaussianity. Moreover, under Assumption~\ref{asmp:3.c}, the bound tends to 0 as $d=o(n^{2/3})$ for all $q_x>4$, disregarding the polylogarithmic factors.

\subsection{Bias Estimation}\label{sec:3.2}

The result given in Theorem~\ref{thm:1} is mostly of theoretical importance as the inequality yet involves unknown parameters through bias $\Bc$ and asymptotic variance $\sigma_c^2$. Therefore, to do inference on $\beta$, for e.g. building confidence region, it is necessary to incorporate the estimates of bias and variance in the Berry Esseen bound given in Theorem~\ref{thm:1}. To this effect, we consider a method of moment estimator $\hat\Bc$ for $\Bc$ defined as
\begin{equation}\label{eq:bias_est}
    \hat\Bc = -\frac{1}{n^2}\sum_{i=1}^n \hat\Sigma^{-1}X_i(Y_i-X_i^\top\hat\beta)\norm{X_i}_{\hat\Sigma^{-1}}^2.
\end{equation} The next result provides the consistency rate for the bias estimate in high dimensions and under mild moment conditions. As elucidated in Remark~\ref{rmk:2}, the OLSE does not necessitate a debiasing procedure under Assumption~\ref{asmp:3.c} with a sufficient number of moments of covariates and errors. Therefore, the following results will consider the cases under Assumption~\ref{asmp:3.a} and \ref{asmp:3.b} only.

\begin{theorem}\label{thm:2}
    Suppose that Assumption~\ref{asmp:2} holds for $q>3$ and that Assumption~\ref{asmp:4} holds. Fix $c\in\Real^d$ with $\norm{c}_{\Sigma^{-1}}=1$, and assume $d+2\log(2n)\leq n/(18K_x)^2$.
    \begin{itemize}
        \item[(i)] Suppose further that Assumption~\ref{asmp:3.a} holds for $q_x\geq8$, $(1/q_x+1/q)^{-1}\geq3$, and $(3/q_x+1/q)^{-1}\geq 2$. Then there exists a constant $C=C(q,q_x,\overline{\lambda},K_x,K_y)$ such that for all $\eta\in(0,1]$,
\begin{eqnarray*}
\Pb\left(\sqrt{n}\abs{c^\top(\hat\Bc-\Bc)}\geq\delta_{n,d}(\eta)\right)\leq \frac{1}{\sqrt{n}}+\delta_{n,d}(\eta)+\eta,
\end{eqnarray*} where
\begin{equation*}
    \delta_{n,d}(\eta):= C\left(\frac{d^2\log^{3/2}(2d/\eta)}{\eta^{2/q_x}n^{3/2-2/q_x}}+\frac{d^3\log^3(2d/\eta)}{\eta^{4/q_x}n^{5/2-4/q_x}}+\frac{d^{3/2}\log(2n/\eta)}{n}\right).
\end{equation*}
\item[(ii)] Suppose further that Assumption~\ref{asmp:3.b} holds. Then, there exists a constant $C=C(q,\overline{\lambda},K_x,K_y)$ such that
\begin{eqnarray*}
\Pb\left(\sqrt{n}\abs{c^\top(\hat\Bc-\Bc)}\geq\delta_{n,d}\right)\leq \frac{1}{\sqrt{n}}+\delta_{n,d},
\end{eqnarray*} where
\begin{equation*}
    \delta_{n,d}:= C\log(2d)\left[\frac{d^{3/2}}{n}+\frac{d\log(2n)}{n}\right].
\end{equation*}
\end{itemize}
\end{theorem}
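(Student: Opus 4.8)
\textbf{Proof proposal for Theorem~\ref{thm:2}.}

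The plan is to control $\sqrt{n}\,c^\top(\hat\Bc-\Bc)$ by decomposing the difference between the plug-in estimator $\hat\Bc$ and the oracle quantity $\Bc$ into several pieces, each arising from replacing a population object by its sample counterpart: (i) replacing $\Sigma^{-1}$ by $\hat\Sigma^{-1}$ in the leading factor $\Sigma^{-1}X_i$, (ii) replacing $\beta$ by $\hat\beta$ in the residual $Y_i-X_i^\top\beta$, and (iii) replacing $\norm{X_i}_{\Sigma^{-1}}^2$ by $\norm{X_i}_{\hat\Sigma^{-1}}^2$. Writing $\hat\Sigma^{-1}-\Sigma^{-1} = \Sigma^{-1}(\Sigma-\hat\Sigma)\hat\Sigma^{-1}$ and, for the quadratic form, $\norm{X_i}_{\hat\Sigma^{-1}}^2 - \norm{X_i}_{\Sigma^{-1}}^2 = X_i^\top(\hat\Sigma^{-1}-\Sigma^{-1})X_i$, every error term is a multilinear expression in the ``small'' quantities $\Dc_\Sigma = \norm{\Sigma^{-1/2}\hat\Sigma\Sigma^{-1/2}-I_d}_{\rm op}$, $\hat\beta-\beta$, and normalized covariate norms $\norm{X_i}_{\Sigma^{-1}}/\sqrt d$. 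The first step, then, is to write out this algebraic decomposition carefully and collect the resulting terms; I expect roughly four or five summands, the dominant one being the $\beta\mapsto\hat\beta$ substitution because $Y_i-X_i^\top\hat\beta = (Y_i-X_i^\top\beta) - X_i^\top(\hat\beta-\beta)$ contributes a term of the form $n^{-2}\sum_i \Sigma^{-1}X_i X_i^\top(\hat\beta-\beta)\norm{X_i}_{\Sigma^{-1}}^2$ whose size is governed by the already-established linear-contrast behavior of $\hat\beta-\beta$.

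Second, I would invoke the high-probability bounds available from the earlier parts of the paper as black boxes: the concentration inequality for $\Dc_\Sigma$ (Proposition~\ref{prop:6}, so that $\Dc_\Sigma \lesssim \sqrt{d/n}$ up to logs under Assumption~\ref{asmp:3.a}, and $\Dc_\Sigma\lesssim\sqrt{(d+\log n)/n}$ under the sub-Gaussian Assumption~\ref{asmp:3.b}), the condition $d+2\log(2n)\le n/(18K_x)^2$ which guarantees $\Dc_\Sigma<1/2$ on a good event so that $\norm{\hat\Sigma^{-1}}_{\Sigma}$-type quantities are bounded, and the order $d/\sqrt n$ behavior of the oracle bias contrast $c^\top\Bc$ from Section~\ref{sec:3.1}. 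On the good event I would bound each error summand deterministically in terms of $\Dc_\Sigma$, $\norm{\hat\beta-\beta}_{\Sigma}$ (or, more delicately, linear contrasts thereof), and empirical averages such as $n^{-1}\sum_i\norm{X_i}_{\Sigma^{-1}}^{2k}|Y_i-X_i^\top\beta|$ which can be controlled in $L^1$ via Assumptions~\ref{asmp:2} and \ref{asmp:3.a} together with Markov's inequality — this is where the $\eta^{-2/q_x}$ and $\eta^{-4/q_x}$ factors and the moment exponents $q_x\ge 8$, $(1/q_x+1/q)^{-1}\ge3$, $(3/q_x+1/q)^{-1}\ge2$ enter, since we need finiteness of moments like $\Eb[\norm{X}_{\Sigma^{-1}}^{6}|Y-X^\top\beta|]$ and of sixth-order covariate moments appearing from the $X_i^\top(\hat\Sigma^{-1}-\Sigma^{-1})X_i$ factor.

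Third, for the term carrying $\hat\beta-\beta$ I would not use the crude bound $\norm{\hat\beta-\beta}_\Sigma = O_P(\sqrt{d/n})$ throughout — that would be too lossy for a contrast — but rather feed in the sharper first-order approximation $c^\top(\hat\beta-\beta)\approx n^{-1}\sum_i c^\top\Sigma^{-1}X_i(Y_i-X_i^\top\beta)$ from \eqref{eq:2.2.1}, whose contrast is $O_P(1/\sqrt n)$; the combination with the extra $n^{-1}$ prefactor and the $\norm{X_i}_{\Sigma^{-1}}^2$ weight then yields a $d^{3/2}/n$-type contribution after accounting for the correlation between the weight and the summand. The sub-Gaussian case (ii) is the same decomposition with $\Dc_\Sigma$ and all moment averages controlled by sub-Gaussian/sub-exponential tail bounds, producing the cleaner $\log(2d)\,[d^{3/2}/n + d\log(2n)/n]$ rate with no $\eta$.

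\textbf{Main obstacle.} The hard part will be the bookkeeping in the second and third steps: several error terms are products of three or more random factors that are \emph{not} independent (e.g.\ $\hat\Sigma^{-1}$, $\hat\beta-\beta$, and the weights $\norm{X_i}_{\hat\Sigma^{-1}}^2$ all depend on the same sample), so one must be careful to peel them off one at a time on a single good event rather than taking expectations naively, and to track exactly which moment of $X$ each peeling consumes — the quadratic form $X_i^\top(\hat\Sigma^{-1}-\Sigma^{-1})X_i$ already costs two extra powers of $\norm{X_i}_{\Sigma^{-1}}$, which is what forces $q_x\ge 8$. Getting the polynomial-in-$d$ exponents ($d^2$, $d^3$, $d^{3/2}$) and the matching powers of $n$ to line up with the stated $\delta_{n,d}(\eta)$ — rather than a strictly weaker bound — is the delicate accounting that the proof must execute, but it introduces no new ideas beyond the decomposition plus the cited concentration results.
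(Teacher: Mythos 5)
Your plan is essentially the paper's proof: the same three substitutions ($\Sigma^{-1}\mapsto\hat\Sigma^{-1}$, $\beta\mapsto\hat\beta$, $\norm{X_i}_{\Sigma^{-1}}^2\mapsto\norm{X_i}_{\hat\Sigma^{-1}}^2$) generate a multilinear decomposition (seven terms in the paper, reduced to four on the event $\lambda_{\rm min}(\Sigma^{-1/2}\hat\Sigma\Sigma^{-1/2})\geq 1/2$), and each piece is then peeled off on a single good event using the concentration of $\Dc_\Sigma$, of $\norm{\hat\beta-\beta}_\Sigma$, and Chebyshev/Markov bounds on weighted empirical averages of the type you describe. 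Two corrections to your accounting, though. First, your third step is based on a miscalculation: the overall prefactor is $d/\sqrt{n}$, so the $\hat\beta-\beta$ term only needs to be $O(\sqrt{d/n})$ to produce the stated $d^{3/2}/n$ contribution, and the paper obtains exactly this by Cauchy--Schwarz, $\Sc_2\leq\norm{\hat\beta-\beta}_\Sigma\cdot\Norm{n^{-1}\sum_i\Sigma^{-1/2}X_i(c^\top\Sigma^{-1}X_i)\norm{X_i}_{\Sigma^{-1}}^2/d}_2$, with the second factor $O_P(1)$ --- the ``crude'' bound is not lossy here. Your proposed refinement via the first-order expansion of $c^\top(\hat\beta-\beta)$ would in fact be harder to execute, because the contrast direction multiplying $\hat\beta-\beta$ in this term is the random, sample-dependent vector above, not the fixed $c$, so the $O_P(1/\sqrt{n})$ fixed-contrast rate does not apply directly. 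Second, the $\eta^{-2/q_x}$ and $\eta^{-4/q_x}$ factors in $\delta_{n,d}(\eta)$ do not come from Markov bounds on the moment averages (those are handled with fixed confidence levels like $d/n$); they come from the heavy-tailed concentration bound for $\Dc_\Sigma$ in Proposition~\ref{prop:6} under Assumption~\ref{asmp:3.a}, whose first- and second-power appearances in the final bound produce exactly those two exponents. Neither point invalidates your plan, but following it literally on the $\hat\beta-\beta$ term would lead you into unnecessary difficulty.
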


Theorem~\ref{thm:2} provides the finite sample concentration inequalities for the bias estimate. Inspecting the result under Assumption~\ref{asmp:3.a}, with the choice of
\begin{equation*}
    \eta=\left(\frac{d}{n^{3/4-1/q_x}}\right)^{(2q_x)/(2+q_x)}\vee\left(\frac{d}{n^{5/6-4/(3q_x)}}\right)^{(3q_x)/(4+q_x)},
\end{equation*} which was intended to minimize the tail probability, the bias estimate is $\sqrt{n}-$consistent if $d=o(n^{\min\{2/3,3/4-1/q_x\}})$, ignoring the polylogarithmic factors. If $q_x\geq12$, then the dimension requirement reduces to $d=o(n^{2/3})$. If Assumption~\ref{asmp:3.a} is replaced with Assumption~\ref{asmp:3.b}, then the dimension requirement for the consistency becomes $d=o(n^{2/3})$ again ignoring the polylogarithmic factors.

\subsection{Berry Esseen Bound for the Unnormalized Bias-corrected Estimator}\label{sec:3.3}
Combining the error bound for the bias estimate from Theorem~\ref{thm:2} with the Berry-Esseen bound in Theorem~\ref{thm:1}, we get to the next result, a uniform Berry-Esseen bound for the unnormalized bias-corrected OLS estimator. We denote the bias-corrected estimator for the projection parameters as 
\begin{eqnarray*}
    \hat \beta_{\rm bc}:=\hat\beta-\hat\Bc.
\end{eqnarray*}

\begin{theorem}\label{thm:3}
\begin{itemize}
    \item[(i)] Suppose that assumptions made in Theorem~\ref{thm:2}(i) hold. Then, there exists a constant $C=C(q,q_x,K_x,K_y,\overline{\lambda},\underline{\lambda})$ such that
\begin{eqnarray*}
\sup_{c\in\Real^d\setminus\{0_d\}}\sup_{t\in\Real}\Abs{\Pb\left[\sqrt{n}c^\top\left(\hat \beta_{\rm bc}-\beta\right)\leq t\right]-\left\{\Phi\left(\frac{t}{\sigma_c}\right)+\kappa_c\Phi^{(3)}\left(\frac{t}{\sigma_c}\right)\right\}}\leq \epsilon_{n,d},
\end{eqnarray*} where
\begin{eqnarray*}
    \epsilon_{n,d}&=&C\left\{\left(\frac{d\log^3(2n)}{n^{4/5-8/(5q_x)}}\right)^{5q_x/(2q_x+8)}+\left(\frac{d\log^{3/4}(2n)}{n^{3/4-1/q_x}}\right)^{(2q_x)/(2+q_x)}+\left(\frac{d\log(2n)}{n^{5/6-4/(3q_x)}}\right)^{(3q_x)/(4+q_x)}\right\}\\
&&+C\left(\frac{1}{\sqrt{n}}+\frac{d^{3/2}\log(2n)}{n}\right).
\end{eqnarray*}
\item[(ii)] Suppose that the assumptions made in Theorem~\ref{thm:2}(ii) hold. Then, there exists a constant $C=C(q,K_x,K_y,\overline{\lambda},\underline{\lambda})$ such that
\begin{eqnarray*}
    &&\sup_{c\in\Real^d\setminus\{0_d\}}\sup_{t\in\Real}\Abs{\Pb\left[\sqrt{n}c^\top\left(\hat \beta_{\rm bc}-\beta\right)\leq t\right]-\left\{\Phi\left(\frac{t}{\sigma_c}\right)+\kappa_c\Phi^{(3)}\left(\frac{t}{\sigma_c}\right)\right\}}\\
    &\leq& C \left(\frac{1}{\sqrt{n}}+\frac{d^{3/2}\log (2n)}{n}\right).
\end{eqnarray*}
\end{itemize}
\end{theorem}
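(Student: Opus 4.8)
The plan is to obtain Theorem~\ref{thm:3} by gluing together the oracle Berry--Esseen bound of Theorem~\ref{thm:1}, applied to $Z:=\sqrt{n}\,c^\top(\hat\beta-\Bc-\beta)$, with the concentration bound of Theorem~\ref{thm:2} for the remainder $R:=\sqrt{n}\,c^\top(\hat\Bc-\Bc)$, since $\sqrt{n}\,c^\top(\hat\beta_{\rm bc}-\beta)=Z-R$. Because every quantity in the statement is invariant under $c\mapsto\alpha c$, $\alpha\neq0$, I would first reduce to $\norm{c}_{\Sigma^{-1}}=1$; then Assumption~\ref{asmp:4} gives $\sigma_c^2=c^\top\Sigma^{-1}V\Sigma^{-1}c\in[\underline{\lambda},\overline{\lambda}]$ and, by the discussion in Remark~\ref{rmk:1} together with the standing constraint $d+2\log(2n)\le n/(18K_x)^2$, $|\kappa_c|\lesssim\sqrt{d/n}\lesssim1$. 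The elementary inequality
\begin{equation*}
    \Pb(Z\le t-\delta)-\Pb(|R|\ge\delta)\;\le\;\Pb\bigl(\sqrt{n}\,c^\top(\hat\beta_{\rm bc}-\beta)\le t\bigr)\;\le\;\Pb(Z\le t+\delta)+\Pb(|R|\ge\delta),
\end{equation*}
valid for every deterministic $\delta>0$ and $t\in\Real$, is the backbone of the argument.

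Next I would set $g_c(t):=\Phi(t/\sigma_c)+\kappa_c\Phi^{(3)}(t/\sigma_c)$ and apply Theorem~\ref{thm:1} (part (i) or (ii), according to which set of assumptions is in force) at the shifted arguments $t\pm\delta$, so that $\Pb(Z\le t\pm\delta)=g_c(t\pm\delta)+O(\epsilon^{\rm BE}_{n,d})$ where $\epsilon^{\rm BE}_{n,d}$ is the Theorem~\ref{thm:1} bound. The shift is then absorbed using that $g_c$ is $C^1$ with $|g_c'(s)|\le\sigma_c^{-1}(\norm{\Phi'}_\infty+|\kappa_c|\,\norm{\Phi^{(4)}}_\infty)\lesssim1$ uniformly, whence $|g_c(t\pm\delta)-g_c(t)|\lesssim\delta$; note that monotonicity is unavailable here because $g_c$ is not a distribution function, so one genuinely relies on this derivative bound, and the $\kappa_c\Phi^{(4)}$ contribution is harmless precisely because $|\kappa_c|\lesssim1$. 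Combining, for every $t$,
\begin{equation*}
    \bigl|\Pb\bigl(\sqrt{n}\,c^\top(\hat\beta_{\rm bc}-\beta)\le t\bigr)-g_c(t)\bigr|\;\lesssim\;\epsilon^{\rm BE}_{n,d}+\delta+\Pb(|R|\ge\delta).
\end{equation*}

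It then remains to choose $\delta$ and invoke Theorem~\ref{thm:2}, whose constant does not depend on $c$ and hence applies uniformly over $\norm{c}_{\Sigma^{-1}}=1$. For part (i) I would take $\delta=\delta_{n,d}(\eta)$ with
\begin{equation*}
    \eta=\Bigl(d/n^{3/4-1/q_x}\Bigr)^{2q_x/(2+q_x)}\vee\Bigl(d/n^{5/6-4/(3q_x)}\Bigr)^{3q_x/(4+q_x)},
\end{equation*}
the minimizer (up to constants) of the right-hand side of Theorem~\ref{thm:2}(i), so that $\Pb(|R|\ge\delta)\le n^{-1/2}+\delta_{n,d}(\eta)+\eta$; substituting the explicit $\delta_{n,d}(\eta)$ and this $\eta$ makes the two $\eta^{\pm2/q_x}$-weighted terms balance against $\eta$ and collapses $\delta+\Pb(|R|\ge\delta)$ to $n^{-1/2}+(d\log^{3/4}(2n)/n^{3/4-1/q_x})^{2q_x/(2+q_x)}+(d\log(2n)/n^{5/6-4/(3q_x)})^{3q_x/(4+q_x)}+d^{3/2}\log(2n)/n$; adding $\epsilon^{\rm BE}_{n,d}$ from Theorem~\ref{thm:1}(i), which supplies the leftover $(d\log^3(2n)/n^{4/5-8/(5q_x)})^{5q_x/(2q_x+8)}$ term (and, up to consolidating polylogarithmic factors, the $(1\vee\sqrt{\log(2n)/d})$ weights), reproduces $\epsilon_{n,d}$. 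For part (ii) I would take $\delta=\delta_{n,d}=C\log(2d)\,[d^{3/2}/n+d\log(2n)/n]$ and use Theorem~\ref{thm:2}(ii) ($\Pb(|R|\ge\delta)\le n^{-1/2}+\delta_{n,d}$) together with Theorem~\ref{thm:1}(ii), folding the stray terms $d^{3/2}\log(2d)/n$ and $d\log(2d)\log(2n)/n$ into $d^{3/2}\log(2n)/n$ via $\log(2d)\le\sqrt d$ and $d\le n$.

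I do not anticipate a conceptual obstacle: the argument is essentially bookkeeping. The one genuinely delicate point is that the threshold in Theorem~\ref{thm:2} is self-referential (the tail probability depends on the same $\delta_{n,d}(\eta)$ one thresholds at), which is handled by freezing the optimal $\eta$ first so that $\delta$ becomes a fixed number and the shift inequality applies verbatim; the remaining effort is the algebraic verification that the chosen $\eta$ minimizes (up to constants) the Theorem~\ref{thm:2}(i) bound and that, after substitution, the total collapses to exactly the stated $\epsilon_{n,d}$ rather than something larger.
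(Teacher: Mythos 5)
Your proposal is correct and follows essentially the same route as the paper: the same sandwich inequality comparing $\sqrt{n}c^\top(\hat\beta_{\rm bc}-\beta)$ to $\sqrt{n}c^\top(\hat\beta-\Bc-\beta)$, absorption of the shift via the uniform bound on $N_c'$ (using $|\kappa_c|\lesssim 1$), Theorem~\ref{thm:1} for the oracle statistic, and Theorem~\ref{thm:2} with the same optimized choice of $\eta^*$ (the paper's $\eta^*$ carries the polylogarithmic factors inside, but this is cosmetic). The "self-referential threshold" issue you flag is resolved exactly as you describe, by fixing $\epsilon=\delta_{n,d}(\eta^*)$ first.
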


Theorem~\ref{thm:3} presents the Berry Esseen bound for approximating the law of the unnormalized linear contrast of the bias-corrected LSE --- with respect to any query vector $c\in\Real^d\setminus\{0_d\}$--- to the adjusted Normal distribution. To the best of our knowledge, our bound stands as the sharpest among Berry Esseen bounds applicable to existing estimators in the assumption-lean linear regression framework. Moreover, it accommodates the widest range of dimensions for vanishing bound.

Under the moment condition of Assumption~\ref{asmp:3.a}, our bound converges to zero if $d=o(n^{\min\{2/3,4/5-8/q_x\}})$, disregarding the polylogarithmic factors. This reduces to $d=o(n^{2/3})$ when $q_x\geq 12$. Furthermore, if the covariates are sub-Gaussian, the Berry Esseen bound tends to zero if $d=o(n^{2/3})$, again ignoring the polylogarithmic factors. Comparing the Berry Essen bound of the bias-corrected estimator to the bound given in Theorem~\ref{thm:1} when the bias is known, we observe that using the bias estimate does not impose a stricter dimension requirement. This is due to the fact that the dimension requirement for the $\sqrt{n}$-consistency of the bias estimate in Theorem~\ref{thm:2} is less strict than that given in Theorem~\ref{thm:1}, as shown by the inequality: ${\rm min}\{2/3,4/5-8/(5q_x)\}\leq{\rm min}\{2/3,3/4-1/q_x\}$ for $q_x\geq4$. If $q_x\geq 12$, then the quantities on both sides coincide with 2/3, and thus, the bound converges to zero if $d=o(n^{2/3})$.

The following corollary presents a simpler Berry Essen bound for the bias-corrected OLS estimator. We use $A\lesssim B$ as a shorthand for the inequality $A\leq C_{n,d} B$, where $C_{n,d}$ involves only the constants defined in the assumptions and log-polynomial factors of $n$ and $d$.

\begin{corollary}\label{cor:1} Suppose that the assumptions made in Theorem~\ref{thm:3}(i) hold. Furthermore, if $q_x\geq 12$ and $d = O(n^{2/3})$, then
\begin{equation}\label{eq:cor1:1}
\sup_{c\in\Real^d\setminus\{0_d\}}\sup_{t\in\Real}\Abs{\Pb\left[\sqrt{n}c^\top\left(\hat \beta_{\rm bc}-\beta\right)\leq t\right]-\left\{\Phi\left(\frac{t}{\sigma_c}\right)+\kappa_c\Phi^{(3)}\left(\frac{t}{\sigma_c}\right)\right\}}\lesssim\frac{1}{\sqrt{n}} + \frac{d^{3/2}}{n}.
\end{equation} In addition, the approximation to the Normal distribution follows:
\begin{equation}\label{eq:cor1:2}
\sup_{c\in\Real^d\setminus\{0_d\}}\sup_{t\in\Real}\Abs{\Pb\left[\sqrt{n}c^\top\left(\hat \beta_{\rm bc}-\beta\right)\leq t\right]-\Phi\left(\frac{t}{\sigma_c}\right)}\lesssim\sqrt{\frac{d}{n}} + \frac{d^{3/2}}{n}.
\end{equation}
\end{corollary}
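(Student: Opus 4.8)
The plan is to deduce \eqref{eq:cor1:1} from the explicit bound $\epsilon_{n,d}$ of Theorem~\ref{thm:3}(i) by elementary exponent arithmetic under the extra hypotheses $q_x\ge 12$ and $d=O(n^{2/3})$, and then to obtain \eqref{eq:cor1:2} from \eqref{eq:cor1:1} by peeling off the Edgeworth correction $\kappa_c\Phi^{(3)}(\cdot/\sigma_c)$ and controlling it through the order estimate $|\kappa_c|\lesssim\sqrt{d/n}$ supplied by Lemma~\ref{lem:11} (cf.\ Remark~\ref{rmk:1}).

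First I would isolate in $\epsilon_{n,d}$ the three ``moment'' terms of the form $T_i=(d\,\mathrm{polylog}(n)/n^{\alpha_i})^{\gamma_i}$, namely $(\alpha_1,\gamma_1)=(\tfrac45-\tfrac{8}{5q_x},\tfrac{5q_x}{2q_x+8})$, $(\alpha_2,\gamma_2)=(\tfrac34-\tfrac1{q_x},\tfrac{2q_x}{q_x+2})$, $(\alpha_3,\gamma_3)=(\tfrac56-\tfrac{4}{3q_x},\tfrac{3q_x}{q_x+4})$, leaving the remainder $C(n^{-1/2}+d^{3/2}\log(2n)/n)$, which is already $\lesssim n^{-1/2}+d^{3/2}/n$. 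Each $\alpha_i$ and each $\gamma_i$ is increasing in $q_x$, so it suffices to check at $q_x=12$ (where $\alpha_1=\alpha_2=2/3$, $\alpha_3=13/18$, $\gamma_1=15/8$, $\gamma_2=12/7$, $\gamma_3=9/4$) that $q_x\ge 12$ forces $\alpha_i\ge 2/3$ and $\gamma_i\ge 3/2$ for all $i$. Then, using $\gamma_i\ge 3/2$, $d\le C_0 n^{2/3}$, $\alpha_i\ge 2/3$ and $n\ge 1$,
\[
d^{\gamma_i-3/2}\le C_0^{\gamma_i-3/2}\,n^{(2/3)(\gamma_i-3/2)}=C_0^{\gamma_i-3/2}\,n^{(2/3)\gamma_i-1}\le C_0^{\gamma_i-3/2}\,n^{\alpha_i\gamma_i-1},
\]
which rearranges to $(d/n^{\alpha_i})^{\gamma_i}\le C_0^{\gamma_i-3/2}\,d^{3/2}/n$; the factor $(\mathrm{polylog}(n))^{\gamma_i}$ is absorbed into $\lesssim$, so $T_i\lesssim d^{3/2}/n$, and summing the three terms with the remainder yields \eqref{eq:cor1:1}.

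Next I would pass to the unadjusted Normal approximation. By the triangle inequality the left-hand side of \eqref{eq:cor1:2} is at most the left-hand side of \eqref{eq:cor1:1} plus $\sup_{c\ne 0}\sup_t|\kappa_c\,\Phi^{(3)}(t/\sigma_c)|$. Since $\Phi^{(3)}(x)=(x^2-1)\phi(x)$ is bounded on $\Real$ by an absolute constant (hence so is $\Phi^{(3)}(t/\sigma_c)$ uniformly over $t$ and over $\sigma_c$), the second summand is $\le C\sup_{c\ne 0}|\kappa_c|$, which Lemma~\ref{lem:11} bounds by $\lesssim\sqrt{d/n}$. Combining with \eqref{eq:cor1:1} and absorbing $n^{-1/2}\le\sqrt{d/n}$ (valid since $d\ge 1$) gives $\sup_{c\ne 0}\sup_t|\Pb[\sqrt n c^\top(\hat\beta_{\mathrm{bc}}-\beta)\le t]-\Phi(t/\sigma_c)|\lesssim\sqrt{d/n}+d^{3/2}/n$, which is \eqref{eq:cor1:2}.

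I do not anticipate a genuine obstacle: this corollary is essentially a tidy restatement of Theorem~\ref{thm:3}(i) together with the magnitude of $\kappa_c$. The only point requiring care is that the exponent inequalities $\alpha_i\ge 2/3$, $\gamma_i\ge 3/2$ must be verified over the whole range $q_x\ge 12$ rather than merely at the endpoint, and that all logarithmic prefactors are confirmed to be swallowed by the $\lesssim$ convention.
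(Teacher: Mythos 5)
Your proposal is correct and follows essentially the same route as the paper: the paper likewise observes that under $q_x\geq 12$ and $d\lesssim n^{2/3}$ each of the three moment terms has base $\lesssim d/n^{2/3}\lesssim 1$ and exponent at least $3/2$, hence is $\lesssim d^{3/2}/n$, and obtains \eqref{eq:cor1:2} by adding the $O(\sqrt{d/n})$ adjustment term via Lemma~\ref{lem:11}. Your version is merely more explicit about the monotonicity of the exponents in $q_x$ and the constant bookkeeping.
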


A slower rate in \eqref{eq:cor1:2} for the Normal approximation in comparison to that in \eqref{eq:cor1:1} stems from the incorporation of the adjustment term $\sup_{c}\sup_t\kappa_c\Phi^{(3)}(t/\sigma_c)$, which approximately scales at $O(\sqrt{d/n})$ (See Lemma~\ref{lem:11}), into the bound. As discussed previously in Remark~\ref{rmk:1}, however, if the linear model is correctly specified so that $\kappa_c=0$, then the bound provided in \eqref{eq:cor1:1} can also be utilized for the Normal approximation.

The Berry Esseen bound in Theorem~\ref{thm:3}, despite its reliance on the unknown distribution parameter $\sigma_c^2$, does not hinder our ability to conduct inference on the projection parameter $\beta$. We leverage recent statistical methodologies based on re-sampling or sample splitting to enable this inference, which remains unaffected by the uncertainty of $\sigma_c^2$. In Section~\ref{sec:4}, a series of distinct methods are introduced, illustrating and exemplifying this approach.

\subsection{Consistency of the Sandwich Variance Estimator}\label{sec:3.4}

Berry Esseen bound for the studentized bias-corrected LSE remains pertinent, notably in the construction of Wald-type confidence intervals. As previously discussed, accomplishing this entails devising an estimator for the asymptotic variance that ideally maintains consistency across a wide range of dimensions argued in Theorem~\ref{thm:3}. For a specified query vector $c\in\Real^d\setminus\{0_d\}$, the asymptotic variance of $\sqrt{n}c^\top(\hat\beta_{\rm bc}-\beta)$ can be approximated through $\sigma_c^2=c^\top\Sigma^{-1}V\Sigma^{-1}c$, as asserted in Theorem~\ref{thm:3}. Notably, the $d\times d$ matrix $\Sigma^{-1}V\Sigma^{-1}$ is commonly referred to as the `sandwich variance' for OLS in a misspecified linear regression setting \citep{white1980heteroskedasticity,buja2019models1}. A natural estimator for the sandwich variance is its plug-in counterpart $\hat\Sigma^{-1}\hat V\hat\Sigma^{-1}$, termed the `sandwich variance estimator', where $\hat\Sigma$ is the sample Gram matrix and $\hat V$ is expressed as:
\begin{equation*}
    \hat V:=\frac{1}{n}\sum_{i=1}^nX_iX_i^\top(Y_i-X_i^\top\hat\beta)^2.
\end{equation*} To this end, we define $\hat\sigma_c^2:=c^\top\hat\Sigma^{-1}\hat V\hat\Sigma^{-1}c$ for $c\in\Real^d\setminus\{0_d\}$.

The consistency of the sandwich variance estimator has been studied under various model- and distributional- assumptions and has often been a stand-alone line of research. A recent significant finding in a closely related context is detailed in Lemma~7 of \cite{kuchibhotla2020berry}. In particular, they prove that $\norm{\hat\Sigma^{-1}\hat V\hat\Sigma^{-1}-\Sigma^{-1}V\Sigma^{-1}}_{\rm op}=o_p(1)$ under conditions where $d=o(\min\{n^{1/2},n^{2(1-2/q_x)/3}\})$, up to polylogarithmic factors, provided that $(1/q_x+1/q)^{-1}\geq4$. Hence, directly applying their results to our scenario may be challenging, as we are considering situations beyond $d=o(\sqrt{n})$. Nevertheless, we have found that along a specific query vector $c\in\Real^d$, the estimator $\hat\sigma_c^2$ maintains consistency across a broader range of dimensions, as elucidated in the following theorem.

\begin{theorem}\label{thm:4} Suppose that Assumption~\ref{asmp:2},\ref{asmp:3.a}, and \ref{asmp:4} holds. Fix $c\in\Real^d\setminus\{0_d\}$, and assume $d+4\log(2n)\leq n/(18K_x)^2$. If $(1/q_x+1/q)^{-1}\geq4$ and $(3/q_x+1/q)^{-1}\geq2$, then there exists a constant $C=C(q,q_x,K_x,K_y,\overline{\lambda},\underline{\lambda})$ such that
\begin{equation*}
    \Pb\left(\Abs{\frac{\hat\sigma_c^2}{\sigma_c^2}-1}\geq \eta_{n,d}\right)\leq\eta_{n,d},
\end{equation*} where
\begin{eqnarray*}
    \eta_{n,d}&=&C\left(\frac{1}{n^{1/3}}+\sqrt{\frac{d+\log(2n)}{n}}+\frac{d\log(2n)}{n^{1-1/q_x}}\right)\\
    &&+C\left[\left(\frac{d\log^{3/2}(2n)}{n^{1-2/q_x}}\right)^{\frac{1}{1+2/q_x}}+\left(\frac{d\log^{5/4}(2n)}{n^{1-1/q_x-1/q}}\right)^{\frac{2}{1+2/q_x+2/q}}+\left(\frac{d\log^{3/2}(2n)}{n^{1-2/q_x-2/(3q)}}\right)^{\frac{3}{1+3/q_x+1/q}}\right]\\
    &&+C\left[\left(\frac{d\log^{5/4}(2n)}{n^{1-2/q_x}}\right)^{\frac{2}{1+4/q_x}}+\left(\frac{d\log^{3/4}(2n)}{n^{1-2/q_x}}\right)^{\frac{2+8/q_x+4/q}{1+6/q_x+2/q}}+\left(\frac{d\log^{1/2}(2n)}{n^{1-2/q_x-2/(3q)}}\right)^{\frac{3+12/q_x+6/q}{1+6/q_x+2/q}}\right].
\end{eqnarray*}
\end{theorem}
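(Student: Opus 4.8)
The plan is to reduce everything to a fixed unit direction, strip off the sample--Gram--matrix error with Proposition~\ref{prop:6}, and then reduce the relative error of $\hat\sigma_c^2$ to a one--dimensional law of large numbers along that direction plus lower--order corrections. Since $\hat\sigma_c^2/\sigma_c^2$ is invariant to the scale of $c$, take $\norm{c}_{\Sigma^{-1}}=1$, i.e.\ $c=\Sigma^{1/2}\tilde c$ with $\tilde c\in\Sb^{d-1}$; then $\sigma_c^2=\tilde c^\top\bar V\tilde c\in[\underline\lambda,\overline\lambda]$ by Assumption~\ref{asmp:4}, where $\bar V:=\Sigma^{-1/2}V\Sigma^{-1/2}=\Eb[\tilde V_0]$ with $\tilde V_0:=n^{-1}\sum_i g_ig_i^\top e_i^2$, $g_i:=\Sigma^{-1/2}X_i$, $e_i:=Y_i-X_i^\top\beta$. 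Writing $W:=(\Sigma^{-1/2}\hat\Sigma\Sigma^{-1/2})^{-1}=(I_d+E)^{-1}$ with $E:=\Sigma^{-1/2}\hat\Sigma\Sigma^{-1/2}-I_d$, $\norm{E}_{\rm op}=\Dc_\Sigma$, and $\tilde V:=\Sigma^{-1/2}\hat V\Sigma^{-1/2}$, one has $\hat\sigma_c^2=\tilde c^\top W\tilde V W\tilde c$; I would work on the event $\{\Dc_\Sigma\le 1/2\}$, which by Proposition~\ref{prop:6} fails with probability at most the bound stated there, so that $\norm{W-I_d}_{\rm op}\le 2\Dc_\Sigma$ and $\norm{W}_{\rm op}\le 2$.

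On that event the identity $W\tilde V W-\bar V=(W\bar V W-\bar V)+(\tilde V-\bar V)+(W-I_d)(\tilde V-\bar V)W+(\tilde V-\bar V)(W-I_d)$ gives the decomposition $\hat\sigma_c^2-\sigma_c^2=(\mathrm I)+(\mathrm{II})+(\mathrm{III})$ with $(\mathrm I)=\tilde c^\top(W\bar V W-\bar V)\tilde c$, $(\mathrm{II})=\tilde c^\top(\tilde V-\bar V)\tilde c$, and $(\mathrm{III})$ the two cross terms. Term $(\mathrm I)$ is pure Gram--matrix error: $|(\mathrm I)|\le(\norm{W}_{\rm op}+1)\norm{W-I_d}_{\rm op}\norm{\bar V}_{\rm op}\lesssim\overline\lambda\,\Dc_\Sigma$, bounded via Proposition~\ref{prop:6}. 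Term $(\mathrm{III})$, by Cauchy--Schwarz and $\norm{(W-I_d)\tilde c}_2\le 2\Dc_\Sigma$, is at most a constant times $\Dc_\Sigma\norm{(\tilde V-\bar V)\tilde c}_2+\Dc_\Sigma^2\norm{\tilde V-\bar V}_{\rm op}$; because of the extra $\Dc_\Sigma$ factors this only needs moderately sharp (not $o_P(1)$) control of $\hat V$, obtained from vector and matrix Fuk--Nagaev/Rosenthal--type inequalities for $\tilde V_0-\Eb\tilde V_0$ together with the residual--replacement estimates below. The binding term is $(\mathrm{II})$, which along the \emph{fixed} direction equals $n^{-1}\sum_i(g_i^\top\tilde c)^2\hat e_i^2-\Eb[(g_1^\top\tilde c)^2e_1^2]$ with $\hat e_i=Y_i-X_i^\top\hat\beta$; splitting $\hat e_i^2=e_i^2+(\hat e_i^2-e_i^2)$ separates a fluctuation term $n^{-1}\sum_i\{(g_i^\top\tilde c)^2e_i^2-\Eb[(g_1^\top\tilde c)^2e_1^2]\}$ from a residual--replacement term $n^{-1}\sum_i(g_i^\top\tilde c)^2(\hat e_i^2-e_i^2)$.

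For the fluctuation term, the summands $W_i:=(g_i^\top\tilde c)^2e_i^2\ge0$ are i.i.d.\ with mean $\sigma_c^2$ and, by Hölder using Assumptions~\ref{asmp:3.a} and \ref{asmp:2} together with $(1/q_x+1/q)^{-1}\ge4$, finite second moment $\Eb[W_1^2]\le K_x^4K_y^4$; a Fuk--Nagaev / truncated Bernstein bound for $n^{-1}\sum_i(W_i-\Eb W_i)$ has the shape $\exp(-cnt^2)+n^{1-r}t^{-r}$ with $r=(1/q_x+1/q)^{-1}/2$, and balancing the tail probability against the deviation $t$ yields the leading $n^{-1/3}$ and $\sqrt{\log(2n)/n}$ contributions, while retaining the polynomial part at its optimal truncation level produces the further fractional--power terms in $\eta_{n,d}$. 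For the residual--replacement term I would write $\hat e_i^2-e_i^2=-2e_i(g_i^\top\Delta)+(g_i^\top\Delta)^2$ with $\Delta:=\Sigma^{1/2}(\hat\beta-\beta)=W\cdot n^{-1}\sum_jg_je_j$, so the linear part equals $-2\Delta^\top\big(n^{-1}\sum_i(g_i^\top\tilde c)^2e_ig_i\big)$ and the quadratic part is at most $\norm{\Delta}_2^2\,\norm{n^{-1}\sum_i(g_i^\top\tilde c)^2g_ig_i^\top}_{\rm op}$; combining the high--probability bound $\norm{\Delta}_2=\norm{\hat\beta-\beta}_\Sigma\lesssim\sqrt{d/n}$ (which follows from $\norm{W}_{\rm op}\le2$ and $\Eb\norm{n^{-1}\sum g_ie_i}_2^2\le n^{-1}\Eb[\norm{g_1}_2^2e_1^2]\lesssim d/n$, and underlies Theorem~\ref{thm:1}) with Fuk--Nagaev bounds for the vector average $n^{-1}\sum_i(g_i^\top\tilde c)^2e_ig_i$ and the matrix average $n^{-1}\sum_i(g_i^\top\tilde c)^2g_ig_i^\top$ (whose population means are $O(1)$ in the relevant norms) contributes the remaining family of fractional--power terms.

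The main obstacle is exactly the residual--replacement and $\Dc_\Sigma$--cross terms: handled naively they would require operator--norm consistency of $\hat V$, which is what forces $d=o(\sqrt n)$ in the uniform statement (Lemma~7 of \cite{kuchibhotla2020berry}). The point is that for a \emph{fixed} query $c$, $\hat V$ only ever appears either contracted against $\tilde c$ (a vector, not operator, deviation) or multiplied by one of the genuinely small factors $\Dc_\Sigma$, $\Dc_\Sigma^2$, $\norm{\hat\beta-\beta}_\Sigma$, $\norm{\hat\beta-\beta}_\Sigma^2$, so only moderately sharp concentration of the relevant scalar/vector/matrix averages of $\{g_i,e_i\}$ is needed; carrying out this concentration under only $(1/q_x+1/q)^{-1}\ge4$ and $(3/q_x+1/q)^{-1}\ge2$ moments, with the Hölder exponents and truncation radii chosen optimally, is the bulk of the work and generates the three groups of terms in $\eta_{n,d}$.
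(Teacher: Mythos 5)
Your overall architecture is the same as the paper's: fix the direction $\tilde c$, compare $\hat\sigma_c^2$ to the oracle quantity $\tilde\sigma_c^2=n^{-1}\sum_i(\tilde c^\top\Sigma^{-1/2}X_i)^2(Y_i-X_i^\top\beta)^2$ (whose deviation from $\sigma_c^2$ is handled by a second-moment/Fuk--Nagaev bound under $(1/q_x+1/q)^{-1}\ge4$, producing the $n^{-1/3}$ term), and decompose the rest into Gram-matrix, residual-replacement and cross terms weighted by $\Dc_\Sigma$, $\norm{\hat\beta-\beta}_\Sigma$ and their squares, with the accompanying empirical processes ($\Lc$, $\Mc$, $\Nc$ in the paper's notation) having one slot pinned at $\tilde c$. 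This is essentially the content of Lemma~\ref{lem:det_bound_sand_var} together with Lemmas~\ref{lem:onM}--\ref{lem:onL} and Propositions~\ref{prop:6} and \ref{prop:11}.

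There is, however, one genuine gap. The quadratic residual piece $(X_i^\top(\hat\beta-\beta))^2$ inside the $\Dc_\Sigma$-weighted cross terms (your term $(\mathrm{III})$; the paper's $\Qc_8=n^{-1}\sum_i(r_1^\top\Sigma^{-1/2}X_i)^2\,(X_i^\top(\hat\beta-\beta))^2$) cannot be closed using only the factors you list, namely $\Dc_\Sigma$, $\Dc_\Sigma^2$, $\norm{\hat\beta-\beta}_\Sigma$, $\norm{\hat\beta-\beta}_\Sigma^2$ combined with single-direction averages. If you pull out $\norm{\hat\beta-\beta}_\Sigma^2$ you are left with the double-supremum process $\sup_{u,v\in\Sb^{d-1}}n^{-1}\sum_i(u^\top\Sigma^{-1/2}X_i)^2(v^\top\Sigma^{-1/2}X_i)^2$, whose mean alone is of order $d$, and the resulting contribution $\Dc_\Sigma^2\cdot(d/n)\cdot d\asymp d^3/n^2$ already fails to reach the rate the theorem claims (the theorem's bound is $\sqrt{d/n}+n^{-1/3}+\cdots$, which still vanishes at $d\asymp n^{2/3}$, where $d^3/n^2$ does not, and the fluctuations of the double-sup process under only $L^{q_x}$ marginals are worse still). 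The paper's resolution is to bound $\Qc_8\le 3\Dc_\Sigma\max_{1\le i\le n}|X_i^\top(\hat\beta-\beta)|^2$ and then to prove a \emph{sharp} bound $\max_i|X_i^\top(\hat\beta-\beta)|^2\lesssim d/n^{1-2/q_x}$ via a leave-one-out / Sherman--Morrison argument (Theorem~\ref{thm:A.1}); the crude bound $\max_i\norm{X_i}_{\Sigma^{-1}}\norm{\hat\beta-\beta}_\Sigma=O_P(d/\sqrt n)$ is explicitly insufficient once $d\gg\sqrt n$. This leave-one-out step is a substantial, non-routine component of the proof (it is also the source of the $d\log(2n)/n^{1-1/q_x}$ term in $\eta_{n,d}$), and your proposal neither identifies the need for it nor offers a substitute. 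A smaller, fixable point: your Chebyshev bound $\norm{\hat\beta-\beta}_\Sigma\lesssim\sqrt{d/(n\delta)}$ loses a factor $\delta^{-1/2}$; you need the Fuk--Nagaev form $\sqrt{(d+\log(2n))/n}$ of Proposition~\ref{prop:11} to avoid degrading the final rate.
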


\begin{remark}
    The proof of Theorem~\ref{thm:4} is extensive due to the complex nature of the sandwich variance estimator, encompassed in Section~\ref{sec:A.Consistency of the Sandwich Variance Estimator}. Theorem~\ref{thm:4} primarily focuses on the concentration inequality for $\abs{\hat\sigma_c^2/\sigma_c^2-1}$, with the pivotal deterministic inequality established in Lemma~\ref{lem:det_bound_sand_var}. This lemma argues that the difference between the plug-in sandwich variance estimate and its population counterparts can be deterministically bounded using several factors, including $\norm{\Sigma^{-1/2}\hat\Sigma\Sigma^{-1/2}-I_d}_{\rm op}$, $\norm{\hat\beta-\beta}_\Sigma$, and $\max_{1\leq i\leq n}|X_i^\top(\hat\beta-\beta)|$, among others mentioned. Consequently, the proof relies on the concentration inequality of these quantities, which in themselves hold potential as areas of independent interest. The rate presented in Theorem~\ref{thm:4} is a result of leveraging these concentration inequalities, aiming to achieve the widest possible scaling of $d$ in terms of $n$. However, alternative choices remain viable.
\end{remark}

Theorem~\ref{thm:4} shows that for a given $c\in\Real^d$, the variance estimate $\hat\sigma_c^2$ is consistent for the asymptotic variance $\sigma_c^2$ as long as
\begin{equation}\label{eq:thm5.dim_requirement}
    d=o\left(n^{\min\{1-2/q_x,1-1/q_x-1/q,1-2/q_x-2/(3q)\}}\right),
\end{equation}ignoring the polylogarithmic factors. Under the assumptions that $(1/q_x+1/q)^{-1}\geq4$ and $(3/q_x+1/q)^{-1}\geq2$, it reduces to $d=o(n^{2/3})$. This strictly embraces the one obtained in Theorem~\ref{thm:3}, but it should be emphasized that Theorem~\ref{thm:3} relies on the moment conditions $q_x\geq8$ and $(1/q_x + 1/q)^{-1}\geq 3$.

\subsection{Berry Esseen Bound for the Studentized Bias-corrected Estimator}\label{sec:3.5}
The error bounds established for the sandwich variance estimator in Theorem~\ref{thm:4} and the Berry Esseen bound derived in Theorem~\ref{thm:3} help to derive the Berry Esseen bound for the studentized debiased LSE, leading to our main result Theorem~\ref{thm:5}. For easier comprehension, we present a simpler bound in Corollary~\ref{cor:2}.

\begin{theorem}\label{thm:5}
    Suppose that Assumption~\ref{asmp:2},\ref{asmp:3.a}, and \ref{asmp:4} holds with $q_x\geq8$, $(1/q_x+1/q)^{-1}\geq4$, and $(3/q_x+1/q)^{-1}\geq2$. Assume $d+4\log(2n)\leq n/(18K_x)^2$, and let $\epsilon_{n,d}$ and $\eta_{n,d}$ be those defined in Theorem~\ref{thm:3} and Theorem~\ref{thm:4}, respectively, with possibly different choices of constants. Then, 
    \begin{eqnarray*}
    \sup_{c\in\Real^d\setminus\{0_d\}}\sup_{t\in\Real}\Abs{\Pb\left[\sqrt{n}\frac{c^\top(\hat \beta_{\rm bc}-\beta)}{\hat\sigma_c}\leq t\right]-\left\{\Phi(t)+\kappa_c\Phi^{(3)}(t)\right\}}\leq\epsilon_{n,d}+\eta_{n,d}.
    \end{eqnarray*}
\end{theorem}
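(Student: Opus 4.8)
The plan is to combine the Berry--Esseen bound for the \emph{unnormalized} bias-corrected estimator (Theorem~\ref{thm:3}) with the concentration inequality for the sandwich variance estimator (Theorem~\ref{thm:4}) by a standard ``studentization'' argument. Write $Z_c := \sqrt{n}\,c^\top(\hat\beta_{\rm bc}-\beta)/\sigma_c$ for the unnormalized-and-population-scaled statistic, so that the studentized statistic is $\sqrt{n}\,c^\top(\hat\beta_{\rm bc}-\beta)/\hat\sigma_c = Z_c \cdot (\sigma_c/\hat\sigma_c)$. On the event $\mathcal{E}_c := \{|\hat\sigma_c^2/\sigma_c^2 - 1| < \eta_{n,d}\}$, which by Theorem~\ref{thm:4} has probability at least $1-\eta_{n,d}$, the ratio $\sigma_c/\hat\sigma_c$ lies in $[(1+\eta_{n,d})^{-1/2}, (1-\eta_{n,d})^{-1/2}]$, so $Z_c(\sigma_c/\hat\sigma_c) \leq t$ implies $Z_c \leq t(1+\eta_{n,d})^{1/2}$ when $t \geq 0$ and $Z_c \leq t(1-\eta_{n,d})^{-1/2}$ when $t<0$ (and the reverse one-sided inclusions give the matching lower bound). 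Hence, for every $t$,
\begin{equation*}
\Pb\!\left[\tfrac{\sqrt{n}\,c^\top(\hat\beta_{\rm bc}-\beta)}{\hat\sigma_c}\leq t\right]
\leq \Pb\!\left[Z_c \leq t\,\rho_{+}(t)\right] + \eta_{n,d},
\qquad
\Pb\!\left[\tfrac{\sqrt{n}\,c^\top(\hat\beta_{\rm bc}-\beta)}{\hat\sigma_c}\leq t\right]
\geq \Pb\!\left[Z_c \leq t\,\rho_{-}(t)\right] - \eta_{n,d},
\end{equation*}
where $\rho_\pm(t)$ are the appropriate one-of $(1\pm\eta_{n,d})^{\pm1/2}$ factors depending on the sign of $t$.

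Next I would apply Theorem~\ref{thm:3} (in the form appropriate to the assumptions, here part (i) since $q_x\geq 8$) to replace $\Pb[Z_c \leq t']$ by $\Phi(t'/\sigma_c)+\kappa_c\Phi^{(3)}(t'/\sigma_c)$ up to an error $\epsilon_{n,d}$, uniformly in $c$ and $t'$ --- note Theorem~\ref{thm:3} is stated for $\Pb[\sqrt{n}c^\top(\hat\beta_{\rm bc}-\beta)\leq s]$ compared to $\Phi(s/\sigma_c)+\kappa_c\Phi^{(3)}(s/\sigma_c)$, which is exactly the statement for $Z_c$ after the substitution $s = \sigma_c t'$, so the comparison function becomes $\Phi(t')+\kappa_c\Phi^{(3)}(t')$. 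It then remains to bound the discrepancy between $\Phi(t\rho_\pm(t))+\kappa_c\Phi^{(3)}(t\rho_\pm(t))$ and the target $\Phi(t)+\kappa_c\Phi^{(3)}(t)$. Since $\Phi$ and $\Phi^{(3)}$ are Lipschitz and, more importantly, the functions $t\mapsto \Phi(t)$ and $t\mapsto\Phi^{(3)}(t)$ together with $t\mapsto t\,\phi(t)$ (arising from the chain rule) are uniformly bounded with bounded derivative, a mean-value estimate gives
$|\Phi(t\rho)+\kappa_c\Phi^{(3)}(t\rho) - \Phi(t)-\kappa_c\Phi^{(3)}(t)| \lesssim |t(\rho-1)|\cdot\sup_s(|\phi(s)| + |\kappa_c|\,|\Phi^{(4)}(s)|)$, and since $|t(\rho-1)| \lesssim |t|\,\eta_{n,d}$ while $|t|\,\phi(t\xi)$ stays bounded uniformly over $\xi$ in a neighborhood of $1$ (as $|t|e^{-ct^2}$ is bounded), this discrepancy is $O(\eta_{n,d})$ uniformly in $t$, using also $|\kappa_c| = O(\sqrt{d/n}) = O(1)$ from Lemma~\ref{lem:11}. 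Collecting the three error contributions --- $\eta_{n,d}$ from the change of event, $\epsilon_{n,d}$ from Theorem~\ref{thm:3}, and $O(\eta_{n,d})$ from the argument-perturbation of $\Phi$ and $\Phi^{(3)}$ --- and absorbing constants (which is allowed since the statement permits different choices of constants in $\epsilon_{n,d}$ and $\eta_{n,d}$), yields the claimed bound $\epsilon_{n,d}+\eta_{n,d}$ uniformly over $c\in\Real^d\setminus\{0_d\}$ and $t\in\Real$.

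The only genuinely delicate point is ensuring uniformity in $t$ over all of $\Real$ in the perturbation step: for large $|t|$ the multiplicative perturbation $t\mapsto t\rho_\pm(t)$ moves the argument by an unbounded amount $|t|\eta_{n,d}$, so one cannot just use global Lipschitzness of $\Phi$. The resolution is that what multiplies the perturbation is a Gaussian-type density evaluated at an intermediate point $t\xi$ with $\xi$ bounded away from $0$ and $\infty$ (for $\eta_{n,d}$ small, say $\eta_{n,d}\leq 1/2$, which we may assume since otherwise the bound is trivial as probabilities are at most $1$), so the product $|t|\,\phi(t\xi) \lesssim |t|e^{-t^2\xi^2/2}$ is uniformly bounded; the same applies to the $\Phi^{(3)}, \Phi^{(4)}$ terms which are polynomial-times-Gaussian. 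Hence the main obstacle is really just bookkeeping: tracking the sign of $t$ to get the correct one-sided inclusions, verifying the tail behavior makes the $t$-uniform bound go through, and checking that $|\kappa_c|$ is bounded so the $\Phi^{(3)}$ term does not spoil the estimate --- all of which are routine given the results already established earlier in the paper.
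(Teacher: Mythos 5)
Your proposal is correct and follows essentially the same route as the paper's proof: sandwich the studentized probability between multiplicatively perturbed versions of the $\sigma_c$-normalized one using the event from Theorem~\ref{thm:4}, invoke Theorem~\ref{thm:3}, and control the perturbation $N_c((1+\delta)t)-N_c((1-\delta)t)$ uniformly in $t$ via the boundedness of $t\,N_c'(t)$ (the paper does this with an integral/mean-value bound and the choice $\delta=\eta_{n,d}\wedge 1/2$, exactly matching your argument). Your explicit sign-of-$t$ casing for the one-sided inclusions is in fact slightly more careful than the paper's write-up, but the substance is identical.
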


\begin{corollary}\label{cor:2}
    Suppose that the assumptions made in Theorem~\ref{thm:5} hold. Furthermore, if $q_x\geq12$, $q\geq 6$, and $d=O(n^{2/3})$, then
    \begin{equation}\label{eq:cor2:1}
        \sup_{c\in\Real^d\setminus\{0_d\}}\sup_{t\in\Real}\Abs{\Pb\left[\sqrt{n}\frac{c^\top(\hat \beta_{\rm bc}-\beta)}{\hat\sigma_c}\leq t\right]-\Phi(t)}\lesssim\frac{1}{n^{1/3}}+\sqrt{\frac{d}{n}}+\frac{d^{3/2}}{n}.
    \end{equation} Here, $\lesssim$ is a shorthand for the inequality up to constants and log-polynomial factors of $n$ and $d$.
\end{corollary}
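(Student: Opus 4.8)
The plan is to deduce Corollary~\ref{cor:2} from Theorem~\ref{thm:5} in two steps: (i) replace the adjusted Normal law $\Phi(t)+\kappa_c\Phi^{(3)}(t)$ by $\Phi(t)$, and (ii) collapse the finite-sample quantities $\epsilon_{n,d}$ and $\eta_{n,d}$ into the target expression under the restrictions $q_x\ge 12$, $q\ge 6$, and $d=O(n^{2/3})$. First I would verify that these restrictions imply the hypotheses of Theorem~\ref{thm:5}: $q_x\ge 8$ is immediate, and $q_x\ge12$, $q\ge6$ give $1/q_x+1/q\le 1/12+1/6=1/4$ and $3/q_x+1/q\le 1/4+1/6=5/12$, i.e.\ $(1/q_x+1/q)^{-1}\ge4$ and $(3/q_x+1/q)^{-1}\ge 12/5\ge2$. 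Theorem~\ref{thm:5} then yields
\[
\sup_{c\in\Real^d\setminus\{0_d\}}\sup_{t\in\Real}\Abs{\Pb\left[\sqrt{n}\,\frac{c^\top(\hat\beta_{\rm bc}-\beta)}{\hat\sigma_c}\le t\right]-\left\{\Phi(t)+\kappa_c\Phi^{(3)}(t)\right\}}\le\epsilon_{n,d}+\eta_{n,d}.
\]
By the triangle inequality the same supremum with $\Phi(t)$ in place of the adjusted law is at most $\epsilon_{n,d}+\eta_{n,d}+\sup_{c}|\kappa_c|\cdot\sup_t|\Phi^{(3)}(t)|$; since $\sup_t|\Phi^{(3)}(t)|=\sup_t|(t^2-1)\phi(t)|$ is an absolute constant and, by Lemma~\ref{lem:11}, $\sup_{c\in\Real^d\setminus\{0_d\}}|\kappa_c|\lesssim\sqrt{d/n}$, it suffices to prove $\epsilon_{n,d}+\eta_{n,d}\lesssim\frac1{n^{1/3}}+\sqrt{\frac dn}+\frac{d^{3/2}}n$.

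For $\epsilon_{n,d}$ (Theorem~\ref{thm:3}(i)) the two explicit summands $n^{-1/2}$ and $d^{3/2}\log(2n)/n$ are already admissible, using $n^{-1/2}\le n^{-1/3}$. Each remaining summand has the form $\bigl(d\,\mathrm{polylog}(n)\,n^{-\alpha}\bigr)^{\gamma}=d^{\gamma}\,\mathrm{polylog}(n)\,n^{-\alpha\gamma}$, and a direct inspection of the three pairs $(\alpha,\gamma)$ shows $\gamma\ge3/2$ in every case and that $q_x\ge12$ is precisely the condition under which $\alpha\ge2/3$. Since $1\le d\le Cn^{2/3}$ we may bound $d^{\gamma}\le C^{\gamma-3/2}d^{3/2}n^{2(\gamma-3/2)/3}$, and because $\alpha\gamma-\tfrac23\bigl(\gamma-\tfrac32\bigr)=\gamma\bigl(\alpha-\tfrac23\bigr)+1\ge1$, each summand is $\lesssim d^{3/2}/n$ up to logarithmic factors. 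Hence $\epsilon_{n,d}\lesssim\frac1{n^{1/3}}+\frac{d^{3/2}}n$.

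The term $\eta_{n,d}$ (Theorem~\ref{thm:4}) is the laborious step. Its leading group contributes $n^{-1/3}$, $\sqrt{(d+\log(2n))/n}\lesssim\sqrt{d/n}+n^{-1/3}$, and $d\log(2n)/n^{1-1/q_x}\lesssim d/n^{11/12}$ for $q_x\ge12$, this last being $\lesssim n^{-1/3}$ when $d\le n^{1/6}$ and $\lesssim d^{3/2}/n$ when $d>n^{1/6}$. Each of the six exponentiated summands equals $d^{a}\,\mathrm{polylog}(n)\,n^{-b}$ for $(a,b)$ explicit in $(q_x,q)$; substituting $q_x\ge12$, $q\ge6$ one checks that each falls into one of the regimes ``$a\le3/2$ and $b\ge1$'', ``$a>3/2$ and $b\ge2a/3$'', or, for the summand with $b<1$, a short case analysis according to whether $d\le n^{1/3}$, $n^{1/3}\le d\le n^{1/2}$, or $n^{1/2}\le d\le n^{2/3}$; in each regime one uses $1\le d\le Cn^{2/3}$ to trade surplus powers of $d$ for powers of $n$, placing the summand below $\frac1{n^{1/3}}+\sqrt{d/n}+\frac{d^{3/2}}n$. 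Adding the bounds for $\epsilon_{n,d}$, $\eta_{n,d}$, and the contribution from $\kappa_c$ gives the corollary. I expect the principal obstacle to be exactly this bookkeeping over the nine summands of $\epsilon_{n,d}+\eta_{n,d}$: one must check that $q_x\ge12$ and $q\ge6$ (the latter already forced when $q_x=12$ by the hypothesis $(1/q_x+1/q)^{-1}\ge4$ of Theorem~\ref{thm:5}) are simultaneously sufficient for every term, and that no contribution of order $d^{3/2}/n$ is accidentally dominated by a higher power of $d$.
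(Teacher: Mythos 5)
Your proposal is correct and follows the route the paper intends: the hypotheses of Theorem~\ref{thm:5} are verified, the adjustment term is removed via Lemma~\ref{lem:11}'s bound $\kappa_c\lesssim\sqrt{d/n}$ together with the boundedness of $\Phi^{(3)}$, and the nine summands of $\epsilon_{n,d}+\eta_{n,d}$ are absorbed into $n^{-1/3}+\sqrt{d/n}+d^{3/2}/n$ exactly as in the paper's proof of Corollary~\ref{cor:1} (each exponentiated term $(d\,\mathrm{polylog}/n^{\beta})^{\gamma}$ has base $\lesssim d/n^{2/3}\lesssim 1$ and either $\gamma\ge 3/2$ or falls to one of the three target terms after the case analysis you describe). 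The bookkeeping you flag as the main obstacle does go through at $q_x\ge 12$, $q\ge 6$, so the argument is complete.
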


\begin{remark}
    The bound provided in \eqref{eq:cor2:1} consists of three terms, none of which entirely dominates the others. Specifically, when the dimension is relatively `small', that is, if $d\lesssim n^{1/3}$, the dominant factor in the bound becomes $n^{-1/3}$. In a moderately dimensional scenario, i.e., when $n^{1/3}\lesssim d\lesssim n^{1/2}$, $\sqrt{d/n}$ becomes the leading factor. However, in a `large' dimension scenario where $d\gtrsim n^{1/2}$, the prevailing factor in the bound is $d^{3/2}/n$.
\end{remark}

One of the immediate consequences of Theorem~\ref{thm:5} is the asymptotic validity of the Wald confidence interval for the linear contrast of the projection parameters. Define a level-$\alpha$ Wald confidence interval with respect to a predetermined $c\in\Real^d\setminus\{0_d\}$ as
\begin{equation*}
    {\rm CI}^{\rm Wald}_{c,\alpha} = \left[c^\top\hat\beta_{\rm bc}-\Phi^{-1}\left(1-\frac{\alpha}{2}\right)\frac{\hat\sigma_c}{\sqrt{n}},c^\top\hat\beta_{\rm bc}-\Phi^{-1}\left(\frac{\alpha}{2}\right)\frac{\hat\sigma_c}{\sqrt{n}}\right].
\end{equation*}

\begin{corollary}\label{cor:3}
    Suppose that assumptions made in Theorem~\ref{thm:6} hold, and let $\epsilon_{n,d}$ and $\eta_{n,d}$ be as described therein. Then, the false coverage rate of Wald CI can be controlled as
    \begin{equation*}
        \sup_{c\in\Real^d\setminus\{0_d\}}\Abs{\Pb\left(c^\top\beta\notin {\rm CI}^{\rm Wald}_{c,\alpha}\right)-\alpha}\leq \epsilon_{n,d}+\eta_{n,d}+C\sqrt{\frac{d}{n}},
    \end{equation*} for some $C=C(K_x,K_y,\underline{\lambda})$. If $q_x\geq12$, $q\geq6$, and $d=O(n^{2/3})$, then 
    \begin{equation*}
        \sup_{c\in\Real^d\setminus\{0_d\}}\Abs{\Pb\left(c^\top\beta\notin {\rm CI}^{\rm Wald}_{c,\alpha}\right)-\alpha}\lesssim\frac{1}{n^{1/3}}+\sqrt{\frac{d}{n}}+\frac{d^{3/2}}{n}.
    \end{equation*}
\end{corollary}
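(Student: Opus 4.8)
The plan is to deduce the corollary from the studentized Berry--Esseen bound of Theorem~\ref{thm:5} by a short chain of elementary reductions. Set $z:=\Phi^{-1}(1-\alpha/2)$, so that $\Phi^{-1}(\alpha/2)=-z$ and $z>0$, and abbreviate $T_c:=\sqrt{n}\,c^\top(\hat\beta_{\rm bc}-\beta)/\hat\sigma_c$. On the event $\{\hat\sigma_c>0\}$, which by Theorem~\ref{thm:4} has probability at least $1-\eta_{n,d}$, reading off the endpoints of ${\rm CI}^{\rm Wald}_{c,\alpha}$ yields the identity
\begin{equation*}
    \{c^\top\beta\notin {\rm CI}^{\rm Wald}_{c,\alpha}\}=\{T_c>z\}\cup\{T_c<-z\},
\end{equation*}
and the two events on the right are disjoint because $z>0$; hence $\Pb(c^\top\beta\notin {\rm CI}^{\rm Wald}_{c,\alpha})=\Pb(T_c>z)+\Pb(T_c<-z)$ up to an additive error at most $\eta_{n,d}$.

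I would then apply Theorem~\ref{thm:5} at the cutoffs $z$ and $-z$. It gives $\Pb(T_c\le z)=\Phi(z)+\kappa_c\Phi^{(3)}(z)+R_1$ with $|R_1|\le\epsilon_{n,d}+\eta_{n,d}$, and, after letting the argument approach $-z$ from below (harmless because $t\mapsto\Phi(t)+\kappa_c\Phi^{(3)}(t)$ is continuous), $\Pb(T_c<-z)=\Phi(-z)+\kappa_c\Phi^{(3)}(-z)+R_2$ with $|R_2|\le\epsilon_{n,d}+\eta_{n,d}$. Using $\Phi(z)=1-\Phi(-z)=1-\alpha/2$ and adding the two contributions,
\begin{equation*}
    \Pb\bigl(c^\top\beta\notin {\rm CI}^{\rm Wald}_{c,\alpha}\bigr)-\alpha=\kappa_c\bigl(\Phi^{(3)}(-z)-\Phi^{(3)}(z)\bigr)-R_1+R_2.
\end{equation*}
Because $\Phi^{(3)}(x)=(x^2-1)\phi(x)$ is even the $\kappa_c$ term is in fact $0$, but it is cleaner, and costs nothing since $\sqrt{d/n}$ is already a summand of $\eta_{n,d}$, to bound it by $2\bigl(\sup_{x\in\Real}|\Phi^{(3)}(x)|\bigr)|\kappa_c|$ and invoke Lemma~\ref{lem:11}. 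By scale invariance of $\kappa_c$ we may normalize $\norm{c}_{\Sigma^{-1}}=1$, whence $\sigma_c^2\ge\underline\lambda$ by Assumption~\ref{asmp:4}, and Lemma~\ref{lem:11} gives $|\kappa_c|\le C\sqrt{d/n}$ with $C=C(K_x,K_y,\underline\lambda)$ uniformly in $c$. Taking the supremum over $c\in\Real^d\setminus\{0_d\}$ and absorbing the universal constant $\sup_{x}|\Phi^{(3)}(x)|$ and the various factors of $2$ into the constants implicit in $\epsilon_{n,d}$ and $\eta_{n,d}$ gives the first displayed bound of the corollary.

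For the second statement I would specialize to the regime $q_x\ge12$, $q\ge6$, $d=O(n^{2/3})$: Corollary~\ref{cor:1} reduces $\epsilon_{n,d}$ to $n^{-1/2}+d^{3/2}/n$ up to polylogarithmic factors, and the same simplifications behind Corollary~\ref{cor:2} give $\epsilon_{n,d}+\eta_{n,d}\lesssim n^{-1/3}+\sqrt{d/n}+d^{3/2}/n$; since the extra $C\sqrt{d/n}$ term is of the same order, the total is $\lesssim n^{-1/3}+\sqrt{d/n}+d^{3/2}/n$, which is the claim. The argument carries no genuine obstacle; it is essentially bookkeeping on top of Theorem~\ref{thm:5}. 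The two places calling for a little care are (i) passing from the non-strict inequality in the Berry--Esseen statement to the strict inequality $\{T_c<-z\}$ appearing in the miscoverage event, handled by a one-sided limit and continuity of the limiting c.d.f.; and (ii) making the bound $|\kappa_c|\lesssim\sqrt{d/n}$ uniform over all query directions $c$, which is precisely the content of Lemma~\ref{lem:11}.
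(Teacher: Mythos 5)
Your proof is correct and is exactly the bookkeeping the paper leaves implicit (the paper gives no separate proof of this corollary, presenting it as an immediate consequence of Theorem~\ref{thm:5}): split the miscoverage event at the two endpoints, apply the studentized Berry--Esseen bound, and absorb the $\kappa_c\Phi^{(3)}$ adjustment via the uniform bound $|\kappa_c|\lesssim\sqrt{d/n}$ from Lemma~\ref{lem:11}, which is precisely the source of the extra $C\sqrt{d/n}$ term with $C=C(K_x,K_y,\underline{\lambda})$. Your observation that the adjustment actually cancels for the symmetric two-sided interval because $\Phi^{(3)}$ is even is a nice bonus, and keeping the conservative bound anyway matches the stated result.
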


\section{Confidence Region for the Projection Parameter}\label{sec:4}
In the previous section, we have shown that the bias-corrected least square estimator is asymptotically Normal distributed. In this section, we present three methods that do not require a consistent estimate of the asymptotic variance but still enable us to get valid confidence intervals for the linear contrasts of the projection parameters.

\subsection{HulC}\label{sec:4.1}
The HulC \citep{kuchibhotla2021hulc} is a statistical method that computes confidence regions for parameters by constructing convex hulls around a set of estimates.
Suppose that we split a sample of $n$ i.i.d observations $(X_1,Y_1),\ldots,(X_n,Y_n)$ into $B$ batches where each batch contains at least $\lfloor n/B\rfloor$ observations\footnote{\cite{kuchibhotla2021hulc} noted that the batches need
not be equal-sized, but having approximately equal sizes yields good width properties.}. Denote the bias-corrected least square estimators obtained from $b$:th batch as $\hat\beta_{\rm bc}^{(b)}$ for $b=1,\ldots,B$. For any $c\in\Real^d$, define the maximum median bias \citep{kuchibhotla2021hulc} for $\beta$ as
\begin{equation*}
    \Delta_c := \max_{1\leq b\leq B}\left(\frac{1}{2}-\min\left\{\Pb(c^\top(\hat\beta_{\rm bc}^{(b)}-\beta)>0),\Pb(c^\top(\hat\beta_{\rm bc}^{(b)}-\beta)<0)\right\}\right).
\end{equation*} A key idea of the HulC is that the event 
\begin{equation*}
    \min_{1\leq b\leq B}c^\top\hat\beta_{\rm bc}^{(b)}\leq c^\top\beta\leq\max_{1\leq b\leq B}c^\top\hat\beta_{\rm bc}^{(b)}
\end{equation*} occurs unless either of the following happens: (1) $c^\top(\hat\beta_{\rm bc}^{(b)}-\beta)>0$ for all $1\leq b\leq B$ or (2) $c^\top(\hat\beta_{\rm bc}^{(b)}-\beta)<0$ for all $1\leq b\leq B$. Consequently, we get
\begin{equation*}
    \Pb\left(\min_{1\leq b\leq B}c^\top\hat\beta_{\rm bc}^{(b)}\leq c^\top\beta\leq\max_{1\leq b\leq B}c^\top\hat\beta_{\rm bc}^{(b)}\right)\geq 1-\left(\frac{1}{2}-\Delta_c\right)^B-\left(\frac{1}{2}+\Delta_c\right)^B.
\end{equation*} Since the bias-corrected OLS estimator is asymptotically Normal as shown in Corollory~\ref{cor:1}, $\Delta_c$ converges to the asymptotic median, which is $0$. The detailed procedure for constructing confidence interval at level $\alpha$ is described in Algorithm~\ref{alg:1}.

\begin{theorem}\label{thm:6}
    Suppose that the assumptions made in Theorem~\ref{thm:3}(i) hold, and denote the Berry Esseen bound given in Theorem~\ref{thm:3}(i) as $\epsilon_{n,d}$. For any $\alpha\in(0,1)$ and $c\in\Real^d$ with $\norm{c}_{\Sigma^{-1}}=1$, let $B=\lceil \log (2/\alpha)\rceil$ and assume $d\leq n/B$. Let ${\rm CI}^{\rm HulC}_{c,\alpha}$ be the confidence interval returned by Algorithm~\ref{alg:1}. Then, the false coverage rate of HulC CI can be controlled as
    \begin{equation*}
        \sup_{c\in\Real^d\setminus\{0_d\}}\left\{\Pb\left(c^\top\beta\notin {\rm CI}^{\rm HulC}_{c,\alpha}\right)-\alpha\right\}\leq \alpha\left(B^2\epsilon_{\frac{n}{B},d}^2+C\frac{B^3d}{n}\right),
    \end{equation*} for some constant $C=C(K_x,K_y,\underline{\lambda})$. If $q_x\geq12$ and $d=O(n^{2/3})$, then
    \begin{equation*}
        \sup_{c\in\Real^d\setminus\{0_d\}}\left\{\Pb\left(c^\top\beta\notin {\rm CI}^{\rm HulC}_{c,\alpha}\right)-\alpha\right\}\lesssim \alpha\left(\frac{B^3d}{n}+\frac{B^4d^3}{n^2}\right).
    \end{equation*}
\end{theorem}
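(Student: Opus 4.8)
The plan is to combine the generic coverage guarantee of HulC from \cite{kuchibhotla2021hulc} with the Berry--Esseen bound of Theorem~\ref{thm:3}(i), the only extra work being to convert that Berry--Esseen bound into a control on the \emph{maximum median bias} $\Delta_c$ of each batch estimator. First I would recall the abstract HulC guarantee: for $B$ independent batches, the miscoverage of the convex-hull interval is at most $(\tfrac12-\Delta_c)^B+(\tfrac12+\Delta_c)^B$, where here $\Delta_c$ is the median bias of $c^\top\hat\beta_{\rm bc}^{(b)}$ computed on a batch of size $m=\lfloor n/B\rfloor$. Writing $p_\pm=\Pb(\pm c^\top(\hat\beta_{\rm bc}^{(b)}-\beta)>0)$, the two quantities to bound are $\tfrac12-p_+$ and $\tfrac12-p_-$; both equal, up to sign, the gap between $\Pb(\sqrt{m}\,c^\top(\hat\beta_{\rm bc}^{(b)}-\beta)\le 0)$ and $\tfrac12$.

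The key step is this median-bias estimate. Apply Theorem~\ref{thm:3}(i) with sample size $m=\lfloor n/B\rfloor$ and evaluation point $t=0$: the law of $\sqrt{m}\,c^\top(\hat\beta_{\rm bc}^{(b)}-\beta)$ is within $\epsilon_{m,d}$ of $\Phi(t/\sigma_c)+\kappa_c\Phi^{(3)}(t/\sigma_c)$ in Kolmogorov distance. At $t=0$ we have $\Phi(0)=\tfrac12$ and $\Phi^{(3)}(0)=\varphi''(0)=0$ (the third derivative of $\Phi$ is the second derivative of the standard Gaussian density, which vanishes at $0$ since $\varphi'(x)=-x\varphi(x)$ gives $\varphi''(0)=-\varphi(0)\neq 0$ — wait: $\Phi^{(3)}=\varphi''$ and $\varphi''(x)=(x^2-1)\varphi(x)$, so $\Phi^{(3)}(0)=-\varphi(0)\neq 0$). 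This is the subtle point: the adjustment term does \emph{not} vanish at $t=0$, so naively one only gets $|\tfrac12-p_\pm|\le \epsilon_{m,d}+|\kappa_c|\varphi(0)$. However, by Lemma~\ref{lem:11}, $|\kappa_c|=O(\sqrt{d/m})$, so in fact $\Delta_c\lesssim \epsilon_{m,d}+\sqrt{d/m}$. Here $m\asymp n/B$, and the $\sqrt{d/m}$ term is exactly the source of the $B^3d/n$ contribution in the stated bound (after the $(\tfrac12-\Delta_c)^B$ expansion produces a factor $B$, and $B=\lceil\log(2/\alpha)\rceil$ absorbs into constants, with one more factor $B$ from $m^{-1}=B/n$, giving $B^2\cdot(B d/n)$... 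I would track the powers of $B$ carefully here).

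Finally I would linearize. With $B$ chosen so that $2^{-B}\le\alpha/2$, i.e. $B=\lceil\log_2(2/\alpha)\rceil$ (the statement writes $\log(2/\alpha)$; I would either use base-$2$ or absorb the discrepancy into constants), write $(\tfrac12\mp\Delta_c)^B=2^{-B}(1\mp2\Delta_c)^B$. Since $2\Delta_c$ is small for $n$ large, $(1\mp2\Delta_c)^B\le e^{\pm 2B\Delta_c}$, and summing the two terms and subtracting $\alpha$ (using $2\cdot 2^{-B}\le\alpha$) yields a bound of order $2^{-B}\cdot B\Delta_c\lesssim \alpha B\Delta_c^2$ after a second-order Taylor expansion — more precisely $2^{-B}(e^{2B\Delta_c}+e^{-2B\Delta_c}-2)\le 2^{-B}\cdot C B^2\Delta_c^2\le \alpha C B^2\Delta_c^2$. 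Plugging $\Delta_c\lesssim \epsilon_{n/B,d}+\sqrt{Bd/n}$ and squaring gives $\alpha(B^2\epsilon_{n/B,d}^2 + C B^3 d/n)$, which is the first displayed inequality. For the second display, specialize $q_x\ge 12$, $d=O(n^{2/3})$: Corollary~\ref{cor:1}-type simplification gives $\epsilon_{m,d}\lesssim m^{-1/2}+d^{3/2}/m$ up to logs, so $\epsilon_{n/B,d}^2\lesssim B/n + B^2 d^3/n^2$, and $B^2\epsilon_{n/B,d}^2\lesssim B^3/n+B^4d^3/n^2$; since $d=O(n^{2/3})$ forces $d^3/n^2=O(1)\cdot(d/n)\cdot\text{(bounded)}$ actually $d^3/n^2\le n^2/n^2=1$ and dominates $1/n$, while $B^3d/n$ dominates $B^3/n$, the whole expression collapses to $\alpha(B^3 d/n + B^4 d^3/n^2)$ as claimed. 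The main obstacle is the median-bias step: correctly handling the non-vanishing Edgeworth correction $\kappa_c\Phi^{(3)}(0)$ and invoking Lemma~\ref{lem:11} to show it is itself $O(\sqrt{d/m})$, rather than mistakenly concluding $\Delta_c\lesssim\epsilon_{m,d}$ alone; everything after that is bookkeeping with the binomial/exponential expansion and the powers of $B$.
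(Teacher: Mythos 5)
Your proposal is correct and follows essentially the same route as the paper: bound the median bias of each batch estimator by $\epsilon_{n/B,d}+C\sqrt{Bd/n}$, where the second term comes precisely from the non-vanishing Edgeworth correction $\kappa_c\Phi^{(3)}(0)$ controlled via Lemma~\ref{lem:11}'s bound $\kappa_c=O(\sqrt{d/(n/B)})$, and then square the median bias with an $\alpha B^2$ prefactor. The only cosmetic difference is that the paper invokes Theorem~2 of \cite{kuchibhotla2021hulc} to get the $\tfrac{\alpha B(B-1)}{2}\Delta^2$ miscoverage excess in one step, whereas you re-derive that quadratic expansion from $(\tfrac12\pm\Delta_c)^B$ by hand; the substance and the resulting rates are identical.
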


\begin{remark}[Comparison with Wald Confidence Interval] The Wald CI and HulC CI are both (asymptotically) valid as demonstrated in Corollary~\ref{cor:3} and Theorem~\ref{thm:6}, respectively. A notable distinction between them lies in the fact that the HulC CI does not require an estimate for the asymptotic variance. Furthermore, the HulC CI shows a sharper miscoverage rate due to its reliance on the `squared' Berry Esseen bound in Theorem~\ref{thm:3}. Despite this, the HulC CI tends to be wider due to its robust nature, which can be empirically demonstrated in Section~\ref{sec:5}. Importantly, the expected width ratio of the HulC CI to the Wald CI approximately equals $\sqrt{\log_2(\log_2(2/\alpha))}$ \citep{kuchibhotla2021hulc}, slowly diverging as $\alpha\to0+$, and reaching 1.55 for the conventional choice of $\alpha=0.05$. A more comprehensive comparison can be found in Section 2.3 of \cite{kuchibhotla2021hulc}.
    
\end{remark}

\begin{algorithm}[t]
  \caption{ \small HulC for the projection parameter}\label{alg:1}
  \algorithmicrequire \ { data $(X_1,Y_1),\ldots,(X_n,Y_n)$, target coverage $1-\alpha$, and a query vector $c\in\Real^d\setminus\{0_d\}$.}\\ 
  \algorithmicensure \ { A confidence interval ${\rm CI}^{\rm HulC}_{c,\alpha}$ for $c^\top\beta$ } 
    \begin{algorithmic}[1]
    \STATE Let $B=\lceil\log_2(2/\alpha)\rceil$ and $\tau=2^{B-1}\alpha-1$. Note that $\tau\in [0,1)$.
    \STATE Draw a random variable $U$ from the uniform distribution on the interval $[0,1]$. Let $B^* = B-1$ if $U\leq\tau$ and let $B^*=B$ otherwise.
    \STATE Split the data $(X_1,Y_1),\ldots,(X_n,Y_n)$ into $B^*$ disjoint batches where each batch contains at least $\lfloor n/B^*\rfloor$ observations. 
    \STATE Compute $B^*$ bias-corrected LS estimators $\hat\beta_{\rm bc}^{(1)},\ldots,\hat\beta_{\rm bc}^{(B^*)}$.
     \STATE \textbf{Return} the level-$\alpha$ confidence interval
     \begin{equation*}
         {\rm CI}^{\rm HulC}_{c,\alpha} = \left[\min_{1\leq b\leq B^*}c^\top\hat\beta_{\rm bc}^{(b)},\max_{1\leq b\leq B^*}c^\top\hat\beta_{\rm bc}^{(b)}\right].
     \end{equation*}
  \end{algorithmic}
\end{algorithm}

\subsection{\texorpdfstring{$t$}{}-statistic based Inference}\label{sec:4.2}
As in the previous section, consider $B$ partitions of $n$ observations. Here, the number of batches $B$ does not necessarily depend on the target coverage $1-\alpha$, and suppose that each batch has at least $\lfloor n/B\rfloor$ observations. Denote by $\hat\beta_{\rm bc}^{(b)}$ the bias-corrected estimator of $\beta$ using observations in batch $b$, $1\leq b\leq B$. Since estimators are obtained independently, this amounts to the convergence of the joint distribution;
\begin{equation*}
    \sqrt{\frac{n}{B}}\left(\frac{c^\top(\hat\beta_{\rm bc}^{(1)}-\beta)}{\sigma_c},\ldots,\frac{c^\top(\hat\beta_{\rm bc}^{(B)}-\beta)}{\sigma_c}\right)^\top\xrightarrow{D}\Nc(0_B,I_B),
\end{equation*} as $n\to\infty$ if the dimension requirement in Theorem~\ref{thm:3} are met. Hence, we can form an asymptotically valid confidence region based on the $t$-statistic,
\begin{equation*}
    T = \sqrt{B}\frac{c^\top(\overline{\hat\beta}_{\rm bc}-\beta)}{s_{\hat\beta}},
\end{equation*} where $\overline{\hat\beta}_{\rm bc}=B^{-1}\sum_{b=1}^B \hat\beta_{\rm bc}^{(b)}$ and $s_{\hat\beta}^2 = (B-1)^{-1}\sum_{b=1}^B\{c^\top(\hat\beta_{\rm bc}^{(b)}-\overline{\hat\beta}_{\rm bc})\}^2$. By the continuous mapping theorem \citep[Theorem 2.7]{billingsley2013convergence}, the given statistic $T$ is asymptotically $t$-distributed with $(B-1)$-degrees of freedom. By implication, the confidence interval for $c^\top\beta$ is given by
\begin{equation*}
    {\rm CI}^{\rm T}_{c,\alpha}:=\left[c^\top\overline{\hat\beta}_{\rm bc} - t_{\alpha/2,B-1}\frac{s_{\hat\beta}}{\sqrt{B}},c^\top\overline{\hat\beta}_{\rm bc} + t_{\alpha/2,B-1}\frac{s_{\hat\beta}}{\sqrt{B}}\right],
\end{equation*} where $t_{q,\nu}$ denotes the upper $q$th quantile of $t$-distribution with $\nu$ degrees of freedom.

\subsection{Bootstrap Inference}\label{sec:4.3}
This section presents a method for carrying out post-bias-correction inference in our scenario, based on a bootstrap approach. Specifically, we utilize the wild bootstrap technique \citep{wu1986jackknife} to derive a computationally efficient approximation of the distribution of the bias-corrected OLS estimator. The idea is, as the residual bootstrap, to leave the covariates at their sample value, but to resample the response variable based on the residual values.
First, we compute from the original dataset that the OLS estimator $\hat\beta$ and the residuals $\hat\epsilon_i:=Y_i-X_i^\top\hat\beta$ for $i=1,\ldots,n$. For $i=1,\ldots,n$, let $\xi_i^*$ be i.i.d. bootstrap weights such that $\Eb\xi_i^* =0$, $\Eb{\xi_i^*}^2=1$, and $\Eb{\xi_i^*}^3=1$. Then, we generate a bootstrap sample of $n$ observations $(X_1,Y_1^*),\ldots,(X_n,Y_n^*)$ by letting
\begin{equation*}
    Y_i^* = X_i^\top\hat\beta +\hat\epsilon_i\xi_i^*,
\end{equation*} for $i=1,\ldots,n$. Although Bootstrap weights are often drawn from the standard Normal distribution, we made use of the asymmetric weights in Section~\ref{sec:5}, which was proposed in \cite{mammen1993bootstrap}; $\Pb(\xi^*=-(\sqrt{5}-1)/2)=(\sqrt{5}+1)/(2\sqrt{5})$ and $\Pb(\xi^*=(\sqrt{5}+1)/2)=(\sqrt{5}-1)/(2\sqrt{5})$. Let estimator $\hat\beta^*_{\rm bc}$ be the bias-corrected least square estimator based on bootstrap observations,
\begin{eqnarray*}
    \hat\beta^*_{\rm bc} = \hat\beta^* - \hat\Bc^*,
\end{eqnarray*} where $\hat\beta^*$ and $\hat\Bc^*$ are bootstrap counterparts of the OLS estimator and estimated bias, respectively;
\begin{eqnarray*}
     \hat\beta^*=\hat\Sigma^{-1}\frac{1}{n}\sum_{i=1}^nX_iY_i^*\quad\mbox{and}\quad\hat\Bc^* = -\frac{1}{n^2}\sum_{i=1}^n \hat\Sigma^{-1}X_i(Y_i^*-X_i^\top\hat\beta^*)\norm{X_i}_{\hat\Sigma^{-1}}^2.
\end{eqnarray*} To construct the confidence interval, let $T_b^* = \hat\beta_{\rm bc}^{*,(b)}-\hat\beta$ where $\hat\beta_{\rm bc}^{*,(b)}$ is the bootstrap bias-corrected estimator in $b$:th simulation. The wild bootstrap level $\alpha$ bias-corrected confidence interval for $c^\top\beta$ is given by

\begin{eqnarray*}
    {\rm CI}_{\alpha}^{\rm WBS} = \left[\hat\beta_{\rm bc} -\hat q^*_{1-\alpha/2}, \hat\beta_{\rm bc} -\hat q^*_{\alpha/2}\right],
\end{eqnarray*} where $\hat q^*_{\alpha}=\inf\{t\in\Real:\hat F^*(t)\geq\alpha\}$ is the empirical upper $\alpha$:th quantile of $\{T_b^*:1\leq b \leq B\}$ with $\hat F^*(t) = \frac{1}{B}\sum_{b=1}^B \mathbbm{1}(T_b^*\leq t)$.
\begin{remark}\label{rmk:3}
    The validity of the above wild bootstrap inference may require additional dimension requirements. Theorem~\ref{thm:1} of \cite{mammen1993bootstrap} shows that wild bootstrap provides a consistent estimation of the unconditional distribution of $\sqrt{n}(\hat\beta-\beta)$ in a well-specified linear regression model, without any bias correction, under the condition that $d = o(\sqrt{n})$. More recently, Theorem~\ref{thm:1}1 of \cite{kuchibhotla2020berry} establishes a finite sample approximation of the conditional bootstrap distribution of the law of OLSE in a similar setting to ours. In particular, the proof utilizes the Gaussian comparison bound \citep{lopes2020central} to compare the conditional distribution of the bootstrap estimate with the Normal distribution. However, the estimation of asymptotic variance appears to be inevitable in the process, and this adds an additional dimension requirement of $d = o(\sqrt{n})$, particularly when $q_x\geq 8$ and $s\geq 4$ (see Lemma 10 of \cite{kuchibhotla2020berry}). Furthermore, although not discussed in a linear regression context, Lemma 4.1 of \cite{zhilova2020nonclassical} gives a sequence of random variables that justifies the necessity of the condition $d= o(\sqrt{n})$ for the consistency of the wild bootstrap when estimating sample mean.
\end{remark}

\section{Numerical Studies}\label{sec:5}
In this section, we compare the empirical coverages and widths of the 95\% confidence intervals obtained from three inferential methods discussed in Section~\ref{sec:4}. Furthermore, we also compare the bootstrap confidence interval based on the jackknife-debiased OLS proposed in \cite{cattaneo2019two}. In our specific simulation settings, which will be described shortly, we discovered that when leveraged with the bootstrap inferential methods, the jackknife-based debiased OLSE slightly outperforms the OLSE and the proposed bias-corrected OLSE. A large set of implementations, including both resampling bootstrap- and wild bootstrap-based confidence intervals using the OLS and the proposed debiased OLS, are presented in Appendix~\ref{sec:D}.

\subsection{Well-specified Linear Model}\label{sec:5.1}
This section concerns the well-specified linear model. The simulation setting is as follows: for $n\in\set{1000, 2000}$ and $d\in\set{20k:1\leq k\leq 24}$, independent observations $(Y_i,X_i)\in\Real^d\times\Real$, $1\leq i \leq n$, are generated as
\begin{eqnarray*}
    X_i\sim\Nc(0_d, I_d), \epsilon_i\sim \Nc(0, 1),\mbox{ and }Y_i = 2X_i(1)+\epsilon_i,
\end{eqnarray*}where $X_i(1)$ is the first coordinate of $X_i$, that is, $X_i(1)=\ev_1^\top X_i$. Thus, the response $Y_i$ relies only on the first dimension of $X_i$ and independent error $\epsilon_i$. As the given model is well-specified, the projection parameter is given by $\beta=2\ev_1\in\Real^d$.

Figure~\ref{fig:1} compares the empirical coverage and the width of the 95\% confidence intervals of the first coordinate of $\beta$, which is 2, obtained from various methods. It is noteworthy that our bias-corrected estimator when incorporated with HULC achieves the target coverage of 0.95. On the contrary, the $t$-statistic-based confidence interval seems to be fairly conservative while the wild bootstrap inference yields a seemingly less conservative confidence interval.

\begin{figure}[t]
     \centering
     \includegraphics[width=\textwidth]{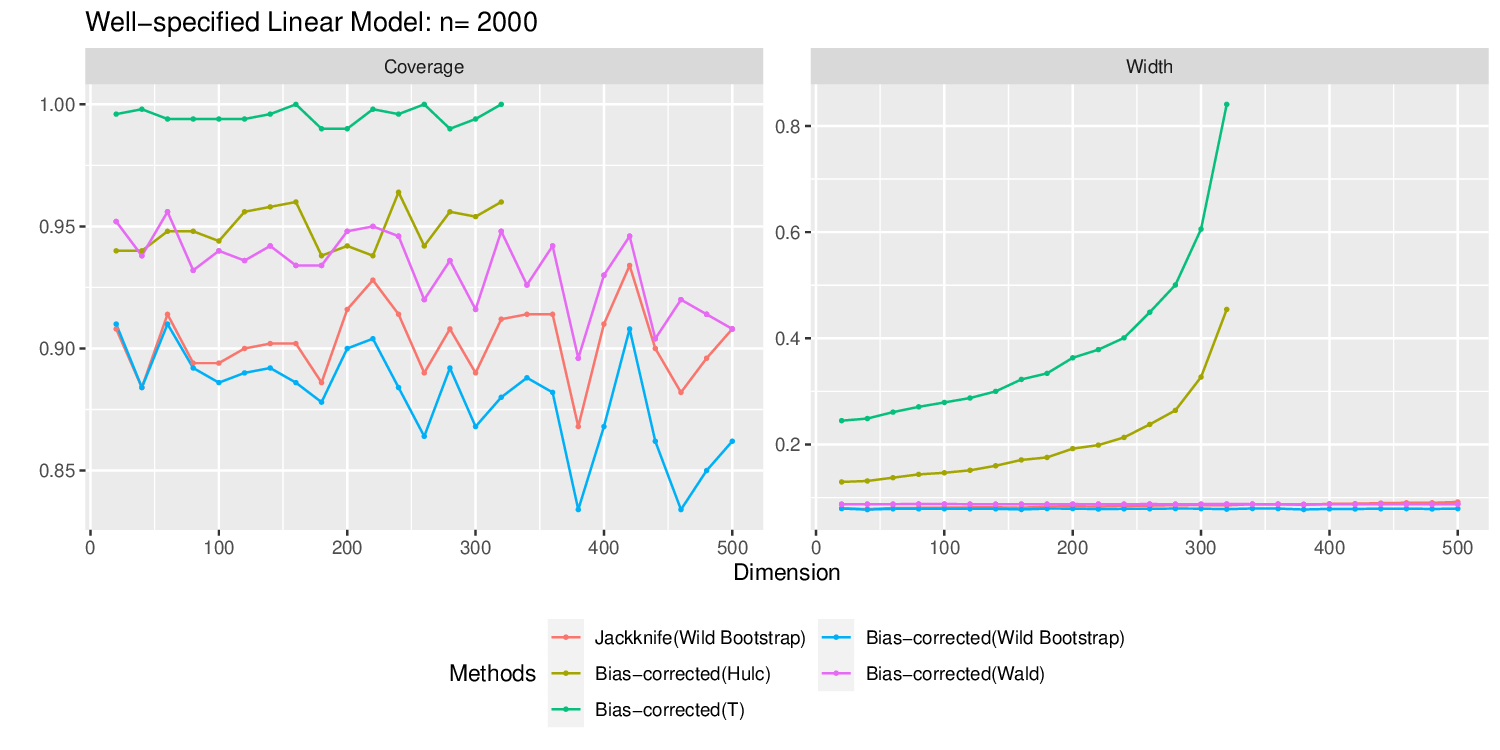}
     \caption{ Comparison of coverages and widths of confidence intervals; wild bootstrap based on Jackknife estimators, wild bootstrap based on proposed bias-corrected estimators, HulC using proposed bias-corrected estimators, and $t$-statistic based inference using proposed bias-corrected estimators, and Wald confidence interval based on proposed estimators. The empirical coverages and widths of CI are computed based on 1000 replications. To attain a 0.95-level HULC confidence interval, Algortihm~\ref{alg:1} requires data to be split into at most six subsets. This only allows the dimension to increase by $\approx$320 in order to ensure that the least square estimator is well-defined within a subset of data. Furthermore, $t$-statistics-based inference also requires data splitting, and we used the same data split as employed in HulC CI for the sake of simplicity. 
     }
     \label{fig:1}
\end{figure}

\subsection{Misspecified Model}\label{sec:5.2}
In this section, we concern about a misspecified non-linear model. The data-generating process for individual observation is as follows. We first independently generate two $d$-dimensional Gaussian random vector 
\begin{eqnarray*}
    Z\sim\Nc(0_d,I_d) \mbox{ and }W\sim\Nc(0_d,\Sigma_\rho).
\end{eqnarray*} Here, the covariance matrix of $W$, $\Sigma_\rho$, is a compound symmetry where the diagonal entries are all $1$ and the off-diagonal elements are all $\rho$ for $\rho\in[0,1)$. That is, $\Sigma_\rho = (1-\rho)I_d + \rho 1_d1_d^\top$. We define the covariate vector as $X = Z\odot W$ where $\odot$ denotes the entry-wise product. Then, the entries of $X$ are uncorrelated, i.e., $\Eb[XX^\top]=I_d$, but not necessarily independent except the case when $\rho=0$. For a $d$-dimensional parameter $\theta\in\Real^d$, we let 
\begin{equation}\label{eq:mis_dgp}
    Y=(X^\top\theta)^3+\epsilon,
\end{equation} where $\epsilon\sim\Nc(0,1)$ and $\epsilon$ is independent with $Z$ and $W$. 

Under the aforementioned data-generating process, we conducted comprehensive experiments with the following sample sizes, dimensions, $\theta$, and $\rho$;
\begin{eqnarray*}
    n&\in&\set{1000, 2000, 5000, 10000, 20000},\\
    d&\in&\set{50k: 1\leq k \leq 20},\\
    \theta &\in&\set{\ev_1, 1_d/\sqrt{d}}\subset\Real^d,\\
    \rho&\in&\set{0, 0.2, 0.5}.
\end{eqnarray*}
Under the model \eqref{eq:mis_dgp}, the projection parameter $\beta$ has the closed-form representation and turns out to be determined by $\theta$ and $\rho$ as,
\begin{eqnarray*}
    \beta = 3(1+2\rho^2)\norm{\theta}^2_2\theta + 6(1-\rho^2)\theta^{\odot 3},
\end{eqnarray*} where $\theta^{\odot 3} = \theta\odot\theta\odot\theta$ (see Lemma~\ref{lem:aux:1}). The linear contrast of interest was set to $\theta^\top\beta$ which becomes the first coordinate of $\beta$ when $\theta=\ev_1$ and becomes the scaled average of coefficients of $\beta$ when $\theta=1_d/\sqrt{d}$. Furthermore, we can show that
\begin{align*}
    \theta^\top\beta=
    \begin{cases}
    9,& \mbox{ if }\theta=\ev_1,\\
    3(1+2\rho^2)+6(1-\rho^2)/d, & \mbox{ if }\theta=1_d^\top/\sqrt{d};
    \end{cases}.
\end{align*} Figure~\ref{fig:3} compares the coverage and length of the 95\% confidence intervals for $\theta^\top\beta$ under the setting where $n=20000$, $\theta=\ev_1$, and $\rho=0$ or $0.5$. Results under different combinations of sample size, dimension, $\theta$, and $\rho$ are contained in Appendix \ref{sec:D}.

\begin{figure}
     \centering
     \begin{subfigure}[b]{\textwidth}
         \centering
         \includegraphics[width=\textwidth]{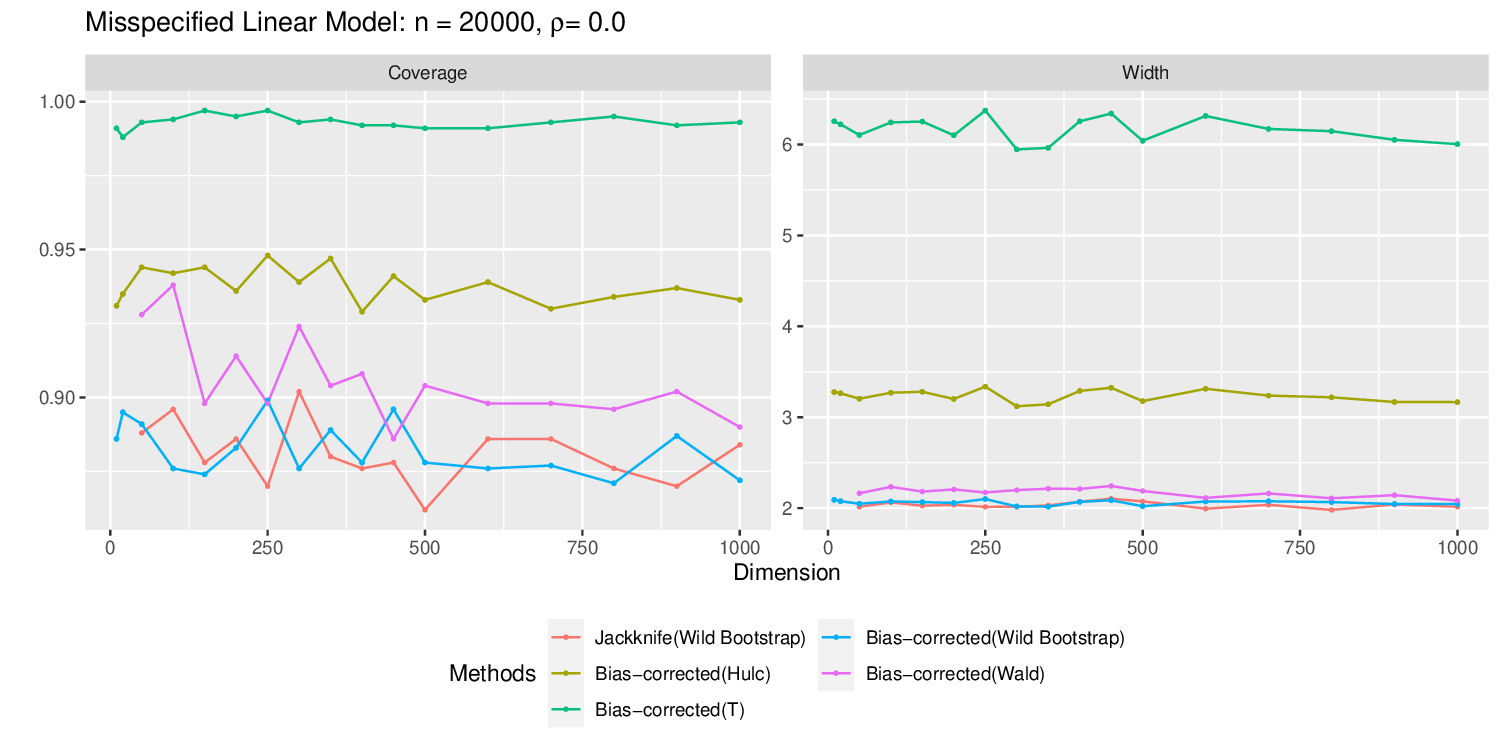}
     \end{subfigure}\\
     \begin{subfigure}[b]{\textwidth}
         \centering
         \includegraphics[width=\textwidth]{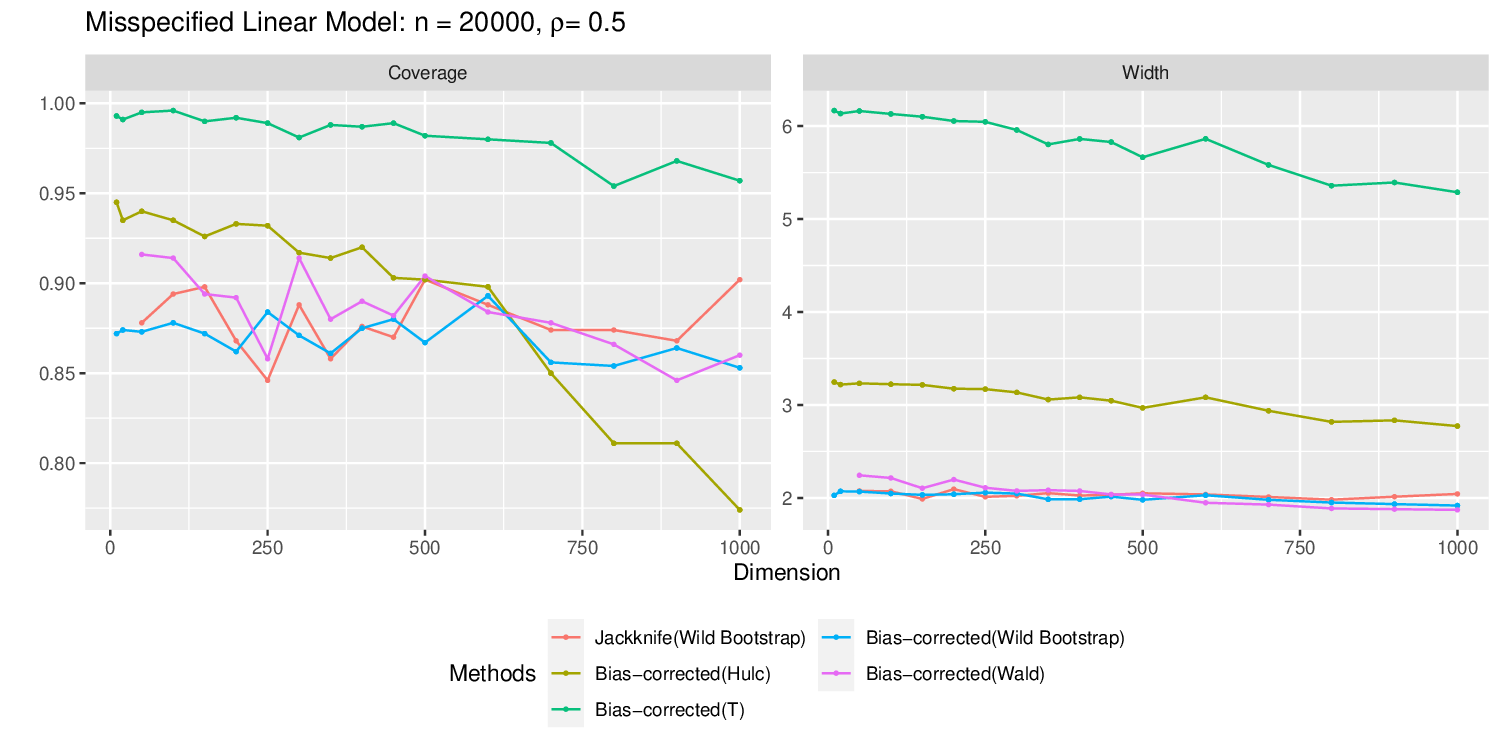}
     \end{subfigure}
     \caption{Comparison of coverages and widths of confidence intervals obtained from 5 different methods under the misspecified model with $n=20000$, $\theta=\ev_1$. The model parameter $\rho$ is set to $0.0$ (top) and $0.5$ (bottom). See Figure~\ref{fig:1} for the abbreviations of methods. The empirical coverages and widths of CI are computed based on 1000. It is noteworthy that $n^{1/2}\approx 141$ and $n^{2/3}\approx 737$.}
     \label{fig:3}
\end{figure}
\section{Discussion}
We provide an estimator for the bias of the ordinary least squares estimator that is consistent as long as the dimension $d$ grows slower than $n^{2/3}$, where $n$ is the sample size. The resulting debiased least squares estimator, after proper normalization, is asymptotically Normal at an $n^{1/2}$-rate as long as $d = o(n^{2/3})$. We also provide valid inference along any arbitrary direction for the projection parameter without having to estimate the variance. We achieve this inferential goal by leveraging the methods such as HulC. We believe the results of this paper can be extended to further expand the allowed growth rate of dimension. This extension would involve further expansion of the least squares estimator into higher order $U$-statistics and removing the bias.
\paragraph{Acknowledgements.}  A. K. Kuchibhotla and A. Rinaldo were partially supported by NSF DMS-2113611.
\bibliography{bib}

\begin{thebibliography}{59}
\providecommand{\natexlab}[1]{#1}
\providecommand{\url}[1]{\texttt{#1}}
\expandafter\ifx\csname urlstyle\endcsname\relax
  \providecommand{\doi}[1]{doi: #1}\else
  \providecommand{\doi}{doi: \begingroup \urlstyle{rm}\Url}\fi

\bibitem[Abdalla(2023)]{abdalla2023covariance}
Pedro Abdalla.
\newblock Covariance estimation under missing observations and $ l\_4-l\_2 $
  moment equivalence.
\newblock \emph{arXiv preprint arXiv:2305.12981}, 2023.

\bibitem[Adamczak et~al.(2010)Adamczak, Litvak, Pajor, and
  Tomczak-Jaegermann]{adamczak2010quantitative}
Rados{\l}aw Adamczak, Alexander Litvak, Alain Pajor, and Nicole
  Tomczak-Jaegermann.
\newblock Quantitative estimates of the convergence of the empirical covariance
  matrix in log-concave ensembles.
\newblock \emph{Journal of the American Mathematical Society}, 23\penalty0
  (2):\penalty0 535--561, 2010.

\bibitem[Bentkus et~al.(2009)Bentkus, Jing, and Zhou]{bentkus2009normal}
Vidmantas Bentkus, Bing-Yi Jing, and Wang Zhou.
\newblock {On normal approximations to U-statistics}.
\newblock \emph{The Annals of Probability}, 37\penalty0 (6):\penalty0 2174 --
  2199, 2009.

\bibitem[Berk et~al.(2019)Berk, Buja, Brown, George, Kuchibhotla, Su, and
  Zhao]{berk2019assumption}
Richard Berk, Andreas Buja, Lawrence Brown, Edward George, Arun~Kumar
  Kuchibhotla, Weijie Su, and Linda Zhao.
\newblock Assumption lean regression.
\newblock \emph{The American Statistician}, 2019.

\bibitem[Bickel and Freedman(1983)]{bickel1983bootstrapping}
Peter~J Bickel and David~A Freedman.
\newblock Bootstrapping regression models with many parameters.
\newblock \emph{Festschrift for Erich L. Lehmann}, pages 28--48, 1983.

\bibitem[Billingsley(2013)]{billingsley2013convergence}
Patrick Billingsley.
\newblock \emph{Convergence of probability measures}.
\newblock John Wiley \& Sons, 2013.

\bibitem[Brailovskaya and van Handel(2023)]{brailovskaya2023universality}
Tatiana Brailovskaya and Ramon van Handel.
\newblock Universality and sharp matrix concentration inequalities, 2023.

\bibitem[Buja et~al.(2019{\natexlab{a}})Buja, Brown, Berk, George, Pitkin,
  Traskin, Zhang, and Zhao]{buja2019models1}
Andreas Buja, Lawrence Brown, Richard Berk, Edward George, Emil Pitkin, Mikhail
  Traskin, Kai Zhang, and Linda Zhao.
\newblock Models as approximations {I}: Consequences illustrated with linear
  regression.
\newblock \emph{Statistical Science}, 34\penalty0 (4):\penalty0 523--544,
  2019{\natexlab{a}}.

\bibitem[Buja et~al.(2019{\natexlab{b}})Buja, Brown, Kuchibhotla, Berk, George,
  and Zhao]{buja2019models2}
Andreas Buja, Lawrence Brown, Arun~Kumar Kuchibhotla, Richard Berk, Edward
  George, and Linda Zhao.
\newblock Models as approximations {II}: A model-free theory of parametric
  regression.
\newblock \emph{Statistical Science}, 34\penalty0 (4):\penalty0 545--565,
  2019{\natexlab{b}}.

\bibitem[Catoni(2016)]{catoni2016pac}
Olivier Catoni.
\newblock Pac-bayesian bounds for the gram matrix and least squares regression
  with a random design.
\newblock \emph{arXiv preprint arXiv:1603.05229}, 2016.

\bibitem[Cattaneo et~al.(2015)Cattaneo, Jansson, and
  Newey]{cattaneo2015treatment}
Matias~D Cattaneo, Michael Jansson, and Whitney~K Newey.
\newblock Treatment effects with many covariates and heteroskedasticity.
\newblock Technical report, cemmap working paper, 2015.

\bibitem[Cattaneo et~al.(2018{\natexlab{a}})Cattaneo, Jansson, and
  Newey]{cattaneo2018alternative}
Matias~D Cattaneo, Michael Jansson, and Whitney~K Newey.
\newblock Alternative asymptotics and the partially linear model with many
  regressors.
\newblock \emph{Econometric Theory}, 34\penalty0 (2):\penalty0 277--301,
  2018{\natexlab{a}}.

\bibitem[Cattaneo et~al.(2018{\natexlab{b}})Cattaneo, Jansson, and
  Newey]{cattaneo2018inference}
Matias~D Cattaneo, Michael Jansson, and Whitney~K Newey.
\newblock Inference in linear regression models with many covariates and
  heteroscedasticity.
\newblock \emph{Journal of the American Statistical Association}, 113\penalty0
  (523):\penalty0 1350--1361, 2018{\natexlab{b}}.

\bibitem[Cattaneo et~al.(2019)Cattaneo, Jansson, and Ma]{cattaneo2019two}
Matias~D Cattaneo, Michael Jansson, and Xinwei Ma.
\newblock Two-step estimation and inference with possibly many included
  covariates.
\newblock \emph{The Review of Economic Studies}, 86\penalty0 (3):\penalty0
  1095--1122, 2019.

\bibitem[Donald and Newey(1994)]{DONALD199430}
S.G. Donald and W.K. Newey.
\newblock Series estimation of semilinear models.
\newblock \emph{Journal of Multivariate Analysis}, 50\penalty0 (1):\penalty0
  30--40, 1994.
\newblock ISSN 0047-259X.
\newblock \doi{https://doi.org/10.1006/jmva.1994.1032}.
\newblock URL
  \url{https://www.sciencedirect.com/science/article/pii/S0047259X84710323}.

\bibitem[Einmahl and Li(2008)]{einmahl2008characterization}
Uwe Einmahl and Deli Li.
\newblock Characterization of {LIL} behavior in {B}anach space.
\newblock \emph{Transactions of the American Mathematical Society},
  360\penalty0 (12):\penalty0 6677--6693, 2008.

\bibitem[Gu{\'e}don and Rudelson(2007)]{guedon2007lp}
Olivier Gu{\'e}don and Mark Rudelson.
\newblock Lp-moments of random vectors via majorizing measures.
\newblock \emph{Advances in Mathematics}, 208\penalty0 (2):\penalty0 798--823,
  2007.

\bibitem[Gu{\'e}don et~al.(2017)Gu{\'e}don, Litvak, Pajor, and
  Tomczak-Jaegermann]{guedon2017interval}
Olivier Gu{\'e}don, Alexander~E Litvak, Alain Pajor, and Nicole
  Tomczak-Jaegermann.
\newblock On the interval of fluctuation of the singular values of random
  matrices.
\newblock \emph{Journal of the European Mathematical Society (EMS Publishing)},
  19\penalty0 (5), 2017.

\bibitem[Han and Wellner(2019)]{han2019convergence}
Qiyang Han and Jon~A Wellner.
\newblock Convergence rates of least squares regression estimators with
  heavy-tailed errors.
\newblock \emph{The Annals of Statistics}, 47\penalty0 (4):\penalty0
  2286--2319, 2019.

\bibitem[Ibragimov and M{\"u}ller(2010)]{ibragimov2010t}
Rustam Ibragimov and Ulrich~K M{\"u}ller.
\newblock t-statistic based correlation and heterogeneity robust inference.
\newblock \emph{Journal of Business \& Economic Statistics}, 28\penalty0
  (4):\penalty0 453--468, 2010.

\bibitem[Jochmans(2022)]{jochmans2022heteroscedasticity}
Koen Jochmans.
\newblock Heteroscedasticity-robust inference in linear regression models with
  many covariates.
\newblock \emph{Journal of the American Statistical Association}, 117\penalty0
  (538):\penalty0 887--896, 2022.

\bibitem[Koltchinskii and Lounici(2017)]{koltchinskii2017concentration}
Vladimir Koltchinskii and Karim Lounici.
\newblock Concentration inequalities and moment bounds for sample covariance
  operators.
\newblock \emph{Bernoulli}, 23\penalty0 (1):\penalty0 110--133, 2017.

\bibitem[Kuchibhotla and Patra(2022)]{kuchibhotla2022least}
Arun~K Kuchibhotla and Rohit~K Patra.
\newblock On least squares estimation under heteroscedastic and heavy-tailed
  errors.
\newblock \emph{The Annals of Statistics}, 50\penalty0 (1):\penalty0 277--302,
  2022.

\bibitem[Kuchibhotla et~al.(2020)Kuchibhotla, Rinaldo, and
  Wasserman]{kuchibhotla2020berry}
Arun~Kumar Kuchibhotla, Alessandro Rinaldo, and Larry Wasserman.
\newblock {Berry-Esseen} bounds for projection parameters and partial
  correlations with increasing dimension.
\newblock \emph{arXiv preprint arXiv:2007.09751}, 2020.

\bibitem[Kuchibhotla et~al.(2021)Kuchibhotla, Balakrishnan, and
  Wasserman]{kuchibhotla2021hulc}
Arun~Kumar Kuchibhotla, Sivaraman Balakrishnan, and Larry Wasserman.
\newblock The {HulC}: Confidence regions from convex hulls.
\newblock \emph{arXiv preprint arXiv:2105.14577}, 2021.

\bibitem[Lam(2022)]{lam2022cheap}
Henry Lam.
\newblock A cheap bootstrap method for fast inference.
\newblock \emph{arXiv preprint arXiv:2202.00090}, 2022.

\bibitem[Long and Ervin(2000)]{long2000using}
J~Scott Long and Laurie~H Ervin.
\newblock Using heteroscedasticity consistent standard errors in the linear
  regression model.
\newblock \emph{The American Statistician}, 54\penalty0 (3):\penalty0 217--224,
  2000.

\bibitem[Lopes(2020)]{lopes2020central}
Miles~E Lopes.
\newblock Central limit theorem and bootstrap approximation in high dimensions
  with near $1/\sqrt{n}$ rates.
\newblock \emph{arXiv preprint arXiv:2009.06004}, 2020.

\bibitem[MacKinnon(2012)]{mackinnon2012thirty}
James~G MacKinnon.
\newblock Thirty years of heteroskedasticity-robust inference.
\newblock In \emph{Recent advances and future directions in causality,
  prediction, and specification analysis: Essays in honor of Halbert L. White
  Jr}, pages 437--461. Springer, 2012.

\bibitem[MacKinnon and White(1985)]{mackinnon1985some}
James~G MacKinnon and Halbert White.
\newblock Some heteroskedasticity-consistent covariance matrix estimators with
  improved finite sample properties.
\newblock \emph{Journal of econometrics}, 29\penalty0 (3):\penalty0 305--325,
  1985.

\bibitem[Mammen(1993)]{mammen1993bootstrap}
Enno Mammen.
\newblock Bootstrap and wild bootstrap for high dimensional linear models.
\newblock \emph{The annals of statistics}, 21\penalty0 (1):\penalty0 255--285,
  1993.

\bibitem[Mendelson(2016)]{mendelson2016upper}
Shahar Mendelson.
\newblock Upper bounds on product and multiplier empirical processes.
\newblock \emph{Stochastic Processes and their Applications}, 126\penalty0
  (12):\penalty0 3652--3680, 2016.

\bibitem[Mendelson and Paouris(2012)]{mendelson2012generic}
Shahar Mendelson and Grigoris Paouris.
\newblock On generic chaining and the smallest singular value of random
  matrices with heavy tails.
\newblock \emph{Journal of Functional Analysis}, 262\penalty0 (9):\penalty0
  3775--3811, 2012.

\bibitem[Mendelson and Paouris(2014)]{mendelson2014singular}
Shahar Mendelson and Grigoris Paouris.
\newblock On the singular values of random matrices.
\newblock \emph{Journal of the European Mathematical Society (EMS Publishing)},
  16\penalty0 (4), 2014.

\bibitem[Mourtada(2022)]{mourtada2022exact}
Jaouad Mourtada.
\newblock Exact minimax risk for linear least squares, and the lower tail of
  sample covariance matrices.
\newblock \emph{The Annals of Statistics}, 50\penalty0 (4):\penalty0
  2157--2178, 2022.

\bibitem[Mourtada et~al.(2022)Mourtada, Va{\v{s}}kevi{\v{c}}ius, and
  Zhivotovskiy]{mourtada2022distribution}
Jaouad Mourtada, Tomas Va{\v{s}}kevi{\v{c}}ius, and Nikita Zhivotovskiy.
\newblock Distribution-free robust linear regression.
\newblock \emph{Mathematical Statistics and Learning}, 4\penalty0 (3):\penalty0
  253--292, 2022.

\bibitem[Oliveira(2016)]{oliveira2016lower}
Roberto~Imbuzeiro Oliveira.
\newblock The lower tail of random quadratic forms with applications to
  ordinary least squares.
\newblock \emph{Probability Theory and Related Fields}, 166:\penalty0
  1175--1194, 2016.

\bibitem[Pfanzagl(1973)]{pfanzagl1973accuracy}
Johann Pfanzagl.
\newblock The accuracy of the normal approximation for estimates of vector
  parameters.
\newblock \emph{Zeitschrift f{\"u}r Wahrscheinlichkeitstheorie und verwandte
  Gebiete}, 25\penalty0 (3):\penalty0 171--198, 1973.

\bibitem[Portnoy(1984)]{portnoy1984asymptotic}
Stephen Portnoy.
\newblock Asymptotic behavior of $ m $-estimators of $ p $ regression
  parameters when $ p^2/n $ is large. i. consistency.
\newblock \emph{The Annals of Statistics}, 12\penalty0 (4):\penalty0
  1298--1309, 1984.

\bibitem[Portnoy(1985)]{portnoy1985asymptotic}
Stephen Portnoy.
\newblock Asymptotic behavior of $ m $ estimators of $ p $ regression
  parameters when $ p^2/n $ is large; ii. normal approximation.
\newblock \emph{The Annals of Statistics}, 13\penalty0 (4):\penalty0
  1403--1417, 1985.

\bibitem[Portnoy(1986)]{portnoy1986asymptotic}
Stephen Portnoy.
\newblock Asymptotic behavior of the empiric distribution of m-estimated
  residuals from a regression model with many parameters.
\newblock \emph{The Annals of Statistics}, 14\penalty0 (3):\penalty0
  1152--1170, 1986.

\bibitem[Portnoy(1988)]{portnoy1988asymptotic}
Stephen Portnoy.
\newblock Asymptotic behavior of likelihood methods for exponential families
  when the number of parameters tends to infinity.
\newblock \emph{The Annals of Statistics}, pages 356--366, 1988.

\bibitem[Puchkin et~al.(2023)Puchkin, Noskov, and Spokoiny]{puchkin2023sharper}
Nikita Puchkin, Fedor Noskov, and Vladimir Spokoiny.
\newblock Sharper dimension-free bounds on the frobenius distance between
  sample covariance and its expectation.
\newblock \emph{arXiv preprint arXiv:2308.14739}, 2023.

\bibitem[Rinaldo et~al.(2019)Rinaldo, Wasserman, and
  G’Sell]{rinaldo2019bootstrapping}
Alessandro Rinaldo, Larry Wasserman, and Max G’Sell.
\newblock Bootstrapping and sample splitting for high-dimensional,
  assumption-lean inference.
\newblock \emph{The Annals of Statistics}, 47\penalty0 (6):\penalty0
  3438--3469, 2019.

\bibitem[Rio(2017)]{rio2017constants}
Emmanuel Rio.
\newblock About the constants in the fuk-nagaev inequalities.
\newblock \emph{HAL}, 2017, 2017.

\bibitem[Rudelson(1999)]{rudelson1999random}
Mark Rudelson.
\newblock Random vectors in the isotropic position.
\newblock \emph{Journal of Functional Analysis}, 164\penalty0 (1):\penalty0
  60--72, 1999.

\bibitem[Srivastava and Vershynin(2013)]{srivastava2013covariance}
Nikhil Srivastava and Roman Vershynin.
\newblock Covariance estimation for distributions with 2+ $\varepsilon$
  moments.
\newblock \emph{Annals of probability: An official journal of the Institute of
  Mathematical Statistics}, 41\penalty0 (5):\penalty0 3081--3111, 2013.

\bibitem[Tikhomirov(2018)]{tikhomirov2018sample}
Konstantin Tikhomirov.
\newblock Sample covariance matrices of heavy-tailed distributions.
\newblock \emph{International Mathematics Research Notices}, 2018\penalty0
  (20):\penalty0 6254--6289, 2018.

\bibitem[Tropp(2016)]{tropp2016expected}
Joel~A Tropp.
\newblock The expected norm of a sum of independent random matrices: An
  elementary approach.
\newblock In \emph{High Dimensional Probability VII: The Carg{\`e}se Volume},
  pages 173--202. Springer, 2016.

\bibitem[Tropp et~al.(2015)]{tropp2015introduction}
Joel~A Tropp et~al.
\newblock An introduction to matrix concentration inequalities.
\newblock \emph{Foundations and Trends{\textregistered} in Machine Learning},
  8\penalty0 (1-2):\penalty0 1--230, 2015.

\bibitem[Van~der Vaart(2000)]{van2000asymptotic}
Aad~W Van~der Vaart.
\newblock \emph{Asymptotic statistics}, volume~3.
\newblock Cambridge university press, 2000.

\bibitem[Vansteelandt and Dukes(2022)]{vansteelandt2022assumption}
Stijn Vansteelandt and Oliver Dukes.
\newblock Assumption-lean inference for generalised linear model parameters.
\newblock \emph{Journal of the Royal Statistical Society Series B: Statistical
  Methodology}, 84\penalty0 (3):\penalty0 657--685, 2022.

\bibitem[Va{\v{s}}kevi{\v{c}}ius and
  Zhivotovskiy(2023)]{vavskevivcius2023suboptimality}
Tomas Va{\v{s}}kevi{\v{c}}ius and Nikita Zhivotovskiy.
\newblock Suboptimality of constrained least squares and improvements via
  non-linear predictors.
\newblock \emph{Bernoulli}, 29\penalty0 (1):\penalty0 473--495, 2023.

\bibitem[Vershynin(2012)]{vershynin2012close}
Roman Vershynin.
\newblock How close is the sample covariance matrix to the actual covariance
  matrix?
\newblock \emph{Journal of Theoretical Probability}, 25\penalty0 (3):\penalty0
  655--686, 2012.

\bibitem[Vershynin(2018)]{vershynin2018high}
Roman Vershynin.
\newblock \emph{High-dimensional probability: An introduction with applications
  in data science}, volume~47.
\newblock Cambridge university press, 2018.

\bibitem[White(1980)]{white1980heteroskedasticity}
Halbert White.
\newblock A heteroskedasticity-consistent covariance matrix estimator and a
  direct test for heteroskedasticity.
\newblock \emph{Econometrica: journal of the Econometric Society}, pages
  817--838, 1980.

\bibitem[Wu(1986)]{wu1986jackknife}
Chien-Fu~Jeff Wu.
\newblock Jackknife, bootstrap and other resampling methods in regression
  analysis.
\newblock \emph{the Annals of Statistics}, 14\penalty0 (4):\penalty0
  1261--1295, 1986.

\bibitem[Yang and Kuchibhotla(2021)]{yang2021finite}
Yachong Yang and Arun~Kumar Kuchibhotla.
\newblock Finite-sample efficient conformal prediction.
\newblock \emph{arXiv preprint arXiv:2104.13871}, 2021.

\bibitem[Zhilova(2020)]{zhilova2020nonclassical}
M~Zhilova.
\newblock Nonclassical {Berry--Esseen} inequalities and accuracy of the
  bootstrap.
\newblock \emph{Annals of Statistics}, 48\penalty0 (4):\penalty0 1922--1939,
  2020.

\end{thebibliography}
\bibliographystyle{plainnat}

\appendix
\section{Proofs of Theorems and Corollaries}

\subsection{Proof of Theorem~\ref{thm:1}}
Recall that the OLS estimator can be expressed as $\hat\beta = \beta + \hat\Sigma^{-1}\frac{1}{n}\sum_{i=1}^nX_i(Y_i-X_i^\top\beta)$. We write Taylor series expansion (up to the second order) of the sample Gram matrix $\hat\Sigma$ at $\Sigma$ as
\begin{eqnarray*}
    \hat\Sigma^{-1}-\Sigma^{-1}&=&\hat\Sigma^{-1}(\Sigma-\hat\Sigma)\Sigma^{-1}\\
    &=&\Sigma^{-1}(\Sigma-\hat\Sigma)\Sigma^{-1}+(\hat\Sigma^{-1}-\Sigma^{-1})(\Sigma-\hat\Sigma)\Sigma^{-1}\\
    &=&\Sigma^{-1}(\Sigma-\hat\Sigma)\Sigma^{-1}+\hat\Sigma^{-1}(\Sigma-\hat\Sigma)\Sigma^{-1}(\Sigma-\hat\Sigma)\Sigma^{-1}.
\end{eqnarray*} Plugging the expansion in the OLS expression gives that
\begin{eqnarray}
    \hat\beta -\beta
    &=&\Sigma^{-1}\n\sum_{i=1}^nX_i(Y_i-X_i\beta)+\Sigma^{-1}(\Sigma-\hat\Sigma)\Sigma^{-1}\n\sum_{i=1}^nX_i(Y_i-X_i^\top\beta)\label{eq:A.1.1}\\
    &&+\,\hat\Sigma^{-1}(\Sigma-\hat\Sigma)\Sigma^{-1}(\Sigma-\hat\Sigma)\Sigma^{-1}\n\sum_{i=1}^nX_i(Y_i-X_i^\top\beta).\label{eq:A.1.2}
\end{eqnarray} As in \eqref{eq:2.2.6}, the low-order approximation \eqref{eq:A.1.1} of $\hat\beta-\beta$ can be decomposed into two components: $\Bc$ and $\Uc$. In this decomposition, $\Bc$ represents the bias, which is precisely defined in \eqref{eq:bias_true}, while $\Uc$ is a combination of first and second-order $U$-statistics, expressed as:
\begin{eqnarray*}
    \Uc = \left(1+\frac{1}{n}\right)\sum_{i=1}^n\psi(X_i,Y_i)+\frac{1}{n(n-1)}\sum_{1\leq i\neq j\leq n}\phi(X_i,Y_i,X_j,Y_j),
\end{eqnarray*} and $\psi$ and $\phi$ are defined in \eqref{eq:psi_and_phi}. Finally, we denote the approximation error in \eqref{eq:A.1.2} as $\Rc$.

The proof involves two steps of approximations; (1) an approximation of the distribution of $\sqrt{n}c^\top(\hat\beta-\beta-\Bc)$ to that of $\sqrt{n}\,c^\top\Uc$ and (2) an approximation of the distribution of $\sqrt{n}\,c^\top\Uc$ to the adjusted Normal distribution. For any $x\in\Real$ and $\epsilon>0$,
\begin{eqnarray}\label{eq:A.2.1}
    \Pb(\sqrt{n}c^\top(\hat\beta-\beta-\Bc)\leq x)&=&\Pb(\sqrt{n}c^\top(\Uc+\Rc)\leq x)\nonumber\\
    &\leq& \Pb(\sqrt{n}\,c^\top\Uc\leq x+\epsilon)+\Pb(\Abs{\sqrt{n}c^\top\Rc}>\epsilon).
\end{eqnarray}
The first term on the right-hand side can be approximated to the adjusted Normal percentile by Lemma~\ref{lem:11} as it says that
\begin{eqnarray}\label{eq:A.2.2}
    \sup_{x\in\Real}\Abs{\Pb(\sqrt{n}\,c^\top\Uc\leq x+\epsilon)-N_c\left(\frac{x+\epsilon}{\sigma_c}\right)}\leq \Delta_n,
\end{eqnarray} where we write $N_c(x) = \Phi(x)-\kappa_c\Phi^{(3)}(x)$ (see \eqref{eq:2.2.6} for the definition of $\kappa_c$). In addition, we have
\begin{equation*}
    \Delta_n = C\left(\frac{\bar\lambda^{3/2}K_x^3K_y^3}{\sqrt{n}}+\frac{\bar\lambda K_x^6K_y^2d}{2n}\right)=O\left(\frac{1}{\sqrt{n}}+\frac{d}{n}\right),
\end{equation*} for some absolute constant $C>0$. Combining \eqref{eq:A.2.1} and \eqref{eq:A.2.2} leads to
\begin{eqnarray}\label{eq:A.2.3}
    \Pb(\sqrt{n}c^\top(\hat\beta-\beta-\Bc)\leq x)-N_c\left(\frac{x}{\sigma_c}\right) &\leq& N_c\left(\frac{x+\epsilon}{\sigma_c}\right)-N_c\left(\frac{x}{\sigma_c}\right) +\Delta_n+\Pb(\Abs{\sqrt{n}c^\top\Rc}>\epsilon)\nonumber\\
    &\leq&\frac{\epsilon}{\sigma_c}\sup_{t\in\Real}\Abs{N_c'(t)}+\Delta_n+\Pb(\Abs{\sqrt{n}c^\top\Rc}>\epsilon).
\end{eqnarray} Instead of \eqref{eq:A.2.1}, we can similarly argue that
\begin{equation*}
    \Pb(\sqrt{n}c^\top(\hat\beta-\beta-\Bc)> x)\leq \Pb(\sqrt{n}\,c^\top\Uc> x+\epsilon)+\Pb(\Abs{\sqrt{n}c^\top\Rc}>\epsilon),
\end{equation*} for any $x\in\Real$ and $\epsilon>0$. This leads to an analog of \eqref{eq:A.2.3}, and consequently, we have
\begin{equation*}
    \Abs{\Pb(\sqrt{n}c^\top(\hat\beta-\beta-\Bc)\leq x)-N_c\left(\frac{x}{\sigma_c}\right)}\leq\frac{\epsilon}{\sigma_c}\sup_{t\in\Real}\Abs{N_c'(t)}+\Delta_n+\Pb(\Abs{\sqrt{n}c^\top\Rc}>\epsilon).
\end{equation*} It follows from the definition of $N_c$ that $\sup_{t\in\Real}\Abs{N_c'(t)}=(1+3\kappa_c)/\sqrt{2\pi}$, and $\kappa_c$ can be further bounded through Lemma~\ref{lem:11} essentially leading to
\begin{eqnarray}\label{eq:A.2.4}
    \frac{1}{\sigma_c}\sup_{t\in\Real}\Abs{N_c'(t)}\leq\frac{1}{\sqrt{2\pi}\sigma_c}\left(1+\frac{3K_x^3K_y}{2\sigma_c}\sqrt{\frac{d}{n}}\right)&\leq&\frac{1}{\sqrt{2\pi}\sigma_c}\left(1+\frac{3K_x^3K_y}{2\sigma_c}\right)\nonumber\\
    &\leq&\sqrt{\frac{1}{2\pi\underline{\lambda}}}\left(1+\frac{3K_x^3K_y}{2\underline{\lambda}^{1/2}}\right).
\end{eqnarray} We used $d\leq n$ for the penultimate inequality, and the last inequality is due to that $\sigma_c^2\geq \underline{\lambda}$ which follows from Assumption~\ref{asmp:4}. Hence, we have
\begin{equation}\label{eq:A.2.5}
    \Abs{\Pb(\sqrt{n}c^\top(\hat\beta-\beta-\Bc)\leq x)-N_c\left(\frac{x}{\sigma_c}\right)} \leq \Delta_n + C\epsilon +\Pb\left(|\sqrt{n}c^\top\Rc|>\epsilon\right),
\end{equation} 
for $C=C(\underline{\lambda},K_x,K_y)$ in \eqref{eq:A.2.4}. The next and final step involves harnessing the value of $\epsilon$ on the right-hand side of \eqref{eq:A.2.5}, which will serve as our Berry Esseen bound. It is worth highlighting that Lemma \ref{lem:8}---\ref{lem:10} establishes the tail bound for the quantity $\sqrt{n}c^\top\Rc$, contingent upon three different moment assumptions on covariates (Assumption~\ref{asmp:3.a}---\ref{asmp:3.c}). In particular, their results have the form of
\begin{equation*}
    \Pb\left(|\sqrt{n}c^\top\Rc|>\epsilon_1(\delta)\right)\leq \delta.
\end{equation*} Hence, each of these scenarios necessitates distinct selections for $\delta$ which were chosen to minimize $\epsilon_1(\delta)+\delta$. We note that the choices we make for $\delta$ are such that $1/\delta$ is no larger than a polynomial of $n$, so that $\log(1/\delta)$ can be bounded with $\log(2n)$ up to constants.

\paragraph{Under Assumption~\ref{asmp:3.a}} We use Lemma~\ref{lem:8}. The choice of
\begin{eqnarray*}
    \delta = \left(\frac{d\log^3(2n)}{n^{4/5-8/(5q_x)}}\right)^{5q_x/(2q_x+8)},
\end{eqnarray*} yields the result.

\paragraph{Under Assumption~\ref{asmp:3.b}} The result of Lemma~\ref{lem:9} with $\delta=1/n$ yields the result.

\paragraph{Under Assumption~\ref{asmp:3.c}}
In Lemma~\ref{lem:10}, taking the right-hand side of \eqref{eq:lem:6} as $\epsilon = \epsilon(\delta)$ with the choice of
\begin{eqnarray*}
    \delta = \left(\frac{d^{1/2+4/q_x}\log^3(2d)}{n^{2-4/q_x}}\right)^{q_x/(q_x+4)}\leq\left(\frac{d^{3/2}\log^3(2d)}{n}\right)^{q_x/(q_x+4)},
\end{eqnarray*} yields the result. 

Moreover, in this scenario, we claimed that the bias $\Bc$ degenerates with the additional condition that $(3/q_x+1/q)^{-1}\geq2$. To see this, we revisit to inequality in \eqref{eq:A.2.1} and instead write,
\begin{eqnarray*}
    \Pb(\sqrt{n}c^\top(\hat\beta-\beta)\leq x)&=&\Pb(\sqrt{n}c^\top(\Uc+\Bc+\Rc)\leq x)\nonumber\\
    &\leq& \Pb(\sqrt{n}\,c^\top\Uc\leq x+\epsilon+\epsilon') +\Pb(\Abs{\sqrt{n}c^\top\Rc}>\epsilon)++\Pb(\Abs{\sqrt{n}c^\top\Bc}>\epsilon').
\end{eqnarray*} This leads to 
\begin{eqnarray}
    \Abs{\Pb(\sqrt{n}c^\top(\hat\beta-\beta-\Bc)\leq x)-N_c\left(\frac{x}{\sigma_c}\right)} &\leq& \Delta_n + C\epsilon +\Pb\left(|\sqrt{n}c^\top\Rc|>\epsilon\right)\label{eq:X-ind_case_org}\\&& + C\epsilon'+\Pb\left(|\sqrt{n}c^\top\Bc|>\epsilon'\right)\label{eq:X-ind_case_add}.
\end{eqnarray} It is noteworthy that \eqref{eq:X-ind_case_org} can still be controlled in the aforementioned way. The quantity in \eqref{eq:X-ind_case_add} can be controlled using Lemma~\ref{lem:12}. Let $\epsilon'=\epsilon'(\delta)$ be the right-hand side in the probability notation in \eqref{eq:lem:8}, then
\begin{eqnarray*}
    C\epsilon' + \Pb(|\sqrt{n} c^\top\Bc|>\epsilon')\leq C K_x^3K_y\sqrt{\frac{d}{n}}\left(1+\frac{d}{n\sqrt{\delta}}\right)+\delta.
\end{eqnarray*}Finally, choosing $\delta=d/n$ yields the desired result.

\subsection{Proof of Theorem~\ref{thm:2}}
Throughout the proof, we fix a contrast vector $c\in\Real^d$ such that $\norm{c}_{\Sigma^{-1}}=1$. To control $|c^\top(\hat\Bc-\Bc)|$, we begin by writing,
\begin{eqnarray*}
    c^\top\hat\Sigma^{-1}X_i = c^\top\Sigma^{-1}X_i + R_{1,i},&\quad& R_{1,i}=c^\top(\hat \Sigma^{-1}-\Sigma^{-1})X_i,\\
    (Y_i-X_i^\top\hat\beta)=(Y_i-X_i^\top\beta) + R_{2,i},&\quad& R_{2,i}=X_i^\top(\beta-\hat\beta),\\
    \norm{X_i}_{\hat\Sigma^{-1}}^2=\norm{X_i}_{\Sigma^{-1}}^2(1+R_{3,i}),&\quad& R_{3,i}=\frac{\norm{X_i}_{\hat\Sigma^{-1}}^2}{\norm{X_i}_{\Sigma^{-1}}^2}-1,
\end{eqnarray*} for $i=1,\ldots,n$. Then, it follows from the definition of $\hat\Bc$ in \eqref{eq:bias_est} that $|c^\top(\hat\Bc-\Bc)|$ can be bounded by seven distinct quantities, denoted as $\Sc_k$ for $k=1,\ldots,7$ with respect to the presented order.
\begin{eqnarray*}
    \frac{n}{d}\Abs{c^\top(\hat\Bc-\Bc)}\leq&&\Abs{\n\sum_{i=1}^nR_{1,i}(Y_i-X_i^\top\beta)\frac{\norm{X_i}_{\Sigma^{-1}}^2}{d}}+\Abs{\n\sum_{i=1}^nc^\top\Sigma^{-1}X_iR_{2,i}\frac{\norm{X_i}_{\Sigma^{-1}}^2}{d}}\\
    &+&\Abs{\n\sum_{i=1}^nc^\top\Sigma^{-1}X_i(Y_i-X_i^\top\beta)R_{3,i}\frac{\norm{X_i}_{\Sigma^{-1}}^2}{d}}+
    \Abs{\n\sum_{i=1}^nR_{1,i}R_{2,i}\frac{\norm{X_i}_{\Sigma^{-1}}^2}{d}}\\
    &+&\Abs{\n\sum_{i=1}^nc^\top\Sigma^{-1}X_iR_{2,i}R_{3,i}\frac{\norm{X_i}_{\Sigma^{-1}}^2}{d}}+\Abs{\n\sum_{i=1}^nR_{1,i}(Y_i-X_i^\top\beta)R_{3,i}\frac{\norm{X_i}_{\Sigma^{-1}}^2}{d}}\\
    &+&\Abs{\n\sum_{i=1}^nR_{1,i}R_{2,i}R_{3,i}\frac{\norm{X_i}_{\Sigma^{-1}}^2}{d}}\\
    =:&&\sum_{k=1}^7 \Sc_k.
\end{eqnarray*} Before analyzing individual remainder terms, we define
\begin{equation}
    \Dc_\Sigma = \Norm{\Sigma^{-1/2}\hat\Sigma\Sigma^{-1/2}-I_d}_{\rm op}.
\end{equation} It is noteworthy that the quantity $R_{3,i}$ can be stochastically bounded by in terms of $\Dc_\Sigma$. To see this, write the eigenvalue decomposition of $\Sigma^{-1/2}\hat\Sigma\Sigma^{-1/2}$ as $\Sigma^{-1/2}\hat\Sigma\Sigma^{-1/2}=U\Lambda U^\top$ where $U$ is a $d\times d$ orthogonal matrix and $\Lambda$ is a $d\times d$ diagonal matrix with decreasing diagonal entries. Then, the quantity $\Dc_\Sigma$ is determined by two extreme eigenvalues as $$\Dc_\Sigma = \max\set{\Abs{\lambda_{\rm max}(\Sigma^{-1/2}\hat\Sigma\Sigma^{-1/2})-1},\Abs{\lambda_{\rm min}(\Sigma^{-1/2}\hat\Sigma\Sigma^{-1/2})-1}}.$$ Now, we note the deterministic relation that 
\begin{eqnarray*}
    \sup_{\theta\in\Real^d}\Abs{\frac{\theta^\top\hat\Sigma^{-1}\theta}{\theta^\top\Sigma^{-1}\theta}-1} &=& \sup_{\omega\in\Real^d}\Abs{\frac{\omega^\top(\Sigma^{1/2}\hat\Sigma^{-1}\Sigma^{1/2}-I_d)\omega}{\omega^\top\omega}}\\
    &=&\sup_{\omega\in\Real^d}\Abs{\frac{\omega^\top(\Lambda^{-1}-I_d)\omega}{\omega^\top\omega}}\\
    &=&\max\set{\Abs{\frac{1-\lambda_{\rm max}(\Sigma^{-1/2}\hat\Sigma\Sigma^{-1/2})}{\lambda_{\rm max}(\Sigma^{-1/2}\hat\Sigma\Sigma^{-1/2})}},\Abs{\frac{1-\lambda_{\rm min}(\Sigma^{-1/2}\hat\Sigma\Sigma^{-1/2})}{\lambda_{\rm min}(\Sigma^{-1/2}\hat\Sigma\Sigma^{-1/2})}}}\\
    &\leq&\frac{\Dc_\Sigma}{\lambda_{\rm min}(\Sigma^{-1/2}\hat\Sigma\Sigma^{-1/2})}.
\end{eqnarray*} Define the event $\Ac:=\set{\lambda_{\rm min}(\Sigma^{-1/2}\hat\Sigma\Sigma^{-1/2})\geq1/2}$. On $\Ac$, we have
\begin{eqnarray*}
    R_{3,i}\leq\norm{\Sigma^{1/2}\Sigma^{-1}\Sigma^{1/2}-I}_{\rm op}\leq 2\Dc_\Sigma,\quad i=1,\ldots,n.
\end{eqnarray*} Moreover, Proposition~\ref{prop:oliveira} states that
\begin{equation}\label{eq:oliveira}
    \Pb\left(\lambda_{\rm min}(\Sigma^{-1/2}\hat\Sigma\Sigma^{-1/2})\geq 1-9K_x\sqrt{\frac{d+2\log(2/\delta)}{n}}\right)\leq 1-\delta,
\end{equation} for any $\delta\in(0,1)$. Therefore, $\Pb(\Ac)\leq 1-1/n$ provided that $d+2\log(2n)\leq n/(18K_x)^2$.

By inspecting the relationship between the remainders, we can reduce the number of remainders that we need to handle. On $\Ac$, we have
\begin{eqnarray*}
    \Sc_5\leq 2\Dc_\Sigma \Sc_2,\quad
    \Sc_6\leq 2\Dc_\Sigma \Sc_1,\quad
    \Sc_7\leq 2\Dc_\Sigma \Sc_4.
\end{eqnarray*} Combining all, we have that  with probability at least $1-1/n$ that
\begin{equation}\label{eq:thm2.1}
    \frac{n}{d}\Abs{c^\top(\hat\Bc-\Bc)}\leq \left(1+2\Dc_\Sigma\right)\left[\Sc_1+\Sc_2+\Sc_4\right]+\Sc_3.
\end{equation} Hence, it suffices to shift our attention to the remainder $\Sc_k$ for $k=1,2,3,4$, and these four remainders have been carefully analyzed in Lemmas \ref{lem:rem1}---\ref{lem:rem4}, respectively. We provide a brief summary of the key results for each remainder term: define the events with some proper constants $\set{C_i:i=1,2,3,4}$ which only depend on $K_x$ and $K_y$ that
\begin{eqnarray*}
    \Mc_1&=&\set{\Sc_1\leq \frac{C_1\Dc_\Sigma}{\lambda_{\rm min}(\Sigma^{-1/2}\hat\Sigma\Sigma^{-1/2})}}\Rightarrow\Mc_1\cap\Ac\subseteq\set{\Sc_1\leq 2C_1\Dc_\Sigma},\\
    \Mc_2&=&\set{\Sc_2\leq C_2\norm{\hat\beta-\beta}_\Sigma},\\
    \Mc_3&=&\set{\Sc_3\leq \frac{C_3\Dc_\Sigma}{\lambda_{\rm min}(\Sigma^{-1/2}\hat\Sigma\Sigma^{-1/2})}}\Rightarrow\Mc_3\cap\Ac\subseteq\set{\Sc_3\leq 2C_3\Dc_\Sigma}\\
    \Mc_4(\delta)&=&\set{\Sc_4\leq C_4 \frac{\Dc_\Sigma\norm{\hat\beta-\beta}_\Sigma}{\lambda_{\rm min}(\Sigma^{-1/2}\hat\Sigma\Sigma^{-1/2})}\left(1+\frac{d\log (2d)}{\delta\sqrt{n}}\right)},\quad\delta\in(0,1]\\
    &\Rightarrow&\Mc_4(\delta)\cap\Ac\subseteq\set{\Sc_4\leq 2C_4\Dc_\Sigma\norm{\hat\beta-\beta}_\Sigma\left(1+\frac{d\log (2d)}{\delta\sqrt{n}}\right)}
\end{eqnarray*} Lemma~\ref{lem:rem1}---\ref{lem:rem4} prove that under Assumption~\ref{asmp:3.a} the events $\Mc_i$, $i=1,\ldots,4$ occur with high probabilities as,
\begin{equation*}
    \Pb(\Mc_1)\geq1-\frac{d}{n},\quad\Pb(\Mc_2)\geq1-\frac{d}{n},\quad\Pb(\Mc_3)\geq1-\frac{1}{n},\quad\Pb(\Mc_4)\geq1-\delta.
\end{equation*} It follows from \eqref{eq:thm2.1} and the definitions of the events $\Mc_i$, $i=1,2,3,4$ that there exists a constant $C=C()$ such that on the event $\Ac\cap\Mc_1\cap\Mc_2\cap\Mc_3\cap\Mc_4(\delta)$, it holds that
\begin{equation}\label{eq:thm2:2}
    \frac{n}{d}\abs{c^\top(\hat\Bc-\Bc)}\leq C\left(1\vee\Dc_\Sigma\right)\left(\Dc_\Sigma+\norm{\hat\beta-\beta}_\Sigma+\Dc_\Sigma\norm{\hat\beta-\beta}_\Sigma\left(1+\frac{d\log (2d)}{\delta\sqrt{n}}\right)\right)
\end{equation}
Consequently, to manage $\lvert c^\top (\hat{\Bc} - \Bc) \rvert$, it is sufficient to derive upper bounds for $\mathcal{D}_\Sigma$ and $\lVert \hat{\beta} - \beta \rVert_\Sigma$ with high probabilities, which are described in Propositions \ref{prop:6} and \ref{prop:11}, respectively. We will separately control these quantities based on the assumptions pertaining to the covariates.

\paragraph{Under Assumption~\ref{asmp:3.a}.} By Proposition~\ref{prop:6} and Proposition~\ref{prop:11}, we get 
\begin{eqnarray*}
    \Pb(\Dc_\Sigma\leq C\Av_{n,d}(\eta))\geq 1-\eta,\quad\Pb(\norm{\hat\beta-\beta}_\Sigma\leq  C'\Bv_{n,d})\geq 1-\frac{1}{n^{\min\{1,s/2-1\}}},
\end{eqnarray*}where $C$ and $C'$ are some constants which depend on $q_x,q,K_x,K_y,$ and $\overline{\lambda}$, and
\begin{equation*}
    \Av_{n,d}(\eta) = \frac{d\log^{3/2}(2d/\eta)}{\eta^{2/q_x}n^{1-2/q_x}}+\sqrt{\frac{d\log(2d/\eta)}{n}},\quad\Bv_{n,d}=\sqrt{\frac{d+\log(2n)}{n}}.
\end{equation*} Denote the events $\Cc_1(\eta):=\{\Dc_\Sigma\leq C\Av_{n,d}(\eta)\}$ and $\Cc_2:=\{\norm{\hat\beta-\beta}_\Sigma\leq C'\Bv_{n,d}\}$, and we let
\begin{equation*}
    \delta^*:=\delta^*(\eta) = \left[\left(\Av_{n,d}^{1/2}(\eta)\vee\Av_{n,d}(\eta)\right)\Bv_{n,d}^{1/2}\frac{\log^{1/2}(2d)d}{\sqrt{n}}\right]\wedge1.
\end{equation*} Equipped with such choice of $\delta^*$, we have from \eqref{eq:thm2:2} that
\begin{eqnarray*}
    &&\sqrt{n}\abs{c^\top (\hat{\Bc} - \Bc)}\\
    &\leq& C\left[\frac{d}{\sqrt{n}}(1\vee\Av_{n,d}(\eta))\left(\Av_{n,d}(\eta)+\Bv_{n,d}+\Av_{n,d}(\eta)\Bv_{n,d}\right)+\frac{\log^{1/2}(2d)d}{\sqrt{n}}(\Av_{n,d}^{1/2}(\eta)\vee\Av_{n,d}(\eta))\Bv_{n,d}^{1/2}\right],\\
    &\leq& C\left[\frac{d}{\sqrt{n}}(\Av_{n,d}(\eta)\vee\Av_{n,d}^2(\eta))+\frac{\log^{1/2}(2d)d}{\sqrt{n}}(\Av_{n,d}^{1/2}(\eta)\vee\Av_{n,d}(\eta))\Bv_{n,d}^{1/2}\right]\\
    &\leq&C\frac{d}{\sqrt{n}}\left[(\Av_{n,d}(\eta)\vee\Av_{n,d}^2(\eta))+\Bv_{n,d}\log(2d)\right],
\end{eqnarray*} holds with probability at least
\begin{equation*}
    \Pb\left(\left(\Ac\cap\Mc_1\cap\Mc_2\cap\Mc_3\cap\Mc_4(\delta^*)\cap\Cc_1(\eta)\cap\Cc_2\right)^\complement\right)\geq \frac{2d}{n}+\frac{1}{n}+\frac{1}{n^{\min\{1,s/2-1\}}}+\delta^*+\eta.
\end{equation*}

\paragraph{Under Assumption~\ref{asmp:3.b}} If the covariates are drawn from a sub-Gaussian distribution, a more robust result regarding the concentration of the sample covariance matrix can be obtained, as demonstrated in Proposition~\ref{prop:6}. Specifically, we can establish that:

\begin{equation*}
\Pb(\Dc_\Sigma\leq C\Bv_{n,d})\geq 1-\frac{1}{n}.
\end{equation*} The remaining step of the proof is identical to the one presented under Assumption~\ref{asmp:3.a}, with the only difference being the utilization of $\Bv_{n,d}$ instead of $\Av_{n,d}$ as a stochastic bound for $\Dc_\Sigma$.

\subsection{Proof of Theorem~\ref{thm:3}}
For a given $c\in\Real^d$ with $\norm{c}_{\Sigma^{-1}}=1$, write $\hat S_n=\sqrt{n}c^\top(\hat\beta-\hat\Bc-\beta)$ and $\tilde S_n=\sqrt{n}c^\top(\hat\beta-\Bc-\beta)$. For any $x\in \Real$ and $\epsilon>0$, we have
\begin{eqnarray}\label{eq:thm3:1}
    \Pb(\hat S_n\leq x)&\leq&\Pb(\tilde S_n\leq x+\epsilon)+\Pb(|\tilde S_n-\hat S_n|\geq \epsilon),\nonumber\\
    \Pb(\hat S_n> x)&\leq&\Pb(\tilde S_n> x+\epsilon)+\Pb(|\tilde S_n-\hat S_n|\geq \epsilon).
\end{eqnarray} Define $\tilde\Delta_{n,d}:=\sup_{x\in\Real}|\Pb(\tilde S_n\leq x)-N_c(x/\sigma_c)|$ and $\hat\Delta_{n,d}:=\sup_{x\in\Real}|\Pb(\hat S_n\leq x)-N_c(x/\sigma_c)|$. Then, it follows from \eqref{eq:thm3:1} that
\begin{eqnarray}\label{eq:thm3:2}
    \hat\Delta_n&\leq&\tilde\Delta_n+\sigma_c^{-1}\norm{N_c'}_\infty\epsilon + \Pb\left(\sqrt{n}|\hat\Bc-\Bc|\geq\epsilon\right)\nonumber\\
    &\leq&\tilde\Delta_n+C\epsilon + \Pb\left(\sqrt{n}|\hat\Bc-\Bc|\geq\epsilon\right),
\end{eqnarray} where $C=C(K_x,K_y,\underline{\lambda})$ is defined in \eqref{eq:A.2.4}. Note that the quantity $\tilde\Delta_n$ is bounded in Theorem~\ref{thm:1}. Moreover, the tail probability of $\hat\Bc$ is controlled in Theorem~\ref{thm:2}.
\paragraph{Under Assumption~\ref{asmp:3.a}}
We have from Theorem~\ref{thm:2}(i) that
\begin{equation*}
    \Pb\left(\sqrt{n}|\hat\Bc-\Bc|\geq\delta_{n,d}(\eta)\right)\leq\frac{1}{\sqrt{n}} + \delta_{n,d}(\eta).
\end{equation*} Taking $\epsilon=\delta_{n,d}(\eta^*)$ in \eqref{eq:thm3:2} where
\begin{equation*}
    \eta^*=\left(\frac{d\log^{3/4}(2n)}{n^{3/4-1/q_x}}\right)^{(2q_x)/(2+q_x)}\vee\left(\frac{d\log(2n)}{n^{5/6-4/(3q_x)}}\right)^{(3q_x)/(4+q_x)},
\end{equation*} yields the result.
\paragraph{Under Assumption~\ref{asmp:3.b}} Taking $\epsilon=\delta_{n,d}$, which is defined in Theorem~\ref{thm:2}(ii), gives the desired result.
\subsection{Proof of Corollary~\ref{cor:1}}
We will show that Berry Esseen bound in Theorem~\ref{thm:3}(i) is dominated by $1/\sqrt{n}+d^{3/2}/n$ up to polylogarithmic factors. Since $q_x\geq 12$ and $d\lesssim n^{2/3}$, we observe that
\begin{equation*}
    \frac{d\log^3(2n)}{n^{4/5-8/(5q_x)}}, \frac{d\log^{3/4}(2n)}{n^{3/4-1/q_x}},\frac{d\log(2n)}{n^{5/6-4/(3q_x)}}\lesssim\frac{d}{n^{2/3}}\lesssim1.
\end{equation*}Furthermore, since the exponents of the above terms appeared in Berry Esseen bound is larger than $3/2$ as long as $q_x\geq12$, the proof is completed.

\subsection{Proof of Theorem~\ref{thm:5}} Throughout, we fix $c\in\Real^d$ with $\norm{c}_{\Sigma^{-1}}=1$. Theorem~\ref{thm:3} proves that
\begin{equation*}
    \sup_{t\in\Real}\Abs{\Pb\left(\sqrt{n}\frac{c^\top(\hat\beta_{\rm bc}-\beta)}{\sigma_c}\leq t\right) - N_c(t)}\leq \epsilon_{n,d}.
\end{equation*} We observe that for a small $\delta>0$,
\begin{equation*}
    \Pb\left(\sqrt{n}\frac{c^\top(\hat\beta_{\rm bc}-\beta)}{\hat\sigma_c}\leq t\right)\leq \Pb\left(\sqrt{n}\frac{c^\top(\hat\beta_{\rm bc}-\beta)}{\sigma_c}\leq t(1+\delta)\right) + \Pb\left(\frac{\hat\sigma_c}{\sigma_c}>1+\delta\right).
\end{equation*} Consequently,
\begin{eqnarray*}
    \Pb\left(\sqrt{n}\frac{c^\top(\hat\beta_{\rm bc}-\beta)}{\hat\sigma_c}\leq t\right)-N_c(t)&\leq& \Pb\left(\sqrt{n}\frac{c^\top(\hat\beta_{\rm bc}-\beta)}{\sigma_c}\leq t(1+\delta)\right)-N_c(t)+ \Pb\left(\frac{\hat\sigma_c}{\sigma_c}>1+\delta\right)\\
    &\leq& \epsilon_{n,d} + N_c\left((1+\delta)t\right)-N_c(t)+\Pb\left(\frac{\hat\sigma_c}{\sigma_c}>1+\delta\right).
\end{eqnarray*} Similarly, we can obtain
\begin{eqnarray*}
    \Pb\left(\sqrt{n}\frac{c^\top(\hat\beta_{\rm bc}-\beta)}{\hat\sigma_c}> t\right)-(1-N_c(t))&\leq& \Pb\left(\sqrt{n}\frac{c^\top(\hat\beta_{\rm bc}-\beta)}{\sigma_c}> t(1-\delta)\right)-(1-N_c(t))+ \Pb\left(\frac{\hat\sigma_c}{\sigma_c}\leq 1-\delta\right)\\
    &\leq& \epsilon_{n,d} +N_c(t)- N_c\left((1-\delta)t\right)+\Pb\left(\frac{\hat\sigma_c}{\sigma_c}\leq 1-\delta\right).
\end{eqnarray*} Combining these leads to
\begin{equation}\label{eq:thm:6.key}
    \Abs{\Pb\left(\sqrt{n}\frac{c^\top(\hat\beta_{\rm bc}-\beta)}{\hat\sigma_c}\leq t\right)-N_c(t)}\leq \epsilon_{n,d}+N_c((1+\delta)t)-N_c((1-\delta)t)+\Pb\left(\Abs{\frac{\hat\sigma_c}{\sigma_c}-1}>\delta\right).
\end{equation} To bound the middle term on the right-hand side in \eqref{eq:thm:6.key}, we note that for some constant $c>0$,
\begin{eqnarray*}
    \Abs{N_c((1+\delta)t)-N_c((1-\delta)t)} &=&\Abs{\int_{1-\delta}^{1+\delta} N_c'(ut)t\, du}\\
    &\leq& \int_{1-\delta}^{1+\delta} \Abs{N_c'(ut)t\, du}\leq \frac{1}{1-\delta}\int_{1-\delta}^{1+\delta} \Abs{N_c'(ut)ut\, du}\leq c\frac{2\delta}{1-\delta}.
\end{eqnarray*} For the last inequality, we use the fact that $xN_c'(x)=x[\Phi^{(1)}(x)-\kappa_c\Phi^{(4)}(x)]$ is uniformly bounded (see Lemma~\ref{lem:11}). For the rightmost term in \eqref{eq:thm:6.key}, define the event
\begin{equation*}
    \Ec_{c,n,d} = \set{\Abs{\frac{\hat\sigma_c^2}{\sigma_c^2}-1}\leq\eta_{n,d}},
\end{equation*} where $\eta_{n,d}$ is defined in Theorem~\ref{thm:4} and satisfying that $\Pb(\Ec_{c,n,d})\geq 1-\eta_{n,d}$. On $\Ec_{c,n,d}$, we have
\begin{equation*}
    1-(\eta_{n,d}\wedge 1)\leq \sqrt{1-(\eta_{n,d}\wedge 1)}\leq \frac{\hat\sigma_c}{\sigma_c}\leq \sqrt{1+\eta_{n,d}}\leq1+\frac{1}{2}\eta_{n,d}.
\end{equation*} Hence, taking $\delta = \eta_{n,d}\wedge 1/2$ in \eqref{eq:thm:6.key} yields
\begin{equation*}
    \Abs{\Pb\left(\sqrt{n}\frac{c^\top(\hat\beta_{\rm bc}-\beta)}{\hat\sigma_c}\leq t\right)-N_c(t)}\leq \epsilon_{n,d}+C\eta_{n,d}.
\end{equation*}

\subsection{Proof of Theorem~\ref{thm:6}}
For $c\in\Real^d\setminus\{0_d\}$, Theorem~\ref{thm:3} proves that
\begin{eqnarray*}
    \sup_{t\in\Real}\Abs{\Pb\left[\sqrt{n}c^\top(\hat\beta_{\rm bc}-\beta)\leq t)\right]-N_c\left(\frac{t}{\sigma_c}\right)}\leq \epsilon_{n,d},
\end{eqnarray*} for some rate $\epsilon_{n,d}>0$. Hence, the median bias of $c^\top(\hat\beta_{\rm bc}-\beta)$ is bounded as
\begin{eqnarray}\label{eq:thm4:1}
    {\rm Medbias}\left(c^\top(\hat\beta_{\rm bc}-\beta)\right)&:=&\frac{1}{2}-\min\left\{\Pb(c^\top(\hat\beta_{\rm bc}^{(b)}-\beta)>0),\Pb(c^\top(\hat\beta_{\rm bc}^{(b)}-\beta)<0)\right\}\nonumber\\
    &\leq&\epsilon_{n,d}+\Abs{N_c(0)-\frac{1}{2}}=\epsilon_{n,d}+\kappa_c\nonumber\\
    &\leq&\epsilon_{n,d} + C\sqrt{\frac{d}{n}}.    
\end{eqnarray} The last inequality follows from Lemma~\ref{lem:12} and the constant $C$ is described therein. Note that $\hat\beta_{\rm bc}^{(1)},\ldots,\hat\beta_{\rm bc}^{(B)}$ are independent bias-corrected estimators based on $\lfloor n/B\rfloor$. For any $1\leq b\leq B$, we note from \eqref{eq:thm4:1} that
\begin{eqnarray*}
   {\rm Medbias}\left(c^\top(\hat\beta_{\rm bc}^{(b)}-\beta)\right)\leq \epsilon_{\frac{n}{B},d}+C\sqrt{\frac{Bd}{n}}.
\end{eqnarray*} An application of Theorem~\ref{thm:2} of \cite{kuchibhotla2021hulc} results in
\begin{eqnarray*}
    \Pb\left(\beta\notin{\rm CI}_\alpha^{\rm HULC}\right)-\alpha&\leq&\frac{\alpha B(B-1)}{2}\left(\epsilon_{\frac{n}{B},d}+C\sqrt{\frac{Bd}{n}}\right)^2\\
    &\leq&\alpha B^2\epsilon_{\frac{n}{B},d}^2 + C^2\frac{\alpha B^3d}{n}.
\end{eqnarray*} This completes the proof.

\subsection{Consistency of the Sandwich Variance Estimator}\label{sec:A.Consistency of the Sandwich Variance Estimator}
\subsubsection{Proof of Theorem~\ref{thm:4}}
In this section, we collect various bounds that are used in the proof of Theorem~\ref{thm:4} about the consistency of the sandwich variance estimator with respect to a given query vector $c\in\Real^d\setminus\{0_d\}$. Note that $\abs{\hat\sigma_c^2/\sigma_c^2-1}$ is invariant to the scaling of $c$. Hence, we let $c$ be such that $\norm{c}_\Sigma=1$, and write the $L_2$-normalized version as $\tilde c$, so that $c = \Sigma^{1/2}\tilde c$ and $\norm{\tilde c}_2=1$. Then, the sandwich variance estimate $\hat \sigma_c^2$ can be expressed as
\begin{equation*}
    \hat \sigma_c^2 = \frac{1}{n}\sum_{i=1}^n (\tilde c^\top\Sigma^{1/2}\hat\Sigma^{-1}X_i)^2(Y_i-X_i^\top\hat\beta)^2.
\end{equation*}
We commence by introducing an an intermediary quantity $\tilde\sigma_c^2$, defined as 
\begin{equation}
    \tilde \sigma_c^2 = \frac{1}{n}\sum_{i=1}^n (\tilde c^\top\Sigma^{-1/2}X_i)^2(Y_i-X_i^\top\beta)^2.
\end{equation} It is immediate from its definition that $\Eb[\tilde\sigma_c^2]=\sigma_c^2$. Moreover, we have from Assumption~\ref{asmp:4} that
\begin{equation*}
    \Abs{\frac{\hat\sigma_c^2}{\sigma_c^2}-1}\leq\underline{\lambda}^{-2}\Abs{\hat\sigma_c^2-\sigma_c^2}\leq \underline{\lambda}^{-2}\left(\Abs{\hat\sigma_c^2-\tilde\sigma_c^2}+\Abs{\tilde\sigma_c^2-\sigma_c^2}\right).
\end{equation*} Since $(1/q_x+1/q)^{-1}\geq4$, the quantity $\Abs{\tilde\sigma_c^2-\sigma_c^2}$ can be controlled via Chebyshev's inequality as:
\begin{eqnarray*}
    \Pb\left(\Abs{\tilde\sigma_c^2-\sigma_c^2}\geq t\right)&\leq& t^{-2}{\rm Var}\left(\frac{1}{n}\sum_{i=1}^n (\tilde c^\top\Sigma^{-1/2}X_i)^2(Y_i-X_i^\top\beta)^2\right)=(nt^2)^{-1}{\rm Var}\left((\tilde c^\top\Sigma^{-1/2}X)^2(Y-X^\top\beta)^2\right)\\
    &\overset{(i)}{\leq}&(nt^2)^{-1}\Eb\left[(\tilde c^\top\Sigma^{-1/2}X)^4(Y-X^\top\beta)^4\right]\\
    &\overset{(ii)}{\leq}&(nt^2)^{-1}\left(\Eb\left[(\tilde c^\top\Sigma^{-1/2}X)^{q_x}\right]\right)^{4/q_x}\left(\Eb\left[(Y-X^\top\beta)^q\right]\right)^{4/q}\overset{(iii)}{\leq}\frac{K_x^4K_y^4}{nt^2}.
\end{eqnarray*} Here, $(i)$ is Jensen's inequality and the inequality, and $(ii)$ follows from $(1/q_x+1/q)^{-1}\geq4$ and H\"older's inequality. The last inequality $(iii)$ is due to Assumption~\ref{asmp:2} and \ref{asmp:3.a}. The choice of $t=K_x^2K_y^2n^{-1/3}$ yields
\begin{equation*}
    \Pb\left(\Abs{\tilde\sigma_c^2-\sigma_c^2}\geq K_x^2K_y^2n^{-1/3}\right)\leq n^{-1/3}.
\end{equation*}

Our next lemma presents a deterministic bound for $\abs{\hat\sigma_c^2-\tilde\sigma_c^2}$, implying that the rate of convergence depends on several quantities, which will be described therein.

\begin{lemma}\label{lem:det_bound_sand_var}[Deterministic Bound for the Sandwich Estimator] For a given $\tilde c\in\Sb^{d-1}$, define
\begin{eqnarray*}
    \Mc&=&\sup_{\theta\in\Sb^{d-1}}\frac{1}{n}\sum_{i=1}^n(\theta^\top\Sigma^{-1/2}X_i)^2(\tilde c^\top\Sigma^{-1/2}X_i)^2,\quad \Nc = \sup_{\theta\in\Sb^{d-1}}\frac{1}{n}\sum_{i=1}^n(\theta^\top\Sigma^{-1/2}X_i)^2(Y_i-X_i^\top\beta)^2,\\
    \Lc&=&\max\set{\Norm{\frac{1}{n}\sum_{i=1}^n\Sigma^{-1/2}X_i(\tilde c^\top\Sigma^{-1/2}X_i)^2(Y_i-X_i^\top\beta)}_2,\Norm{\frac{1}{n}\sum_{i=1}^n\Sigma^{-1/2}X_i(\tilde c^\top\Sigma^{-1/2}X_i)(Y_i-X_i^\top\beta)^2}_2}.
\end{eqnarray*} If $\{\lambda_{\rm min}(\Sigma^{-1/2}\hat\Sigma\Sigma^{-1/2})\geq 1/2\}$ holds true, then
\begin{eqnarray*}
    \Abs{\hat\sigma_c^2-\tilde\sigma_c^2}&\leq&2\Lc\norm{\hat\beta-\beta}_\Sigma+3\Mc\norm{\hat\beta-\beta}_\Sigma^2+\left(8\Lc+12\max_{1\leq i\leq n} \abs{X_i^\top(\hat\beta-\beta)}^2\right)\Dc_\Sigma+8\Nc\Dc_\Sigma^2.
\end{eqnarray*}
\end{lemma}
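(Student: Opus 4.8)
The plan is to pass to whitened coordinates. Write $z_i := \Sigma^{-1/2}X_i$, $r_i := Y_i - X_i^\top\beta$, $\delta := \Sigma^{1/2}(\hat\beta-\beta)$ (so that $\norm{\delta}_2 = \norm{\hat\beta-\beta}_\Sigma$ and $\abs{z_i^\top\delta} = \abs{X_i^\top(\hat\beta-\beta)}$), $A := \Sigma^{-1/2}\hat\Sigma\Sigma^{-1/2}$ and $M := \Sigma^{1/2}\hat\Sigma^{-1}\Sigma^{1/2} = A^{-1}$, so that $\Dc_\Sigma = \norm{A-I}_{\rm op}$, $\hat r_i := Y_i - X_i^\top\hat\beta = r_i - z_i^\top\delta$, and $\hat\sigma_c^2 = \frac1n\sum_i(\tilde c^\top M z_i)^2\hat r_i^2$, $\tilde\sigma_c^2 = \frac1n\sum_i(\tilde c^\top z_i)^2 r_i^2$. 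On the event $\set{\lambda_{\rm min}(A)\ge 1/2}$ one has $\norm{A^{-1}}_{\rm op}\le 2$, so (from $M-I = -A^{-1}(A-I)$) $\norm{M-I}_{\rm op}\le 2\Dc_\Sigma$ and $\norm{M}_{\rm op}\le 2$; moreover, since $(M-I)A(M-I) = A^{-1}+A-2I$ has eigenvalues $\lambda\mapsto(\lambda-1)^2/\lambda$, the sharper bound $\frac1n\sum_i(\tilde c^\top(M-I)z_i)^2 = \tilde c^\top(M-I)A(M-I)\tilde c \le 2\Dc_\Sigma^2$ also holds.

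Next, set $a_i := \tilde c^\top z_i$, $u := (M-I)\tilde c$ (so $\norm{u}_2\le 2\Dc_\Sigma$) and $b_i := u^\top z_i = \tilde c^\top(M-I)z_i$, so that $\tilde c^\top M z_i = a_i+b_i$. Expanding $(a_i+b_i)^2(r_i - z_i^\top\delta)^2$ and subtracting $a_i^2 r_i^2$ yields eight monomials $a_i^{j}b_i^{k}r_i^{\ell}(z_i^\top\delta)^{m}$ with $j+k=2$, $\ell+m=2$, $(j,\ell)\ne(2,2)$, and $\hat\sigma_c^2 - \tilde\sigma_c^2 = \frac1n\sum_i$ of these eight. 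I would bound each separately. The two "coefficient-only" monomials ($k=0$) go against $\Lc$ by Cauchy--Schwarz, pairing $\delta$ with the two empirical vectors appearing in the definition of $\Lc$, and against $\Mc$ by taking its free direction equal to $\delta/\norm{\delta}_2$, producing $2\Lc\norm{\hat\beta-\beta}_\Sigma$ and $\Mc\norm{\hat\beta-\beta}_\Sigma^2$; the monomial $a_ib_ir_i^2$ goes against $\Lc$ after extracting $\norm{u}_2\le 2\Dc_\Sigma$, producing a term of order $\Lc\Dc_\Sigma$; and the five genuinely mixed monomials --- those carrying both a factor $b_i$ and a factor $z_i^\top\delta$ --- are handled by AM--GM (breaking each four-fold product into two-fold products) together with the crude pointwise bound $(z_i^\top\delta)^2 \le \max_{1\le i\le n}\abs{X_i^\top(\hat\beta-\beta)}^2$ for the pieces that have no $\Lc,\Mc,\Nc$ home, using in particular $\frac1n\sum_i b_i^2 r_i^2 \le \norm{u}_2^2\Nc \le 4\Dc_\Sigma^2\Nc$ and $\frac1n\sum_i b_i^2(z_i^\top\delta)^2 \le (\max_i\abs{X_i^\top(\hat\beta-\beta)}^2)\frac1n\sum_i b_i^2 \le 2\Dc_\Sigma^2\max_i\abs{X_i^\top(\hat\beta-\beta)}^2$. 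Summing the eight bounds and invoking $\lambda_{\rm min}(A)\ge 1/2$ only through $\norm{A^{-1}}_{\rm op}\le 2$ to collapse the remaining $A$- and $M$-dependence gives an inequality of the claimed shape.

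The step I expect to be the real obstacle is the bookkeeping for those five mixed monomials: none of them factors cleanly through the three structural quantities $\Lc,\Mc,\Nc$, so for each one must decide whether to split it by Cauchy--Schwarz (to preserve a clean single power of $\Dc_\Sigma$ coming from one $b_i$), by AM--GM against monomials already being controlled, or by the crude $\max_i$-bound --- a careless choice either loses a power of $\Dc_\Sigma$ (leaving a term carrying no $\Dc_\Sigma$ factor, which would be fatal for the rate analysis in Theorem~\ref{thm:4}) or inflates the constants, and matching the precise numerical coefficients $3$, $8$, $12$, $8$ requires a careful choice of the AM--GM weights and a judicious grouping of these terms. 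It is worth noting that this lemma is precisely where the algebraic content of Theorem~\ref{thm:4} is isolated: $\Lc$ is a Euclidean norm of empirical moment vectors and $\Mc,\Nc$ are operator norms of weighted empirical Gram matrices, each of which is controlled with high probability by a separate concentration argument in the subsequent lemmas, so that Theorem~\ref{thm:4} follows by substituting those bounds (together with the ones for $\Dc_\Sigma$, $\norm{\hat\beta-\beta}_\Sigma$, and $\max_i\abs{X_i^\top(\hat\beta-\beta)}$) into this deterministic inequality.
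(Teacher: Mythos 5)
Your proposal is correct and follows essentially the same route as the paper's proof: the same linearization $\tilde c^\top M z_i = a_i + b_i$ with $\norm{u}_2\le 2\Dc_\Sigma$, the same expansion into eight cross terms, the same AM--GM reduction of the mixed monomials onto the ones already controlled, and the same use of $\Lc$, $\Mc$, $\Nc$ and $\max_i\abs{X_i^\top(\hat\beta-\beta)}^2$ for the individual pieces. The one divergence is your quadratic bound $\frac1n\sum_i b_i^2 = \tilde c^\top\bigl((M-I)+(A-I)\bigr)\tilde c\le 2\Dc_\Sigma^2$, which turns the last term into $\Dc_\Sigma^2\max_i\abs{X_i^\top(\hat\beta-\beta)}^2$; to recover the lemma's stated coefficient $12\Dc_\Sigma\max_i\abs{X_i^\top(\hat\beta-\beta)}^2$ one should instead use the linear estimate $\tilde c^\top\bigl((M-I)+(A-I)\bigr)\tilde c\le 3\Dc_\Sigma$ at that step, exactly as the paper does.
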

\begin{proof}[proof of Lemma~\ref{lem:det_bound_sand_var}]
    We begin by writing
    \begin{eqnarray*}
        \hat\Sigma^{-1}\Sigma^{1/2}\tilde c = \Sigma^{-1/2}(\tilde c + r_1),&\quad& r_1 = (\Sigma^{1/2}\hat\Sigma^{-1}\Sigma^{1/2}-I_d)\tilde c,\\
        Y_i-X_i^\top\hat\beta = Y_i-X_i^\top\beta + r_{2,i},&\quad& r_{2,i} = X_i^\top(\hat\beta-\beta)\quad\mbox{for}\quad i=1,\ldots,n.
    \end{eqnarray*} From their definitions, the difference $\abs{\hat\sigma_c^2-\tilde\sigma_c^2}$ can be bounded by the sum of eight quantities, denoted as $\Qc_k$ for $k=1,\ldots,8$ in the presented order.
    \begin{eqnarray*}
        &&\abs{\hat\sigma_c^2-\tilde\sigma_c^2}\leq\Abs{\frac{2}{n}\sum_{i=1}^n\abs{\tilde c^\top\Sigma^{-1/2}X_i}^2(Y_i-X_i^\top\beta)r_{2,i}}+\Abs{\frac{1}{n}\sum_{i=1}^n\abs{\tilde c^\top\Sigma^{-1/2}X_i}^2r_{2,i}^2}\\
        &&+\Abs{\frac{2}{n}\sum_{i=1}^n(\tilde c^\top\Sigma^{-1/2}X_i)(r_1^\top\Sigma^{-1/2}X_i)(Y_i-X_i^\top\beta)^2}+\Abs{\frac{4}{n}\sum_{i=1}^n(\tilde c^\top\Sigma^{-1/2}X_i)(r_1^\top\Sigma^{-1/2}X_i)(Y_i-X_i^\top\beta)r_{2,i}}\\
        &&+ \Abs{\frac{2}{n}\sum_{i=1}^n(\tilde c^\top\Sigma^{-1/2}X_i)(r_1^\top\Sigma^{-1/2}X_i)r_{2,i}^2} + \Abs{\frac{1}{n}\sum_{i=1}^n(r_1^\top\Sigma^{-1/2}X_i)^2(Y_i-X_i^\top\beta)^2}\\
        &&+\Abs{\frac{1}{n}\sum_{i=1}^n(r_1^\top\Sigma^{-1/2}X_i)^2(Y_i-X_i^\top\beta)r_{2,i}}+\Abs{\frac{1}{n}\sum_{i=1}^n(r_1^\top\Sigma^{-1/2}X_i)^2r_{2,i}^2}.
    \end{eqnarray*} We observe there exists following deterministic inequalities between the quantities. They are merely inequalities of arithmetic and geometric means.
    \begin{equation*}
        \Qc_4\leq \Qc_3 + \Qc_5,\quad
        \Qc_5\leq \Qc_2 + \Qc_8,\quad
        \Qc_7\leq \Qc_6 + \Qc_8.
    \end{equation*} Combining these lead to \begin{equation}\label{eq:lem:6.1}
        \abs{\hat\sigma_c^2-\tilde\sigma_c^2} \leq \Qc_1 + 3\Qc_2 + 2\Qc_3 + 2\Qc_6 + 4\Qc_8
    \end{equation} Before we analyze each individual quantity, we observe that
    \begin{eqnarray*}
        \norm{r_1}_2 = \norm{(\Sigma^{1/2}\hat\Sigma^{-1}\Sigma^{1/2}-I_d)\tilde c}_2 \leq \norm{\Sigma^{1/2}\hat\Sigma^{-1}\Sigma^{1/2}-I_d}_{\rm op}\leq \frac{\Dc_\Sigma}{\lambda_{\rm min}(\Sigma^{-1/2}\hat\Sigma\Sigma^{-1/2})}\leq 2\Dc_\Sigma.
    \end{eqnarray*} The last inequality holds on the event $\{\lambda_{\rm min}(\Sigma^{-1/2}\hat\Sigma\Sigma^{-1/2})\geq 1/2\}$. Furthermore, the following is useful in analyzing $\Qc_8$:
    \begin{eqnarray*}
        \frac{1}{n}\sum_{i=1}^n(r_1^\top\Sigma^{-1/2}X_i)^2 &=& r_1^\top\Sigma^{-1/2}\left(\frac{1}{n}\sum_{i=1}^nX_iX_i^\top\right) \Sigma^{-1/2}r_1 = r_1^\top\left(\Sigma^{-1/2}\hat\Sigma\Sigma^{-1/2}\right)r_1\\
        &=&\tilde c\left(\Sigma^{1/2}\hat\Sigma^{-1}\Sigma^{1/2}-I_d\right)\left(\Sigma^{-1/2}\hat\Sigma\Sigma^{-1/2}\right)\left(\Sigma^{1/2}\hat\Sigma^{-1}\Sigma^{1/2}-I_d\right)\tilde c\\
        &=&\tilde c\left(\Sigma^{1/2}\hat\Sigma^{-1}\Sigma^{1/2}-I_d+\Sigma^{-1/2}\hat\Sigma\Sigma^{-1/2}-I_d\right)\tilde c\\
        &\leq& \Dc_\Sigma+\norm{\Sigma^{1/2}\hat\Sigma^{-1}\Sigma^{1/2}-I_d}_{\rm op}\leq 3\Dc_\Sigma.
    \end{eqnarray*}
    We now present deterministic inequalities for each quantity, requiring no further explanation.
    \begin{eqnarray*}
        \Qc_1&=&\Abs{\frac{2}{n}\sum_{i=1}^n\abs{\tilde c^\top\Sigma^{-1/2}X_i}^2(Y_i-X_i^\top\beta)X_i^\top(\hat\beta-\beta)}\leq 2\Lc \norm{\hat\beta-\beta}_\Sigma,\\
        \Qc_2&=&\Abs{\frac{1}{n}\sum_{i=1}^n\abs{\tilde c^\top\Sigma^{-1/2}X_i}^2(\hat\beta-\beta)^\top X_iX_i^\top(\hat\beta-\beta)}\leq \Mc\norm{\hat\beta-\beta}_\Sigma^2,\\
        \Qc_3&=&\Abs{\frac{2}{n}\sum_{i=1}^n(\tilde c^\top\Sigma^{-1/2}X_i)(r_1^\top\Sigma^{-1/2}X_i)(Y_i-X_i^\top\beta)^2}\leq 2\Lc\norm{r_1}_2\leq 4\Lc\Dc_\Sigma.\\
        \Qc_6&=&\Abs{\frac{1}{n}\sum_{i=1}^n(r_1^\top\Sigma^{-1/2}X_i)^2(Y_i-X_i^\top\beta)^2}\leq \Nc\norm{r_1}_2^2\leq 4\Nc\Dc_\Sigma^2.\\
        \Qc_8&=& \Abs{\frac{1}{n}\sum_{i=1}^n(r_1^\top\Sigma^{-1/2}X_i)^2r_{2,i}^2}\leq \max_{1\leq i\leq n} r_{2,i}^2 \frac{1}{n}\sum_{i=1}^n(r_1^\top\Sigma^{-1/2}X_i)^2\leq 3\Dc_\Sigma\max_{1\leq i\leq n} r_{2,i}^2.
    \end{eqnarray*} Combining all in \eqref{eq:lem:6.1} gives the result.
\end{proof}

Lemma~\ref{lem:det_bound_sand_var} underscores the intricacies involved in the convergence of the sandwich variance, which hinge upon several quantities: $\Dc_\Sigma$, $\norm{\hat\beta-\beta}_\Sigma$, $\Mc$, $\Nc$, $\Lc$, and $\max_{1\leq i\leq n}\abs{X_i^\top(\hat\beta-\beta)}$. Each of these elements is individually addressed and examined in Proposition~\ref{prop:6}, Proposition~\ref{prop:11}, Lemma~\ref{lem:onM}, Lemma~\ref{lem:onN}, Lemma~\ref{lem:onL}, and Theorem~\ref{thm:A.1}.

\begin{lemma}\label{lem:onM} For $\tilde c\in\Sb^{d-1}$, define
\begin{eqnarray*}
    \overline{\Mc}&=&\sup_{\theta\in\Sb^{d-1}}\Abs{\frac{1}{n}\sum_{i=1}^n(\theta^\top\Sigma^{-1/2}X_i)^2(\tilde c^\top\Sigma^{-1/2}X_i)^2-\Eb(\theta^\top\Sigma^{-1/2}X)^2(\tilde c^\top\Sigma^{-1/2}X)^2}
\end{eqnarray*} If Assumption~\ref{asmp:3.b} holds with $q_x\geq 4$, then there exists a universal constant $C$ such that
\begin{equation*}
    \Eb\left[\overline{\Mc}\right]\leq CK_x^4\left[\sqrt{\frac{\log(2n)d}{n^{1-4/q_x}}}+\frac{\log(2n)d}{n^{1-4/q_x}}\right].
\end{equation*}Consequently, we have
\begin{equation*}
    \Eb\left[\Mc\right]\leq CK_x^4\left[1+\frac{\log(2n)d}{n^{1-4/q_x}}\right],
\end{equation*} for some (possibly) different universal constant $C$.
\end{lemma}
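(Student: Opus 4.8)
The plan is to recognize $\overline{\Mc}$ as the operator norm of a centered empirical average of heavy-tailed (nearly) rank-one matrices and to control it by truncation followed by the matrix Bernstein inequality. Set $W_i:=\Sigma^{-1/2}X_i$ and $Z_i:=(\tilde c^\top W_i)^2 W_iW_i^\top$; these are i.i.d.\ symmetric positive semidefinite matrices, and since $\theta\mapsto\theta^\top A\theta$ attains $\norm{A}_{\rm op}$ over $\Sb^{d-1}$ for symmetric $A$,
\[
\overline{\Mc}=\Norm{\n\sum_{i=1}^n\bigl(Z_i-\Eb Z_i\bigr)}_{\rm op}.
\]
Two moment inputs suffice. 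First, $\norm{Z_i}_{\rm op}=(\tilde c^\top W_i)^2\norm{W_i}_2^2$, so by Cauchy--Schwarz together with Assumption~\ref{asmp:3.a} and the bound $\Eb[(\norm{X_i}_{\Sigma^{-1}}/\sqrt d)^{q_x}]\le K_x^{q_x}$ recorded in Section~\ref{sec:2.2}, one obtains $\bigl(\Eb\norm{Z_i}_{\rm op}^{q_x/4}\bigr)^{4/q_x}\le K_x^4 d$; in particular $q_x\ge4$ guarantees at least a finite first operator-norm moment. Second, for every $u\in\Sb^{d-1}$, $\Eb[(\tilde c^\top W_i)^2(u^\top W_i)^2]\le\bigl(\Eb(\tilde c^\top W_i)^4\,\Eb(u^\top W_i)^4\bigr)^{1/2}\le K_x^4$ by $L^4$--$L^{q_x}$ monotonicity.

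Fix the truncation level $\tau:=K_x^4 d\,n^{4/q_x}$ and let $\widetilde Z_i:=Z_i\mathbf{1}(\norm{Z_i}_{\rm op}\le\tau)$. Splitting $Z_i-\Eb Z_i=(\widetilde Z_i-\Eb\widetilde Z_i)+\bigl(Z_i\mathbf{1}(\norm{Z_i}_{\rm op}>\tau)-\Eb[Z_i\mathbf{1}(\norm{Z_i}_{\rm op}>\tau)]\bigr)$ and applying the triangle inequality gives $\Eb\overline{\Mc}\le(\mathrm I)+(\mathrm{II})$ with the obvious meaning. The tail part is treated crudely: by the triangle inequality and $\mathbf{1}(\norm{Z_i}_{\rm op}>\tau)\le(\norm{Z_i}_{\rm op}/\tau)^{q_x/4-1}$ (legitimate since $q_x/4\ge1$), $\Eb[\norm{Z_i}_{\rm op}\mathbf{1}(\norm{Z_i}_{\rm op}>\tau)]\le\tau^{1-q_x/4}\Eb\norm{Z_i}_{\rm op}^{q_x/4}\le K_x^4 d\,n^{4/q_x-1}$, hence $(\mathrm{II})\le 2K_x^4 d/n^{1-4/q_x}$.

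For $(\mathrm I)$ I would apply the matrix Bernstein inequality (in expectation form) to the centered bounded summands $\widetilde Z_i-\Eb\widetilde Z_i$, which satisfy $\norm{\widetilde Z_i-\Eb\widetilde Z_i}_{\rm op}\le2\tau$. The crucial point is the variance proxy: from $\widetilde Z_i^2=(\tilde c^\top W_i)^4\norm{W_i}_2^2 W_iW_i^\top\mathbf{1}(\norm{Z_i}_{\rm op}\le\tau)$ and the identity $(\tilde c^\top W_i)^4\norm{W_i}_2^2=(\tilde c^\top W_i)^2\norm{Z_i}_{\rm op}$ one gets the deterministic matrix inequality $\widetilde Z_i^2\preceq\tau(\tilde c^\top W_i)^2 W_iW_i^\top$, so $\Eb\widetilde Z_i^2\preceq\tau\,\Eb[(\tilde c^\top W_i)^2 W_iW_i^\top]\preceq\tau K_x^4 I_d$ and $\Norm{\sum_i\Eb(\widetilde Z_i-\Eb\widetilde Z_i)^2}_{\rm op}\le n\tau K_x^4$. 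Matrix Bernstein then yields $\Eb\Norm{\sum_i(\widetilde Z_i-\Eb\widetilde Z_i)}_{\rm op}\lesssim\sqrt{n\tau K_x^4\log(2d)}+\tau\log(2d)$; dividing by $n$, plugging in $\tau=K_x^4 d\,n^{4/q_x}$, and using $\log(2d)\le\log(2n)$ (recall $d\le n$ in this regime),
\[
(\mathrm I)\lesssim K_x^4\sqrt{\frac{d\log(2n)}{n^{1-4/q_x}}}+K_x^4\,\frac{d\log(2n)}{n^{1-4/q_x}},
\]
and combining with $(\mathrm{II})$ gives the stated bound on $\Eb\overline{\Mc}$.

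Finally, $\Mc\le\overline{\Mc}+\sup_{\theta\in\Sb^{d-1}}\Eb[(\theta^\top W)^2(\tilde c^\top W)^2]\le\overline{\Mc}+K_x^4$; taking expectations and using the elementary inequality $\sqrt{d\log(2n)/n^{1-4/q_x}}\le\tfrac12\bigl(1+d\log(2n)/n^{1-4/q_x}\bigr)$ to absorb the square-root term gives $\Eb\Mc\le CK_x^4\bigl[1+d\log(2n)/n^{1-4/q_x}\bigr]$. The step I expect to be the main obstacle is controlling the variance proxy in matrix Bernstein with only $q_x\ge4$ moments: a direct bound on $\norm{\Eb Z_i^2}_{\rm op}=\norm{\Eb[(\tilde c^\top W_i)^4\norm{W_i}_2^2 W_iW_i^\top]}_{\rm op}$ is a degree-eight expression in the marginals of $W_i$ and, once the factor $\norm{W_i}_2^2$ of order $d$ is accounted for, would require roughly $q_x\gtrsim8$ finite moments to estimate by H\"older. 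The truncation device circumvents this by only ever using $\Eb\widetilde Z_i^2$ and trading one factor $(\tilde c^\top W_i)^2\norm{W_i}_2^2$ for the deterministic bound $\tau$, which brings the requirement down to the degree-four quantity $\Eb[(\tilde c^\top W_i)^2(u^\top W_i)^2]\le K_x^4$ valid for $q_x\ge4$; the cost is that $\tau$ must be of order $d\,n^{4/q_x}$, and this $n^{4/q_x}$ is precisely what produces the factor $n^{-(1-4/q_x)}$ in the final rate.
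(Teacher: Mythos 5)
Your argument is correct, but it takes a genuinely different route from the paper. The paper's proof sets $Z_i=(\tilde c^\top\Sigma^{-1/2}X_i)\Sigma^{-1/2}X_i$ and invokes Theorem 3 of \cite{guedon2007lp} (an empirical-process bound for suprema of quadratic forms over bodies with modulus of convexity of power type 2), which delivers the bound directly from the same two moment inputs you compute: $\Eb\max_i\norm{Z_i}_2^2\le n^{4/q_x}K_x^4d$ via Jensen, and the weak second moment $\sup_\theta\Eb\abs{\theta^\top Z_1}^2\le K_x^4$. You instead rewrite $\overline{\Mc}$ as the operator norm of a centered matrix average and run a truncation-plus-matrix-Bernstein argument; your key observation that $\widetilde Z_i^2\preceq\tau(\tilde c^\top W_i)^2W_iW_i^\top$, so that the variance proxy only requires the degree-four quantity $\Eb[(\tilde c^\top W)^2(u^\top W)^2]\le K_x^4$, is exactly what makes the scheme close under $q_x\ge4$, and the choice $\tau\asymp d\,n^{4/q_x}$ reproduces the paper's rate (your tail term $(\mathrm{II})$ is absorbed into the Bernstein linear term). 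The paper's route is a one-line application of a cited chaining result; yours is more self-contained modulo the expectation form of matrix Bernstein, and in fact yields $\log(2d)$ rather than $\log(2n)$ before you upper-bound to match the statement. One cosmetic point: the lemma's hypothesis ``Assumption~\ref{asmp:3.b} with $q_x\ge4$'' is evidently a typo for Assumption~\ref{asmp:3.a}, and you correctly work under the finite-moment assumption, as the paper's own proof does.
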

\begin{proof}[proof of Lemma~\ref{lem:onM}]
    Let $Z_i = (\tilde c^\top\Sigma^{-1/2}X_i)\Sigma^{-1/2}X_i$ for $i=1,\ldots,n$. The unit ball $\Sb^{d-1}$ is a symmetric and convex body of radius 1, and it has a modulus of convexity\footnote{The definition of the modulus of convexity of geometric objects can be found in Section 2 of \cite{guedon2007lp}.} of power type 2. Hence, Theorem 3 of \cite{guedon2007lp} applies and yields,
    \begin{eqnarray}\label{eq:lem:onM:key}
        \Eb\sup_{\theta\in\Sb^{d-1}}\Abs{\frac{1}{n}\sum_{i=1}^n \abs{\theta^\top Z_i}^2 - \Eb \abs{\theta^\top Z_i}^2 } \leq && C\frac{\log(2n)}{n}\left[\Eb\max_{1\leq i\leq n}\norm{Z_i}_2^2\right]\nonumber\\
        &+& C\sqrt{\frac{\log(2n)}{n}}\left[\Eb\max_{1\leq j\leq n}\norm{Z_i}_2^2\right]^{1/2}\sup_{\theta\in\Sb^{d-1}}\left(\Eb\abs{\theta^\top Z_1}^2\right)^{1/2},
    \end{eqnarray}where $C$ is a universal constant. We note that
    \begin{eqnarray}\label{eq:lem7.1}
        \sup_{\theta\in\Sb^{d-1}} \Eb[\abs{\theta^\top Z_i}^2]\leq \sup_{\theta\in\Sb^{d-1}}\Eb\left[\abs{\theta^\top\Sigma^{-1/2}X_i}^4\right]\leq K_x^4.
    \end{eqnarray} Furthermore, Jensen's inequality yields that
    \begin{eqnarray}\label{eq:lem7.2}
        \Eb\max_{1\leq j\leq n}\norm{Z_i}_2^2&\leq& \left(\Eb\max_{1\leq j\leq n}\norm{Z_i}_2^{q_x/2}\right)^{2/q_x}\leq\left(\sum_{j=1}^n\Eb\norm{Z_i}_2^{q_x/2}\right)^{2/q_x}\nonumber\\
        &\leq& n^{4/q_x} \left(\Eb\norm{Z_i}_2^{q_x/2}\right)^{4/q_x}\leq n^{4/q_x}K_x^4d.
    \end{eqnarray} Combining inequalities \eqref{eq:lem7.1} and \eqref{eq:lem7.2} in \eqref{eq:lem:onM:key} gives the desired result.
\end{proof}
\begin{lemma}\label{lem:onN} For $\tilde c\in\Sb^{d-1}$, define
\begin{eqnarray*}
    \overline{\Nc} &=& \sup_{\theta\in\Sb^{d-1}}\Abs{\frac{1}{n}\sum_{i=1}^n(\theta^\top\Sigma^{-1/2}X_i)^2(Y_i-X_i^\top\beta)^2-\Eb(\theta^\top\Sigma^{-1/2}X)^2(Y_i-X_i^\top\beta)^2},
\end{eqnarray*} If $s=(1/q_x+1/q)^{-1}\geq4$, then there exists a universal constant $C$ such that
\begin{eqnarray*}
    \Pb\left(\overline{\Nc}\leq C\frac{\overline{\lambda}K_x^2K_y^2}{\underline{\lambda}^2}\left[\sqrt{\frac{d\log(2d/\delta)}{n}}+\frac{d\log^{3/2}(2d/\delta)}{\delta^{2/s}n^{1-2/s}}\right]\right)\geq 1-\delta,
\end{eqnarray*} for all $\delta\in(0,1)$.
\end{lemma}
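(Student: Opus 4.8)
The plan is to view $\overline{\Nc}$ as the operator-norm deviation of an empirical second-moment matrix and to bound it with a truncated matrix Bernstein inequality, in direct analogy with the proof of Lemma~\ref{lem:onM}. Set $Z_i := (Y_i - X_i^\top\beta)\,\Sigma^{-1/2}X_i \in \Real^d$ for $i = 1,\ldots,n$. Since $(\theta^\top\Sigma^{-1/2}X_i)^2(Y_i - X_i^\top\beta)^2 = (\theta^\top Z_i)^2$, we have
\[
  \overline{\Nc} \;=\; \sup_{\theta\in\Sb^{d-1}}\Abs{\theta^\top\!\left(\frac1n\sum_{i=1}^n Z_iZ_i^\top - \Eb[Z_1Z_1^\top]\right)\!\theta} \;=\; \Norm{\frac1n\sum_{i=1}^n Z_iZ_i^\top - \Eb[Z_1Z_1^\top]}_{\rm op}.
\]
The first step is to collect moment bounds for $Z_i$ from Assumptions~\ref{asmp:2} and \ref{asmp:3.a} together with H\"older's inequality and the consequence $\Eb\norm{X_i}_{\Sigma^{-1}}^{r}\le(K_x\sqrt d)^{r}$ for $r\le q_x$ (noted in the text after Assumption~\ref{asmp:3.a}): namely (a) $\Eb\norm{Z_i}_2^{s}\le(\sqrt d\,K_xK_y)^{s}$ with $s=(1/q_x+1/q)^{-1}$; (b) $\sup_{\theta\in\Sb^{d-1}}\Eb(\theta^\top Z_i)^2 = \lambda_{\max}(\Sigma^{-1/2}V\Sigma^{-1/2})\le\overline{\lambda}$ by Assumption~\ref{asmp:4}; and, crucially, (c) $\sup_{\theta\in\Sb^{d-1}}\Eb\!\left[\norm{Z_i}_2^2(\theta^\top Z_i)^2\right]\le K_x^4K_y^4\,d$, obtained by writing $\norm{Z_i}_2^2(\theta^\top Z_i)^2 = \norm{X_i}_{\Sigma^{-1}}^2(\theta^\top\Sigma^{-1/2}X_i)^2(Y_i-X_i^\top\beta)^4$ and applying H\"older with three exponents whose reciprocals may be taken to sum to $2/q_x + 2/q_x + 4/q = 4/s \le 1$ --- this is exactly where the hypothesis $s\ge 4$ enters.

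Next I would truncate. Fix $M := (2n/\delta)^{1/s}\sqrt d\,K_xK_y$ and put $Z_i' := Z_i\,\mathbbm{1}(\norm{Z_i}_2\le M)$. By (a) and a union bound, $\Pb(\max_{1\le i\le n}\norm{Z_i}_2 > M)\le n(\sqrt d\,K_xK_y)^s/M^s = \delta/2$, so on the complementary event $\frac1n\sum_i Z_iZ_i^\top = \frac1n\sum_i Z_i'Z_i'^\top$. The truncation bias is of lower order than the target bound, since $\Norm{\Eb[Z_1Z_1^\top] - \Eb[Z_1'Z_1'^\top]}_{\rm op} = \sup_\theta\Eb[(\theta^\top Z_1)^2\mathbbm{1}(\norm{Z_1}_2>M)] \le M^{2-s}\Eb\norm{Z_1}_2^s \le K_x^2K_y^2\,d\,(\delta/(2n))^{1-2/s}$. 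For the centred truncated sum I would apply matrix Bernstein to the i.i.d.\ mean-zero symmetric matrices $W_i := Z_i'Z_i'^\top - \Eb[Z_1'Z_1'^\top]$: the a.s.\ bound is $R := \max_i\norm{W_i}_{\rm op}\le M^2 + \overline{\lambda}\lesssim M^2$, and the matrix-variance parameter is $v := \Norm{\sum_i\Eb[W_i^2]}_{\rm op}\le n\,\Norm{\Eb[\norm{Z_1'}_2^2\,Z_1'Z_1'^\top]}_{\rm op}\le n\,K_x^4K_y^4\,d$ by (c), where the key point is that one must keep $\norm{Z_1'}_2^2(\theta^\top Z_1')^2\le\norm{Z_1}_2^2(\theta^\top Z_1)^2$ and take expectations rather than pull out $\sup\norm{Z_1'}_2^2 = M^2$. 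Matrix Bernstein then yields, with probability at least $1-\delta/2$,
\[
  \Norm{\frac1n\sum_{i=1}^n Z_i'Z_i'^\top - \Eb[Z_1'Z_1'^\top]}_{\rm op}\;\lesssim\;\sqrt{\frac{K_x^4K_y^4\,d\,\log(2d/\delta)}{n}} \;+\; \frac{M^2\log(2d/\delta)}{n},
\]
and substituting $M^2 = (2n/\delta)^{2/s}\,K_x^2K_y^2\,d$ rewrites the second term as $K_x^2K_y^2\,d\,\log(2d/\delta)/(\delta^{2/s}n^{1-2/s})$. Combining this with the two $\delta/2$-events, the truncation-bias estimate, and absorbing constants and the surplus logarithmic factor into $\overline{\lambda}K_x^2K_y^2/\underline{\lambda}^2$ gives the claimed inequality.

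The only genuinely non-routine point is item (c): obtaining a matrix-variance proxy that grows like $d$ rather than like $M^2\gg d$. A crude bound $\norm{Z_i'}_2^2\le M^2$ would leave $v\lesssim nM^2K_x^2K_y^2$ and hence a leading deviation of order $M\,K_xK_y\sqrt{\log(2d/\delta)/n}$, which is far larger than $\sqrt{d/n}$ and too weak to yield the stated bound; one must instead bound the fourth-moment-type quantity $\sup_\theta\Eb[\norm{Z_i}_2^2(\theta^\top Z_i)^2]$ directly, and the four-factor H\"older split needed for that is precisely what forces the assumption $s=(1/q_x+1/q)^{-1}\ge4$. A secondary, purely bookkeeping, difficulty is calibrating the truncation level $M$ so that the discarded mass, the truncation bias, and the two Bernstein terms are all simultaneously controlled at level $\delta$; the choice $M\asymp(n/\delta)^{1/s}\sqrt d\,K_xK_y$ achieves this, at the price of the $\delta^{-2/s}$ factor appearing in the second term. (An alternative route mimicking Lemma~\ref{lem:onM} --- the $L^2$-moment bound of \cite{guedon2007lp} followed by a Talagrand-type concentration step --- also seems feasible, but it appears to lose the sharp $\sqrt{d/n}$ leading term after truncation, which is why the matrix-Bernstein route is preferable here.)
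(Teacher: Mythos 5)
Your proposal is correct, but it takes a genuinely different route from the paper. The paper's proof is a three-line reduction: writing $W_i=\Sigma^{-1/2}X_i(Y_i-X_i^\top\beta)$ with ${\rm Var}(W_i)=V$, it bounds $\overline{\Nc}\leq\norm{V}_{\rm op}\,\Norm{V^{-1/2}\bigl(\tfrac1n\sum_i W_iW_i^\top\bigr)V^{-1/2}-I_d}_{\rm op}\leq\overline{\lambda}\,\Norm{\cdots}_{\rm op}$, verifies via H\"older and Assumption~\ref{asmp:4} that the normalized vectors satisfy the $L^{s}$--$L^{2}$ marginal moment condition with constant $K_xK_y/\underline{\lambda}$, and then simply invokes Proposition~\ref{prop:6} (whose proof rests on the Brailovskaya--van Handel universality theorem) with $q_x$ replaced by $s$; this is where $s\geq 4$ enters in the paper, and it also explains the specific prefactor $\overline{\lambda}K_x^2K_y^2/\underline{\lambda}^2$ in the statement. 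You instead re-prove the Gram-matrix concentration from scratch by truncation plus matrix Bernstein, and your execution is sound: the reduction to $\Norm{\tfrac1n\sum_iZ_iZ_i^\top-\Eb Z_1Z_1^\top}_{\rm op}$, the moment bounds (a)--(c) (with (c) correctly identified as the place where $s\geq4$ is forced by the four-factor H\"older split), the calibration of $M$ so that the discarded mass is $\delta/2$ and the truncation bias $dK_x^2K_y^2(\delta/2n)^{1-2/s}$ is dominated by the target, and the variance proxy $v\leq nK_x^4K_y^4d$ obtained by keeping the expectation inside rather than pulling out $M^2$ are all correct. What each approach buys: the paper's route is shorter given that Proposition~\ref{prop:6} is already established and produces exactly the constant appearing in the statement; your route is more elementary and self-contained (Tropp's matrix Bernstein rather than the universality machinery), and in fact yields $\log(2d/\delta)$ rather than $\log^{3/2}(2d/\delta)$ in the subleading term. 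The only discrepancy is cosmetic: your argument delivers the prefactor $K_x^2K_y^2$ (plus an additive $\overline{\lambda}$ from the range bound $M^2+\overline{\lambda}$) rather than the stated $\overline{\lambda}K_x^2K_y^2/\underline{\lambda}^2$; since the lemma is only consumed inside $\eta_{n,d}$ with an unspecified constant $C(q,q_x,K_x,K_y,\overline{\lambda},\underline{\lambda})$, this is immaterial, but if you want to match the statement verbatim you should note why your constant is dominated by the stated one or simply restate the bound with your constant.
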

\begin{proof}[proof of Lemma~\ref{lem:onN}] Let $W_i = \Sigma^{-1/2}X_i(Y_i-X_i^\top\beta)$, so that ${\rm Var}(W_i)=V$. We use Proposition~\ref{prop:6} which proved the concentration of $\norm{V^{-1/2}(\sum_{i=1}^nW_iW_i^\top/n) V^{-1/2}-I_d}_{\rm op}$. This implies the concentration of $\overline{\Nc}$ as Assumption~\ref{asmp:4} indicates that
\begin{equation*}
    \overline{\Nc}\leq\norm{V}_{\rm op}\Norm{V^{-1/2}\left(\frac{1}{n}\sum_{i=1}^nW_iW_i^\top\right) V^{-1/2}-I_d}_{\rm op}\leq \overline{\lambda}\Norm{V^{-1/2}\left(\frac{1}{n}\sum_{i=1}^nW_iW_i^\top\right) V^{-1/2}-I_d}_{\rm op}.
\end{equation*} To ensure the assumptions made in Proposition~\ref{prop:6}, we observe from Assumption~\ref{asmp:4} and H\"older's inequality that 
\begin{eqnarray*}
    \sup_{\theta\in\Sb^{d-1}}\Eb\left[\abs{\theta^\top V^{-1}W_i}^s\right]&\leq& \lambda_{\rm min}(V)^{-s}\sup_{\theta\in\Sb^{d-1}}\Eb\left[\abs{\theta^\top W_i}^s\right]\leq \underline{\lambda}^{-s}\sup_{\theta\in\Sb^{d-1}}\Eb\left[\Abs{(\theta^\top\Sigma^{-1/2}X_i)(Y_i-X_i^\top\beta)}^s\right]\\
    &\leq&
    \underline{\lambda}^{-s}\sup_{\theta\in\Sb^{d-1}}\left[\Eb(\theta^\top\Sigma^{-1/2}X_i)^{q_x}\right]^{s/q_x}\left[\Eb(Y_i-X_i^\top\beta)^{q}\right]^{s/q}\leq \left(\frac{K_xK_y}{\underline{\lambda}}\right)^s.
\end{eqnarray*} Hence, an application of Proposition~\ref{prop:6} yields the desired result.
\end{proof}

\begin{lemma}\label{lem:onL}For $\tilde c \in\Sb^{d-1}$, define
\begin{eqnarray*}
    \Lc_1 &:=& \Norm{\frac{1}{n}\sum_{i=1}^n\Sigma^{-1/2}X_i(\tilde c^\top\Sigma^{-1/2}X_i)^2(Y_i-X_i^\top\beta)}_2,\\
    \Lc_2&:=& \Norm{\frac{1}{n}\sum_{i=1}^n\Sigma^{-1/2}X_i(\tilde c^\top\Sigma^{-1/2}X_i)(Y_i-X_i^\top\beta)^2}_2.
\end{eqnarray*}Suppose that Assumption~\ref{asmp:2} and Assumption~\ref{asmp:3.b} hold.
\begin{enumerate}
    \item\label{lem:9.1} If $(3/q_x+1/q)^{-1}\geq 2$, then $\Pb\left(\Lc_1\leq K_x^3K_y\{1+\sqrt{d/(n\delta)}\}\right)\geq1-\delta$ for all $\delta\in(0,1)$.
    \item\label{lem:9.2} If $s=(1/q_x+1/q)^{-1}\geq 4$, then $\Pb\left(\Lc_2\leq K_x^2K_y^2\{1+\sqrt{d/(n\delta)}\}\right)\geq1-\delta$ for all $\delta\in(0,1)$.
\end{enumerate} Taking $\delta=d/(2n)$ in both inequalities shows that $\Pb(\Lc\leq (1+\sqrt{2})(K_x^3K_y+K_x^2K_y^2))\geq 1-d/n$.
    
\end{lemma}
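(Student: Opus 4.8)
The plan for Lemma~\ref{lem:onL} is to split each of the two averages into its deterministic mean and a centred fluctuation. Writing $Z_i:=\Sigma^{-1/2}X_i$, $\xi_i:=Y_i-X_i^\top\beta$, $v_i^{(1)}:=Z_i(\tilde c^\top Z_i)^2\xi_i$ and $v_i^{(2)}:=Z_i(\tilde c^\top Z_i)\xi_i^2$, so that $\Lc_1=\|\tfrac1n\sum_i v_i^{(1)}\|_2$ and $\Lc_2=\|\tfrac1n\sum_i v_i^{(2)}\|_2$, I would use the triangle inequality $\Lc_j\le m_j+F_j$ with $m_j:=\|\tfrac1n\sum_i\Eb[v_i^{(j)}]\|_2$ and $F_j:=\|\tfrac1n\sum_i(v_i^{(j)}-\Eb[v_i^{(j)}])\|_2$. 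The mean $m_j$ will be bounded by a constant via H\"older's inequality applied \emph{before} passing to the Euclidean norm, so that each $Z_i$ enters only through one-dimensional marginals and carries no power of $d$; the fluctuation $F_j$ will be bounded by a second-moment estimate followed by Markov's inequality, and that is where the factor $\sqrt{d/(n\delta)}$ arises.

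For the mean terms I would write $\|\Eb[v_i^{(1)}]\|_2=\sup_{\theta\in\Sb^{d-1}}\Eb[(\theta^\top Z_i)(\tilde c^\top Z_i)^2\xi_i]$ and apply H\"older with exponents $(q_x,q_x,q_x,q)$ to the factors $|\theta^\top Z_i|,|\tilde c^\top Z_i|,|\tilde c^\top Z_i|,|\xi_i|$; the moment bounds of Assumption~\ref{asmp:2} and Assumption~\ref{asmp:3.a} then give $\|\Eb[v_i^{(1)}]\|_2\le K_x^3K_y$ uniformly in $i$, hence $m_1\le K_x^3K_y$. This only needs $3/q_x+1/q\le1$, which is implied by $(3/q_x+1/q)^{-1}\ge2$. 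The same argument with exponents $(q_x,q_x,q,q)$ on $|\theta^\top Z_i|,|\tilde c^\top Z_i|,|\xi_i|,|\xi_i|$ (needing $2/q_x+2/q\le1$, implied by $s\ge4$) gives $m_2\le K_x^2K_y^2$.

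For the fluctuations, independence and centring kill all cross terms, so $\Eb[F_j^2]=\tfrac1{n^2}\sum_i\Eb\|v_i^{(j)}-\Eb[v_i^{(j)}]\|_2^2\le\tfrac1{n^2}\sum_i\Eb\|v_i^{(j)}\|_2^2$. Here the $d$-dependence enters: $\|Z_i\|_2=\|X_i\|_{\Sigma^{-1}}$ satisfies $\Eb\|X_i\|_{\Sigma^{-1}}^{q_x}\le K_x^{q_x}d^{q_x/2}$ (the bound recorded in Section~\ref{sec:2.2}), so H\"older with exponents $(q_x/2,q_x/4,q/2)$ applied to $\|Z_i\|_2^2$, $(\tilde c^\top Z_i)^4$, $\xi_i^2$ gives $\Eb\|v_i^{(1)}\|_2^2=\Eb[\|Z_i\|_2^2(\tilde c^\top Z_i)^4\xi_i^2]\le K_x^6K_y^2 d$, admissible exactly when $6/q_x+2/q\le1$, i.e.\ $(3/q_x+1/q)^{-1}\ge2$; likewise exponents $(q_x/2,q_x/2,q/4)$ on $\|Z_i\|_2^2$, $(\tilde c^\top Z_i)^2$, $\xi_i^4$ give $\Eb\|v_i^{(2)}\|_2^2\le K_x^4K_y^4 d$, admissible exactly when $4/q_x+4/q\le1$, i.e.\ $s\ge4$. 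Thus $\Eb[F_1^2]\le K_x^6K_y^2 d/n$ and $\Eb[F_2^2]\le K_x^4K_y^4 d/n$, and Markov gives $\Pb(F_1\ge K_x^3K_y\sqrt{d/(n\delta)})\le\delta$ and $\Pb(F_2\ge K_x^2K_y^2\sqrt{d/(n\delta)})\le\delta$ for every $\delta\in(0,1)$. Adding the mean bounds yields parts~\ref{lem:9.1} and~\ref{lem:9.2}; then choosing $\delta=d/(2n)$ makes $\sqrt{d/(n\delta)}=\sqrt2$, and a union bound over the two resulting events (each of probability at least $1-d/(2n)$) gives the final claim $\Lc=\max\{\Lc_1,\Lc_2\}\le(1+\sqrt2)(K_x^3K_y+K_x^2K_y^2)$ with probability at least $1-d/n$.

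I do not expect a genuine obstacle; the only point that requires care is the asymmetry just described. The mean must be bounded by interchanging expectation and the supremum over $\theta$, so that $Z_i$ costs only $K_x$ per marginal and contributes no power of $d$, whereas the fluctuation term unavoidably involves $\Eb\|Z_i\|_2^2\asymp d$. This is precisely why the $d$-dependence and the sharper moment budgets $(3/q_x+1/q)^{-1}\ge2$ (for $\Lc_1$) and $s\ge4$ (for $\Lc_2$) are spent on the fluctuation, and why the crude bound $\Lc_j\le\tfrac1n\sum_i\|v_i^{(j)}\|_2$, which would cost an extra $\sqrt d$, must be avoided.
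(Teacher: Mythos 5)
Your proposal is correct and follows essentially the same route as the paper's proof: the same mean-plus-fluctuation decomposition, the same H\"older bounds on $\Norm{\Eb S_i}_2$ via the supremum over unit vectors (costing no power of $d$), and the same second-moment/Chebyshev control of the centred average yielding the $\sqrt{d/(n\delta)}$ term. The moment-exponent bookkeeping and the final choice $\delta=d/(2n)$ also match the paper exactly.
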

\begin{proof}[proof of Lemma~\ref{lem:onL}] To show the first claim, we let
    $S_i = \abs{\tilde c^\top\Sigma^{-1/2}X_i}^2(Y_i-X_i^\top\beta)\Sigma^{-1/2}X_i$ for $i=1,\ldots,n$, and observe that
    \begin{eqnarray*}
        \Norm{\frac{1}{n}\sum_{i=1}^nS_i}_2\leq \Norm{\frac{1}{n}\sum_{i=1}^n\Eb S_i}_2 + \Norm{\frac{1}{n}\sum_{i=1}^nS_i-\Eb S_i}_2.
    \end{eqnarray*} The expectation of $S_i$ can be controlled as
    \begin{eqnarray*}
        \norm{\Eb S_i}_2 &=& \sup_{u\in\Sb^{d-1}}\Eb\left[\abs{\tilde c^\top\Sigma^{-1/2}X_i}^2(Y_i-X_i^\top\beta)(u^\top\Sigma^{-1/2}X_i)\right]\leq K_x^3K_y,
    \end{eqnarray*} provided that $3/q_x+1/q\leq 1$. Meanwhile,
    \begin{eqnarray*}
        \Eb\Norm{\frac{1}{n}\sum_{i=1}^nS_i-\Eb S_i}_2^2 &=& {\rm tr}\left({\rm Var}\left[\frac{1}{n}\sum_{i=1}^n S_i\right]\right)=\frac{1}{n}{\rm tr}\left({\rm Var}\left[W_1\right]\right)\\
        &\leq&\frac{1}{n}\Eb \left[\abs{\tilde c^\top\Sigma^{-1/2}X_1}^4(Y_1-X_1^\top\beta)^2\norm{X_1}_{\Sigma^{-1}}^2\right]\leq K_x^6K_y^2\frac{d}{n},
    \end{eqnarray*}provided that $3/q_x+1/q\leq 1/2$. An application of Chebyshev's inequality proves the claim.

    The second claim follows similarly. Denote $T_i = \abs{\tilde c^\top\Sigma^{-1/2}X_i}(Y_i-X_i^\top\beta)^2\Sigma^{-1/2}X_i$, and observe that
\begin{eqnarray*}
        \Norm{\frac{1}{n}\sum_{i=1}^nT_i}_2\leq \Norm{\frac{1}{n}\sum_{i=1}^n\Eb T_i}_2 + \Norm{\frac{1}{n}\sum_{i=1}^nT_i-\Eb T_i}_2.
    \end{eqnarray*} The leading term on the right-hand side can be bounded as
    \begin{eqnarray*}
        \norm{\Eb T_i}_2 &=& \sup_{u\in\Sb^{d-1}}\Eb\left[\abs{\tilde c^\top\Sigma^{-1/2}X_i}(Y_i-X_i^\top\beta)^2(u^\top\Sigma^{-1/2}X_i)\right]\leq K_x^2K_y^2,
    \end{eqnarray*} provided that $1/q_x+1/q\leq 1/2$. Furthermore, we observe that
    \begin{eqnarray*}
        \Eb\Norm{\frac{1}{n}\sum_{i=1}^nT_i-\Eb T_i}_2^2 &=& {\rm tr}\left({\rm Var}\left[\frac{1}{n}\sum_{i=1}^n T_i\right]\right)=\frac{1}{n}{\rm tr}\left({\rm Var}\left[W_1\right]\right)\\
        &\leq&\frac{1}{n}\Eb \left[\abs{\tilde c^\top\Sigma^{-1/2}X_1}^2(Y_1-X_1^\top\beta)^4\norm{X_1}_{\Sigma^{-1}}^2\right]\leq K_x^4K_y^4\frac{d}{n},
    \end{eqnarray*}provided that $1/q_x+1/q\leq 1/4$. The Chebyshev's inequality completes the proof.
\end{proof}

\subsubsection{Maximal Concentration of the Fitted Values} In this section, we aim to prove the following theorem. For the convenience of notation and clarity, we write the weighted harmonic sum of $q_x$ and $q$ as $q_{m,n}=(m/q_x+n/q)^{-1}$ for $m,n>0$.

\begin{theorem}\label{thm:A.1}
    Suppose that Assumption~\ref{asmp:2}, \ref{asmp:3.a}, \ref{asmp:4} hold for $q_x\geq4$ and $(1/q_x+1/q)^{-1}>2$. There exists constants $C = C(q_x,q,K_x,K_y,\underline{\lambda})$ such that for any $\delta_0,\delta_1,\delta_2>0$,
    \begin{equation*}
    \max_{1\leq i\leq n}\abs{X_i^\top(\hat\beta-\beta)}\leq C\left(1-9K_x\sqrt{\frac{d+2\log(2n/\delta_0)}{n}}\right)_+^{-1}\left\{\sqrt{\frac{d+\log(2n/\delta_1)}{\delta_1^{2/q_x}n^{1-2/q_x}}}+\frac{\sqrt{d}}{\delta_1^{1/q_{2,1}}n^{1-1/q_{3,2}}}+\frac{d}{\delta_2^{1/q_{2,1}}n^{1-1/q_{2,1}}}\right\},
\end{equation*} with probability at least $1-\delta_0-\delta_1-\delta_2$.
\end{theorem}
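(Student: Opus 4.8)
The plan is to decompose $X_i^\top(\hat\beta - \beta)$ using the basic identity $\hat\beta - \beta = \hat\Sigma^{-1} n^{-1}\sum_{j=1}^n X_j(Y_j - X_j^\top\beta)$, so that
\[
X_i^\top(\hat\beta-\beta) = (\Sigma^{-1/2}X_i)^\top \big(\Sigma^{1/2}\hat\Sigma^{-1}\Sigma^{1/2}\big)\,\Sigma^{-1/2}\Big(\frac{1}{n}\sum_{j=1}^n X_j(Y_j - X_j^\top\beta)\Big).
\]
First I would pull out the operator norm of $\Sigma^{1/2}\hat\Sigma^{-1}\Sigma^{1/2}$, which is $\big(\lambda_{\min}(\Sigma^{-1/2}\hat\Sigma\Sigma^{-1/2})\big)^{-1}$; Proposition~\ref{prop:oliveira} (the $\lambda_{\min}$ bound already invoked around \eqref{eq:oliveira}) controls this factor by $\big(1 - 9K_x\sqrt{(d+2\log(2n/\delta_0))/n}\big)_+^{-1}$ with probability $1-\delta_0$, giving the first factor in the claimed bound. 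It then remains to bound, with probability $1-\delta_1-\delta_2$,
\[
\max_{1\le i\le n}\Big\| \Sigma^{-1/2}X_i\Big\|_2 \cdot \Big\| \Sigma^{-1/2}\frac{1}{n}\sum_{j=1}^n X_j(Y_j-X_j^\top\beta)\Big\|_2
\]
is not quite what we want — a naive Cauchy--Schwarz here is too lossy because $\|\Sigma^{-1/2}X_i\|_2 \asymp \sqrt d$ and the averaged score vector has norm $\asymp \sqrt{d/n}$, producing $d/\sqrt n$, which is larger than the target for the relevant dimension range. So the key refinement is to split the inner average into the $j=i$ term and the $j\ne i$ term: the $j=i$ contribution is $n^{-1}(\Sigma^{-1/2}X_i)^\top\Sigma^{1/2}\hat\Sigma^{-1}\Sigma^{1/2}\Sigma^{-1/2}X_i(Y_i-X_i^\top\beta)$, which after bounding the middle matrix is $\lesssim n^{-1}\|X_i\|_{\Sigma^{-1}}^2 |Y_i - X_i^\top\beta|$, and a union bound over $i$ using Assumptions~\ref{asmp:3.a} and \ref{asmp:2} (the $q_x$-th and $q$-th moments) plus Markov gives the $d/(\delta_2^{1/q_{2,1}} n^{1-1/q_{2,1}})$ term.

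For the $j\ne i$ part, I would condition on $X_i$ (so it becomes a fixed unit-direction problem) and write the remaining sum as $\frac{1}{n}\sum_{j\ne i} (\Sigma^{-1/2}X_i)^\top \Sigma^{1/2}\hat\Sigma^{-1}\Sigma^{1/2}\, \Sigma^{-1/2}X_j (Y_j - X_j^\top\beta)$. Replacing $\Sigma^{1/2}\hat\Sigma^{-1}\Sigma^{1/2}$ by $I_d$ costs a factor involving $\mathcal D_\Sigma$, which is already controlled (Proposition~\ref{prop:6}), so the main term to handle is $\frac{1}{n}\sum_{j\ne i} (\Sigma^{-1/2}X_i)^\top \Sigma^{-1/2}X_j (Y_j - X_j^\top\beta)$. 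Given $X_i$, this is an average of mean-zero independent terms (mean-zero because of the normal equation \eqref{eq:projection_parameters_2}), with second moment $\lesssim \|X_i\|_{\Sigma^{-1}}^2 / n$ after using Assumption~\ref{asmp:4}, hence magnitude $\lesssim \|X_i\|_{\Sigma^{-1}}/\sqrt n \lesssim \sqrt{d/n}$ — but controlling the \emph{maximum over $i$} requires a more careful argument than a single Chebyshev bound. Here I would use the moment/maximal inequality machinery already employed in the paper (e.g., the Rosenthal-type or Marcinkiewicz--Zygmund bounds used for the remainder terms in Lemmas~\ref{lem:8}--\ref{lem:10} and Proposition~\ref{prop:11}): bound $\mathbb E\big[\max_i |\cdot|^r\big]$ by $n \cdot \mathbb E|\cdot|^r$ for a suitable $r$ tied to the harmonic means $q_{2,1}=(2/q_x+1/q)^{-1}$ and $q_{3,2}=(3/q_x+2/q)^{-1}$, yielding after Markov the two terms $\sqrt{(d+\log(2n/\delta_1))/(\delta_1^{2/q_x}n^{1-2/q_x})}$ (the "Gaussian-like" part, where the $\log(2n/\delta_1)$ comes from the maximum over $n$ directions $X_i$ combined with the self-normalized sub-exponential tail of $\|X_i\|_{\Sigma^{-1}}$) and $\sqrt d /(\delta_1^{1/q_{2,1}} n^{1-1/q_{3,2}})$ (the heavy-tailed correction term coming from the third-moment/fourth-moment cross terms in the Rosenthal expansion).

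The main obstacle is the maximal inequality over the $n$ data-dependent directions $\Sigma^{-1/2}X_i$: unlike a fixed finite set of directions, the directions here are random and correlated with the very sum being averaged (through the excluded index and through $\hat\Sigma$), so one must be careful to condition properly and to track how the moment assumptions $q_x \ge 4$ and $q_{2,1} > 2$ exactly determine which powers of $n$ and $d$ appear. I expect the bookkeeping of the three error terms — matching the exponents $1-2/q_x$, $1-1/q_{3,2}$, and $1-1/q_{2,1}$ to the right combinations of $q_x$ and $q$ via Hölder — to be the delicate part, but each piece reduces to moment bounds that follow from Assumptions~\ref{asmp:2}, \ref{asmp:3.a}, and \ref{asmp:4} together with the concentration results for $\mathcal D_\Sigma$ and $\lambda_{\min}$ already established earlier in the paper. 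Once all three pieces are assembled and multiplied by the $\lambda_{\min}$ factor, a final union bound over the events of probability $\delta_0,\delta_1,\delta_2$ completes the proof.
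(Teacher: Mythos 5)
Your overall strategy---isolate the self-term $j=i$ from the cross-term $j\neq i$ and exploit (approximate) independence between $X_i$ and the rest---is the right idea, and your treatment of the self-term essentially matches the paper's, which also bounds $n^{-1}(X_i^\top\hat\Sigma_{(-i)}^{-1}X_i)|Y_i-X_i^\top\beta|$ via $\lambda_{\min}$ control plus Markov/H\"older at exponent $q_{2,1}$ to get the $d/(\delta_2^{1/q_{2,1}}n^{1-1/q_{2,1}})$ term. However, your handling of the cross-term has a genuine gap. You keep the full matrix $\Sigma^{1/2}\hat\Sigma^{-1}\Sigma^{1/2}$ in the middle and propose to swap it for $I_d$ at a cost of order
\[
\Norm{\Sigma^{1/2}\hat\Sigma^{-1}\Sigma^{1/2}-I_d}_{\rm op}\cdot\max_{1\le i\le n}\norm{X_i}_{\Sigma^{-1}}\cdot\Norm{\frac1n\sum_{j}\Sigma^{-1/2}X_j(Y_j-X_j^\top\beta)}_2 .
\]
Under Assumption~\ref{asmp:3.a} the middle factor is only controlled by $\sqrt{d}\,n^{1/q_x}$ (union bound with $q_x$ moments of $\norm{X_i}_{\Sigma^{-1}}/\sqrt d$), so the swap costs roughly $\sqrt{d/n}\cdot\sqrt{d}\,n^{1/q_x}\cdot\sqrt{d/n}=d^{3/2}/n^{1-1/q_x}$. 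Compared with the largest term in the claimed bound, $d/n^{1-1/q_{2,1}}=d/n^{1-2/q_x-1/q}$, this error dominates whenever $d\gg n^{2/q_x+2/q}$, so the argument does not deliver the stated rate in general (e.g.\ for light-tailed covariates with large $q_x,q$ and growing $d$). Moreover, even after the swap, the remaining maximal inequality over the $n$ random directions $\Sigma^{-1/2}X_i$ is only sketched; you would still need to make the Rosenthal-type step produce exactly the exponents $1-2/q_x$ and $1-1/q_{3,2}$.

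The paper avoids the matrix swap entirely by a leave-one-out argument: it applies the Sherman--Morrison--Woodbury identity to $\hat\Sigma^{-1}$ with the rank-one update $X_iX_i^\top/(n-1)$, which yields the exact identity
\[
\abs{X_i^\top(\hat\beta-\beta)}\leq\abs{X_i^\top(\hat\beta_{(-i)}-\beta)}+\frac{1}{n-1}\,(X_i^\top\hat\Sigma_{(-i)}^{-1}X_i)\,\abs{Y_i-X_i^\top\beta},
\]
where $\hat\beta_{(-i)}$ is the least squares estimator computed without observation $i$. The cross-term is then a genuine inner product of \emph{independent} quantities, so the conditional Markov inequality at exponent $q_x$ (given $\hat\beta_{(-i)}$) combined with the concentration of $\norm{\hat\beta_{(-i)}-\beta}_\Sigma$ from Proposition~\ref{prop:11}, an optimization over the intermediate probability level, and a union bound over $i$ produces the first two terms of the bound with no $\Dc_\Sigma$-type loss. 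If you want to salvage your route, you should replace the ``swap $\hat\Sigma^{-1}$ for $\Sigma^{-1}$'' step by this exact rank-one identity; otherwise the error from the swap is not absorbed by the claimed bound.
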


\begin{remark} In the context of Theorem~\ref{thm:A.1}, the concentration inequality we investigate is not a novel concept; one of its earliest demonstrations was provided by \cite{portnoy1985asymptotic}. They established that if the covariate vector and errors have independent Gaussian entries, then $\max_{1\leq i\leq n}\abs{X_i^\top(\hat\beta-\beta)}\xrightarrow{P}0$ holds, given that $d=o(n^{2/3}/\log^{1/3}(n))$ and $n\to\infty,$ albeit without explicitly presenting the convergence rate. It is noteworthy that given the infinite number of moments of covariates and errors, our bound tends to 0 as long as $d=o(n)$.
\end{remark}

\begin{remark}
    A crude bound can be established as $\abs{X_i^\top(\hat\beta-\beta)}\leq\norm{X_i}_{\Sigma^{-1}}\norm{\hat\beta-\beta}_\Sigma$. However, the right-hand side approximately scales as $O_p(d/\sqrt{n})$ (see Proposition~\ref{prop:11}), leading to a loose upper bound. The primary limitation of this bound stems from its failure to account for the dependency relationship between the two quantities, $X_i$ and $\hat\beta-\beta$. Heuristically, these two quantities should not exhibit high dependence, given that a single observation is expected to contribute a fraction of $1/n$ when forming the estimate $\hat\beta$. Consequently, we adopt a leave-one-out analysis to more effectively manage the subtle dependency structure.
\end{remark}

\begin{proof}[proof of Theorem~\ref{thm:A.1}]
    We consider the leave-one-out least square estimate, defined as
\begin{equation}\label{eq:loo_lse}
    \hat\beta_{(-i)}:=\hat\Sigma^{-1}_{(-i)}\frac{1}{n-1}\sum_{j\neq i}X_jY_j,
\end{equation} for $i=1,\ldots,n$ where the $\hat\Sigma_{(-i)}=\sum_{j\neq i}X_jX_j^\top/(n-1)$ denotes the leave-one-out sample Gram matrix correspondingly. It follows from the Sherman-Morrison-Woodbury matrix identity that
\begin{eqnarray*}
    X_i^\top(\hat\beta-\beta)&=&X_i^\top\hat\Sigma^{-1}\frac{1}{n}\sum_{j=1}^nX_j(Y_j-X_j^\top\beta)\\
    &=&X_i^\top\left(\frac{n}{n-1}\hat\Sigma_{(-i)}^{-1}-\frac{\frac{n}{(n-1)^2}\hat\Sigma_{(-i)}^{-1}X_iX_i^\top\hat\Sigma_{(-i)}^{-1}}{1+\frac{1}{n-1}X_i^\top\hat\Sigma_{(-i)}^{-1}X_i}\right)\frac{1}{n}\sum_{j=1}^nX_j(Y_j-X_j^\top\beta)\\
    &=&\frac{n}{n-1}\left(1+\frac{1}{n-1}X_i^\top\hat\Sigma_{(-i)}^{-1}X_i\right)^{-1}X_i^\top\hat\Sigma_{(-i)}^{-1}\frac{1}{n}\sum_{j=1}^nX_j(Y_j-X_j^\top\beta)\\
    &=&\frac{n}{n-1}\left(1+\frac{1}{n-1}X_i^\top\hat\Sigma_{(-i)}^{-1}X_i\right)^{-1}\left[\frac{n-1}{n}X_i^\top(\hat\beta_{(-i)}-\beta)+\frac{1}{n}(X_i^\top\hat\Sigma_{(-i)}^{-1}X_i)(Y_i-X_i^\top\beta)\right].
\end{eqnarray*} Hence, we have
\begin{equation}\label{eq:them:A.1.1}
    \abs{X_i^\top(\hat\beta-\beta)}\leq\abs{X_i^\top(\hat\beta_{(-i)}-\beta)}+\frac{1}{n-1}(X_i^\top\hat\Sigma_{(-i)}^{-1}X_i)\abs{Y_i-X_i^\top\beta}.
\end{equation}Inspecting the rightmost term in \eqref{eq:them:A.1.1}, we note that
\begin{equation}
    \frac{X_i^\top\hat\Sigma_{(-i)}^{-1}X_i}{X_i^\top\Sigma^{-1}X_i}\leq\sup_{\theta\in\Sb^{d-1}\theta}\frac{\theta^\top\hat\Sigma_{(-i)}^{-1}\theta}{\theta^\top\Sigma^{-1}\theta}\leq\frac{1}{\lambda_{\rm min}(\Sigma^{-1/2}\hat\Sigma_{(-i)}\Sigma^{-1/2})}.
\end{equation} Hence, we have
\begin{equation*}
    \max_{1\leq i\leq n}\abs{X_i^\top(\hat\beta-\beta)}\leq\max_{1\leq i\leq n}\abs{X_i^\top(\hat\beta_{(-i)}-\beta)}+\frac{1}{n-1}\max_{1\leq i\leq n}\set{\frac{(X_i^\top\Sigma^{-1}X_i)\abs{Y_i-X_i^\top\beta}}{\lambda_{\rm min}(\Sigma^{-1/2}\hat\Sigma_{(-i)}\Sigma^{-1/2})}}.
\end{equation*} Combining Markov's inequality and H\"older's inequality yields that for any $t\geq 0$,
\begin{eqnarray*}
    \Pb\left((X_i^\top\Sigma^{-1}X_i)\abs{Y_i-X_i^\top\beta}\geq t\right)&\leq& t^{-q_{2,1}}\Eb\left[(X_i^\top\Sigma^{-1}X_i)^{q_{2,1}}\abs{Y_i-X_i^\top\beta}^{q_{2,1}}\right]\\
    &\leq& t^{-q_{2,1}}\left\{\left(\Eb\left[(X_i^\top\Sigma^{-1}X_i)^{q_x/2}\right]\right)^{2/q_x}\left(\Eb\left[\abs{Y_i-X_i^\top\beta}^{q}\right]\right)^{1/q}\right\}^{q_{2,1}}\\
    &\leq&\left(\frac{K_x^2K_yd}{t}\right)^{q_{2,1}}.
\end{eqnarray*} Equivalently, we have
\begin{equation*}
    \Pb\left((X_i^\top\Sigma^{-1}X_i)\abs{Y_i-X_i^\top\beta}\geq K_x^2K_y d\delta^{-1/q_{2,1}}\right)\leq \delta,
\end{equation*}for any $\delta\in(0,1)$. Meanwhile, Proposition~\ref{prop:oliveira} reads
\begin{equation*}
    \Pb\left(\lambda_{\rm min}(\Sigma^{-1/2}\hat\Sigma_{(-i)}\Sigma^{-1/2})\geq 1-9K_x\sqrt{\frac{d+2\log(2/\delta_0)}{n-1}}\right)\leq \delta_0.
\end{equation*} Combining these yields that 
\begin{eqnarray*}
    \frac{1}{n-1}\frac{(X_i^\top\Sigma^{-1}X_i)\abs{Y_i-X_i^\top\beta}}{\lambda_{\rm min}(\Sigma^{-1/2}\hat\Sigma_{(-i)}\Sigma^{-1/2})}\leq C\left(1-9K_x\sqrt{\frac{d+2\log(4/\delta_0)}{n}}\right)_+^{-1}K_x^2K_y \frac{d\delta^{-1/q_{2,1}}}{n},
\end{eqnarray*} holds with probability at least $1-\delta_0-\delta$ with an absolute constant $C$. Consequently, the union-bound yields 
\begin{eqnarray*}
    \frac{1}{n-1}\max_{1\leq i\leq n}\set{\frac{(X_i^\top\Sigma^{-1}X_i)\abs{Y_i-X_i^\top\beta}}{\lambda_{\rm min}(\Sigma^{-1/2}\hat\Sigma_{(-i)}\Sigma^{-1/2})}}\leq C\left(1-9K_x\sqrt{\frac{d+2\log(2n/\delta_0)}{n}}\right)_+^{-1}K_x^2K_y \frac{d\delta^{-1/q_{2,1}}}{n^{1-1/q_{2,1}}},
\end{eqnarray*} with probability at least $1-\delta_0-\delta$ for all $\delta_0,\delta\in(0,1)$.

 We now analyze $X_i^\top(\hat\beta_{(-i)}-\beta)$. Since it is an inner product of independent quantities, we get from the conditional Markov's inequality and Assumption~\ref{asmp:3.b} that
\begin{eqnarray}\label{eq:them:A.1:cond_mar}
    \Pb\left(\abs{X_i^\top(\hat\beta_{(-i)}-\beta)}\geq t\big|\hat\beta_{(-i)}\right)\leq \norm{\hat\beta_{(-i)}-\beta}_{\Sigma}^{q_x}(K_x/t)^{q_x}\quad\mbox{almost surely,}
\end{eqnarray} for $t\geq0$. An application of Proposition~\ref{prop:11} yields that
\begin{eqnarray}\label{eq:them:A.1.2}
    \norm{\hat\beta_{(-i)}-\beta}_{\Sigma}\leq C\left(1-9K_x\sqrt{\frac{d+2\log(2/\delta_0)}{n}}\right)_+^{-1}\left[2\sqrt{\frac{d+\log(2/\delta)}{n\underline{\lambda}}}+\frac{C_{q_x,q}K_xK_yd^{1/2}}{(\delta_2/n)^{1/q_{1,1}}n}\right],
\end{eqnarray} with probability at least $1-\delta_0-\delta$. Denote the event in \eqref{eq:them:A.1.2} as $\Cc_i$ for $i=1,\ldots,n$. The law of total expectation combined with the conditional Markov's inequality in \eqref{eq:them:A.1:cond_mar} gives that
\begin{eqnarray*}
    &&\Pb\left(\abs{X_i^\top(\hat\beta_{(-i)}-\beta)}\geq t\right)=\Eb\left[\Pb\left(\abs{X_i^\top(\hat\beta_{(-i)}-\beta)}\geq t\big|\hat\beta_{(-i)}\right)\right]\leq \Eb\left[\norm{\hat\beta_{(-i)}-\beta}_{\Sigma}^{q_x}(K_x/t)^{q_x}\mathbbm{1}(\Cc_i)+\mathbbm{1}(\Cc_i^\complement)\right]\\
    &\leq&\underbrace{C\left(1-9K_x\sqrt{\frac{d+2\log(2/\delta_0)}{n}}\right)_+^{-q_x}}_{=:C(\delta_0)}t^{-q_x}\left[\left(\frac{d+\log(2/\delta)}{n}\right)^{q_x/2}+\frac{d^{q_x/2}}{\delta^{q_x/q_{1,1}}n^{q_x-q_x/q_{1,1}}}\right]+\delta_0+\delta.
\end{eqnarray*} To minimize the right-hand side with respect to $\delta$, we take $\delta = (d^{1/2}/(tn^{1-1/q_{1,1}}))^{q_{2,1}}$, resulting in
\begin{eqnarray*}
    \Pb\left(\abs{X_i^\top(\hat\beta_{(-i)}-\beta)}\geq t\right)&\leq& C(\delta_0)\left[\left(\frac{d+\log(2nt)}{t^2n}\right)^{q_x/2}+\left(\frac{d^{1/2}}{tn^{(1-1/q_{1,1})}}\right)^{q_{2,1}}\right]+\delta_0.
\end{eqnarray*} The union-bound gives that
\begin{eqnarray*}
    \Pb\left(\max_{1\leq i\leq n}\abs{X_i^\top(\hat\beta_{(-i)}-\beta)}\geq t\right)&\leq& C(\delta_0/n)n\left[\left(\frac{d+\log(2nt)}{t^2n}\right)^{q_x/2}+\left(\frac{d^{1/2}}{tn^{(1-1/q_{1,1})}}\right)^{q_{2,1}}\right]+\delta_0\\
    &=&C(\delta_0/n)\left[\left(\frac{d+\log(2nt)}{t^2n^{1-2/q_x}}\right)^{q_x/2}+\left(\frac{d^{1/2}}{tn^{(1-1/q_{3,2})}}\right)^{q_{2,1}}\right]+\delta_0,
\end{eqnarray*} for all $t\geq0$. We take
\begin{equation*}
    t = C(\delta_0/n)^{1/q_x}\left[\sqrt{\frac{d+\log(2n)}{\delta^{2/q_x}n^{1-2/q_x}}}+\frac{d^{1/2}}{\delta^{1/q_{2,1}}n^{1-1/q_{3,2}}}\right].
\end{equation*}
This yields that
\begin{eqnarray*}
    \max_{1\leq i\leq n}\abs{X_i^\top(\hat\beta_{(-i)}-\beta)}\leq C\left(1-9K_x\sqrt{\frac{d+2\log(2n/\delta_0)}{n}}\right)_+^{-1}\left[\sqrt{\frac{d+\log(2n/\delta)}{\delta^{2/q_x}n^{1-2/q_x}}}+\frac{d^{1/2}}{\delta^{1/q_{2,1}}n^{1-1/q_{3,2}}}\right],
\end{eqnarray*} with probability at least $1-\delta-\delta_0$. The right-hand side tends to $0$ as long as $d =o(n^{1-2/q_x})$ if $(1/q_x+1/q)^{-1}\geq 4$.
\end{proof}

\section{Technical Lemmas and Propositions}
\subsection{Proof of Auxiliary Results for Theorem~\ref{thm:1}}

\begin{lemma}\label{lem:5}
Let $\Dc_\Sigma=\norm{\Sigma^{-1/2}(\Sigma-\hat\Sigma)\Sigma^{-1/2}}_{\rm op}$. The following deterministic inequality holds for any $\beta\in\Real^d$.
\begin{eqnarray*}
    \Norm{\Rc}_\Sigma\leq\frac{\Dc_\Sigma^2}{\lambda_{\rm min}(\Sigma^{-1/2}\hat\Sigma\Sigma^{-1/2})}\Norm{\n\sum_{i=1}^n\Sigma^{-1/2}X_i(Y_i-X_i^\top\beta)}_2.
\end{eqnarray*}
\end{lemma}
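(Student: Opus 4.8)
The plan is to rewrite $\Rc$ in terms of $\Sigma^{-1/2}$-conjugated matrices so that submultiplicativity of the operator norm immediately produces the claimed bound. Recall from the proof of Theorem~\ref{thm:1} that
\[
\Rc = \hat\Sigma^{-1}(\Sigma-\hat\Sigma)\Sigma^{-1}(\Sigma-\hat\Sigma)\Sigma^{-1}\,\frac{1}{n}\sum_{i=1}^nX_i(Y_i-X_i^\top\beta),
\]
and that $\norm{\Rc}_\Sigma=\norm{\Sigma^{1/2}\Rc}_2$. First I would insert the identity $I_d=\Sigma^{1/2}\Sigma^{-1/2}=\Sigma^{-1/2}\Sigma^{1/2}$ at the appropriate places: writing $\hat\Sigma^{-1}=\Sigma^{-1/2}(\Sigma^{1/2}\hat\Sigma^{-1}\Sigma^{1/2})\Sigma^{-1/2}$ and splitting each intermediate $\Sigma^{-1}$ as $\Sigma^{-1/2}\Sigma^{-1/2}$, one obtains the purely algebraic factorization
\[
\Sigma^{1/2}\Rc=\bigl(\Sigma^{1/2}\hat\Sigma^{-1}\Sigma^{1/2}\bigr)\bigl(\Sigma^{-1/2}(\Sigma-\hat\Sigma)\Sigma^{-1/2}\bigr)^2\Bigl(\frac{1}{n}\sum_{i=1}^n\Sigma^{-1/2}X_i(Y_i-X_i^\top\beta)\Bigr).
\]

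Second I would apply submultiplicativity of the operator norm together with $\norm{Av}_2\le\norm{A}_{\rm op}\norm{v}_2$, which gives
\[
\norm{\Rc}_\Sigma\le\norm{\Sigma^{1/2}\hat\Sigma^{-1}\Sigma^{1/2}}_{\rm op}\,\norm{\Sigma^{-1/2}(\Sigma-\hat\Sigma)\Sigma^{-1/2}}_{\rm op}^2\,\Norm{\frac{1}{n}\sum_{i=1}^n\Sigma^{-1/2}X_i(Y_i-X_i^\top\beta)}_2.
\]
The middle factor equals $\Dc_\Sigma^2$ by definition. For the first factor, note that $\Sigma^{1/2}\hat\Sigma^{-1}\Sigma^{1/2}=(\Sigma^{-1/2}\hat\Sigma\Sigma^{-1/2})^{-1}$ is the inverse of a symmetric positive (semi)definite matrix, so its operator norm equals $1/\lambda_{\rm min}(\Sigma^{-1/2}\hat\Sigma\Sigma^{-1/2})$ (with the convention that the asserted inequality is vacuous when this smallest eigenvalue is $0$). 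Substituting these identifications yields the stated inequality.

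Since every step is either an algebraic identity or a single instance of operator-norm submultiplicativity, there is no substantive obstacle; the only thing requiring care is the bookkeeping of the $\Sigma^{\pm1/2}$ insertions and the observation that $\norm{(\Sigma^{-1/2}\hat\Sigma\Sigma^{-1/2})^{-1}}_{\rm op}=\lambda_{\rm min}(\Sigma^{-1/2}\hat\Sigma\Sigma^{-1/2})^{-1}$, which uses the symmetry of $\Sigma^{-1/2}\hat\Sigma\Sigma^{-1/2}$. Note also that $\Dc_\Sigma=\norm{\Sigma^{-1/2}(\Sigma-\hat\Sigma)\Sigma^{-1/2}}_{\rm op}=\norm{\Sigma^{-1/2}\hat\Sigma\Sigma^{-1/2}-I_d}_{\rm op}$, consistent with the notation used elsewhere in the paper, so no separate argument is needed to reconcile the two forms of $\Dc_\Sigma$.
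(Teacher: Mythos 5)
Your proposal is correct and follows essentially the same route as the paper's own proof: the same factorization $\Sigma^{1/2}\Rc=(\Sigma^{1/2}\hat\Sigma^{-1}\Sigma^{1/2})(\Sigma^{-1/2}(\Sigma-\hat\Sigma)\Sigma^{-1/2})^2\,\frac{1}{n}\sum_i\Sigma^{-1/2}X_i(Y_i-X_i^\top\beta)$, followed by submultiplicativity and the identity $\norm{\Sigma^{1/2}\hat\Sigma^{-1}\Sigma^{1/2}}_{\rm op}=1/\lambda_{\rm min}(\Sigma^{-1/2}\hat\Sigma\Sigma^{-1/2})$. No gaps.
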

\begin{proof}
We note from \eqref{eq:A.1.1} that
\begin{eqnarray*}
    &&\Norm{\hat\beta-\beta-\left\{\Sigma^{-1}\n\sum_{i=1}^nX_i(Y_i-X_i^\top\beta)+\Sigma^{-1}(\Sigma-\hat\Sigma)\Sigma^{-1}\n\sum_{i=1}^nX_i(Y_i-X_i^\top\beta)\right\}}_\Sigma\\
    &=&\Norm{\Sigma^{1/2}\hat\Sigma^{-1}(\Sigma-\hat\Sigma)\Sigma^{-1}(\Sigma-\hat\Sigma)\Sigma^{-1}\n\sum_{i=1}^nX_i(Y_i-X_i^\top\beta)}_2\\
    &\leq&\Norm{\Sigma^{1/2}\hat\Sigma^{-1}\Sigma^{1/2}}_{\rm op}\Norm{\Sigma^{-1/2}(\Sigma-\hat\Sigma)\Sigma^{-1/2}}_{\rm op}^2\Norm{\n\sum_{i=1}^n\Sigma^{-1/2}X_i(Y_i-X_i^\top\beta)}_2\\
    &\leq&\frac{\Dc_\Sigma^2}{\lambda_{\rm min}(\Sigma^{-1/2}\hat\Sigma\Sigma^{-1/2})}\Norm{\n\sum_{i=1}^n\Sigma^{-1/2}X_i(Y_i-X_i^\top\beta)}_2.
\end{eqnarray*}
The desired result follows from the definition of $\Rc$ in \eqref{eq:A.1.2} and Cauchy Schwarz's inequality.

\end{proof}

\begin{lemma}\label{lem:8}
    Suppose that Assumption~\ref{asmp:2},\ref{asmp:3.a}, and \ref{asmp:4} holds such that $q_x\geq4$ and $s=(1/q_x+1/q)^{-1}>2$. If $d+2\log(2n)\leq n/(18K_x)^2$, then there exists constant $C=C(\overline{\lambda}, q_x,q,K_x,K_y)$ such that
    \begin{eqnarray}\label{eq:lem:4}
        \sup_{c\in\Real^d:\norm{c}_{\Sigma^{-1}}=1}\sqrt{n}c^\top\Rc\leq C \left[\frac{d^{5/2}\log^3(2d/\delta)}{n^{2-4/q_x}\delta^{4/q_x}}+\frac{d^{3/2}\log(2d/\delta)}{n}\right]\left(1\vee\sqrt{\frac{\log(2n)}{d}}\right),
    \end{eqnarray} occurs with probability at least 
    \begin{equation*}
        1-\frac{1}{n^{\min\{1,s/2-1\}}}-\delta
    \end{equation*} for any $\delta\in(0,1)$.
\end{lemma}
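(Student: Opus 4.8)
The plan is to turn the supremum over $c$ into a bound on $\norm{\Rc}_\Sigma$, feed it into the deterministic inequality of Lemma~\ref{lem:5}, and then control the three factors appearing there on a single high-probability event assembled from Proposition~\ref{prop:6}, \eqref{eq:oliveira}, and Proposition~\ref{prop:11}. First, parametrize $c=\Sigma^{1/2}u$ with $\norm{u}_2=1$; then $\norm{c}_{\Sigma^{-1}}=1$ and $c^\top\Rc=u^\top(\Sigma^{1/2}\Rc)$, so $\sup_{\norm{c}_{\Sigma^{-1}}=1}c^\top\Rc=\norm{\Sigma^{1/2}\Rc}_2=\norm{\Rc}_\Sigma$ by Cauchy--Schwarz. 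Lemma~\ref{lem:5} then gives the deterministic bound
\[
  \norm{\Rc}_\Sigma \le \frac{\Dc_\Sigma^2}{\lambda_{\rm min}(\Sigma^{-1/2}\hat\Sigma\Sigma^{-1/2})}\,\norm{\bar W}_2, \qquad \bar W:=\frac1n\sum_{i=1}^n\Sigma^{-1/2}X_i(Y_i-X_i^\top\beta),
\]
so it suffices to bound $\Dc_\Sigma$ from above, $\lambda_{\rm min}(\Sigma^{-1/2}\hat\Sigma\Sigma^{-1/2})$ from below, and $\norm{\bar W}_2$ from above, each with the right rate and failure probability.

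Second, I collect three ingredients. (i) Proposition~\ref{prop:6} applies under Assumption~\ref{asmp:3.a} with $q_x\ge4$ and yields $\Dc_\Sigma\le C\Av_{n,d}(\delta)$ with probability at least $1-\delta$, where $\Av_{n,d}(\delta)=\frac{d\log^{3/2}(2d/\delta)}{\delta^{2/q_x}n^{1-2/q_x}}+\sqrt{\frac{d\log(2d/\delta)}{n}}$. (ii) Inequality~\eqref{eq:oliveira} with parameter $1/n$, together with the hypothesis $d+2\log(2n)\le n/(18K_x)^2$, gives $\lambda_{\rm min}(\Sigma^{-1/2}\hat\Sigma\Sigma^{-1/2})\ge1/2$ with probability at least $1-1/n$. (iii) Proposition~\ref{prop:11} gives $\norm{\hat\beta-\beta}_\Sigma\le C'\Bv_{n,d}$ with $\Bv_{n,d}=\sqrt{(d+\log(2n))/n}$ and probability at least $1-n^{-\min\{1,\,s/2-1\}}$; since the event underlying Proposition~\ref{prop:11} already incorporates the lower bound on $\lambda_{\rm min}(\Sigma^{-1/2}\hat\Sigma\Sigma^{-1/2})$ used in its own proof, the $1/n$ from (ii) is absorbed, up to constants, into $n^{-\min\{1,s/2-1\}}$ because $\min\{1,s/2-1\}\le1$. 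The key move is \emph{not} to bound $\bar W$ directly (a naive moment bound would lose polynomial-in-$n$ factors and would not yield failure probability $n^{-\min\{1,s/2-1\}}$), but to use the identity $\bar W=(\Sigma^{-1/2}\hat\Sigma\Sigma^{-1/2})\,\Sigma^{1/2}(\hat\beta-\beta)$, valid since $\hat\Gamma=\hat\Sigma\hat\beta$, which gives $\norm{\bar W}_2\le(1+\Dc_\Sigma)\norm{\hat\beta-\beta}_\Sigma$ and lets Proposition~\ref{prop:11} supply the sharp rate.

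Third, I assemble. On the intersection of the events in (i)--(iii), which has probability at least $1-\delta-n^{-\min\{1,s/2-1\}}$ after the absorption above, Lemma~\ref{lem:5} gives
\[
  \sqrt n\,\norm{\Rc}_\Sigma \le 2\sqrt n\,\Dc_\Sigma^2\,\norm{\bar W}_2 \le 2C'\bigl(1+C\Av_{n,d}(\delta)\bigr)\bigl(C\Av_{n,d}(\delta)\bigr)^2\sqrt n\,\Bv_{n,d}.
\]
Then $\sqrt n\,\Bv_{n,d}=\sqrt{d+\log(2n)}\le 2\sqrt d\,(1\vee\sqrt{\log(2n)/d})$, and by $(a+b)^2\le 2a^2+2b^2$ one has $\Av_{n,d}(\delta)^2\le 2\bigl(\frac{d^2\log^3(2d/\delta)}{\delta^{4/q_x}n^{2-4/q_x}}+\frac{d\log(2d/\delta)}{n}\bigr)$. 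In the regime where the asserted bound is $O(1)$ one has $\Av_{n,d}(\delta)\lesssim1$, so $1+C\Av_{n,d}(\delta)\lesssim1$; multiplying the three pieces, multiplying $\Av_{n,d}(\delta)^2$ by $\sqrt d\,(1\vee\sqrt{\log(2n)/d})$, and absorbing all constants into $C=C(\overline\lambda,q_x,q,K_x,K_y)$ reproduces exactly the claimed bound
\[
  \sqrt n\,\norm{\Rc}_\Sigma \le C\Bigl(\frac{d^{5/2}\log^3(2d/\delta)}{n^{2-4/q_x}\delta^{4/q_x}}+\frac{d^{3/2}\log(2d/\delta)}{n}\Bigr)\Bigl(1\vee\sqrt{\tfrac{\log(2n)}{d}}\Bigr).
\]

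The main obstacle is essentially bookkeeping rather than a genuine difficulty, since all the probabilistic content is carried by Propositions~\ref{prop:6} and~\ref{prop:11}. The two points that require care are: the failure-probability accounting, i.e.\ verifying that the $1/n$ coming from the $\lambda_{\rm min}$ control does not degrade the stated $\delta+n^{-\min\{1,s/2-1\}}$ (handled by $\min\{1,s/2-1\}\le1$ and by the fact that Proposition~\ref{prop:11}'s event already contains the relevant eigenvalue event); and the observation, used in the second step above, that one must route the control of $\bar W$ through $\norm{\hat\beta-\beta}_\Sigma$ rather than bounding it by elementary moment inequalities, as only the former gives logarithmic (rather than polynomial) factors and the required small failure probability.
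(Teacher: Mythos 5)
Your proof has the same skeleton as the paper's: reduce the supremum over $c$ to $\sqrt n\,\norm{\Rc}_\Sigma$, invoke the deterministic bound of Lemma~\ref{lem:5}, control $\Dc_\Sigma$ via Proposition~\ref{prop:6} and $\lambda_{\rm min}(\Sigma^{-1/2}\hat\Sigma\Sigma^{-1/2})$ via \eqref{eq:oliveira}, and take a union bound. Where you diverge is in the treatment of the averaged influence function $\bar W=\frac1n\sum_i\Sigma^{-1/2}X_i(Y_i-X_i^\top\beta)$: the paper bounds $\norm{\bar W}_2$ \emph{directly} by Proposition~\ref{prop:7} (an Einmahl--Li/Fuk--Nagaev-type inequality), which with $\delta=n^{1-s/2}$ already yields $\norm{\bar W}_2\lesssim\sqrt{d/n}+\sqrt{\log(2n)/n}$ at failure probability $n^{-(s/2-1)}$. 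Your assertion that a direct bound on $\bar W$ would necessarily lose polynomial factors is therefore not correct --- that is precisely the tool the paper uses. Your detour through the identity $\bar W=(\Sigma^{-1/2}\hat\Sigma\Sigma^{-1/2})\Sigma^{1/2}(\hat\beta-\beta)$ and Proposition~\ref{prop:11} is logically legitimate, but note that Proposition~\ref{prop:11} is itself proved by inverting this identity and applying Proposition~\ref{prop:7}, so you travel a circle and come back with the same rate.

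The detour does cost you two things worth fixing. First, it introduces the extra factor $(1+\Dc_\Sigma)$, which you dispose of by assuming $\Av_{n,d}(\delta)\lesssim 1$; that restriction is not among the hypotheses, and when $\Av_{n,d}(\delta)>1$ your bound degrades to order $\Av_{n,d}(\delta)^3$ rather than $\Av_{n,d}(\delta)^2$, so the lemma as stated is not fully recovered in that regime (the direct route via Proposition~\ref{prop:7} avoids this entirely). Second, the ``absorption'' of the $1/n$ from the eigenvalue event into $n^{-\min\{1,s/2-1\}}$ is not justified as written: the conclusion event of Proposition~\ref{prop:11} concerns only $\norm{\hat\beta-\beta}_\Sigma$ and does not contain the event $\{\lambda_{\rm min}(\Sigma^{-1/2}\hat\Sigma\Sigma^{-1/2})\geq 1/2\}$, which you still need separately to apply Lemma~\ref{lem:5}; the honest union bound gives $\delta+1/n+n^{-\min\{1,s/2-1\}}$. (The paper's own proof arrives at $1-2n^{-\min\{1,s/2-1\}}-\delta$ and is equally loose against the lemma statement, so this is a shared constant-factor blemish rather than an error unique to you.) The rate bookkeeping at the end --- $\sqrt{n}\,\Bv_{n,d}\leq 2\sqrt d\,(1\vee\sqrt{\log(2n)/d})$ and $\Av_{n,d}(\delta)^2$ expanding to the two displayed terms --- is correct and matches \eqref{eq:lem:4}.
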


\begin{lemma}\label{lem:9}
    Suppose that Assumption~\ref{asmp:2},\ref{asmp:3.b}, and \ref{asmp:4} holds such that $q>2$. Then, there exists some constant $C=C(\overline{\lambda},q,K_y)$  such that
    \begin{eqnarray}\label{eq:lem:5}
        \Pb\left[\sup_{c\in\Real^d:\norm{c}_{\Sigma^{-1}}=1}\sqrt{n}c^\top\Rc\leq C\left(\frac{d^{3/2}}{n}+\frac{d\log(2n/\delta)}{n}\right)\right]\geq 1-\frac{1}{n^{\min\{1,q/2-1\}}}-\delta,
    \end{eqnarray} for $\delta\in(0,1)$ and any $c\in\Real^d$ with $\norm{c}_{\Sigma^{-1}}=1$.
\end{lemma}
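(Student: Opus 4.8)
The plan is to reduce the statement to the deterministic estimate of Lemma~\ref{lem:5} and then feed in the two sub-Gaussian concentration bounds already available in the paper, namely the operator-norm deviation of the normalized sample Gram matrix (Proposition~\ref{prop:6}) and the estimation-error bound for $\norm{\hat\beta-\beta}_\Sigma$ (Proposition~\ref{prop:11}). First I would dispose of the supremum over $c$: for $\norm{c}_{\Sigma^{-1}}=1$, Cauchy--Schwarz gives $c^\top\Rc=\langle\Sigma^{-1/2}c,\Sigma^{1/2}\Rc\rangle\le\norm{c}_{\Sigma^{-1}}\norm{\Rc}_\Sigma=\norm{\Rc}_\Sigma$, so it suffices to bound $\sqrt n\,\norm{\Rc}_\Sigma$ and the supremum disappears at the outset. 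Lemma~\ref{lem:5} then gives
\[
  \norm{\Rc}_\Sigma\le\frac{\Dc_\Sigma^2}{\lambda_{\min}(\Sigma^{-1/2}\hat\Sigma\Sigma^{-1/2})}\,\norm{\frac{1}{n}\sum_{i=1}^n\Sigma^{-1/2}X_i(Y_i-X_i^\top\beta)}_2 ,
\]
and since $\frac1n\sum_{i=1}^n\Sigma^{-1/2}X_i(Y_i-X_i^\top\beta)=(\Sigma^{-1/2}\hat\Sigma\Sigma^{-1/2})\Sigma^{1/2}(\hat\beta-\beta)$, the trailing norm is at most $(1+\Dc_\Sigma)\norm{\hat\beta-\beta}_\Sigma$. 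On the event $\{\Dc_\Sigma\le 1/2\}$, Weyl's inequality forces $\lambda_{\min}(\Sigma^{-1/2}\hat\Sigma\Sigma^{-1/2})\ge 1/2$, so I would arrive at $\sup_{c:\norm{c}_{\Sigma^{-1}}=1}\sqrt n\,c^\top\Rc\le 3\sqrt n\,\Dc_\Sigma^2\,\norm{\hat\beta-\beta}_\Sigma$.

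Next I would plug in the random bounds. Under Assumption~\ref{asmp:3.b}, Proposition~\ref{prop:6} yields $\Dc_\Sigma\le C\sqrt{(d+\log(2/\delta))/n}$ with probability at least $1-\delta$, which in particular realizes the event $\{\Dc_\Sigma\le1/2\}$ in the regime $d+\log(2n/\delta)=o(n)$ (the only one in which \eqref{eq:lem:5} says anything, and in any case $d\le n$ here). Proposition~\ref{prop:11}, applied with $s=(1/q_x+1/q)^{-1}=q$ since $q_x=\infty$ under sub-Gaussianity, gives $\norm{\hat\beta-\beta}_\Sigma\le C\sqrt{(d+\log(2n))/n}$ with probability at least $1-n^{-\min\{1,q/2-1\}}$; the exponent $\min\{1,q/2-1\}$ is precisely what a Rosenthal/von Bahr--Esseen-type bound produces for a normalized sum of i.i.d.\ mean-zero vectors $\Sigma^{-1/2}X_i(Y_i-X_i^\top\beta)$ (mean zero by the normal equations \eqref{eq:projection_parameters_2}) whose coordinates have only $q>2$ finite moments.

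Intersecting the two events (failure probability at most $\delta+n^{-\min\{1,q/2-1\}}$) and multiplying the bounds gives
\[
  \sqrt n\,\norm{\Rc}_\Sigma\lesssim\sqrt n\cdot\frac{d+\log(2n/\delta)}{n}\cdot\sqrt{\frac{d+\log(2n)}{n}}=\frac{(d+\log(2n/\delta))\sqrt{d+\log(2n)}}{n},
\]
and expanding the product while using $d\ge1$ and $\delta<1$ to absorb the subleading logarithmic terms (for instance $d^{1/2}\log(2n/\delta)\le d\log(2n/\delta)$ and $d\sqrt{\log(2n)}\le d\log(2n/\delta)$) bounds the right-hand side by $C(d^{3/2}/n+d\log(2n/\delta)/n)$ with $C=C(\overline{\lambda},q,K_y)$, which is exactly \eqref{eq:lem:5}.

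Given Lemma~\ref{lem:5}, there is no deep obstacle: the argument is little more than Cauchy--Schwarz plus two black-box concentration inputs. The two points that need care are (i) securing the event $\{\Dc_\Sigma\le1/2\}$, hence $\lambda_{\min}(\Sigma^{-1/2}\hat\Sigma\Sigma^{-1/2})\ge1/2$, from the sub-Gaussian matrix deviation bound, which is what effectively confines the statement to $d=o(n)$; and (ii) the moment bookkeeping producing the $n^{-\min\{1,q/2-1\}}$ failure exponent of Proposition~\ref{prop:11}, which is the single step where the weak assumption $q>2$ (rather than $q\ge4$) is actually felt and which demands a Rosenthal/von Bahr--Esseen inequality with a truncation step rather than a crude second-moment estimate.
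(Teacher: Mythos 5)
Your argument is correct and essentially the same as the paper's: both reduce $\sup_c\sqrt n\,c^\top\Rc=\sqrt n\,\norm{\Rc}_\Sigma$ via Lemma~\ref{lem:5} and then combine the sub-Gaussian bound on $\Dc_\Sigma$ from Proposition~\ref{prop:6} with a $\sqrt{(d+\log(2n))/n}$ bound carrying the $n^{-\min\{1,q/2-1\}}$ failure probability. The only (harmless) difference is that you route the influence-function average through the identity $\frac1n\sum_i\Sigma^{-1/2}X_i(Y_i-X_i^\top\beta)=(\Sigma^{-1/2}\hat\Sigma\Sigma^{-1/2})\Sigma^{1/2}(\hat\beta-\beta)$ and Proposition~\ref{prop:11}, whereas the paper invokes Proposition~\ref{prop:7} directly (Proposition~\ref{prop:11} is itself proved from Proposition~\ref{prop:7} plus the lower-eigenvalue bound, so the ingredients coincide and your detour only costs a constant factor on the good event).
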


\begin{lemma}\label{lem:10}
    Suppose that Assumption~\ref{asmp:2},\ref{asmp:3.c}, and \ref{asmp:4} holds such that $q_x>4$ and $s=(1/q_x+1/q)^{-1}>2$. If $d+2\log(2n)\leq n/(18K_x)^2$, then there exists some constant $C=C(\overline{\lambda}, q_x,q,K_x,K_y)$ such that
    \begin{eqnarray}\label{eq:lem:6}
        \sup_{c\in\Real^d:\norm{c}_{\Sigma^{-1}}=1}\sqrt{n}c^\top\Rc\leq C \left[\frac{d^{1/2+4/q_x}\log^3(2d/\delta)}{n^{2-4/q_x}\delta^{4/q_x}}+\frac{d^{3/2}\log(2d/\delta)}{n}\right]\left(1\vee\sqrt{\frac{\log(2n)}{d}}\right),
    \end{eqnarray} occurs with probability at least 
    \begin{equation*}
        1-\frac{1}{n^{\min\{1,s/2-1\}}}-\delta
    \end{equation*} for any $\delta\in(0,1)$.
\end{lemma}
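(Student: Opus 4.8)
The plan is to control $\sup_{c\in\Real^d:\norm{c}_{\Sigma^{-1}}=1}\sqrt n\,c^\top\Rc$ by noting first that this supremum equals $\sqrt n\,\norm{\Rc}_\Sigma$ (writing $c=\Sigma^{1/2}u$ with $\norm{u}_2=1$, one has $c^\top\Rc=u^\top\Sigma^{1/2}\Rc$, whose supremum over $u$ is $\norm{\Sigma^{1/2}\Rc}_2$), and then invoking the deterministic inequality of Lemma~\ref{lem:5},
\[
\sqrt n\,\norm{\Rc}_\Sigma \;\le\; \frac{\Dc_\Sigma^2}{\lambda_{\rm min}(\Sigma^{-1/2}\hat\Sigma\Sigma^{-1/2})}\cdot\sqrt n\,\Norm{\n\sum_{i=1}^n\Sigma^{-1/2}X_i(Y_i-X_i^\top\beta)}_2 .
\]
Writing $G_n:=\n\sum_{i=1}^n\Sigma^{-1/2}X_i(Y_i-X_i^\top\beta)$, I would bound the three factors on the right separately: (i)~$\lambda_{\rm min}(\Sigma^{-1/2}\hat\Sigma\Sigma^{-1/2})\ge\half$, equivalently $\Dc_\Sigma\le\half$; (ii)~$\sqrt n\,\norm{G_n}_2\lesssim\sqrt{d+\log(2n)}$; and (iii)~the sharp tail bound for $\Dc_\Sigma$ available under the independent-coordinate Assumption~\ref{asmp:3.c}. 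Only step~(iii) differs from the proof of the general-moment analogue Lemma~\ref{lem:8}; the rest of the scheme is shared.

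For~(i), Proposition~\ref{prop:oliveira} with $\delta\asymp n^{-1}$ together with the standing hypothesis $d+2\log(2n)\le n/(18K_x)^2$ (which forces $9K_x\sqrt{(d+2\log(2n))/n}\le\half$) yields $\lambda_{\rm min}(\Sigma^{-1/2}\hat\Sigma\Sigma^{-1/2})\ge\half$, and via the two-sided form also $\Dc_\Sigma\le\half$, on an event of probability at least $1-n^{-1}$; on that event the prefactor in Lemma~\ref{lem:5} is at most $2$. For~(ii), $G_n$ is an average of independent mean-zero vectors with $\Eb\norm{G_n}_2^2 \le n^{-1}\tr(\Sigma^{-1/2}V\Sigma^{-1/2})\le\overline{\lambda}\,d/n$ by Assumption~\ref{asmp:4}, while Assumptions~\ref{asmp:2}, \ref{asmp:3.c} with $s=(1/q_x+1/q)^{-1}>2$ supply, through H\"older's inequality and the bound $\Eb\norm{\Sigma^{-1/2}X_i}_2^{q_x}\le d^{q_x/2}K_x^{q_x}$, the $s$-th moment control needed for a truncation argument; this delivers $\sqrt n\,\norm{G_n}_2\le C\sqrt{d+\log(2n)}$ with probability at least $1-n^{-\min\{1,s/2-1\}}$. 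Equivalently one may invoke Proposition~\ref{prop:11}, since $G_n=(\Sigma^{-1/2}\hat\Sigma\Sigma^{-1/2})\,\Sigma^{1/2}(\hat\beta-\beta)$ gives $\norm{G_n}_2\le(1+\Dc_\Sigma)\norm{\hat\beta-\beta}_\Sigma\le\tfrac{3}{2}\norm{\hat\beta-\beta}_\Sigma$ on the event from~(i). Finally $\sqrt{d+\log(2n)}\le 2\sqrt d\,(1\vee\sqrt{\log(2n)/d})$, the form of this factor in the statement.

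For~(iii) I would apply the concentration inequality for $\Dc_\Sigma$ that exploits the independence of the coordinates of $\Sigma^{-1/2}X_i$ (Proposition~\ref{prop:6}; alternatively the Tikhomirov-type Proposition~\ref{prop:tik} when $d\to\infty$ is additionally assumed): with probability at least $1-\delta$,
\[
\Dc_\Sigma\;\le\; C\left(\frac{d^{2/q_x}\log^{3/2}(2d/\delta)}{\delta^{2/q_x}n^{1-2/q_x}}\;+\;\sqrt{\frac{d\log(2d/\delta)}{n}}\right).
\]
The gain over Lemma~\ref{lem:8} is precisely the exponent $2/q_x$ on $d$ in the heavy-tail term (in place of $1$): under independence $\norm{X_i}_{\Sigma^{-1}}^2$ concentrates around $d$ in the manner of a $\chi^2_d$ variable, which is exactly what the universality-based bound reflects. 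Squaring the display (discarding the cross term via $(a+b)^2\le 2a^2+2b^2$) and intersecting with the event from~(i) gives $\Dc_\Sigma^2\le\min\{\tfrac14,\,2C^2(a^2+b^2)\}$; multiplying by $2\sqrt n\,\norm{G_n}_2\lesssim\sqrt d\,(1\vee\sqrt{\log(2n)/d})$ then produces
\[
\sqrt n\,\norm{\Rc}_\Sigma\;\lesssim\;\left(\frac{d^{1/2+4/q_x}\log^3(2d/\delta)}{\delta^{4/q_x}n^{2-4/q_x}}+\frac{d^{3/2}\log(2d/\delta)}{n}\right)\left(1\vee\sqrt{\tfrac{\log(2n)}{d}}\right),
\]
with the implicit constant depending only on $\overline{\lambda},q_x,q,K_x,K_y$, on the intersection of the three events; the complement of that intersection has probability at most $n^{-\min\{1,s/2-1\}}+\delta$ once the $n^{-1}$ loss from~(i) is absorbed (it can be charged directly to the conditioning event already present in Proposition~\ref{prop:11}). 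This is the asserted bound.

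I expect the main obstacle to lie entirely in step~(iii): the sharp $\Dc_\Sigma$ estimate under only $q_x>4$ finite moments — Proposition~\ref{prop:6} — is the one substantive input, and it is exactly there that Assumption~\ref{asmp:3.c} (independent coordinates) is indispensable; granting that proposition, Lemma~\ref{lem:10} reduces to combining Lemma~\ref{lem:5}, Propositions~\ref{prop:oliveira} and \ref{prop:11}, and elementary algebra over a union of three high-probability events. A secondary point requiring care is the bookkeeping in~(ii) — verifying that only $s>2$ (not $s\ge4$) is needed for the truncation constants, consistent with the $n^{-\min\{1,s/2-1\}}$ probability loss — and checking that the $1/\lambda_{\rm min}$ factor from Lemma~\ref{lem:5} is harmless once $\lambda_{\rm min}\ge\half$ is secured.
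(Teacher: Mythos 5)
Your proposal is correct and follows essentially the same route as the paper: the deterministic bound of Lemma~\ref{lem:5}, the lower-tail eigenvalue control of Proposition~\ref{prop:oliveira}, the influence-function bound of Proposition~\ref{prop:7} (with $\delta\asymp n^{1-s/2}$), and the Assumption~\ref{asmp:3.c} case of Proposition~\ref{prop:6} for $\Dc_\Sigma$, combined exactly as you describe. The only slip is your aside that Proposition~\ref{prop:oliveira} also yields $\Dc_\Sigma\le\tfrac12$ via a ``two-sided form'' --- it is a one-sided lower-tail bound --- but this claim is not used anywhere in your argument, so the proof stands.
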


Lemma \ref{lem:8}---\ref{lem:10} share a common structure, differing only in the moment assumption for covariates upon which they rely. As evidenced by Lemma~\ref{lem:5}, the deterministic upper bound for $\Rc$ hinges on the computation of three pivotal quantities, specifically, $\Dc_\Sigma$, $\lambda_{\rm min}(\Sigma^{-1/2}\hat\Sigma\Sigma^{-1/2})$, and the average of influence functions. Notably, Proposition~\ref{prop:6} offers three distinct concentration inequalities for $\Dc_\Sigma$ based on the three different moment assumptions. The quantity $\lambda_{\rm min}(\Sigma^{-1/2}\hat\Sigma\Sigma^{-1/2})$ is controlled via Theorem 4.1 of \cite{oliveira2016lower} as already done in \eqref{eq:oliveira}. Lastly, the average of influence functions is controlled in Proposition~\ref{prop:7}. 

\begin{proof}[proof of Lemma~\ref{lem:8}---\ref{lem:10}]
    Proposition~\ref{prop:6} presents the series of tail bounds which has the form of,
    \begin{equation}\label{eq:lem.1}
        \Pb\left(\Dc_\Sigma\leq \epsilon_1(\delta)\right)\geq 1-\delta.
    \end{equation} Meanwhile, an application of Proposition~\ref{prop:7} with $\delta=n^{1-s/2}$, we have
    \begin{eqnarray}\label{eq:lem.2}
        \Norm{\frac{1}{n}\sum_{i=1}^n\Sigma^{-1/2}X_i(Y_i-X_i^\top\beta)}_2&\leq&\sqrt{\frac{d+(s/2-1)\log(2n)}{n\overline{\lambda}}} +C_sK_xK_y\sqrt{\frac{d}{n}}\nonumber\\
        &\leq&\left(\frac{1}{\overline{\lambda}^{1/2}}\vee C_sK_xK_y\right)\sqrt{\frac{d}{n}}+\sqrt{\frac{(s/2-1)\log(2n)}{n\overline{\lambda}}}\nonumber\\
        &\leq&\left(\frac{s-2}{2\overline{\lambda}^{1/2}}\vee C_sK_xK_y\right)\left[\sqrt{\frac{d}{n}}+\sqrt{\frac{\log(2n)}{n}}\right],
    \end{eqnarray} with probability at least $1-n^{1-s/2}$. Lastly, an application of Proposition~\ref{prop:oliveira} yields that if $d+2\log(2n)\leq n/(18K_x)^2$, then
    \begin{equation}\label{eq:lem.3}
        \Pb\left(\lambda_{\rm min}(\Sigma^{-1/2}\hat\Sigma\Sigma^{-1/2})\geq \frac{1}{2}\right)\leq 1-\frac{1}{n}.
    \end{equation}
    Inspecting the deterministic inequality in Lemma~\ref{lem:5}, and by combining probabilistic inequality \eqref{eq:lem.1}, \eqref{eq:lem.2}, and \eqref{eq:lem.3}, we can deduce that 
    \begin{equation*}
        \sup_{c\in\Real^d:\norm{c}_{\Sigma^{-1}}=1}\sqrt{n}c^\top\Rc=\sqrt{n}\Norm{\Rc}_\Sigma\leq \sqrt{n}C\epsilon_1(\delta)^2\left[\sqrt{\frac{d}{n}}+\sqrt{\frac{\log(2n)}{n}}\right],
    \end{equation*} with probability at least $1-2n^{-\min\{1,s/2-1\}}-\delta$. By following the argument in Proposition~\ref{prop:6} and substituting $\epsilon_1(\delta)$ with their respective values under the three assumptions, the proof is completed.
    
\end{proof}

\begin{lemma}\label{lem:11}
    For $c\in\Real^d$, let $N_c(x)=\Phi(x)-\kappa_c\Phi^{(3)}(x)$ where $\kappa_c$ is defined in \eqref{eq:2.2.6}. Suppose that Assumption~\ref{asmp:2},\ref{asmp:3.a}, and \ref{asmp:4} hold for some $q_x\geq4$ and $(1/q+1/q_x)^{-1}\geq 3$. Then, the distribution of (scaled) $c^\top\Uc$ can be approximated with $N_c$ as
    \begin{equation*}
        \sup_{c\in\Real^d:\norm{c}_{\Sigma^{-1}}=1}\sup_{x\in\Real} \Abs{\Pb\left(\sqrt{n}\frac{c^\top\Uc}{\sigma_c}\leq x\right)-N_c(x)}\leq C\left(\frac{\bar\lambda^{3/2}K_x^3K_y^3}{\sqrt{n}}+\frac{\bar\lambda K_x^6K_y^2d}{2n}\right),
    \end{equation*} for some absolute constant $C>0$. Moreover, there exists a (possibly different) absolute constant $C>0$ such that
    \begin{eqnarray*}
        \sup_{c\in\Real^d:\norm{c}_{\Sigma^{-1}}=1}\sup_{x\in\Real} \Abs{\Pb\left(\sqrt{n}\frac{c^\top\Uc}{\sigma_c}\leq x\right)-\Phi(x)}\leq C\left(\frac{\bar\lambda^{3/2}K_x^3K_y^3}{\sqrt{n}}+\frac{\bar\lambda^{1/2} K_x^3K_y\sqrt{d}}{\sqrt{2n}}\right).
    \end{eqnarray*} Finally, the parameter $\kappa_c$ is uniformly bounded as
    \begin{equation*}
        \kappa_c\leq \frac{\bar\lambda^{1/2}K_x^3K_y}{2}\sqrt{\frac{d}{n}},\quad \forall c\in\Real^d.
    \end{equation*}
\end{lemma}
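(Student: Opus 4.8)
The plan is to view $\sqrt{n}\,c^\top\Uc/\sigma_c$ as a normalized sum of a mean-zero, scaled first-order empirical process plus a degenerate second-order $U$-statistic, and then invoke an off-the-shelf Edgeworth-type / Berry--Esseen result for $U$-statistics of order $2$ (the natural reference is \cite{bentkus2009normal}, cited in the main text). Recall from \eqref{eq:psi_and_phi} that $\sqrt{n}\,c^\top\Uc = (1+1/n)\tfrac{1}{\sqrt n}\sum_i c^\top\psi(X_i,Y_i) + \tfrac{\sqrt n}{n(n-1)}\sum_{i\ne j} c^\top\phi(X_i,Y_i,X_j,Y_j)$, where the kernel $\phi$ is mean-zero and degenerate (conditioning on one argument gives zero because $\Eb[\Sigma - X_iX_i^\top]=0$ and $\Eb[X_j(Y_j - X_j^\top\beta)]=0$). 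First I would set $W_i = c^\top\Sigma^{-1/2}X_i(Y_i - X_i^\top\beta)/\sigma_c$, a mean-zero, variance-one scalar array, and identify the leading Gaussian part as $n^{-1/2}\sum_i W_i$. The correction term $\kappa_c\Phi^{(3)}$ is precisely the first Edgeworth correction arising from the mixed third cumulant: the dominant contribution to $\mathrm{cov}$ between the linear part and the $U$-statistic part, which is exactly the expectation appearing in the definition \eqref{eq:2.2.7} of $\kappa_c$. So the structure of the claimed approximating function $N_c = \Phi - \kappa_c\Phi^{(3)}$ is dictated by matching cumulants through order three.

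The key steps, in order: (1) Verify the moment bounds on $W_i$ and on $c^\top\phi$ needed to apply the $U$-statistic Berry--Esseen bound — specifically $\Eb|W_i|^3 \lesssim \bar\lambda^{3/2}K_x^3 K_y^3$ (Hölder with $(1/q + 1/q_x)^{-1}\ge 3$ and Assumptions~\ref{asmp:2}, \ref{asmp:3.a}, using $\sigma_c^2 \ge \underline\lambda$), and a second-moment bound on the degenerate kernel $c^\top\phi$ that produces the $d/n$ factor (the operator/Frobenius norm of $\Sigma^{-1/2}(\Sigma - X_iX_i^\top)\Sigma^{-1/2}$ contributes $O(d)$ after taking expectation of its square). (2) Apply the Edgeworth expansion result for sums of the form (linear + degenerate quadratic); this yields the stated bound with leading terms $\bar\lambda^{3/2}K_x^3K_y^3/\sqrt n$ (the classical third-moment term) and $\bar\lambda K_x^6 K_y^2 d/(2n)$ (variance of the quadratic part plus remainder of the expansion). (3) Bound $\kappa_c$ directly: $|\kappa_c|$ equals (up to the $n^{-5/2}\binom n2 \approx \tfrac12 n^{-1/2}$ prefactor) $|\Eb[W_1 W_2 \cdot c^\top\phi(\cdot,\cdot)]|/\sigma_c$; expand $\phi$, use independence across $i=1,2$ to reduce to $\Eb[W_1 (c^\top\Sigma^{-1/2}X_1)(\cdot)]\cdot\Eb[W_2 (c^\top\Sigma^{-1/2}X_2)]$-type products, and bound via Cauchy--Schwarz and the moment assumptions, extracting a $\sqrt d$ from the $\Eb\|X\|_{\Sigma^{-1}}^2 = d$ factor — giving $\kappa_c \le \tfrac12\bar\lambda^{1/2}K_x^3 K_y\sqrt{d/n}$. (4) The plain-$\Phi$ approximation follows from the triangle inequality: $\sup_x|N_c(x) - \Phi(x)| \le |\kappa_c|\sup_x|\Phi^{(3)}(x)| \lesssim \kappa_c$, and absorbing $\kappa_c \le \tfrac12\bar\lambda^{1/2}K_x^3K_y\sqrt{d/n}$ into the bound, which replaces the $d/n$ term by $\sqrt{d/n}$ as stated.

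The main obstacle I anticipate is step (2): getting a clean Berry--Esseen/Edgeworth statement for the \emph{mixed} object (a linear statistic plus a degenerate second-order $U$-statistic, normalized together) with exactly the advertised constants and the $d$-dependence displayed, rather than an abstract $o(1)$ or a bound with suboptimal powers of $d$. The cited literature on $U$-statistics typically treats either a pure degenerate $U$-statistic or a non-degenerate one; here one must handle the cross terms carefully, track that the only non-vanishing third cumulant is the $\psi\psi\phi$ one (the $\phi\phi\phi$ term is higher order because of the extra $1/n$ factors built into $\Uc$), and confirm that all remainder terms are $O(d/n)$ uniformly over $c$ with $\|c\|_{\Sigma^{-1}}=1$. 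A secondary technical point is ensuring the dimension-dependence in the second moment of $c^\top\phi$ is genuinely $O(d)$ and not $O(d^2)$ — this uses that $c^\top\Sigma^{-1/2}(\Sigma - XX^\top)\Sigma^{-1/2}$ appears contracted against a fixed direction, so only the trace-type term $\Eb\|\Sigma^{-1/2}X\|_2^2 = d$ enters, not $\Eb\|\Sigma^{-1/2}XX^\top\Sigma^{-1/2}\|^2_{\mathrm{op}}$.
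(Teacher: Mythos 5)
Your proposal takes essentially the same route as the paper: the paper applies Theorems 1 and 2 of \cite{bentkus2009normal}, which are stated precisely for the mixed (linear plus degenerate second-order $U$-statistic) form you worry about, and then verifies the same third-moment bound $\Eb|c^\top\psi|^3/\sigma_c^3\lesssim \bar\lambda^{3/2}K_x^3K_y^3$, the same $O(d)$ (not $O(d^2)$) second-moment bound on the contracted degenerate kernel via $\sup_u\Eb[(u^\top\Sigma^{-1/2}X)^2\norm{X}_{\Sigma^{-1}}^2]\leq dK_x^4$, and the same Cauchy--Schwarz factorization for $\kappa_c$. The only cosmetic difference is that the paper obtains the plain-$\Phi$ bound directly from Theorem 2 of \cite{bentkus2009normal} (which yields $\eta_c+\gamma_c^{1/2}$) rather than via the triangle inequality through $\kappa_c\sup_x|\Phi^{(3)}(x)|$; both give the same $\sqrt{d/n}$ rate.
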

\begin{proof}
    For any $c\in\Real^d$ with $\norm{c}_{\Sigma^{-1}}=1$, let
    \begin{eqnarray*}
        \eta_c&=&n^{-1/2}\Eb \Abs{c^\top\psi(X_1,Y_1)}^3/\sigma_c^3,\\
    \gamma_c&=&\binom{n}{2}n^{-3}\Eb \Abs{c^\top\phi(X_1,Y_1,X_2,Y_2)}^2/\sigma_c^2,
    \end{eqnarray*} where $\psi$ and $\phi$ are defined in \eqref{eq:psi_and_phi}. Theorem~\ref{thm:1} of \cite{bentkus2009normal} implies that there exists a universal constant $C>0$ such that
\begin{eqnarray*}
    \sup_{x\in\Real} \Abs{\Pb\left(\sqrt{n}\frac{c^\top\Uc}{\sigma_c}\leq x\right)-N_c(x)}\leq C\left(\eta_c+\gamma_c\right).
\end{eqnarray*} We first control the asymptotic variance $\sigma_c^2$. From the definition of $\sigma_c^2$, Assumption~\ref{asmp:4} leads to that
\begin{eqnarray*}
    \left(1+\frac{1}{n}\right)^2\bar\lambda^{-1}\leq\sigma_c^2=\left(1+\frac{1}{n}\right)^2{\rm Var}\left[c^\top\Sigma^{-1}X(Y-X^\top\beta)\right]\leq\left(1+\frac{1}{n}\right)^2\overline{\lambda}^{-1}.
\end{eqnarray*} Now, we bound two quantities, $\eta_c$ and $\gamma_c$, respectively. First, Jensen's inequality yields
\begin{eqnarray*}
    \eta_c &=& \frac{(1+1/n)^3}{\sigma_c^3\sqrt{n}}\Eb\Abs{c^\top\Sigma^{-1}X_1(Y_1-X_1^\top\beta)}^3\\
    &\leq& \frac{\bar\lambda^{3/2}}{\sqrt{n}}\left(\Eb\Abs{c^\top\Sigma^{-1}X_1(Y_1-X_1^\top\beta)}^s\right)^{3/s},
\end{eqnarray*} where $s=(1/q_x+1/q)^{-1}\geq 3$. An application of H\"older's inequality with Assumption~\ref{asmp:2} and \ref{asmp:3.a} implies that
\begin{eqnarray*}
    \eta_c &\leq& \frac{\bar\lambda^{3/2}}{\sqrt{n}}\left(\Eb\Abs{c^\top\Sigma^{-1}X_1(Y_1-X_1^\top\beta)}^{q_xq/(q_x+q)}\right)^{3/s}\\
    &\leq&\frac{\bar\lambda^{3/2}}{\sqrt{n}}\left\{\left(\Eb\Abs{c^\top\Sigma^{-1}X_1}^{q_x}\right)^{q/(q_x+q)}\left(\Eb\Abs{Y_1-X_1^\top\beta}^q\right)^{q_x/(q_x+q)}\right\}^{3/s}\\
    &\leq&\frac{\bar\lambda^{3/2}K_x^3K_y^3}{\sqrt{n}}.
\end{eqnarray*} On the other hand, the definition of $\phi$ implies that
\begin{eqnarray*}
    \Eb\Abs{c^\top\phi(X_1,Y_1,X_2,Y_2)}^2 &=& \left(1-\frac{1}{n}\right)^2\Eb|c^\top\Sigma^{-1}(\Sigma-X_1X_1^\top)\Sigma^{-1}X_2(Y_2-X_2^\top\beta)|^2\\&\leq&\Eb|c^\top\Sigma^{-1}(\Sigma-X_1X_1^\top)\Sigma^{-1}X_2(Y_2-X_2^\top\beta)|^2.
\end{eqnarray*} For any $v\in\Real^d$, we note that Jensen's inequality yields
\begin{eqnarray}\label{eq:M_2,2}
    \Eb\Abs{v^\top \Sigma^{-1/2}X_2(Y_2-X_2^\top\beta)}^2&\leq& \norm{v}_2^2\left(\Eb\Abs{(v/\norm{v}_2)^\top \Sigma^{-1/2}X_2(Y_2-X_2^\top\beta)}^s\right)^{2/s}\nonumber\\
    &\leq& \norm{v}_2^2\left\{\left(\Eb\Abs{(v/\norm{v}_2)^\top\Sigma^{-1}X_2}^{q_x}\right)^{q/(q_x+q)}\left(\Eb\Abs{Y_2-X_2^\top\beta}^q\right)^{q_x/(q_x+q)}\right\}^{2/s}\nonumber\\
    &\leq& \norm{v}_2^2K_x^2K_y^2,
\end{eqnarray} where the first inequality is Jensen's inequality, and the second inequality follows from H\"older's inequality. Combining this with the independence of $(X_1,Y_1)$ and $(X_2,Y_2)$ yields
\begin{eqnarray*}
    \Eb\Abs{c^\top\Sigma^{-1}(\Sigma-X_1X_1^\top)\Sigma^{-1}X_2(Y_2-X_2^\top\beta)}^2\leq K_x^2K_y^2\Eb\norm{c^\top\Sigma^{-1}(\Sigma-X_1X_1^\top)\Sigma^{-1/2}}_2^2.
\end{eqnarray*} Inspecting the right-hand side, we have
\begin{eqnarray*}
    \Eb\norm{c^\top\Sigma^{-1}(\Sigma-X_1X_1^\top)\Sigma^{-1/2}}_2^2 &=& \Eb\{c^\top\Sigma^{-1}(\Sigma-X_1X_1^\top)\Sigma^{-1}(\Sigma-X_1X_1^\top)\Sigma^{-1}c\}\\
    &=&c^\top\Sigma^{-1/2}\Eb\left(\Sigma^{-1/2}X_1X_1^\top\Sigma^{-1/2}\norm{X_1}_{\Sigma^{-1}}^2-I\right)\Sigma^{-1/2}c\\
    &\leq&c^\top\Sigma^{-1/2}\Eb\left(\Sigma^{-1/2}X_1X_1^\top\Sigma^{-1/2}\norm{X_1}_{\Sigma^{-1}}^2\right)\Sigma^{-1/2}c\\&\leq&\sup_{u\in\Sb^{d-1}}\Eb\left\{\left(u^\top\Sigma^{-1/2}X_1\right)^2\norm{X_1}_{\Sigma^{-1}}^2\right\}.
\end{eqnarray*} To control the quantity $\norm{X_1}_{\Sigma^{-1}}^2$, let $\ev_j$ be the $j$:th canonical basis of $\Real^d$ for $1\leq j \leq d$. Then,
\begin{eqnarray*}
    \sup_{u\in\Sb^{d-1}}\Eb\left\{\left(u^\top\Sigma^{-1/2}X_1\right)^2\norm{X_1}_{\Sigma^{-1}}^2\right\} &=& \sup_{u\in\Sb^{d-1}}\Eb\left\{\left(u^\top\Sigma^{-1/2}X_1\right)^2\sum_{j=1}^d (\ev_j^\top\Sigma^{-1/2}X_1)^2\right\}\\
    &=& \sup_{u\in\Sb^{d-1}}\sum_{j=1}^d
    \Eb\left\{\left(u^\top\Sigma^{-1/2}X_1\right)^2(\ev_j^\top\Sigma^{-1/2}X_1)^2\right\}\\
    &\leq&d\sup_{u\in\Sb^{d-1}}\max_{1\leq j\leq d}
    \Eb\left\{\left(u^\top\Sigma^{-1/2}X_1\right)^2(\ev_j^\top\Sigma^{-1/2}X_1)^2\right\}\\
    &\leq&d\sup_{u\in\Sb^{d-1}}
    \Eb\left\{\left(u^\top\Sigma^{-1/2}X_1\right)^4\right\}\leq dK_x^4.
\end{eqnarray*} Here, the next-to-last inequality is Cauchy Schwarz inequality, and the last inequality is due to Assumption~\ref{asmp:3.a}. Combining all, we have
\begin{eqnarray}
    \gamma_c&\leq& \frac{\bar\lambda}{2n}\Eb\Abs{c^\top\phi(X_1,Y_1,X_2,Y_2)}^2\nonumber\\
    &\leq&\frac{\bar\lambda K_x^6K_y^2d}{2n}\label{eq:bound_phi}.
\end{eqnarray} This concludes the proof of the first part. The last part can be done by applying Theorem~\ref{thm:2} of \cite{bentkus2009normal}, which proves the following: there exists a universal constant $C>0$ such that
\begin{eqnarray*}
    \sup_{x\in\Real} \Abs{\Pb\left(\sqrt{n}\frac{c^\top\Uc}{\sigma_c}\leq x\right)-\Phi(x)}\leq C\left(\eta_c+\gamma_c^{1/2}\right).
\end{eqnarray*} Finally, we control $\kappa_c$. From its definition in \eqref{eq:2.2.7} and Cauchy Schwarz's inequality, we have
\begin{eqnarray*}
    \kappa_c&\leq& \frac{1}{2\sigma_c^3\sqrt{n}}\left(\Eb\left[\left\{c^\top\psi(X_1,Y_1)\right\}^2\left\{c^\top\psi(X_2,Y_2)\right\}^2\right]\right)^{1/2}\left(\Eb\left[\left\{c^\top\phi(X_1,Y_1,X_2,Y_2)\right\}^2\right]\right)^{1/2}\\
    &=&\frac{1}{2\sigma_c\sqrt{n}}\left(\Eb\left[\left\{c^\top\phi(X_1,Y_1,X_2,Y_2)\right\}^2\right]\right)^{1/2}\\
    &\leq&\frac{K_x^3K_y\sqrt{d}}{2\sigma_c\sqrt{n}}\leq\frac{\bar\lambda^{1/2}K_x^3K_y\sqrt{d}}{2\sqrt{n}}.
\end{eqnarray*} This completes the proof.
\end{proof}
\begin{lemma}\label{lem:12}
    Suppose that Assumption~\ref{asmp:2} and \ref{asmp:3.c} hold with $(3/q_x+1/q)^{-1}\geq 2$. Then, 
    \begin{equation}\label{eq:lem:8}
        \Pb\left(\Abs{\sqrt{n}c^\top\Bc}>K_x^3K_y\left(\sqrt{\frac{d}{n}}+\frac{d}{n\sqrt{\delta}}\right)\right)\leq \delta,
    \end{equation} for $\delta\in(0,1)$.
\end{lemma}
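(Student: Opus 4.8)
The plan is to write $\sqrt n\,c^\top\Bc$ as a scaled sum of independent summands and to bound it by peeling off its expectation. We may assume $\norm{c}_{\Sigma^{-1}}=1$; put $\tilde c:=\Sigma^{-1/2}c\in\Sb^{d-1}$, $Z_i:=\Sigma^{-1/2}X_i$ and $\eps_i:=Y_i-X_i^\top\beta$, so that $\Eb[Z_iZ_i^\top]=I_d$, $\Eb[\eps_i Z_i]=0$ (the defining property of the projection parameter, cf.~\eqref{eq:projection_parameters_2}), and, by Assumption~\ref{asmp:3.c}, the coordinates $Z_{i1},\dots,Z_{id}$ of $Z_i$ are independent with unit variance. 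By~\eqref{eq:bias_true} we then have $\sqrt n\,c^\top\Bc=-n^{-3/2}\sum_{i=1}^n W_i$ with $W_i:=(\tilde c^\top Z_i)\,\eps_i\,\norm{Z_i}_2^2$, and I would bound $|n^{-3/2}\sum_i\Eb W_i|$ and $|n^{-3/2}\sum_i(W_i-\Eb W_i)|$ separately and combine by the triangle inequality.

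For the mean part, the idea is to use $\Eb[(\tilde c^\top Z_i)\eps_i]=0$ to re-center the quadratic factor: $\Eb W_i=\Eb\big[(\tilde c^\top Z_i)\eps_i(\norm{Z_i}_2^2-d)\big]$, so Cauchy--Schwarz gives $|\Eb W_i|\le(\Eb[(\tilde c^\top Z_i)^2\eps_i^2])^{1/2}\,(\Var\norm{Z_i}_2^2)^{1/2}$. The first factor is $\le K_xK_y$ by H\"older with the conjugate pair $(q,\,q'=2q/(q-2))$ — the hypothesis $3/q_x+1/q\le 1/2$ forces $q_x\ge 6q/(q-2)\ge q'$, so Assumption~\ref{asmp:3.c} controls $\Eb|\tilde c^\top Z_i|^{q'}$ and Assumption~\ref{asmp:2} controls $\Eb|\eps_i|^q$. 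The second factor is $\le K_x^2\sqrt d$ because coordinatewise independence makes $\Var\norm{Z_i}_2^2=\sum_{j=1}^d\Var(Z_{ij}^2)\le\sum_{j=1}^d\Eb[(\ev_j^\top\Sigma^{-1/2}X_i)^4]\le dK_x^4$. Hence $|\Eb W_i|\le K_x^3K_y\sqrt d$, so $|n^{-3/2}\sum_i\Eb W_i|\le K_x^3K_y\sqrt{d/n}$. This is the step I expect to be the crux: a direct H\"older bound on the uncentered $\Eb W_i$ only yields $O(d)$, hence $\sqrt n|\Eb c^\top\Bc|=O(d/\sqrt n)$, which is too weak; the gain from $d$ to $\sqrt d$ is precisely the $\chi^2$-type cancellation flagged in Remark~\ref{rmk:2}, and it is here that the independence of the coordinates of $\Sigma^{-1/2}X_i$ — not merely their moment bounds — is genuinely used.

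For the fluctuation part, since the $W_i$ are independent I would apply Chebyshev's inequality: $\Var(n^{-3/2}\sum_i W_i)=n^{-3}\sum_i\Var(W_i)\le n^{-2}\max_i\Eb W_i^2$, and the generalized H\"older inequality with exponents $q_x/2,\ q/2,\ q_x/4$ — whose reciprocals sum to $6/q_x+2/q=2(3/q_x+1/q)\le 1$ — together with $\Eb\norm{Z_i}_2^{q_x}\le K_x^{q_x}d^{q_x/2}$ (Assumption~\ref{asmp:3.c} and Jensen, exactly as in the discussion following Assumption~\ref{asmp:3.a}), gives $\Eb W_i^2=\Eb[(\tilde c^\top Z_i)^2\eps_i^2\norm{Z_i}_2^4]\le K_x^6K_y^2 d^2$. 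Chebyshev then yields $\Pb\big(|n^{-3/2}\sum_i(W_i-\Eb W_i)|>K_x^3K_y\,d/(n\sqrt\delta)\big)\le\delta$, and adding this to the mean bound produces exactly~\eqref{eq:lem:8}. Apart from the one idea above, everything else is routine moment bookkeeping.
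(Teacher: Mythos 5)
Your proof is correct and follows essentially the same route as the paper's: decompose $\sqrt{n}\,c^\top\Bc$ into its mean plus a fluctuation, bound the mean by rewriting it as a covariance with $\norm{Z_i}_2^2$ (your recentering by $d$ is the same step) so that coordinate independence gives $\Var(\norm{Z_i}_2^2)\le K_x^4 d$ rather than $O(d^2)$, and control the fluctuation by Chebyshev with a H\"older bound $\Eb W_i^2\le K_x^6K_y^2d^2$. The only differences are cosmetic (the paper pulls out a factor of $d$ via $\norm{Z_i}_2^4\le d\sum_j Z_i(j)^4$ before applying H\"older, whereas you bound $\Eb\norm{Z_i}_2^{q_x}$ directly), and both yield the identical tail bound.
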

\begin{proof}
    Recall that
    \begin{eqnarray*}
        \Bc = -\frac{1}{n^2}\sum_{i=1}^n V_i,
    \end{eqnarray*} where $V_i = \Sigma^{-1}X_i(Y_i-X_i^\top\beta)\norm{X_i}_{\Sigma^{-1}}^2$ for $i=1,\ldots,n$. Chebyshev's inequality implies that for any $\epsilon >0$
    \begin{eqnarray}\label{eq:lem8:basic_ineq}
        \Pb\left(\Abs{\sqrt{n}c^\top\Bc}>\Abs{\Eb[\sqrt{n}c^\top\Bc]}+\epsilon\right)\leq \epsilon^{-2}{\rm Var}(\sqrt{n}c^\top\Bc).
    \end{eqnarray} Since $\beta$ is the projection parameter, note that
    \begin{eqnarray*}
        \Eb c^\top V_i &=& \Eb c^\top\Sigma^{-1}X_i(Y_i-X_i^\top\beta)\norm{X_i}_{\Sigma^{-1}}^2\\
        &=& {\rm Cov}\left(c^\top\Sigma^{-1}X_i(Y_i-X_i^\top\beta), \norm{X_i}_{\Sigma^{-1}}^2\right).
    \end{eqnarray*} This leads to
    \begin{eqnarray*}
        \Abs{\Eb c^\top V_i}\leq \left[{\rm Var}\left\{c^\top\Sigma^{-1}X_i(Y_i-X_i^\top\beta)\right\}\right]^{1/2}\left\{{\rm Var}\left(\norm{X_i}_{\Sigma^{-1}}^2\right)\right\}^{1/2}.
    \end{eqnarray*} The primary term on the right-hand side can be bounded in a similar manner to the approach outlined in equation \eqref{eq:M_2,2}:
    \begin{equation}\label{eq:lem.13.1}
        {\rm Var}\left\{c^\top\Sigma^{-1}X_i(Y_i-X_i^\top\beta)\right\}\leq \Eb \Abs{c^\top\Sigma^{-1}X_i}^2\Abs{Y_i-X_i^\top\beta}^2\leq K_x^2K_y^2.
    \end{equation} To control $\norm{X_i}_{\Sigma^{-1}}$, write $Z_i = \Sigma^{-1/2}X_i$ and $Z_i = (Z_i(1),\ldots,Z_i(d))^\top$ where $\Eb Z_i(j)^2=1$ and $Z_i(j)$ for $1\leq j\leq d$ are independent. Then, we get
\begin{eqnarray}\label{eq:lem.13.2}
    {\rm Var}(\norm{X_i}_{\Sigma^{-1}}^2) &=& \Eb \norm{X_i}_{\Sigma^{-1}}^4 - \left(\Eb \norm{X_i}_{\Sigma^{-1}}^2\right)^2\nonumber\\
    &=&\sum_{j=1}^d\Eb [Z_i(j)^4] +\sum_{j\neq j'}\Eb [Z_i(j)^2Z_i(j')^2] - d^2\nonumber\\
    &=&\sum_{j=1}^d\Eb [Z_i(j)^4] - d \leq d(K_x^4-1).
\end{eqnarray} The last inequality is due to Assumption~\ref{asmp:3.a}. Combining \eqref{eq:lem.13.1} and \eqref{eq:lem.13.2}, we get $|\Eb c^\top V_i|\leq K_x^3K_y \sqrt{d}$ for $i=1,\ldots,n$, and this implies that
\begin{eqnarray}\label{eq:bias_mean_bound}
    \Abs{\Eb\sqrt{n} c^\top\Bc}\leq K_x^3K_y \sqrt{\frac{d}{n}}.
\end{eqnarray} Now we focus on the variance of the bias $\Bc$. We note that
\begin{eqnarray*}
    {\rm Var}\left(\sqrt{n} c^\top\Bc\right) &=&\frac{1}{n^2} {\rm Var}(c^\top V_1)\leq \frac{1}{n^2} \Eb [(c^\top V_1)^2]=\frac{1}{n^2}\Eb(c^\top\Sigma^{-1}X_i)^2(Y_i-X_i^\top\beta)^2\norm{X_i}_{\Sigma^{-1}}^4\\
    &=&\frac{1}{n^2}\Eb(c^\top\Sigma^{-1}X_i)^2(Y_i-X_i^\top\beta)^2\left(\sum_{j=1}^d Z_i(j)^2\right)^2\\
    &\leq& \frac{d}{n^2}\Eb(c^\top\Sigma^{-1}X_i)^2(Y_i-X_i^\top\beta)^2\left(\sum_{j=1}^d Z_i(j)^4\right)\\
    &=&\frac{d}{n^2}\sum_{j=1}^d\Eb(c^\top\Sigma^{-1}X_i)^2(Y_i-X_i^\top\beta)^2 Z_i(j)^4,
\end{eqnarray*} where two inequalities are Cauchy Schwarz's inequalities. Let $l := (6/q_x+2/q)^{-1}\geq 1$. Combining Jensen's inequality and H\"older's inequality implies that
\begin{eqnarray*}
    \Eb(c^\top\Sigma^{-1}X_i)^2(Y_i-X_i^\top\beta)^2 Z_i(j)^4 &\leq& \left[\Eb|c^\top\Sigma^{-1}X_i|^{2l}|Y_i-X_i^\top\beta|^{2l} |Z_i(j)|^{4l}\right]^{1/l}\\
    &\leq&\left[\left\{\Eb|c^\top\Sigma^{-1}X_i|^{q_x}\right\}^{2l/q_x}\left\{\Eb|Y_i-X_i^\top\beta|^{q}\right\}^{2l/q} \left\{\Eb|Z_i(j)|^{q_x}\right\}^{4l/q_x}\right]^{1/l}\\
    &\leq&K_x^6K_y^2.
\end{eqnarray*} Hence, we get ${\rm Var}(\sqrt{n} c^\top\Bc)\leq K_x^6K_y^2d^2/n^2$. Combining this with \eqref{eq:bias_mean_bound} in \eqref{eq:lem8:basic_ineq} completes the proof.
\end{proof}

\subsection{Proof of Auxiliary Results for Theorem~\ref{thm:2}}

\begin{lemma}\label{lem:rem1} Suppose that Assumption~\ref{asmp:2},\ref{asmp:3.a}, and \ref{asmp:4} holds with $(3/q_x+1/q)^{-1}\geq 1/2$. Then, it holds that $$\Sc_1\leq \frac{2K_x^3K_y\Dc_\Sigma}{\lambda_{\rm min}(\Sigma^{-1/2}\hat\Sigma\Sigma^{-1/2})},$$ with probability at least $1-d/n$.
\end{lemma}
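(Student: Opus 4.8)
The plan is to first reduce $\Sc_1$ algebraically to a product of an operator‑norm factor and the Euclidean norm of a \emph{fixed} (data‑dependent) vector, and then to control that Euclidean norm by a first/second moment argument. Write $\tilde c=\Sigma^{-1/2}c$, so that $\norm{\tilde c}_2=\norm{c}_{\Sigma^{-1}}=1$, and $u_i=\Sigma^{-1/2}X_i$. Using $\hat\Sigma^{-1}-\Sigma^{-1}=\hat\Sigma^{-1}(\Sigma-\hat\Sigma)\Sigma^{-1}$ and inserting $\Sigma^{1/2}\Sigma^{-1/2}$, one gets $R_{1,i}=c^\top(\hat\Sigma^{-1}-\Sigma^{-1})X_i=\tilde c^\top M E u_i$, where $M=\Sigma^{1/2}\hat\Sigma^{-1}\Sigma^{1/2}=(\Sigma^{-1/2}\hat\Sigma\Sigma^{-1/2})^{-1}$ and $E=I_d-\Sigma^{-1/2}\hat\Sigma\Sigma^{-1/2}$, so that $\norm{M}_{\rm op}=1/\lambda_{\rm min}(\Sigma^{-1/2}\hat\Sigma\Sigma^{-1/2})$ and $\norm{E}_{\rm op}=\Dc_\Sigma$. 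Since $(Y_i-X_i^\top\beta)\norm{X_i}_{\Sigma^{-1}}^2$ is a scalar, $M$ and $E$ factor out of the sum defining $\Sc_1$:
\[
\Sc_1=\Abs{\tilde c^\top M E Z},\qquad Z:=\frac{1}{nd}\sum_{i=1}^n\Sigma^{-1/2}X_i(Y_i-X_i^\top\beta)\norm{X_i}_{\Sigma^{-1}}^2 ,
\]
whence $\Sc_1\le\norm{\tilde c}_2\norm{M}_{\rm op}\norm{E}_{\rm op}\norm{Z}_2=\Dc_\Sigma\norm{Z}_2/\lambda_{\rm min}(\Sigma^{-1/2}\hat\Sigma\Sigma^{-1/2})$. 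It therefore suffices to prove $\norm{Z}_2\le 2K_x^3K_y$ on an event of probability at least $1-d/n$.

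Next I would split $\norm{Z}_2\le\norm{\Eb Z}_2+\norm{Z-\Eb Z}_2$ and treat the two pieces separately. Set $S:=\Sigma^{-1/2}X(Y-X^\top\beta)\norm{X}_{\Sigma^{-1}}^2$, so $\Eb Z=\Eb[S]/d$. The orthogonality characterization \eqref{eq:projection_parameters_2} of the projection parameter gives $\Eb[\Sigma^{-1/2}X(Y-X^\top\beta)]=0$, hence $\Eb S=\Eb[\Sigma^{-1/2}X(Y-X^\top\beta)(\norm{X}_{\Sigma^{-1}}^2-d)]$; this re‑centering is essential, as it is what prevents an extra factor $\sqrt d$. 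Bounding $\norm{\Eb S}_2=\sup_{u\in\Sb^{d-1}}\Eb[(u^\top\Sigma^{-1/2}X)(Y-X^\top\beta)(\norm{X}_{\Sigma^{-1}}^2-d)]$ by Cauchy--Schwarz and then H\"older, using Assumptions \ref{asmp:2} and \ref{asmp:3.a} (with the exponents $q_x\ge 4$ and $(1/q_x+1/q)^{-1}\ge 2$ available from the ambient hypotheses) together with ${\rm Var}(\norm{X}_{\Sigma^{-1}}^2)\le\Eb\norm{X}_{\Sigma^{-1}}^4\le K_x^4d^2$, yields $\norm{\Eb S}_2\le K_x^3K_y\,d$, i.e.\ $\norm{\Eb Z}_2\le K_x^3K_y$.

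For the fluctuation I would use a second‑moment argument. Since the $S_i$ are i.i.d., $\Eb\norm{Z-\Eb Z}_2^2=\frac{1}{n^2d^2}\sum_i{\rm tr}({\rm Var}(S_i))\le\frac{1}{nd^2}\Eb\norm{S}_2^2$. Here $\norm{S}_2^2=(Y-X^\top\beta)^2\norm{X}_{\Sigma^{-1}}^6$, and the power‑mean inequality $\norm{X}_{\Sigma^{-1}}^6=(\sum_j(\ev_j^\top\Sigma^{-1/2}X)^2)^3\le d^2\sum_j(\ev_j^\top\Sigma^{-1/2}X)^6$ reduces matters to estimating $\Eb[(Y-X^\top\beta)^2(\ev_j^\top\Sigma^{-1/2}X)^6]$; H\"older with exponents $q/2$ and $q/(q-2)$ bounds this by $K_x^6K_y^2$ precisely because $6q/(q-2)\le q_x$, which is exactly the hypothesis $(3/q_x+1/q)^{-1}\ge 1/2$. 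Hence $\Eb\norm{S}_2^2\le K_x^6K_y^2d^3$ and $\Eb\norm{Z-\Eb Z}_2^2\le K_x^6K_y^2\,d/n$, so Markov's inequality gives $\Pb(\norm{Z-\Eb Z}_2> K_x^3K_y)\le d/n$; on the complementary event $\norm{Z}_2\le\norm{\Eb Z}_2+\norm{Z-\Eb Z}_2\le 2K_x^3K_y$, which finishes the proof. The main obstacle is precisely this last moment bookkeeping: one has to check that the re‑centering of $\norm{X}_{\Sigma^{-1}}^2$ combined with the power‑mean reduction keeps every exponent inside the range permitted by the stated moment condition.
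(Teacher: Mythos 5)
Your proposal is correct and follows essentially the same route as the paper: factor out the operator-norm terms to reduce $\Sc_1$ to $\Dc_\Sigma\norm{Z}_2/\lambda_{\rm min}(\Sigma^{-1/2}\hat\Sigma\Sigma^{-1/2})$, bound $\norm{\Eb Z}_2$ by $K_x^3K_y$, bound $\Eb\norm{Z-\Eb Z}_2^2$ by $K_x^6K_y^2 d/n$ via the power-mean reduction of $\norm{X}_{\Sigma^{-1}}^6$, and apply Chebyshev with threshold $K_x^3K_y$. The only cosmetic difference is in the mean term: the paper bounds $\Eb[\theta^\top\Sigma^{-1/2}X(Y-X^\top\beta)\norm{X}_{\Sigma^{-1}}^2/d]$ directly by averaging over coordinates $(\ev_j^\top\Sigma^{-1/2}X)^2$ and applying H\"older, without your re-centering by $d$ (so the re-centering is not actually \emph{essential} to avoid a $\sqrt d$ loss, though it works equally well). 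One small slip: the sixth-moment H\"older step requires $6/q_x+2/q\leq 1$, i.e.\ $(3/q_x+1/q)^{-1}\geq 2$, not $\geq 1/2$ as you assert; this mirrors what appears to be a typo in the lemma's stated hypothesis rather than a defect in your argument.
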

\begin{proof}
    Note that
\begin{eqnarray}
    \Sc_1 &\leq&\norm{c^\top(\hat\Sigma^{-1}-\Sigma^{-1})\Sigma^{1/2}}_2\Norm{\n\sum_{i=1}^n\Sigma^{-1/2}X_i(Y_i-X_i^\top\beta)\frac{\norm{X_i}_{\Sigma^{-1}}^2}{d}}_2\nonumber\\
    &\leq& \frac{\Dc_\Sigma}{\lambda_{\rm min}(\Sigma^{-1/2}\hat\Sigma\Sigma^{-1/2})}\Norm{\n\sum_{i=1}^n\Sigma^{-1/2}X_i(Y_i-X_i^\top\beta)\frac{\norm{X_i}_{\Sigma^{-1}}^2}{d}}_2\label{eq:lem:rem1.1},
\end{eqnarray} since $\norm{c}_{\Sigma^{-1}}=1$. Hence, it suffices to control the rightmost quantity in \eqref{eq:lem:rem1.1}. We note that
\begin{align*}
    &\Norm{\n\sum_{i=1}^n\Sigma^{-1/2}X_i(Y_i-X_i^\top\beta)\frac{\norm{X_i}_{\Sigma^{-1}}^2}{d}}_2\leq \Norm{\Eb\Sigma^{-1/2}X_1(Y_1-X_1^\top\beta)\frac{\norm{X_1}_{\Sigma^{-1}}^2}{d}}_2\\
    &+\Norm{\n\sum_{i=1}^n\Sigma^{-1/2}X_i(Y_i-X_i^\top\beta)\frac{\norm{X_i}_{\Sigma^{-1}}^2}{d}-\Eb\Sigma^{-1/2}X_1(Y_1-X_1^\top\beta)\frac{\norm{X_1}_{\Sigma^{-1}}^2}{d}}_2.
\end{align*} The leading term on the right-hand side can be bounded via Assumption~\ref{asmp:2} and \ref{asmp:3.a} as
\begin{eqnarray}
    \Norm{\Eb\Sigma^{-1/2}X_1(Y_1-X_1^\top\beta)\frac{\norm{X_1}_{\Sigma^{-1}}^2}{d}}_2&\leq&\sup_{\theta\in\Sb^{d-1}}\Abs{\Eb\left[\theta^\top\Sigma^{-1/2}X_1(Y_1-X_1^\top\beta)\frac{\norm{X_1}_{\Sigma^{-1}}^2}{d}\right]}\nonumber\\
    &\leq&\sup_{\theta\in\Sb^{d-1}}\max_{1\leq j\leq d}\Abs{\Eb\left[\theta^\top\Sigma^{-1/2}X_1(Y_1-X_1^\top\beta)(\ev_j^\top\Sigma^{-1/2}X_1)^2\right]}\nonumber\\
    &\leq&\sup_{\theta\in\Sb^{d-1}}\Eb\Abs{\theta^\top\Sigma^{-1/2}X_1}^3\Abs{Y_1-X_1^\top\beta}\nonumber\\
    &\leq& K_x^3K_y\label{eq:rem1.2}.
\end{eqnarray} To control the second term, we bound its second moment as
\begin{eqnarray}
    &&\Eb\Norm{\n\sum_{i=1}^n\Sigma^{-1/2}X_i(Y_i-X_i^\top\beta)\frac{\norm{X_i}_{\Sigma^{-1}}^2}{d}-\Eb\Sigma^{-1/2}X_1(Y_1-X_1^\top\beta)\frac{\norm{X_1}_{\Sigma^{-1}}^2}{d}}_2^2\nonumber\\
    &=&\frac{1}{n}{\rm Var}\left(\Sigma^{-1/2}X_i(Y_i-X_i^\top\beta)\frac{\norm{X_i}_{\Sigma^{-1}}^2}{d}\right)\nonumber\\
    &\leq&\n\Eb\left[\norm{X_1}_{\Sigma^{-1}}^2(Y_1-X_1^\top\beta)^2\frac{\norm{X_1}_{\Sigma^{-1}}^4}{d^2}\right]\nonumber\\
    &\leq&\frac{d}{n}\sup_{\theta\in\Sb^{d-1}}\Eb\left[(\theta^\top\Sigma^{-1/2}X_1)^6(Y_1-X_1^\top\beta)^2\right]\nonumber\\
    &\leq&\frac{K_x^6K_y^2d}{n}\label{eq:rem1.3}.
\end{eqnarray} Combining \eqref{eq:rem1.2} and \eqref{eq:rem1.3} with Chebyshev inequality yields that for any $\delta\in(0,1)$,
\begin{eqnarray*}
    \Pb\left(\Norm{\n\sum_{i=1}^n\Sigma^{-1/2}X_i(Y_i-X_i^\top\beta)\frac{\norm{X_i}_{\Sigma^{-1}}^2}{d}}_2\geq K_x^3K_y+K_x^3K_y\sqrt{\frac{d }{n\delta}}\right)\leq \delta.
\end{eqnarray*} Taking $\delta = d/n$ leads to the intended conclusion.
\end{proof}

\begin{lemma}\label{lem:rem2}
    Suppose that Assumption~\ref{asmp:2},\ref{asmp:3.a}, and \ref{asmp:4} holds with $q_x\geq 8$. Then, it holds that $$\Sc_2\leq 2K_x^4\norm{\hat\beta-\beta}_\Sigma,$$ with probability at least $1-d/n$.
\end{lemma}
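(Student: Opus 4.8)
The plan is to strip off the data-dependent factor $\hat\beta-\beta$ by a Cauchy--Schwarz step in the $\Sigma$-inner product, which reduces $\Sc_2$ to the Euclidean norm of an average of i.i.d.\ random vectors along the \emph{fixed} direction $c$; that average can then be controlled exactly as the leading vector in Lemma~\ref{lem:rem1}, by bounding its mean and its second moment. Keeping the notation $R_{2,i}=X_i^\top(\beta-\hat\beta)$ from the proof of Theorem~\ref{thm:2},
\begin{equation*}
    \Sc_2=\Abs{(\beta-\hat\beta)^\top\n\sum_{i=1}^nX_i(c^\top\Sigma^{-1}X_i)\frac{\norm{X_i}_{\Sigma^{-1}}^2}{d}}\leq\norm{\hat\beta-\beta}_\Sigma\,\Lc,\qquad \Lc:=\Norm{\n\sum_{i=1}^n\Sigma^{-1/2}X_i(c^\top\Sigma^{-1}X_i)\frac{\norm{X_i}_{\Sigma^{-1}}^2}{d}}_2.
\end{equation*}
Since $\norm{c}_{\Sigma^{-1}}=1$, the vector $u:=\Sigma^{-1/2}c$ lies in $\Sb^{d-1}$; writing $Z_i:=\Sigma^{-1/2}X_i$ and $S_i:=Z_i(u^\top Z_i)\norm{Z_i}_2^2/d$ we have $\Lc=\norm{n^{-1}\sum_iS_i}_2\leq\norm{n^{-1}\sum_i\Eb S_i}_2+\norm{n^{-1}\sum_i(S_i-\Eb S_i)}_2$. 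The essential point is that $u$ is deterministic, so no supremum over directions is needed and the residual is a genuine sum of independent mean-zero vectors.

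For the mean term, expand $\norm{Z_i}_2^2=\sum_{k=1}^d(\ev_k^\top Z_i)^2$ and apply the triangle inequality together with Hölder's inequality ($\tfrac14+\tfrac14+\tfrac12=1$) and Assumption~\ref{asmp:3.a} (with $q_x\geq4$): for every $v\in\Sb^{d-1}$, $\Abs{\Eb[(v^\top Z_i)(u^\top Z_i)(\ev_k^\top Z_i)^2]}\leq K_x^4$, so summing over $k$ and dividing by $d$ gives $\norm{\Eb S_i}_2\leq K_x^4$ for every $i$, hence $\norm{n^{-1}\sum_i\Eb S_i}_2\leq K_x^4$. For the fluctuation term, independence yields $\Eb\norm{n^{-1}\sum_i(S_i-\Eb S_i)}_2^2=n^{-2}\sum_i\tr\!\big(\Var(S_i)\big)\leq n^{-1}\max_i\Eb\norm{S_i}_2^2$, and $\Eb\norm{S_i}_2^2=d^{-2}\Eb\big[(u^\top Z_i)^2\norm{Z_i}_2^6\big]$. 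The power-mean inequality gives $\norm{Z_i}_2^6=\big(\sum_k(\ev_k^\top Z_i)^2\big)^3\leq d^2\sum_k(\ev_k^\top Z_i)^6$, and here is \emph{exactly where the hypothesis $q_x\geq8$ is used}: by Hölder and Assumption~\ref{asmp:3.a}, $\Eb[(u^\top Z_i)^2(\ev_k^\top Z_i)^6]\leq(\Eb|u^\top Z_i|^{q_x})^{2/q_x}(\Eb|\ev_k^\top Z_i|^{6q_x/(q_x-2)})^{(q_x-2)/q_x}\leq K_x^2\cdot K_x^6=K_x^8$, the exponent $6q_x/(q_x-2)$ being $\leq q_x$ precisely when $q_x\geq8$. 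Consequently $\Eb\norm{S_i}_2^2\leq K_x^8 d$ and $\Eb\norm{n^{-1}\sum_i(S_i-\Eb S_i)}_2^2\leq K_x^8 d/n$.

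Finally, Chebyshev's inequality with threshold $K_x^4$ gives $\Pb\!\big(\norm{n^{-1}\sum_i(S_i-\Eb S_i)}_2>K_x^4\big)\leq K_x^8 d/(nK_x^8)=d/n$, so on the complementary event $\Lc\leq K_x^4+K_x^4=2K_x^4$, and combining with the displayed inequality $\Sc_2\leq\norm{\hat\beta-\beta}_\Sigma\,\Lc$ yields $\Sc_2\leq 2K_x^4\norm{\hat\beta-\beta}_\Sigma$ with probability at least $1-d/n$. The only genuinely delicate step is the sixth-moment estimate for $\Eb\norm{S_i}_2^2$, which forces $q_x\geq8$ and explains why this lemma requires more covariate moments than Lemma~\ref{lem:rem1}; everything else is Cauchy--Schwarz, Hölder, and Chebyshev. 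Note in particular that the naive route of bounding $\Sc_2\leq\norm{\hat\beta-\beta}_\Sigma\,\Norm{n^{-1}\sum_i\norm{X_i}_{\Sigma^{-1}}^2\Sigma^{-1/2}X_iX_i^\top\Sigma^{-1/2}}_{\rm op}$ would \emph{not} work under mere moment assumptions, since a single heavy $\norm{X_i}_{\Sigma^{-1}}$ can inflate that operator norm; exploiting that $c$ is fixed is what makes the argument go through.
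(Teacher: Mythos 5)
Your proof is correct and follows essentially the same route as the paper's: Cauchy--Schwarz to extract $\norm{\hat\beta-\beta}_\Sigma$, a mean/fluctuation split of the remaining fixed-direction vector average, a fourth-moment bound of $K_x^4$ for the mean, an eighth-moment bound of $K_x^8 d/n$ for the variance (which is exactly where $q_x\geq 8$ enters in the paper as well), and Chebyshev at threshold $K_x^4$. The only differences are trivial choices of Hölder exponents in the intermediate moment estimates.
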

\begin{proof} The definition of $\Sc_2$ leads to
    \begin{eqnarray*}
        \Sc_2\leq\norm{\hat\beta-\beta}_\Sigma\Norm{\n\sum_{i=1}^nc^\top\Sigma^{-1}X_iX_i^\top\Sigma^{-1/2}\frac{\norm{X_i}_{\Sigma^{-1}}^2}{d}}_2.
    \end{eqnarray*} Hence, it suffices to control the following term on the right-hand side. We note that
    \begin{eqnarray*}
    &&\Norm{\n\sum_{i=1}^nc^\top\Sigma^{-1}X_iX_i^\top\Sigma^{-1/2}\frac{\norm{X_i}_{\Sigma^{-1}}^2}{d}}_2\leq \Norm{\Eb c^\top\Sigma^{-1}X_1X_1^\top\Sigma^{-1/2}\frac{\norm{X_1}_{\Sigma^{-1}}^2}{d}}_2\\
    &&+\Norm{\n\sum_{i=1}^nc^\top\Sigma^{-1}X_iX_i^\top\Sigma^{-1/2}\frac{\norm{X_i}_{\Sigma^{-1}}^2}{d}-\Eb c^\top\Sigma^{-1}X_1X_1^\top\Sigma^{-1/2}\frac{\norm{X_1}_{\Sigma^{-1}}^2}{d}}_2.
    \end{eqnarray*} The first term on the right-hand side can be bounded as
\begin{eqnarray*}
    \Norm{\Eb c^\top\Sigma^{-1}X_1X_1^\top\Sigma^{-1/2}\frac{\norm{X_1}_{\Sigma^{-1}}^2}{d}}_2&\leq&\Norm{\Eb\Sigma^{-1/2}X_1X_1^\top\Sigma^{-1/2}\frac{\norm{X_1}_{\Sigma^{-1}}^2}{d}}_{\rm op}\\
    &\leq&\sup_{\theta\in\Sb^{d-1}}\Eb\Abs{\theta^\top\Sigma^{-1/2}X_1}^4\\
    &\leq& K_x^4.
\end{eqnarray*} The second term is bounded using the second moment;
\begin{eqnarray*}
    &&\Eb\Norm{\n\sum_{i=1}^nc^\top\Sigma^{-1}X_iX_i^\top\Sigma^{-1/2}\frac{\norm{X_i}_{\Sigma^{-1}}^2}{d}-\Eb c^\top\Sigma^{-1}X_1X_1^\top\Sigma^{-1/2}\frac{\norm{X_1}_{\Sigma^{-1}}^2}{d}}_2^2\\
    &\leq&\frac{1}{n}\Eb\Norm{c^\top\Sigma^{-1}X_1X_1^\top\Sigma^{-1/2}\frac{\norm{X_1}_{\Sigma^{-1}}^2}{d}}_2^2\\
    &=&\frac{1}{n}\Eb\left[(\tilde c^\top\Sigma^{-1/2}X_1)^2\frac{\norm{X_1}_{\Sigma^{-1}}^6}{d^2}\right]\\
    &\leq&\frac{d}{n}\sup_{\theta\in\Sb^{d-1}}\Eb\Abs{\theta^\top\Sigma^{-1/2}X_1}^8\\&\leq& \frac{K_x^8d}{n}.
\end{eqnarray*} Consequently, the tail bound follows from Chebyshev's inequality as
\begin{equation*}
    \Pb\left(\Norm{\n\sum_{i=1}^nc^\top\Sigma^{-1}X_iX_i^\top\Sigma^{-1/2}\frac{\norm{X_i}_{\Sigma^{-1}}^2}{d}}_2\geq K_x^4+K_x^4\sqrt{\frac{d}{n\delta}}\right)\leq\delta,
\end{equation*} for $\delta\in(0,1)$. Taking $\delta=d/n$ yields the result.
\end{proof}
\begin{lemma}\label{lem:rem3}
    Suppose that Assumption~\ref{asmp:2},\ref{asmp:3.a}, and \ref{asmp:4} holds with $(3/q_x+1/q)^{-1}\geq 1/2$. Then, 
    \begin{equation*}
    \Sc_3 \leq \frac{2K_x^3K_y\Dc_\Sigma}{\lambda_{\rm min}(\Sigma^{-1/2}\hat\Sigma\Sigma^{-1/2})},
\end{equation*} holds with probability at least $1-1/n$.
\end{lemma}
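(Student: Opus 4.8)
The plan is to mimic the proof of Lemma~\ref{lem:rem1}, the key observation being that $\Sc_3$ can be reduced to $\Dc_\Sigma/\lambda_{\rm min}(\Sigma^{-1/2}\hat\Sigma\Sigma^{-1/2})$ times a \emph{scalar} empirical average, rather than the $\ell_2$-norm of a $d$-dimensional empirical average as in the analysis of $\Sc_1$; this is exactly why the probability obtained here is $1-1/n$ rather than $1-d/n$.

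First I would peel off the matrix factor. Since $R_{3,i}=\norm{X_i}_{\hat\Sigma^{-1}}^2/\norm{X_i}_{\Sigma^{-1}}^2-1$, we have the identity $R_{3,i}\norm{X_i}_{\Sigma^{-1}}^2=X_i^\top(\hat\Sigma^{-1}-\Sigma^{-1})X_i=(\Sigma^{-1/2}X_i)^\top(\Sigma^{1/2}\hat\Sigma^{-1}\Sigma^{1/2}-I_d)(\Sigma^{-1/2}X_i)$, and hence, by the deterministic bound $\norm{\Sigma^{1/2}\hat\Sigma^{-1}\Sigma^{1/2}-I_d}_{\rm op}\le\Dc_\Sigma/\lambda_{\rm min}(\Sigma^{-1/2}\hat\Sigma\Sigma^{-1/2})$ already established in the proof of Theorem~\ref{thm:2}, $\bigl|R_{3,i}\norm{X_i}_{\Sigma^{-1}}^2\bigr|\le \Dc_\Sigma\norm{X_i}_{\Sigma^{-1}}^2/\lambda_{\rm min}(\Sigma^{-1/2}\hat\Sigma\Sigma^{-1/2})$. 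Substituting into the definition of $\Sc_3$ and writing $c^\top\Sigma^{-1}X_i=u^\top\Sigma^{-1/2}X_i$ with $u:=\Sigma^{-1/2}c\in\Sb^{d-1}$ (legitimate since $\norm{c}_{\Sigma^{-1}}=1$),
\[
    \Sc_3\le\frac{\Dc_\Sigma}{\lambda_{\rm min}(\Sigma^{-1/2}\hat\Sigma\Sigma^{-1/2})}\cdot\n\sum_{i=1}^nT_i,\qquad T_i:=\Abs{u^\top\Sigma^{-1/2}X_i}\,\Abs{Y_i-X_i^\top\beta}\,\frac{\norm{X_i}_{\Sigma^{-1}}^2}{d}\ge0,
\]
so it suffices to show $\n\sum_{i=1}^nT_i\le 2K_x^3K_y$ with probability at least $1-1/n$.

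Next I would bound the mean and variance of this nonnegative scalar average. Using $\Eb[(\norm{X_1}_{\Sigma^{-1}}^2/d)^{q_x/2}]\le K_x^{q_x}$ (proved via Jensen's inequality in Section~\ref{sec:2.2}) and Hölder's inequality across the three factors of $T_1$ with exponents $(q_x,q_x/2,q)$, Assumptions~\ref{asmp:2} and \ref{asmp:3.a} give $\Eb T_1\le K_x^3K_y$. Similarly, since $\Var(\n\sum_iT_i)=\n\Var(T_1)\le\n\Eb[T_1^2]$, applying Jensen's inequality $\bigl(\tfrac1d\sum_jz_j^2\bigr)^2\le\tfrac1d\sum_jz_j^4$ with $z_j=\ev_j^\top\Sigma^{-1/2}X_1$ followed by Hölder with exponents $(q_x/2,q_x/4,q/2)$ gives $\Eb[T_1^2]\le K_x^6K_y^2$. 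Chebyshev's inequality then yields $\Pb\bigl(\n\sum_iT_i\ge \Eb T_1+K_x^3K_y\bigr)\le \Eb[T_1^2]/(nK_x^6K_y^2)\le 1/n$, and on the complementary event $\n\sum_iT_i\le\Eb T_1+K_x^3K_y\le 2K_x^3K_y$; combining with the previous display completes the proof.

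The only genuinely delicate point is the moment bookkeeping: I would need to check carefully that the condition on $(3/q_x+1/q)^{-1}$ in the statement (together with $q_x\ge2$) indeed makes every Hölder application above valid — in particular the second-moment bound $\Eb[T_1^2]\le K_x^6K_y^2$, which is what controls the fluctuation of the scalar average and delivers the $1-1/n$ probability. Everything else is the routine reduction of the first step together with a one-line Chebyshev estimate.
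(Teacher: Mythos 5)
Your argument is correct and follows the paper's proof essentially verbatim: factor out $\max_i\abs{R_{3,i}}\leq\Dc_\Sigma/\lambda_{\rm min}(\Sigma^{-1/2}\hat\Sigma\Sigma^{-1/2})$, bound the mean of the remaining scalar average by $K_x^3K_y$ and its second moment by $K_x^6K_y^2/n$ via Jensen and H\"older, and apply Chebyshev with $\delta=1/n$. Two minor remarks: your reduction to the nonnegative sum $\n\sum_i T_i$ via the triangle inequality is actually the correct form of the paper's displayed step (which, as written, pulls $\max_i\abs{R_{3,i}}$ out while keeping the absolute value of the \emph{signed} sum, an invalid inequality that its own subsequent moment bounds nevertheless repair); and the second-moment H\"older application genuinely requires $(3/q_x+1/q)^{-1}\geq 2$ rather than the $\geq 1/2$ in the lemma statement, an apparent typo in the paper that you rightly flag and that is covered by the hypotheses of Theorem~\ref{thm:2} where the lemma is used.
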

\begin{proof} Recall that
    \begin{equation*}
        \Abs{R_{3,i}}\leq \frac{\Dc_\Sigma}{\lambda_{\rm min}(\Sigma^{-1/2}\hat\Sigma\Sigma^{-1/2})}.
    \end{equation*} This implies that
\begin{equation*}
    \Sc_3\leq\frac{\Dc_\Sigma}{\lambda_{\rm min}(\Sigma^{-1/2}\hat\Sigma\Sigma^{-1/2})}\Abs{\frac{1}{n}\sum_{i=1}^nc^\top\Sigma^{-1}X_i(Y_i-X_i^\top\beta)\frac{\norm{X_i}_{\Sigma^{-1}}^2}{d}}.
\end{equation*} We can control the rightmost quantity routinely. Note that
\begin{align*}
    &\Abs{\frac{1}{n}\sum_{i=1}^nc^\top\Sigma^{-1}X_i(Y_i-X_i^\top\beta)\frac{\norm{X_i}_{\Sigma^{-1}}^2}{d}}\leq \Abs{\Eb c^\top\Sigma^{-1}X_1(Y_1-X_1^\top\beta)\frac{\norm{X_1}_{\Sigma^{-1}}^2}{d}}\\
    &+\Abs{\frac{1}{n}\sum_{i=1}^nc^\top\Sigma^{-1}X_i(Y_i-X_i^\top\beta)\frac{\norm{X_i}_{\Sigma^{-1}}^2}{d}-\Eb c^\top\Sigma^{-1}X_1(Y_1-X_1^\top\beta)\frac{\norm{X_1}_{\Sigma^{-1}}^2}{d}}.
\end{align*} The first term on the right-hand side can be bounded using Assumption~\ref{asmp:2} and \ref{asmp:3.a} as
\begin{eqnarray*}
    \Abs{\Eb c^\top\Sigma^{-1}X_1(Y_1-X_1^\top\beta)\frac{\norm{X_1}_{\Sigma^{-1}}^2}{d}}&\leq&\norm{c}_{\Sigma^{-1}}\Norm{\Eb\Sigma^{-1/2}X_1(Y_1-X_1^\top\beta)\frac{\norm{X_1}_{\Sigma^{-1}}^2}{d}}_{\rm op}\\
    &=&\sup_{\theta\in\Sb^{d-1}}\Eb\Abs{\theta^\top\Sigma^{-1/2}X_1(Y_1-X_1^\top\beta)\frac{\norm{X_1}_{\Sigma^{-1}}^2}{d}}\\
    &\leq&\sup_{\theta\in\Sb^{d-1}}\Eb\Abs{(\theta^\top\Sigma^{-1/2}X_1)^3(Y_1-X_1^\top\beta)}\\
    &\leq& K_x^3K_y.
\end{eqnarray*} The second term can be bounded as
\begin{eqnarray*}
    &&\Abs{\frac{1}{n}\sum_{i=1}^nc^\top\Sigma^{-1}X_i(Y_i-X_i^\top\beta)\frac{\norm{X_i}_{\Sigma^{-1}}^2}{d}-\Eb c^\top\Sigma^{-1}X_1(Y_1-X_1^\top\beta)\frac{\norm{X_1}_{\Sigma^{-1}}^2}{d}}^2\\
    &\leq&\frac{1}{n}\Eb\Abs{c^\top\Sigma^{-1}X_1(Y_1-X_1^\top\beta)\frac{\norm{X_1}_{\Sigma^{-1}}^2}{d}}^2\\
    &=&\frac{1}{n}\Eb\left[(c^\top\Sigma^{-1}X_1)^2(Y_1-X_1^\top\beta)^2\frac{\norm{X_1}_{\Sigma^{-1}}^4}{d^2}\right]\\
    &\leq&\frac{1}{n}\sup_{\theta\in\Sb^{d-1}}\Eb\left[(\theta^\top\Sigma^{-1}X_1)^6(Y_1-X_1^\top\beta)^2\right]\leq \frac{K_x^6K_y^2}{n}.
\end{eqnarray*} Now, Chebyshev's inequality leads to that
\begin{equation*}
    \Pb\left(\Abs{\frac{1}{n}\sum_{i=1}^nc^\top\Sigma^{-1}X_i(Y_i-X_i^\top\beta)\frac{\norm{X_i}_{\Sigma^{-1}}^2}{d}}\geq K_x^3K_y+K_x^3K_y\sqrt{\frac{1}{n\delta}}\right)\leq \delta.
\end{equation*} The choice of $\delta=1/n$ gives the result.
\end{proof}
\begin{lemma}\label{lem:rem4}
    Suppose that Assumption~\ref{asmp:2},\ref{asmp:3.a}, and \ref{asmp:4} holds with $q_x\geq4$. Then, there exists a universal constant $C$ such that for any $\delta\in(0,1)$,
\begin{equation*}
    \Pb\left(\Sc_4 \leq \frac{\Dc_\Sigma\norm{\hat\beta-\beta}_\Sigma}{\lambda_{\rm min}(\Sigma^{-1/2}\hat\Sigma\Sigma^{-1/2})}\left(K_x^4+\frac{16K_x^4d\log (2d)}{\delta\sqrt{n}}\right)\right)\geq 1-\delta.
\end{equation*}
\end{lemma}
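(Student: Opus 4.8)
The plan is to reduce $\Sc_4$ to the operator norm of a single random matrix and then concentrate that operator norm. First I would pass to the whitened variables: writing $c=\Sigma^{1/2}\tilde c$ with $\norm{\tilde c}_2=1$ and $Z_i=\Sigma^{-1/2}X_i$, one checks that $c^\top(\hat\Sigma^{-1}-\Sigma^{-1})=r_1^\top\Sigma^{-1/2}$, where $r_1=(\Sigma^{1/2}\hat\Sigma^{-1}\Sigma^{1/2}-I_d)\tilde c$, so that $R_{1,i}=r_1^\top Z_i$ with $\norm{r_1}_2\le\Dc_\Sigma/\lambda_{\rm min}(\Sigma^{-1/2}\hat\Sigma\Sigma^{-1/2})$ (the same estimate for $\norm{r_1}_2$ already used in the proof of Lemma~\ref{lem:det_bound_sand_var}), and $R_{2,i}=-w^\top Z_i$ with $w=\Sigma^{1/2}(\hat\beta-\beta)$, $\norm{w}_2=\norm{\hat\beta-\beta}_\Sigma$. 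Since also $\norm{X_i}_{\Sigma^{-1}}^2=\norm{Z_i}_2^2$, this gives
\[
\Sc_4=\Abs{r_1^\top G\,w}\le\frac{\Dc_\Sigma\,\norm{\hat\beta-\beta}_\Sigma}{\lambda_{\rm min}(\Sigma^{-1/2}\hat\Sigma\Sigma^{-1/2})}\,\Norm{G}_{\rm op},\qquad G:=\n\sum_{i=1}^n Z_iZ_i^\top\frac{\norm{Z_i}_2^2}{d},
\]
so it remains to prove that $\Norm{G}_{\rm op}\le K_x^4+16K_x^4 d\log(2d)/(\delta\sqrt n)$ with probability at least $1-\delta$.

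For the mean, writing $\norm{Z_i}_2^2=\sum_{j=1}^d(\ev_j^\top Z_i)^2$ and applying Cauchy--Schwarz together with the fourth-moment control in Assumption~\ref{asmp:3.a} gives $\Eb[(\theta^\top Z_1)^2(\ev_j^\top Z_1)^2]\le K_x^4$ for all $\theta\in\Sb^{d-1}$, hence $\Norm{\Eb G}_{\rm op}=d^{-1}\sup_{\theta\in\Sb^{d-1}}\Eb[(\theta^\top Z_1)^2\norm{Z_1}_2^2]\le K_x^4$; this supplies the leading $K_x^4$. For the fluctuation $\Norm{G-\Eb G}_{\rm op}$ I would use truncation together with the matrix Bernstein inequality in expectation and Markov's inequality. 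Put $A_i:=Z_iZ_i^\top\norm{Z_i}_2^2/d$ and $\tilde A_i:=A_i\,\mathbbm{1}(\norm{Z_i}_2^2\le R)$. The bound $\Eb[(\norm{X_1}_{\Sigma^{-1}}/\sqrt d)^{q_x}]\le K_x^{q_x}$ from Section~\ref{sec:2.2}, a union bound and Markov give $\Pb(\exists\,i:\norm{Z_i}_2^2>R)\le n(K_x\sqrt d/\sqrt R)^{q_x}$, which is $\le\delta/2$ once $R$ is of order $dK_x^2(n/\delta)^{2/q_x}$, and on the complement $G=\n\sum_i\tilde A_i$. Each $\tilde A_i$ is positive semidefinite with $\Norm{\tilde A_i}_{\rm op}\le R^2/d$, so $\Eb\tilde A_i^2\preceq(R^2/d)\Eb A_i$ and the variance proxy of $\n\sum_i(\tilde A_i-\Eb\tilde A_i)$ is at most $R^2K_x^4/(nd)$; matrix Bernstein in expectation bounds the expected operator norm of this centered average by a term of order $\sqrt{R^2K_x^4\log(2d)/(nd)}+R^2\log(2d)/(nd)$, and Markov upgrades this to a bound valid with probability $\ge 1-\delta/2$ at the cost of a factor $1/\delta$. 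Combining the two events with $\Norm{\Eb\tilde A_1}_{\rm op}\le K_x^4$, inserting the chosen $R$, and tracking constants yields the claimed bound on $\Norm{G}_{\rm op}$, and hence on $\Sc_4$.

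The main obstacle is this fluctuation step: the summands $A_i$ carry the factor $\norm{Z_i}_2^4$, for which Assumption~\ref{asmp:3.a} provides only about $q_x/4$ finite moments, so they are heavy-tailed --- there is no deterministic uniform bound and the operator-norm variance cannot be estimated from fourth moments alone. The truncation is exactly what makes matrix Bernstein applicable; the delicate point is to choose the level $R$ just large enough that the excluded event is negligible (governed by the $q_x$-th moment of $\norm{X_i}_{\Sigma^{-1}}$) while the Bernstein estimate stays of the advertised size, and the comparatively generous factor $d\log(2d)/(\delta\sqrt n)$ in the statement is precisely the slack absorbing this trade-off. All the remaining ingredients --- the whitening identities, the fourth-moment mean bound, and the semidefinite comparison $\Eb\tilde A_i^2\preceq(R^2/d)\Eb A_i$ --- are routine.
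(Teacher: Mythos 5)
Your reduction of $\Sc_4$ to $\frac{\Dc_\Sigma\norm{\hat\beta-\beta}_\Sigma}{\lambda_{\rm min}(\Sigma^{-1/2}\hat\Sigma\Sigma^{-1/2})}\Norm{G}_{\rm op}$ and the mean bound $\norm{\Eb G}_{\rm op}\le K_x^4$ are exactly the paper's first two steps. The divergence is in the fluctuation term, and there your argument has a genuine quantitative gap: after inserting the truncation level the claimed rate does not come out. The union bound forces $R\asymp K_x^2 d\,(n/\delta)^{2/q_x}$, and this level enters your Bernstein variance proxy through the comparison $\Eb\tilde A_i^2\preceq (R^2/d)\,\Eb A_i$, giving $\sigma^2\le R^2K_x^4/(nd)\asymp K_x^8\,d\,(n/\delta)^{4/q_x}/n$. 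The resulting term $\sqrt{\sigma^2\log(2d)}\asymp K_x^4(n/\delta)^{2/q_x}\sqrt{d\log(2d)/n}$ exceeds the target $K_x^4 d\log(2d)/\sqrt n$ by a factor of order $(n/\delta)^{2/q_x}/\sqrt{d\log(2d)}$, which is unbounded in $n$ for fixed $d$; at the minimal exponent $q_x=4$ allowed by the hypothesis your bound is $\gtrsim K_x^4\sqrt{d\log(2d)/\delta}$ even before the Markov factor and does not tend to zero at all. The slack in the statement is not nearly enough to absorb this. The loss is located precisely in replacing the second moment of $\tilde A_i$ by its worst-case norm times its first moment: the genuine quantity $\norm{\Eb A_1^2}_{\rm op}=d^{-2}\sup_{\theta}\Eb[(\theta^\top \Sigma^{-1/2}X_1)^2\norm{X_1}_{\Sigma^{-1}}^6]\le K_x^8 d$ is smaller by the factor $(n/\delta)^{4/q_x}$ — but controlling it requires eighth moments of linear marginals, which is exactly what your truncation was trying to avoid.

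The paper's proof dispenses with truncation: it applies the expected-norm inequality of Theorem 5.1(2) of \cite{tropp2016expected}, which bounds $\bigl(\Eb\Norm{\frac1n\sum_i(A_i-\Eb A_i)}_{\rm op}^2\bigr)^{1/2}$ by $C_d^{1/2}$ times the square root of the true matrix variance plus $(C_d/n)\bigl(\Eb\max_i\Norm{A_i-\Eb A_i}_{\rm op}^2\bigr)^{1/2}$ with $C_d\asymp\log d$; the unboundedness of the summands is paid for only through the $\Eb\max$ term, bounded crudely by $nK_x^8d^2$, which yields the $K_x^4 d\log(2d)/\sqrt n$ in the statement, and Chebyshev supplies the $1/\delta$. (Both that route and any repaired version of yours use $\sup_\theta\Eb(\theta^\top\Sigma^{-1/2}X)^8\le K_x^8$, i.e.\ effectively $q_x\ge 8$ rather than the $q_x\ge4$ in the hypothesis; that is an issue with the lemma as stated, not with your write-up.) To salvage your approach you would need to bound $\Eb\tilde A_1^2$ directly by $K_x^8 d$ using eighth moments — so that only the Bernstein range term carries $R$, and the rate then does follow — or switch to a moment inequality of Tropp's type that tolerates heavy-tailed summands.
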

\begin{proof}
It follows from the definition of $\Sc_4$ that
\begin{eqnarray}\label{eq:revisit}
    \Sc_4&\leq&\norm{c^\top(\hat\Sigma^{-1}-\Sigma^{-1})\Sigma^{1/2}}_2\norm{\hat\beta-\beta}_\Sigma\Norm{\frac{1}{n}\sum_{i=1}^n\Sigma^{-1/2}X_iX_i^\top\Sigma^{-1/2}\frac{\norm{X_i}^2_{\Sigma^{-1}}}{d}}_{\rm op}\nonumber\\
    &\leq&\frac{\Dc_\Sigma}{\lambda_{\rm min}(\Sigma^{-1/2}\hat\Sigma\Sigma^{-1/2})}\norm{\hat\beta-\beta}_\Sigma\Norm{\frac{1}{n}\sum_{i=1}^n\Sigma^{-1/2}X_iX_i^\top\Sigma^{-1/2}\frac{\norm{X_i}^2_{\Sigma^{-1}}}{d}}_{\rm op}.
\end{eqnarray}
Hence, we focus on the last term in \eqref{eq:revisit}. We note that
\begin{eqnarray*}
    &&\Norm{\frac{1}{n}\sum_{i=1}^n\Sigma^{-1/2}X_iX_i^\top\Sigma^{-1/2}\frac{\norm{X_i}^2_{\Sigma^{-1}}}{d}}_{\rm op}\\
    &\leq& \Norm{\Eb\Sigma^{-1/2}X_1X_1^\top\Sigma^{-1/2}\frac{\norm{X_1}^2_{\Sigma^{-1}}}{d}}_{\rm op}+\Norm{\frac{1}{n}\sum_{i=1}^n\Sigma^{-1/2}X_iX_i^\top\Sigma^{-1/2}\frac{\norm{X_i}^2_{\Sigma^{-1}}}{d}-\Eb\Sigma^{-1/2}X_1X_1^\top\Sigma^{-1/2}\frac{\norm{X_1}^2_{\Sigma^{-1}}}{d}}_{\rm op}.
\end{eqnarray*} The first part of the right-hand side can be bounded as
\begin{eqnarray*}
    \Norm{\Eb\Sigma^{-1/2}X_1X_1^\top\Sigma^{-1/2}\frac{\norm{X_1}^2_{\Sigma^{-1}}}{d}}_{\rm op}&=&\sup_{\theta\in\Sb^{d-1}}\Eb\left[(\theta^\top\Sigma^{-1/2}X_1)^2\frac{\norm{X_1}^2_{\Sigma^{-1}}}{d}\right]\\
    &\leq&\sup_{\theta\in\Sb^{d-1}}\Eb\left[(\theta^\top\Sigma^{-1/2}X_1)^4\right]\leq K_x^4.
\end{eqnarray*} To control the second part, we let
\begin{equation*}
    Z_i=\Sigma^{-1/2}X_iX_i^\top\Sigma^{-1/2}\frac{\norm{X_i}^2_{\Sigma^{-1}}}{d},\quad\mbox{for}\quad i=1,\ldots,n.
\end{equation*} An application of Theorem 5.1(2) of \cite{tropp2016expected} gives that
\begin{equation}\label{eq:bound for the operator norm}
    \left(\Eb\Norm{\frac{1}{n}\sum_{i=1}^nZ_i-\Eb Z_i}_{\rm op}^2\right)^{1/2}\leq C_d^{1/2}\Norm{\Eb\left[\left(\frac{1}{n}\sum_{i=1}^nZ_i-\Eb Z_i\right)^2\right]}_{\rm op}^{1/2}+\frac{C_d}{n}\left(\Eb\left[\max_{1\leq i\leq n}\Norm{Z_i-\Eb Z_i}_{\rm op}^2\right]\right)^{1/2},
\end{equation}where $C_d=2(1+4\lceil\log d\rceil)$. The leading term on \eqref{eq:bound for the operator norm} can be bounded as
\begin{align*}
    \Norm{\Eb\left[\left(\frac{1}{n}\sum_{i=1}^nZ_i-\Eb Z_i\right)^2\right]}_{\rm op}&=\frac{1}{\sqrt{n}}\Norm{\Eb[Z_1^2]-(\Eb Z_1)^2}_{\rm op}\\
    &\leq\frac{1}{\sqrt{n}}\Norm{\Eb[Z_1^2]}_{\rm op}\quad \because\Eb Z_1\mbox{ is symmetric and positive definite.}\\
    &\leq\frac{1}{n}\left(\sup_{\theta\in\Sb^{d-1}}\Eb\left[(\theta^\top\Sigma^{-1/2}X_i)^2\frac{\norm{X_1}^6_{\Sigma^{-1}}}{d^2}\right]\right)\\
    &\leq \frac{d}{n}\sup_{\theta\in\Sb^{d-1}}\Eb[(\theta^\top\Sigma^{-1/2}X_i)^8]\leq\frac{K_x^8d}{n}.
\end{align*} Meanwhile, the second part is controlled as
\begin{align*}
    \Eb\left[\max_{i\in[n]}\Norm{Z_i-\Eb Z_i}_{\rm op}^2\right]&\leq \Eb\left[\max_{i\in[n]}\Norm{Z_i}_{\rm op}^2\right]\quad\because\Eb Z_1\mbox{ is positive definite.}\\
    &\leq\Eb\left[\sum_{i=1}^n\Norm{Z_i}_{\rm op}^2\right]=n\Eb\Norm{Z_1}_{\rm op}^2\\
    &=n\Eb\left[\frac{\norm{X_1}^8_{\Sigma^{-1}}}{d^2}\right]\\
    &\leq nd^2\sup_{\theta\in\Sb^{d-1}}\Eb[(\theta^\top\Sigma^{-1/2}X_i)^8]\leq K_x^8nd^2.
\end{align*} Combining these with \eqref{eq:bound for the operator norm}, we get
\begin{eqnarray*}
    \left(\Eb\Norm{\frac{1}{n}\sum_{i=1}^nZ_i-\Eb Z_i}_{\rm op}^2\right)^{1/2}&\leq& K_x^4(C_d^{1/2}\vee C_d)\left(\sqrt{\frac{d}{n}}+\frac{d}{\sqrt{n}}\right)\leq \frac{16K_x^4d\log (2d)}{\sqrt{n}}.
\end{eqnarray*} Consequently, Chebyshev's inequality yields
\begin{equation*}
    \Pb\left(\Norm{\frac{1}{n}\sum_{i=1}^nZ_i-\Eb Z_i}_{\rm op}\geq t\right)\leq \left(\frac{16K_x^4d\log (2d)}{t\sqrt{n}}\right)^2.
\end{equation*}
Combining all, it follows that
\begin{equation*}
    \Pb\left(\Norm{\frac{1}{n}\sum_{i=1}^n\Sigma^{-1/2}X_iX_i^\top\Sigma^{-1/2}\frac{\norm{X_i}^2_{\Sigma^{-1}}}{d}}_{\rm op}\geq K_x^4+\frac{16K_x^4d\log (2d)}{\delta\sqrt{n}}\right)\leq \delta,
\end{equation*}for any $\delta\in(0,1)$.
\end{proof}

\subsection{Concentration Inequalities for the Sample Gram Matrix}\label{sec:conc_ineq}
As an independent branch of research, estimating the covariance matrix $\Sigma$ in multidimensional distributions is a longstanding problem in statistics. Recent developments in this area have focused on various distributional assumptions, including log-concave distributions \citep{rudelson1999random, adamczak2010quantitative}, sub-Gaussian distributions \cite{koltchinskii2017concentration, vershynin2018high}, and distributions with finite moments \citep{vershynin2012close, mendelson2012generic, srivastava2013covariance, mendelson2014singular}.

In our application, we primarily utilize the standard sample covariance matrix $\hat\Sigma$ with the standard operator norm $\norm{\cdot}_{\rm op}$ as a measure. Notably, some papers have explored truncated estimates, as seen in \citep{abdalla2023covariance}. Moreover, alternative studies have considered using the Frobenius norm as a metric \citep{puchkin2023sharper}.

\begin{proposition}\label{prop:6}Recall that $\Dc_\Sigma=\norm{\Sigma^{-1/2}\hat\Sigma\Sigma^{-1/2}-I_d}_{\rm op}$. The following concentration inequalities for $\Dc_\Sigma$ hold under different moment assumptions on covariates.
\begin{enumerate}
    \item\label{lem:conc_D_sigma_1} Under Assumption~\ref{asmp:3.b}, there exists a universal constant $C>0$ such that
    \begin{equation}\label{eq:conc_D_sigma_1}
        \Pb\left(\Dc_\Sigma\leq C K_x^2\left[\sqrt{\frac{d+\log(1/\delta)}{n}}+\frac{d+\log(1/\delta)}{n}\right]\right)\geq 1-\delta,
    \end{equation} for all $\delta\in(0,1)$.
    \item\label{lem:conc_D_sigma_2} Under Assumption~\ref{asmp:3.a} with $q_x\geq4$, there exists a universal constant $C>0$ such that
    \begin{equation}\label{eq:conc_D_sigma_2}
        \Pb\left(\Dc_\Sigma\leq CK_x^2\left[\sqrt{\frac{d\log(2d/\delta)}{n}}+\frac{d\log^{3/2}(2d/\delta)}{\delta^{2/q_x}n^{1-2/q_x}}\right]\right)\geq 1-\delta,
    \end{equation} for all $\delta\in(0,1)$. 
    \item\label{lem:conc_D_sigma_3} Under Assumption~\ref{asmp:3.c} with $q_x>4$, there exists a constant $C>0$ that only depends on $q_x$ that
    \begin{equation}\label{eq:conc_D_sigma_3}
        \Pb\left(\Dc_\Sigma\leq CK_x^2\left[\sqrt{\frac{d\log(2d/\delta)}{n}}+\frac{d^{2/q_x}\log^{3/2}(2d/\delta)}{\delta^{2/q_x}n^{1-2/q_x}}\right]\right)\geq 1-\delta,
    \end{equation} for all $\delta\in(0,1)$. 
\end{enumerate}
\end{proposition}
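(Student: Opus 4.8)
The plan is to reduce everything to the isotropic case and handle the three regimes by increasingly delicate versions of one idea. Replacing $X_i$ by $\Sigma^{-1/2}X_i$ we may assume $\Sigma=I_d$, so that $\hat\Sigma=n^{-1}\sum_{i=1}^nX_iX_i^\top$, $\Dc_\Sigma=\norm{\hat\Sigma-I_d}_{\rm op}$, $n^{-1}\sum_i\Eb[X_iX_i^\top]=I_d$, and, since $q_x\ge2$ throughout, $\Eb[X_iX_i^\top]\preceq K_x^2I_d$ for every $i$; moreover the Jensen computation following Assumption~\ref{asmp:3.c} gives $\Eb\norm{X_i}_2^{q_x}\le K_x^{q_x}d^{q_x/2}$ under Assumption~\ref{asmp:3.a}, and $\Eb|\ev_j^\top X_i|^{q_x}\le K_x^{q_x}$ for every coordinate $j$. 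For Part~\ref{lem:conc_D_sigma_1} (the sub-Gaussian case) I would use the classical $\varepsilon$-net argument \citep{vershynin2012close,vershynin2018high}: with a $1/4$-net $\mathcal N$ of $\Sb^{d-1}$ of cardinality $\le 9^d$ one has $\Dc_\Sigma\le 2\sup_{u\in\mathcal N}\abs{u^\top(\hat\Sigma-I_d)u}$; for a fixed $u$, $u^\top(\hat\Sigma-I_d)u$ is an average of $n$ independent centered random variables $(u^\top X_i)^2-\Eb(u^\top X_i)^2$, each sub-exponential with $\psi_1$-norm $\lesssim K_x^2$ by Assumption~\ref{asmp:3.b}, so Bernstein's inequality plus a union bound over $\mathcal N$ and the choice $t\asymp K_x^2\big(\sqrt{(d+\log(1/\delta))/n}+(d+\log(1/\delta))/n\big)$ yield \eqref{eq:conc_D_sigma_1}.

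For the heavy-tailed Parts~\ref{lem:conc_D_sigma_2} and \ref{lem:conc_D_sigma_3} the common strategy is to truncate the summands, compare the truncated sum to a Gaussian matrix with matching mean and entrywise covariance, and pay a ``universality'' correction whose size is dictated by the truncation radius. For Part~\ref{lem:conc_D_sigma_2} I would truncate at the vector level: set $\tau:=K_x\sqrt d\,(2n/\delta)^{1/q_x}$ and $Z_i:=X_iX_i^\top\mathbbm{1}(\norm{X_i}_2\le\tau)$; the moment bound above and a union bound give $\Pb(\exists i:\norm{X_i}_2>\tau)\le\delta/2$, so on that event $\hat\Sigma=n^{-1}\sum_iZ_i$, while H\"older's inequality shows the truncation bias $\norm{\Eb[X_iX_i^\top\mathbbm 1(\norm{X_i}_2>\tau)]}_{\rm op}\le K_x^2\Pb(\norm{X_i}_2>\tau)^{1-2/q_x}$ is dominated by the target rate. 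It then remains to bound $\norm{n^{-1}\sum_i(Z_i-\Eb Z_i)}_{\rm op}$, and here I would invoke the universality principle for sums of independent random matrices of \cite{brailovskaya2023universality}: each entry of $n^{-1}\sum_i(Z_i-\Eb Z_i)$ has variance $\lesssim K_x^4/n$ (this is where $q_x\ge4$ enters), so the comparison Gaussian matrix $G$ satisfies $\Eb\norm G_{\rm op}\lesssim K_x^2\sqrt{d/n}$ by standard Gaussian matrix estimates, upgraded to high probability with a $\log(2d/\delta)$ factor by Gaussian concentration; the universality error is governed by the almost-sure bound $\norm{n^{-1}(Z_i-\Eb Z_i)}_{\rm op}\le 2\tau^2/n$ and contributes a term of order $\tau^2/n\asymp K_x^2 d/(\delta^{2/q_x}n^{1-2/q_x})$ up to the logarithmic factors inherited from the universality theorem and the union bounds, which gives \eqref{eq:conc_D_sigma_2}.

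For Part~\ref{lem:conc_D_sigma_3}, the independence of the coordinates of $X_i$ lets me split $\hat\Sigma-I_d$ into its off-diagonal and diagonal parts and treat them separately. The off-diagonal part has entries $n^{-1}\sum_iX_i(j)X_i(k)$, $j\neq k$, which are averages of products of independent centered variables with entry variance $\lesssim K_x^4/n$; the same Gaussian comparison as in Part~\ref{lem:conc_D_sigma_2} bounds its operator norm by $\lesssim K_x^2\sqrt{d\log(2d/\delta)/n}$. The diagonal part equals $\max_{1\le j\le d}\abs{n^{-1}\sum_i(X_i(j)^2-1)}$, a maximum of $d$ scalar averages of i.i.d.\ variables with $q_x/2$ finite moments; a Fuk--Nagaev / Rosenthal tail bound gives $\Pb(\abs{n^{-1}\sum_i(X_i(j)^2-1)}\ge t)\lesssim \exp(-cnt^2/K_x^4)+n(K_x^2/(nt))^{q_x/2}$, and a union bound over the $d$ diagonal entries, optimized in $t$, produces $\sqrt{\log(2d/\delta)/n}+d^{2/q_x}/(\delta^{2/q_x}n^{1-2/q_x})$ up to logarithmic factors (with a $q_x$-dependent constant, consistent with the statement). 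Crucially, because the diagonal is handled coordinatewise --- only a union bound over $d$ scalars, not a $d$-dimensional operator-norm price --- the heavy-tailed correction improves from $d$ to $d^{2/q_x}$, which is the content of \eqref{eq:conc_D_sigma_3}; alternatively one could quote the sample-covariance estimates for isotropic vectors with independent coordinates in \cite{srivastava2013covariance,mendelson2014singular,rio2017constants}.

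The main obstacle is the Gaussian-comparison step in Parts~\ref{lem:conc_D_sigma_2} and \ref{lem:conc_D_sigma_3}: a naive matrix Bernstein inequality pays the summand bound $\norm{Z_i}_{\rm op}\asymp\tau^2$ \emph{inside the square root} and so delivers only $\sqrt{d(n/\delta)^{2/q_x}\log(2d/\delta)/n}$, which is strictly weaker than the required $\sqrt{d\log(2d/\delta)/n}$. Recovering the optimal $\sqrt{d/n}$ rate is exactly what the universality machinery of \cite{brailovskaya2023universality} buys, and the real work is to verify its hypotheses with the correct dependence on $\delta$, to extract only a $\log^{3/2}$ factor rather than a larger power of $\log$, and --- in contrast to a direct application of Theorem~1.1 of \cite{tikhomirov2018sample} --- to check that no lower bound on $d$ (no ``$d\to\infty$'') is ever needed. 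The remaining work is routine bookkeeping: propagating polylogarithmic factors through the net, the union bound over $i$ (or over coordinates), the truncation, and the Gaussian concentration step.
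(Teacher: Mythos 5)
Parts~\ref{lem:conc_D_sigma_1} and \ref{lem:conc_D_sigma_2} of your proposal are essentially sound and follow the paper's route: Part~\ref{lem:conc_D_sigma_1} is the standard net-plus-Bernstein argument (the paper simply cites \cite{vershynin2018high,koltchinskii2017concentration}), and Part~\ref{lem:conc_D_sigma_2} invokes the same universality theorem of \cite{brailovskaya2023universality}, the only difference being that you truncate $\norm{X_i}_2$ explicitly whereas the paper uses the theorem's built-in conditioning on the event $\{Z_n\le R\}$ and removes the complement by Markov's inequality; both give the same $d/(\delta^{2/q_x}n^{1-2/q_x})$ correction. One point to tighten in Part~\ref{lem:conc_D_sigma_2}: for a symmetric matrix with \emph{correlated} jointly Gaussian entries, entrywise variances of order $K_x^4/n$ do not by themselves give $\Eb\norm{G}_{\rm op}\lesssim K_x^2\sqrt{d/n}$; the relevant parameter is $\sigma_\star^2=\norm{\Eb[(G-\Eb G)^2]}_{\rm op}=\sup_{\theta\in\Sb^{d-1}}n^{-1}(\Eb[(\theta^\top\Sigma^{-1/2}X)^2\norm{X}_{\Sigma^{-1}}^2]-1)\le K_x^4 d/n$, whose bound uses the $L^4$--$L^2$ equivalence of \emph{all} one-dimensional marginals (Cauchy--Schwarz over coordinates), not just the entries.

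The genuine gap is in Part~\ref{lem:conc_D_sigma_3}. You attribute the improvement from $d$ to $d^{2/q_x}$ to the diagonal of $\hat\Sigma$ being handled coordinatewise, and you dispatch the off-diagonal part with ``the same Gaussian comparison as in Part~\ref{lem:conc_D_sigma_2}.'' But the $d$-versus-$d^{2/q_x}$ distinction does not live in the diagonal: in the universality argument the heavy-tailed correction comes from the almost-sure/high-probability bound $R$ on $\max_i n^{-1}\norm{\cdot}_{\rm op}$ of the summands, and the off-diagonal part of $X_iX_i^\top$ still has operator norm of order $\norm{X_i}_2^2\asymp d$ (take the test vector $X_i/\norm{X_i}_2$). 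So running ``the same argument as Part~\ref{lem:conc_D_sigma_2}'' on the off-diagonal block—i.e., truncating $\norm{X_i}_2^2$ at its Markov quantile $K_x^2 d(2n/\delta)^{2/q_x}$—reintroduces exactly the Part~\ref{lem:conc_D_sigma_2} correction $d/(\delta^{2/q_x}n^{1-2/q_x})$ and your final bound would be no better than \eqref{eq:conc_D_sigma_2}. What actually produces $d^{2/q_x}$ is the observation that under Assumption~\ref{asmp:3.c} the quantity $\norm{X_i}_2^2=\sum_j X_i(j)^2$ is a sum of $d$ independent variables with $q_x/2$ moments, so its $(\delta/n)$-quantile is $d+\sqrt{d\log(2n/\delta)}+C_{q_x}(dn/\delta)^{2/q_x}$ rather than $d(n/\delta)^{2/q_x}$ (this is the paper's use of \cite{rio2017constants}); the deterministic piece $d/n$ of $Z_n$ is then harmlessly absorbed into $\sqrt{d/n}$ since $d\le n$, and only the fluctuation $d^{2/q_x}/(\delta^{2/q_x}n^{1-2/q_x})$ survives in $R(\delta)$. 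Your closing fallback citation to \cite{rio2017constants} is in fact the missing ingredient, so the fix is short: keep the full matrix (no diagonal/off-diagonal split is needed) and replace the Markov bound on $\max_i\norm{X_i}_2^2$ by the independent-coordinate concentration bound before choosing $R(\delta)$.
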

\paragraph{proof of \eqref{eq:conc_D_sigma_1}} 
    The result under Assumption~\ref{asmp:3.b}, which pertains to sub-Gaussianity, is well-known. Comprehensive proofs and discussions can be found in existing references such as Theorem 4.7.1 of \cite{vershynin2018high} or Theorem~\ref{thm:1} of \cite{koltchinskii2017concentration}. 
\paragraph{proof of \eqref{eq:conc_D_sigma_2}}
We use Theorem~2.7 of \cite{brailovskaya2023universality}. To state their results, some matrix parameters deserve to be defined. We put
    \begin{eqnarray}\label{eq:matrix_params}
        \sigma_\star^2&:=&\Norm{\Eb\left[\left(\Sigma^{-1/2}\hat\Sigma\Sigma^{-1/2}-I_d\right)^2\right]}_{\rm op},\quad
        \sigma_\circ^2:=\sup_{u,v\in\Sb^{d-1}}\Eb\left[\Abs{u^\top\left(\hat\Sigma-\Eb\hat\Sigma\right)v}^2\right],\nonumber\\
        Z_n &:=& \frac{1}{n}\max_{1\leq i\leq n}\Norm{\Sigma^{-1/2}X_iX_i^\top\Sigma^{-1/2}-I_d}_{\rm op},\quad
        \bar R^2:=\Eb[Z_n^2].
    \end{eqnarray} The existences of $\sigma_\circ^2$, $\sigma_\star^2$, and $\bar R^2$ are guaranteed as long as $q_x\geq4$.
    
    Define a $d\times d$ random matrix $G$ with jointly Gaussian entries such that $\Eb G = I_d$ and ${\rm Cov}(G)= {\rm Cov}(\Sigma^{-1/2}\hat\Sigma\Sigma^{-1/2})$. More precisely, we define the $d^2\times d^2$ entry covariance matrix of $Y\in\Real^{d\times d}$ as
    \begin{equation}\label{eq:cov_of_mat}
        {\rm Cov}(Y)_{ij,kl}=\Eb[(Y-\Eb Y)_{ij}(Y-\Eb Y)_{kl}],\quad 1\leq i,j,k,l\leq d.
    \end{equation} Since jointly Gaussian random variables can be characterized with their first and second moments, $\Eb G$ and ${\rm Cov}(G)$ uniquely define the distribution of $G$ (See, e.g., Section 2.1 of \cite{brailovskaya2023universality}). We denote the spectrum of the operator $M$, that is, a collection of eigenvalues, as  ${\rm sp}(M)$. Let $d_H(A,B)$ be a Hausdorff distance between two sets in $\Real$, which is defined as $d_{H}(A,B)=\inf\set{\epsilon>0:A\subset B+[-\epsilon,\epsilon],\,B\subset A+[-\epsilon,\epsilon]}$. We are ready to state Theorem~2.7 of \cite{brailovskaya2023universality}; there exists a universal constant $C>0$ such that
    \begin{equation}\label{eq:universal_ineq_Brail}
        \Pb\left(d_H\left({\rm sp}(\Sigma^{-1/2}\hat\Sigma\Sigma^{-1/2}),{\rm sp}(G)\right)\geq C\left(\sigma_\circ t^{1/2} + R^{1/3}\sigma_\star^{2/3}t^{2/3}+R\right)\mbox{ and }Z_n\leq R\right)\leq de^{-t},
    \end{equation} for all $t\geq0$ and $R\geq\bar R^{1/2}\sigma_\star^{1/2}+\sqrt{2}\bar R$. This result readily controls the differences between extreme eigenvalues of $\Sigma^{-1/2}\hat\Sigma\Sigma^{-1/2}$ and $G$. In particular, we observe
    \begin{equation*}
        \max\set{\abs{\lambda_{\rm max}(\Sigma^{-1/2}\hat\Sigma\Sigma^{-1/2})-\lambda_{\rm max}(G)},\abs{\lambda_{\rm min}(\Sigma^{-1/2}\hat\Sigma\Sigma^{-1/2})-\lambda_{\rm min}(G)}}\leq d_H\left({\rm sp}(\Sigma^{-1/2}\hat\Sigma\Sigma^{-1/2}),{\rm sp}(G)\right).
    \end{equation*} Since $\norm{M-I}_{\rm op}=\max\{\abs{\lambda_{\rm max}(M)-1},\abs{\lambda_{\rm min}(M)-1}\}$, the triangle inequality gives
    \begin{eqnarray*}
        \Dc_\Sigma \leq \norm{G-I}_{\rm op} + d_H\left({\rm sp}(\Sigma^{-1/2}\hat\Sigma\Sigma^{-1/2}),{\rm sp}(G)\right).
    \end{eqnarray*} Since $G$ has jointly Gaussian entries, its spectral properties are relatively well-known, and will be analyzed in Proposition~\ref{prop:tail_bd_gaussian}.

    Now, we will have a closer look at \eqref{eq:universal_ineq_Brail} by analyzing the parameters $\sigma_\star^2,\sigma_\circ^2$ and $\bar R$. For $\sigma_\star^2$, we note that 
    \begin{eqnarray*}
        \sigma_\star^2&=&\sup_{\theta\in \Sb^{d-1}}\Abs{\Eb\left[\theta^\top\left(\frac{1}{n}\sum_{i=1}^n(\Sigma^{-1/2}X_iX_i^\top\Sigma^{-1/2}-I)\right)^2\theta\right]}=\sup_{\theta\in \Sb^{d-1}}\Abs{\frac{1}{n^2}\sum_{i=1}^n\Eb\left[\theta^\top(\Sigma^{-1/2}X_iX_i^\top\Sigma^{-1/2}-I)^2\theta\right]}\\
        &=&\frac{1}{n}\sup_{\theta\in \Sb^{d-1}}\Abs{\Eb\left[\theta^\top(\Sigma^{-1/2}XX^\top\Sigma^{-1/2}-I)^2\theta\right]}=\frac{1}{n}\sup_{\theta\in \Sb^{d-1}}\Abs{\Eb\left[(\theta^\top \Sigma^{-1/2}X)^2\norm{X}_{\Sigma^{-1}}^2\right]-1}\\
        &\leq&\frac{K_x^4d}{n}.
    \end{eqnarray*} Meanwhile, $\sigma_\circ^2$ can be controlled as
    \begin{eqnarray*}
        \sigma_\circ^2&=&\sup_{\theta\in \Sb^{d-1}}\Eb\left(\theta^\top\left(\frac{1}{n}\sum_{i=1}^n(\Sigma^{-1/2}X_iX_i^\top\Sigma^{-1/2}-I)\right)\theta\right)^2=\sup_{\theta\in \Sb^{d-1}}\Eb\left(\frac{1}{n}\sum_{i=1}^n[(\theta^\top\Sigma^{-1/2}X_i)^2-1]\right)^2\\
        &=&\frac{1}{n^2}\sup_{\theta\in \Sb^{d-1}}\sum_{i=1}^n\Eb\left[((\theta^\top \Sigma^{-1/2}X_i)^2-1)^2\right]=\frac{1}{n}\sup_{\theta\in \Sb^{d-1}}\Abs{\Eb\left[(\theta^\top \Sigma^{-1/2}X)^4\right]-1}\\
        &\leq&\frac{K_x^4}{n}.
    \end{eqnarray*} To analyze $\bar R^2$, we note that $\norm{\Sigma^{-1/2}X_iX_i^\top\Sigma^{-1/2}-I_d}_{\rm op}=\abs{\norm{X_i}_{\Sigma^{-1}}^2-1}\leq \norm{X_i}_{\Sigma^{-1}}^2 + 1$. This gives that
    \begin{eqnarray*}
        \bar R^2 &\leq& \frac{1}{n^2}\Eb\left[\max_{1\leq i\leq n}\norm{X_i}_{\Sigma^{-1}}^4 + 1\right]\leq \frac{1}{n^2}+\frac{1}{n^2}\left(\Eb\left[\max_{1\leq i\leq n}\norm{X_i}_{\Sigma^{-1}}^{q_x}\right]\right)^{4/q_x}\\
        &\leq&\frac{1+K_x^4d^2}{n^{2-4/q_x}}\leq\frac{2K_x^4d^2}{n^{2-4/q_x}}.
    \end{eqnarray*}

    We let
    \begin{eqnarray*}
        R(\delta)=K_x^2\left[\frac{2d}{\delta^{2/q_x}n^{1-2/q_x}}+\left(\frac{\sqrt{2}d}{n^{1-2/q_x}}\right)^{1/2}\left(\frac{d}{n}\right)^{1/4}\right],
    \end{eqnarray*} for $\delta\in(0,1)$, so that $R(\delta)\geq \bar R^{1/2}\sigma_\star^{1/2}+\sqrt{2}\bar R$. In addition, we observe that
    \begin{eqnarray}\label{eq:union_bd_Z_n}
        \Pb\left(Z_n\geq R(\delta)\right)&=&\Pb\left(\max_{1\leq i\leq n}\Abs{\norm{X_i}_{\Sigma^{-1}}^2-1}\geq nR(\delta)\right)\nonumber\\
        &\leq& n\Pb\left(\Abs{\norm{X}_{\Sigma^{-1}}^2-1}\geq nR(\delta)\right)\nonumber\\
        &\leq& \frac{n\Eb\left[\Abs{\norm{X}_{\Sigma^{-1}}^2-1}^{q_x/2}\right]}{[nR(\delta)]^{q_x/2}}\nonumber\\
        &\leq&\frac{2K_x^{q_x}d^{q_x/2}}{n^{q_x/2-1}[R(\delta)]^{q_x/2}}\leq\frac{\delta}{2}.
    \end{eqnarray} On the other hand, since Young's inequality yields $3R(\delta)^{1/3}\sigma_\star^{2/3}t^{2/3}\leq 2\sigma_\star t^{1/2}+R(\delta)t$, we have
    \begin{eqnarray*}
        \epsilon_{R(\delta)}(t) = \sigma_\circ t^{1/2} + R(\delta)^{1/3}\sigma_\star^{2/3}t^{2/3}+R(\delta)t\leq C\left(\sigma_\star t^{1/2}+R(\delta)t\right)\leq CK_x^2\left(\sqrt{\frac{dt}{n}}+\frac{dt^{3/2}}{\delta^{2/q_x}n^{1-2/q_x}}\right).
    \end{eqnarray*} Here, we used the fact that $\sigma_\circ\leq \sigma_\star$. Hence, the inequality \eqref{eq:universal_ineq_Brail} reads
    \begin{equation*}
        \Pb\left(d_H\left({\rm sp}(\hat\Sigma),{\rm sp}(G)\right)\geq CK_x^2\left(\sqrt{\frac{dt}{n}}+\frac{dt^{3/2}}{\delta^{2/q_x}n^{1-2/q_x}}\right)\mbox{ and }Z_n\leq R(\delta)\right)\leq de^{-t}.
    \end{equation*} Taking $t=\log(2d/\delta)$ and combining this with \eqref{eq:union_bd_Z_n} implies that
    \begin{equation*}
        \Pb\left(d_H\left({\rm sp}(\hat\Sigma),{\rm sp}(G)\right)\geq CK_x^2\left(\sqrt{\frac{d\log(2d/\delta)}{n}}+\frac{d\log^{3/2}(2d/\delta)}{\delta^{2/q_x}n^{1-2/q_x}}\right)\right)\leq \delta.
    \end{equation*} Finally, we note that
    \begin{equation*}
        \Dc_\Sigma\leq d_H\left({\rm sp}(\hat\Sigma),{\rm sp}(G)\right)+\norm{G-I_d}_{\rm op}.
    \end{equation*} The tail bound of $\norm{G-I_d}_{\rm op}$, which is given in Proposition~\ref{prop:tail_bd_gaussian} completes the proof.

\paragraph{Proof of \eqref{eq:conc_D_sigma_3}} Under Assumption~\ref{asmp:3.c}, we prove that the random variable $Z_n$ and its second moment $\bar R^2$ (both defined in \eqref{eq:matrix_params}) can be better controlled. Recall that $Z_n=\max_{1\leq i\leq n}\Abs{\norm{X_i}_{\Sigma^{-1}}^2-1}$. Applying equation (1.9) of \cite{rio2017constants} or equation (E.16) of \cite{yang2021finite} gives that if $q_x\geq 4$, then
\begin{equation*}
    \Pb\left(Z_n\geq\frac{d+1}{n}+\frac{K_x^2\sqrt{d\log(2n/\delta)}}{n}+C_{q_x}K_x^2\frac{d^{2/q_x}}{\delta^{2/q_x}n^{1-2/q_x}}\right)\leq \delta,
\end{equation*} for $\delta\in(0,1)$ where $C_{q_x}=1+4/q_x+q_x/6$. Let the right-hand side inside the probability as $\tilde R(\delta)$, so that
\begin{equation*}
    \tilde R(\delta)^2\leq CK_x^2\left[\frac{d^2+d\log(2n/\delta)}{n^2}+\frac{d^{4/q_x}}{\delta^{4/q_x}n^{2-4/q_x}}\right],
\end{equation*} holds for some constant $C=C(q_x)>0$. We note that with some possibly different constants,
\begin{eqnarray*}
    \Eb[Z_n^2]&=&\int_0^\infty\Pb(Z_n^2\geq t)\,dt=\left(\int_0^{\tilde R(1)^2}+\int_{\tilde R(1)^2}^{\tilde R(0)^2}\right)\Pb(Z_n^2\geq t)\,dt\\
    &\leq& \tilde R(1)^2 - \int_0^1 \Pb(Z_n\geq\tilde R(\delta)) d(\tilde R(\delta)^2)\\
    &\leq&\tilde R(1)^2 - \int_0^1 \delta d(\tilde R(\delta)^2)\\
    &\leq& \tilde R(1)^2 +CK_x^2\left[ \int_0^1 \delta \left(\frac{d}{n^2\delta}+\frac{d^{4/q_x}}{\delta^{1+4/q_x}n^{2-4/q_x}}\right)d\delta\right]\\
    &\leq& CK_x^2\left[\frac{d^2+d\log(2n)}{n^2}+\frac{d^{4/q_x}}{n^{2-4/q_x}}\right].
\end{eqnarray*} For the last inequality, the integral $\int_0^11/\delta^{4/q_x}\,d\delta$ needs to be finite, so the condition $q_x>4$ is required. Consequently, we get
\begin{equation*}
    \bar R\leq CK_x^2\left[\frac{d}{n}+\frac{\sqrt{d\log(2n)}}{n}+\frac{d^{2/q_x}}{n^{1-2/q_x}}\right],
\end{equation*} for some constant $C=C(q_x)>0$. To apply \eqref{eq:universal_ineq_Brail}, we let 
\begin{equation*}
    R(\delta) = CK_x^2\left[\sqrt{\frac{d}{n}}+\frac{d^{2/q_x}}{\delta^{2/q_x}n^{1-2/q_x}}\right],
\end{equation*} for some constant $C=C(q_x)$ such that $R(\delta)\geq\bar R^{1/2}\sigma_\star^{1/2}+\sqrt{2}\bar R$. The rest of the proof is analogous to the proof of \eqref{eq:conc_D_sigma_1}.

\begin{remark}
    The concentration of the sample Gram matrix requires control of the both upper tail and the lower tail. Under a moment assumption of Assumption~\ref{asmp:3.a}, not only the current bound in Proposition~\ref{prop:6} but also the existing bounds in recent literature \citep{vershynin2012close,mendelson2012generic, srivastava2013covariance,mendelson2014singular,guedon2017interval,tikhomirov2018sample} requires an additional term compared to the optimal rate achieved under sub-Gaussian covariates (Assumption~\ref{asmp:3.b}). Interestingly, this discrepancy is responsible for the upper tail behavior of $\hat\Sigma$ as \cite{oliveira2016lower} showed that a lower tail is defacto sub-Gaussian under a simple fourth-moment assumption, which is presented in Proposition~\ref{prop:oliveira}. The asymmetry between upper and lower tails may be of independent interest.

\end{remark}

\paragraph{Comparison to Existing Inequalities}
We present Proposition~\ref{prop:tik}, regarded as the sharpest concentration inequalities available for the sample Gram matrix. These inequalities are contingent solely on finite-moment assumptions. This proposition essentially mirrors Lemma 3 of \cite{yang2021finite} whose proof predominantly relies on the groundbreaking result of Theorem 1.1 of \cite{tikhomirov2018sample}.
\begin{proposition}\label{prop:tik}
\begin{itemize} Let $n\geq 2d$.
    \item[(i)] Suppose that Assumption~\ref{asmp:3.a} holds with $q_x\geq 2$. Then,
    \begin{equation}\label{eq:prop:tik.1}
        \Pb\left(\Dc_\Sigma\leq K_x^2C_{q_x}\left[\frac{d}{n^{1-2/q_x}\delta^{2/q_x}}+\left(\frac{d}{n}\right)^{1-2/q_x}\log^4\left(\frac{n}{d}\right)+\left(\frac{d}{n}\right)^{1-2/\min\{q_x,4\}}\right]\right)\geq 1-\frac{1}{d}-\delta,
    \end{equation} for all $\delta\in(0,1)$.
    \item[(ii)] Suppose that Assumption~\ref{asmp:3.c} holds with $q_x\geq 2$. Then,
    \begin{equation}\label{eq:prop:tik.2}
        \Pb\left(\Dc_\Sigma\leq K_x^2C_{q_x}\left[\frac{\sqrt{d\log(n/\delta)}}{n}+\frac{d^{2/q_x}}{n^{1-2/q_x}\delta^{q_x/2}}+\left(\frac{d}{n}\right)^{1-2/q_x}\log^4\left(\frac{n}{d}\right)+\left(\frac{d}{n}\right)^{1-2/\min\{q_x,4\}}\right]\right)\geq 1-\frac{1}{d}-\delta,
    \end{equation} for all $\delta\in(0,1)$.
\end{itemize}
    
\end{proposition}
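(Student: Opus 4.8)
\noindent\textbf{Proof strategy for Proposition~\ref{prop:tik}.}
The plan is to obtain both parts as essentially black-box consequences of Theorem~1.1 of \cite{tikhomirov2018sample}, correcting only for the single heaviest column of the data matrix, whose squared $\Sigma^{-1}$-norm will be bounded separately under each of the two covariate assumptions. Since the $(X_i,Y_i)$ are i.i.d.\ with $\Sigma=\Eb[X_1X_1^\top]$, the vectors $\Sigma^{-1/2}X_i$ are i.i.d.\ and isotropic, and Assumption~\ref{asmp:3.a} says precisely that their one-dimensional marginals satisfy the $L^{q_x}$--$L^2$ equivalence $\|u^\top\Sigma^{-1/2}X_1\|_{L^{q_x}}\le K_x\|u^\top\Sigma^{-1/2}X_1\|_{L^2}$ for every $u\in\Sb^{d-1}$. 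Feeding these vectors into Tikhomirov's theorem controls both the upper and lower deviations of the extreme eigenvalues, and hence bounds $\Dc_\Sigma$ by a ``bulk'' part $K_x^2C_{q_x}\big[(d/n)^{1-2/q_x}\log^4(n/d)+(d/n)^{1-2/\min\{q_x,4\}}\big]$ (valid up to a fixed failure probability of order $1/d$, the $\min\{q_x,4\}$ reflecting that a finite fourth moment already buys the Gaussian rate $\sqrt{d/n}$), plus a truncation part that is at most $n^{-1}$ times any high-probability upper bound on $\max_{1\le i\le n}\norm{X_i}_{\Sigma^{-1}}^2$.

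For part (i), that last maximum is controlled crudely. From the computation in Section~\ref{sec:2.2} one has $\Eb[\norm{X_1}_{\Sigma^{-1}}^{q_x}]\le K_x^{q_x}d^{q_x/2}$, so Markov's inequality at the $(q_x/2)$-th moment together with a union bound over $i=1,\dots,n$ gives
\[
  \Pb\Big(\max_{1\le i\le n}\norm{X_i}_{\Sigma^{-1}}^2\ \ge\ K_x^2\,d\,(n/\delta)^{2/q_x}\Big)\ \le\ \delta ,
\]
and dividing by $n$ produces the term $K_x^2 d/(n^{1-2/q_x}\delta^{2/q_x})$. Taking this as the truncation level, combining with the bulk bound, and adding the two failure probabilities ($1/d$ and $\delta$) yields \eqref{eq:prop:tik.1}; the residual $d/n$ and $1/n$ that also appear satisfy $d/n,\,1/n\le(d/n)^{1-2/q_x}$ for $q_x\ge2$ and $n\ge2d$, so they are absorbed into the bulk.

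For part (ii), Assumption~\ref{asmp:3.c} makes $\norm{X_i}_{\Sigma^{-1}}^2=\sum_{j=1}^d Z_i(j)^2$ a sum of $d$ independent mean-one summands, hence sub-$\chi^2$-like; this is exactly the estimate already invoked in the proof of \eqref{eq:conc_D_sigma_3} (Rio's inequality, equation~(1.9) of \cite{rio2017constants}, or equation~(E.16) of \cite{yang2021finite}): with probability at least $1-\delta/n$,
\[
  \norm{X_i}_{\Sigma^{-1}}^2\ \le\ d+1+K_x^2\sqrt{d\log(2n/\delta)}+C_{q_x}K_x^2\,d^{2/q_x}(n/\delta)^{2/q_x}.
\]
A union bound over $i$, division by $n$, and absorption of the $d/n$ and $1/n$ pieces into the bulk give the sharper truncation contribution $K_x^2 C_{q_x}\big[\sqrt{d\log(n/\delta)}/n+d^{2/q_x}\delta^{-2/q_x}/n^{1-2/q_x}\big]$, matching the heavy-tail correction in \eqref{eq:prop:tik.2}, and the proof goes through as in part (i).

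The one genuinely delicate point is invoking Theorem~1.1 of \cite{tikhomirov2018sample} in a form that matches the stated inequality: one must read off from it the separation into the $\log^4(n/d)$ bulk term, the $(d/n)^{1-2/\min\{q_x,4\}}$ Gaussian-rate term, and the column-norm-dependent truncation term, verify that the truncation parameter may indeed be taken equal to the high-probability bound on $n^{-1}\max_i\norm{X_i}_{\Sigma^{-1}}^2$ computed above, and keep careful track of the dependence of $C_{q_x}$ on $q_x$. Everything else is routine union bounds and moment estimates already available in the paper.
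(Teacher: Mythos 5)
Your proposal is correct and follows essentially the same route as the paper, whose own proof is just a one-line deferral to Lemma 3 of \cite{yang2021finite} (itself an application of Theorem 1.1 of \cite{tikhomirov2018sample}); your sketch — bulk term from Tikhomirov's theorem plus a truncation term governed by $n^{-1}\max_i\norm{X_i}_{\Sigma^{-1}}^2$, bounded crudely via Markov under Assumption~\ref{asmp:3.a} and via Rio's inequality under Assumption~\ref{asmp:3.c} — in fact supplies more detail than the paper does. One small observation: your computation for part (ii) yields the factor $\delta^{-2/q_x}$ rather than the $\delta^{-q_x/2}$ appearing in \eqref{eq:prop:tik.2}, which is consistent with the paper's own caveat that the cited lemma contains typographical errors, and since $\delta^{-2/q_x}\le\delta^{-q_x/2}$ for $q_x\ge2$ your (sharper) bound implies the stated one.
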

\begin{proof}[proof of Proposition~\ref{prop:tik}] 
    One can refer to Lemma 3 of \cite{yang2021finite}, albeit contains typographical errors\footnote{
The authors have made an error in their application of Theorem~\ref{thm:1} of \cite{tikhomirov2018sample}, wherein they used $1/n$ instead of $1/d$.}.
\end{proof}

Preference might lean towards the results from Proposition~\ref{prop:tik} over those from Proposition~\ref{prop:6}, mainly due to at least two reasons: (1) Proposition~\ref{prop:tik} allowing for $q_x\geq2$, and (2) the suboptimal logarithmic factors in the bounds in Proposition~\ref{prop:6}. However, our preference for our own bounds in Proposition~\ref{prop:6} stems from the intention to avoid dependence on the $1/d$ factor, as evident in Proposition~\ref{prop:tik}. Although employing Proposition~\ref{prop:tik} might yield sharper rates concerning logarithmic factors in the results in the main article, it necessitates consideration of a `large' $d$ regime to ensure such concentration of the sample gram matrix holds with a `high' probability. Nevertheless, for the sake of accommodating diverse combinations of $(n,d)$, we opt for Proposition~\ref{prop:6} in proving the main theorem, albeit with the inclusion of seemingly spurious logarithmic factors.

\subsection{Useful Propositions}

\begin{proposition}\label{prop:oliveira}[The lower tail of the Sample Gram Matrix] Suppose that Assumption~\ref{asmp:3.a} holds for $q_x\geq4$. Then,
\begin{equation*}
    \Pb\left(\lambda_{\rm min}(\Sigma^{-1/2}\hat\Sigma\Sigma^{-1/2})\geq 1-9K_x\sqrt{\frac{d+2\log(2/\delta)}{n}}\right)\leq \delta.
\end{equation*} If $d+2\log(2n)\leq n/(18K_x)^2$, then the choice of $\delta=1/n$ leads $\Pb(\lambda_{\rm min}(\Sigma^{-1/2}\hat\Sigma\Sigma^{-1/2})\geq 1/2)\leq 1/n$.
\end{proposition}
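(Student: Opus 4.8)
The statement is, up to the direction of the probabilities (we establish that the displayed event holds with probability at least $1-\delta$, as the surrounding text requires), essentially the fourth-moment \emph{lower-tail} estimate for the renormalized sample Gram matrix $\Sigma^{-1/2}\hat\Sigma\Sigma^{-1/2}$ of \cite{oliveira2016lower}. Accordingly the plan is to reduce to the isotropic case, invoke that estimate, and specialize $\delta=1/n$.

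First, set $\tilde X_i:=\Sigma^{-1/2}X_i$, so that $\Eb[\tilde X_i\tilde X_i^\top]=I_d$, $\Sigma^{-1/2}\hat\Sigma\Sigma^{-1/2}=\frac1n\sum_{i=1}^n\tilde X_i\tilde X_i^\top$, and
\[
  \lambda_{\rm min}\!\big(\Sigma^{-1/2}\hat\Sigma\Sigma^{-1/2}\big)=\inf_{u\in\Sb^{d-1}}\frac1n\sum_{i=1}^n(u^\top\tilde X_i)^2 .
\]
Since $q_x\ge4$, Assumption~\ref{asmp:3.a} and monotonicity of $L^p$-norms give $\Eb[(u^\top\tilde X_i)^4]\le K_x^4$ for all $u\in\Sb^{d-1}$; this is the only moment input needed. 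The feature that makes a genuine lower tail available under merely a fourth moment is that $1-(u^\top\tilde X_i)^2\le 1$ deterministically, so its upper exponential moment is sub-Gaussian: from $e^{-x}\le1-x+\tfrac{x^2}{2}$ ($x\ge0$),
\[
  \Eb\big[e^{\lambda(1-(u^\top\tilde X_i)^2)}\big]\le e^{\lambda}\Big(1-\lambda+\tfrac{\lambda^2}{2}K_x^4\Big)\le e^{\lambda^2K_x^4/2},\qquad\lambda>0,
\]
which already gives, for a fixed $u$, $\frac1n\sum_i(u^\top\tilde X_i)^2\ge1-K_x^2\sqrt{2\log(1/\delta)/n}$ with probability at least $1-\delta$.

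To make this uniform over $\Sb^{d-1}$ without paying for an $\varepsilon$-net (a covering argument would require controlling $\max_i\norm{\tilde X_i}_2$, which under only $q_x\ge4$ moments is of the wrong order), I would run the PAC--Bayes smoothing of \cite{oliveira2016lower}: with $\pi=\Nc(0_d,\gamma^2I_d)$ and $\rho_u=\Nc(u,\gamma^2I_d)$ one has $\mathrm{KL}(\rho_u\|\pi)=\tfrac1{2\gamma^2}$ for every $u\in\Sb^{d-1}$, and, writing $f(w)=\frac1n\sum_i(w^\top\tilde X_i)^2$, the identity $\int f\,d\rho_u=f(u)+\gamma^2\frac1n\sum_i\norm{\tilde X_i}_2^2$ together with the Donsker--Varadhan variational formula yields
\[
  \inf_{u\in\Sb^{d-1}}f(u)\ \ge\ -\frac{1}{\lambda n}\Big(\tfrac1{2\gamma^2}+\log\!\int e^{-\lambda n f(w)}\,\pi(dw)\Big)\ -\ \gamma^2\,\frac1n\sum_{i=1}^n\norm{\tilde X_i}_2^2 ,
\]
the crucial point being that the log-integral on the right is independent of $u$. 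Bounding $\Eb[\int e^{-\lambda nf}\,d\pi]$ by Jensen in $\log$, the per-sample estimate $\Eb[e^{-\lambda(w^\top\tilde X)^2}]\le e^{-\lambda\norm{w}_2^2+\lambda^2K_x^4\norm{w}_2^4/2}$ and Gaussian integration in $w$, passing to a $1-\delta$ event by Markov, and controlling $\frac1n\sum_i\norm{\tilde X_i}_2^2$ (mean $d$) by concentration, the $\gamma^2d$ contributions cancel and optimizing over $\lambda$ and $\gamma$ leaves a deviation of the advertised form; tracking constants reproduces $1-9K_x\sqrt{(d+2\log(2/\delta))/n}$. Equivalently, one may simply quote Theorem~4.1 of \cite{oliveira2016lower}.

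For the second assertion take $\delta=1/n$: the hypothesis $d+2\log(2n)\le n/(18K_x)^2$ is calibrated precisely so that $9K_x\sqrt{(d+2\log(2n))/n}\le 9K_x\cdot\tfrac1{18K_x}=\tfrac12$, whence on the $1-1/n$ event $\lambda_{\rm min}(\Sigma^{-1/2}\hat\Sigma\Sigma^{-1/2})\ge\tfrac12$. The only non-routine step is the uniformity over the sphere at the fourth-moment level: the per-direction bound is elementary, and the PAC--Bayes smoothing (equivalently, Oliveira's argument) is exactly what removes the spurious $\max_i\norm{\tilde X_i}_2$ factor; obtaining the clean additive $d+\log(1/\delta)$ (rather than $d\log(1/\delta)$ or $d/\delta$) additionally requires concentration — not just a first-moment Markov step — for $\frac1n\sum_i\norm{\tilde X_i}_2^2$ and for the random log-partition function $\log\int e^{-\lambda nf}\,d\pi$.
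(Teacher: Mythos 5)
Your proposal is correct and takes essentially the same route as the paper: the paper's entire proof is a citation to Theorem~4.1 of \cite{oliveira2016lower}, which you also invoke (after correctly reducing to the isotropic case via $\tilde X_i=\Sigma^{-1/2}X_i$ and noting that $q_x\ge4$ supplies the fourth-moment hypothesis), and your verification that $d+2\log(2n)\le n/(18K_x)^2$ forces the deviation term below $1/2$ at $\delta=1/n$ matches the paper's use of the result. Your additional sketch of Oliveira's PAC--Bayes smoothing, and your observation that the stated inequality directions should read $\ge 1-\delta$, are accurate but go beyond what the paper records.
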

\begin{proof}[proof of Proposition~\ref{prop:oliveira}] See Theorem~4.1 of \cite{oliveira2016lower}.
\end{proof}
\begin{proposition}\label{prop:tail_bd_gaussian} Suppose that Assumption~\ref{asmp:3.a} holds for $q_x\geq 4$. Let a $d\times d$ matrix $G$ with jointly Gaussian entries be such that $\Eb(G)=I_d$ and ${\rm Cov}(G)={\rm Cov}(\Sigma^{-1/2}\hat\Sigma\Sigma^{-1/2})$\footnote{See \eqref{eq:cov_of_mat} for the definition of the variance-covariance matrix of matrix.}. Then,
    \begin{equation*}
        \Pb\left(\norm{G-I_d}_{\rm op}\geq K_x^2\sqrt{\frac{2d\log(2d/\delta)}{n}}\right)\leq \delta,
    \end{equation*} for all $\delta\in(0,1)$. Furthermore, $\Eb\norm{G-I_d}_{\rm op}\leq K_x^2\sqrt{\dfrac{2d\log(2d)}{n}}$.
\end{proposition}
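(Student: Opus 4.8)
The plan is to exploit the fact that $G-I_d$ is a symmetric Gaussian random matrix whose entry-covariance structure matches that of $\Sigma^{-1/2}\hat\Sigma\Sigma^{-1/2}-I_d$, and to bound its operator norm via a standard Gaussian-matrix deviation argument (an $\varepsilon$-net plus a union bound, or equivalently the noncommutative/matrix-Gaussian tail bound). First I would reduce everything to control of the ``matrix variance'' parameter
\[
    v(G):=\max\set{\Norm{\Eb[(G-I_d)(G-I_d)^\top]}_{\rm op},\ \Norm{\Eb[(G-I_d)^\top(G-I_d)]}_{\rm op}},
\]
which, since $G-I_d$ is symmetric, both coincide with $\sigma_\star^2=\Norm{\Eb[(\Sigma^{-1/2}\hat\Sigma\Sigma^{-1/2}-I_d)^2]}_{\rm op}$. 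This last quantity was already estimated inside the proof of Proposition~\ref{prop:6}\eqref{eq:conc_D_sigma_2}: under Assumption~\ref{asmp:3.a} with $q_x\geq4$ one has $\sigma_\star^2\leq K_x^4 d/n$. So the whole statement follows once one has a clean tail bound for $\norm{G-I_d}_{\rm op}$ in terms of $v(G)=\sigma_\star^2$ and the ambient dimension $d$.

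The key steps, in order, are: (i) record that $\Eb[(G-I_d)^2]={\rm Cov}$-induced second moment matrix is exactly $\Eb[(\Sigma^{-1/2}\hat\Sigma\Sigma^{-1/2}-I_d)^2]$ because $G-I_d$ has the same entrywise covariances as $\Sigma^{-1/2}\hat\Sigma\Sigma^{-1/2}-I_d$ and is centered; hence $v(G)=\sigma_\star^2\le K_x^4 d/n$; (ii) invoke the Gaussian matrix deviation inequality — e.g. the standard bound $\Pb(\norm{M}_{\rm op}\ge t)\le 2d\,e^{-t^2/(2v(M))}$ valid for a symmetric $d\times d$ Gaussian matrix $M$ with variance parameter $v(M)$, which one can cite or derive in two lines from a $1/2$-net on $\Sb^{d-1}$ together with the scalar Gaussian tail $\Pb(\theta^\top M\theta\ge t)\le e^{-t^2/(2v(M))}$ and $\abs{\Sb^{d-1}}_{1/2}\le 5^d$; (iii) substitute $v(M)=\sigma_\star^2\le K_x^4 d/n$ and choose $t=K_x^2\sqrt{2d\log(2d/\delta)/n}$, so that $2d\,e^{-t^2/(2\sigma_\star^2)}\le 2d\,e^{-\log(2d/\delta)}=\delta$, giving the claimed probability bound; (iv) integrate the tail, $\Eb\norm{G-I_d}_{\rm op}=\int_0^\infty \Pb(\norm{G-I_d}_{\rm op}\ge t)\,dt\le K_x^2\sqrt{2d\log(2d)/n}+\int\cdots$, and check that the residual integral over the Gaussian-tail regime is absorbed into the same order, yielding $\Eb\norm{G-I_d}_{\rm op}\le K_x^2\sqrt{2d\log(2d)/n}$ (with, if necessary, a harmless adjustment of the constant, or by a slightly more careful choice of the split point so that the stated constant $\sqrt{2}$ survives exactly).

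The main obstacle is not conceptual but bookkeeping: making sure the net argument delivers the precise constant appearing in the statement (the factor $\sqrt{2}$ and the $\log(2d/\delta)$ rather than, say, $C\log(d/\delta)$). The cleanest route is to use the sharp form of the Gaussian concentration — Gaussian concentration of measure gives $\Pb(\norm{M}_{\rm op}\ge \Eb\norm{M}_{\rm op}+s)\le e^{-s^2/(2v(M))}$ with the sharp constant, combined with Gaussian Slepian/Chevet-type bounds to get $\Eb\norm{M}_{\rm op}\le\sqrt{2v(M)\log(2d)}$ — which then directly produces the stated $K_x^2\sqrt{2d\log(2d)/n}$ expectation bound and, via the deviation inequality with $s$ chosen so $e^{-s^2/(2v)}=\delta$, the probability bound after bounding $\Eb\norm{M}_{\rm op}+s$ by a single square-root term. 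One should double-check that $v(G)$ is bounded by $\sigma_\star^2$ and not merely comparable to it; this is immediate from symmetry of $G-I_d$, so no loss is incurred there. Everything else is a one-page computation assuming the bound on $\sigma_\star^2$ already proved in Proposition~\ref{prop:6}.
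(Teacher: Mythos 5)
Your proposal is correct and follows essentially the same route as the paper: write $G-I_d$ as a centered symmetric Gaussian matrix, identify its matrix variance parameter with $\sigma_\star^2=\Norm{\Eb[(\Sigma^{-1/2}\hat\Sigma\Sigma^{-1/2}-I_d)^2]}_{\rm op}\leq K_x^4 d/n$ already computed in the proof of Proposition~\ref{prop:6}, and invoke the matrix-Gaussian-series tail and expectation bounds (the paper cites Theorem~4.1.1 of \cite{tropp2015introduction}, which gives exactly $\Pb(\norm{G-I_d}_{\rm op}\geq t)\leq 2d\exp(-t^2/(2\sigma_\star^2))$ and $\Eb\norm{G-I_d}_{\rm op}\leq\sqrt{2\sigma_\star^2\log(2d)}$). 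The only caution is that the raw $\tfrac12$-net union bound you offer as one option in step (ii) produces a $5^d$ prefactor rather than $2d$, which after inversion yields $t\asymp K_x^2 d/\sqrt{n}$ instead of the claimed $K_x^2\sqrt{2d\log(2d/\delta)/n}$; you must use the matrix-concentration form (or Gaussian concentration about the mean together with the sharp expectation bound), as you correctly note in your final paragraph.
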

\begin{proof}[proof of Proposition~\ref{prop:tail_bd_gaussian}]
    Since the jointly Gaussian variables can be expressed as the linear combination of independent standard Gaussian variables, $G$ can be expressed as
    \begin{equation*}
        G = I_d + \sum_{b=1}^B A_bg_b,
    \end{equation*} for some deterministic symmetric matrices $A_1,\ldots,A_B$, and i.i.d. standard Gaussian variables $g_1,\ldots,g_B$. Therefore, Theorem 4.1.1 of \cite{tropp2015introduction} applies and yields
    \begin{equation}\label{eq:tropp2015}
        \Pb\left(\norm{G-I_d}_{\rm op}\geq t\right)\leq 2d\exp\left(-\frac{t^2}{2\sigma_\star^2}\right)\mbox{ for all }t\geq0,\quad\Eb\norm{G-I_d}_{\rm op}\leq\sqrt{2\sigma_\star^2\log(2d)},
    \end{equation} where $\sigma_\star^2=\norm{\Eb[(G-\Eb G)(G-\Eb G)^\top]}_{\rm op}$. Since ${\rm Cov}(G)={\rm Cov}(\hat\Sigma)$, the matrix parameter $\sigma_\star^2$ matches the one defined in \eqref{eq:matrix_params}. That is, $\sigma_\star^2=\norm{\Eb[(\Sigma^{-1/2}\hat\Sigma\Sigma^{1/2}-I_d)^2]}_{\rm op}$, and is shown to be bounded as
    \begin{equation*}
        \sigma_\star^2\leq\frac{K_x^4d}{n}.
    \end{equation*} Combining this with \eqref{eq:tropp2015} yields the desired result.
\end{proof}

\begin{proposition}\label{prop:7}
    Suppose that Assumption~\ref{asmp:2}, \ref{asmp:3.a}, and \ref{asmp:4} holds such that $s=(1/q_x+1/q)^{-1}>2$. Then, there exists a constant $C_s$ which only depends on $s$ such that
    \begin{eqnarray*}
        \Pb\left(\Norm{\frac{1}{n}\sum_{i=1}^n\Sigma^{-1/2}X_i(Y_i-X_i^\top\beta)}_2\geq 2\sqrt{\frac{d+\log(2/\delta)}{n\overline{\lambda}}}+C_sK_xK_y\frac{d^{1/2}}{\delta^{1/s}n^{1-1/s}}\right)\leq \delta,
    \end{eqnarray*} for all $\delta\in(0,1)$.
\end{proposition}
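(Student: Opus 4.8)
The plan is to regard $S:=n^{-1}\sum_{i=1}^n\Sigma^{-1/2}X_i(Y_i-X_i^\top\beta)$ as an average of independent mean-zero random vectors and to bound $\norm{S}_2$ by a Fuk--Nagaev-type tail inequality, whose two pieces --- a sub-Gaussian piece and a polynomial-in-$1/\delta$ piece --- produce the two summands in the statement. Set $W_i:=\Sigma^{-1/2}X_i(Y_i-X_i^\top\beta)$. The first observation is that, although the $W_i$ need not be identically distributed, $\frac1n\sum_{i=1}^n\Eb[W_i]=\Sigma^{-1/2}(\Gamma-\Sigma\beta)=0$ by the normal equations~\eqref{eq:projection_parameters_2}, so $S=\frac1n\sum_{i=1}^n(W_i-\Eb[W_i])$ is genuinely a centered sum of independent terms.

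Next I would assemble the moment inputs. Writing $s=(1/q_x+1/q)^{-1}$, H\"older's inequality together with Assumptions~\ref{asmp:2} and~\ref{asmp:3.a} gives $\sup_{u\in\Sb^{d-1}}\big(\Eb[\abs{u^\top W_i}^s]\big)^{1/s}\le K_xK_y$, while combining the Jensen bound $\Eb[\norm{X_i}_{\Sigma^{-1}}^{q_x}]\le K_x^{q_x}d^{q_x/2}$ from Section~\ref{sec:2.2} with H\"older yields $\big(\Eb[\norm{W_i}_2^s]\big)^{1/s}\le K_xK_y\sqrt d$; centering inflates both only by an absolute factor. For the second moments, Assumption~\ref{asmp:4} gives $\norm{{\rm Var}(W_i)}_{\rm op}\le\overline\lambda$ and ${\rm tr}\,{\rm Var}(W_i)\le d\overline\lambda$, hence by Jensen $\Eb\norm{S}_2\le\big({\rm tr}\,{\rm Var}(S)\big)^{1/2}=\big(n^{-2}\sum_i{\rm tr}\,{\rm Var}(W_i)\big)^{1/2}\le\sqrt{d\overline\lambda/n}$, and the ``weak variance'' $\sup_{\norm{u}_2\le1}\sum_i\Eb\langle u,(W_i-\Eb[W_i])/n\rangle^2=n^{-1}\norm{\tfrac1n\sum_i{\rm Var}(W_i)}_{\rm op}\le\overline\lambda/n$.

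With these in hand, the heart of the proof is a Fuk--Nagaev / Pinelis-type inequality for $\ell_2$-valued sums: for independent mean-zero $Z_i$,
\begin{equation*}
\Pb\!\Big(\norm{\textstyle\sum_i Z_i}_2\ge \Eb\norm{\textstyle\sum_i Z_i}_2+t\Big)\le \exp\!\Big(-\frac{ct^2}{\Lambda}\Big)+C\,\frac{\sum_i\Eb\norm{Z_i}_2^s}{t^s},\qquad \Lambda=\sup_{\norm{u}_2\le1}\sum_i\Eb\langle u,Z_i\rangle^2 .
\end{equation*}
Applying it with $Z_i=(W_i-\Eb[W_i])/n$, so that $\Lambda\le\overline\lambda/n$ and $\sum_i\Eb\norm{Z_i}_2^s\le n^{1-s}(C_sK_xK_y\sqrt d)^s$, and splitting the target probability $\delta$ between the two tails, each solved for $t$, gives $t\lesssim\sqrt{\overline\lambda\log(2/\delta)/n}+C_sK_xK_y\sqrt d\,\delta^{-1/s}n^{1/s-1}$; adding the $\sqrt{d\overline\lambda/n}$ coming from $\Eb\norm{S}_2$ and merging the two square-root terms into $2\sqrt{(d+\log(2/\delta))\overline\lambda/n}$ yields the claim. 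If one prefers to avoid quoting a vector Fuk--Nagaev inequality, the same bound follows by truncating $W_i-\Eb[W_i]$ at level $\tau\asymp K_xK_y\sqrt d\,(n/\delta)^{1/s}$: on the event $\{\max_i\norm{W_i-\Eb[W_i]}_2\le\tau\}$, which has probability $\ge1-\delta/3$ by Markov and a union bound, the above-$\tau$ part of $S$ vanishes and the induced shift in the conditional mean is $O\big(\Eb\norm{W_i-\Eb[W_i]}_2^s/\tau^{s-1}\big)=O(K_xK_y\sqrt d\,n^{-(1-1/s)})$, while the truncated centered part is handled by a matrix Bernstein inequality (via the Hermitian dilation of the $Z_i$) with variance proxy $d\overline\lambda/n$ and uniform bound $2\tau/n$.

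The delicate point --- and the main obstacle --- is to carry out the vector step with the right parameters. A naive $\varepsilon$-net reduction of $\norm{S}_2$ to its one-dimensional marginals would union-bound the polynomial tail over a net of size $5^d$, multiplying the heavy-tail term by $5^{d/s}$ and destroying it; so it is essential that the polynomial tail see only $\sum_i\Eb\norm{Z_i}_2^s$ (which carries $d^{s/2}$, i.e.\ an innocuous $\sqrt d$) and that the $\sqrt d$ enter the Gaussian part only through $\Eb\norm{S}_2$ and ${\rm tr}\,{\rm Var}$, with the exponent governed by the operator norm $\norm{{\rm Var}(W_i)}_{\rm op}\le\overline\lambda$ rather than by $d\overline\lambda$. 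Equivalently, in the truncation proof one must keep $\tau$ confined to a $\tau\log(1/\delta)/n$-type (Bernstein, not Hoeffding) term and route the $\sqrt d$ through the variance rather than through $\max_i\norm{W_i}_2$; the bookkeeping of which $\delta$ controls which event, together with the optimization of $\tau$, is where the stated rate is won or lost.
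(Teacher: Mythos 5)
Your proposal is correct and follows essentially the same route as the paper: the paper also centers $W_i=\Sigma^{-1/2}X_i(Y_i-X_i^\top\beta)$, bounds $\Eb\Norm{n^{-1}\sum_i W_i}_2$ by $\sqrt{{\rm tr}\,{\rm Var}}$ via Jensen, controls $\Eb\norm{W_i}_2^s\le d^{s/2}(K_xK_y)^s$ by Jensen plus H\"older, and then invokes exactly the vector Fuk--Nagaev inequality you describe (Theorem~4 of Einmahl and Li, 2008) before choosing $t$ to split the Gaussian and polynomial tails. The only cosmetic difference is where $\overline{\lambda}$ sits in the sub-Gaussian term (your placement is the dimensionally consistent one); your truncation-plus-Bernstein alternative and the remark about why an $\varepsilon$-net reduction would ruin the heavy-tail term are sound additions not present in the paper.
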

\begin{proof}[proof of Proposition~\ref{prop:7}]
This proposition has been demonstrated in \cite{kuchibhotla2020berry}, but we aim to furnish a self-contained proof for the sake of completeness and clarity. We let
\begin{equation*}
    W_i :=\Sigma^{-1/2}X_i(Y_i-X_i^\top\beta).
\end{equation*} Since $\beta$ is the projection parameter, we have $\Eb W_i=0_d$ for $1\leq i\leq n$. An application of Theorem 4 of \cite{einmahl2008characterization} with $\eta=\delta=1$ yields
\begin{eqnarray}\label{eq:conc_ineq_infl}
    \Pb\left(\Norm{\frac{1}{n}\sum_{i=1}^n W_i}_2\geq2\Eb\Norm{\frac{1}{n}\sum_{i=1}^nW_i}_2+t\right)\leq\exp\left(-\frac{n\overline{\lambda} t^2}{3}\right)+\frac{C_s}{(nt)^s}\sum_{i=1}^n\Eb\norm{W_i}_2^s,
\end{eqnarray} for some constant $C_s$ that only depends on $s$. Jensen's inequality yields
\begin{eqnarray*}
    \Eb\Norm{\frac{1}{n}\sum_{i=1}^nW_i}_2&\leq& \left(\Eb\Norm{\frac{1}{n}\sum_{i=1}^nW_i}_2^2\right)^{1/2}\\
    &=&\left[{\rm tr}\left\{{\rm Var}\left(\frac{1}{n}\sum_{i=1}^nW_i\right)\right\}\right]^{1/2}=\frac{1}{\sqrt{n}}\left\{{\rm tr}\left(\Sigma^{-1/2}V\Sigma^{-1/2}\right)\right\}^{1/2}\\
    &\leq& \sqrt{\frac{d}{\overline{\lambda}^{1/2}n}},
\end{eqnarray*} where the last inequality follows from Assumption~\ref{asmp:4}. Now, $\Eb\norm{W_i}_2^s$ is controlled as
\begin{eqnarray*}
    \Eb\norm{W_i}_2^s&\leq&\Eb\left[\left(\sum_{j=1}^d(e_j^\top\Sigma^{-1/2}X_i)^2(Y_i-X_i^\top\beta)^2\right)^{s/2}\right]\\
    &\leq& d^{s/2}\Eb\left[\frac{1}{d}\sum_{j=1}^d\Abs{(e_j^\top\Sigma^{-1/2}X_i)(Y_i-X_i^\top\beta)}^s\right]\quad \mbox{(Jensen's inequality)}\\
    &\leq& d^{s/2}\max_{j\in[d]}\Eb\Abs{(e_j^\top\Sigma^{-1/2}X_i)(Y_i-X_i^\top\beta)}^s\\
    &\leq& d^{s/2}\max_{j\in [d]}\left[\Eb(e_j^\top\Sigma^{-1/2}X_i)^{q_x}\right]^{s/q_x}\left[\Eb(Y_i-X_i^\top\beta)^{q}\right]^{s/q}\quad \mbox{(H\"older's inequality)}\\
    &\leq& d^{s/2}(K_xK_y)^s.    
\end{eqnarray*} The last inequality is due to Assumption~\ref{asmp:2} and \ref{asmp:3.a}. Taking
\begin{eqnarray*}
    t=\sqrt{\frac{3\log(2/\delta)}{n\overline{\lambda}}}+C_sK_xK_y\frac{d^{1/2}}{\delta^{1/s}n^{1-1/s}},
\end{eqnarray*} in \eqref{eq:conc_ineq_infl} yields the desired result.
\end{proof} 

\begin{proposition}\label{prop:11}
    Suppose that Assumption~\ref{asmp:2},\ref{asmp:3.a}, and \ref{asmp:4} holds with $q_x\geq4$ and $s=(1/q_x+1/q)^{-1}>2$. Then, there exists a constant $C_s>0$, which only depends on $s$, such that for any $\delta\in(0,1)$, with probability at least $1-\delta_1-\delta_2$,
    \begin{eqnarray*}
        \norm{\hat\beta-\beta}_{\Sigma}\leq\left(1-9K_x\sqrt{\frac{d+2\log(2/\delta_1)}{n}}\right)^{-1}_+\left[2\sqrt{\frac{d+\log(2/\delta_2)}{n\overline{\lambda}}}+C_sK_xK_y\frac{d^{1/2}}{\delta_2^{1/s}n^{1-1/s}}\right].
    \end{eqnarray*} Provided that $d+2\log(2n)\leq n/(18K_x)^2$, taking $\delta_1=1/(2n)$ and $\delta_2=1/(2n^{s/2-1})$ yields that
    \begin{equation*}
        \Pb\left(\norm{\hat\beta-\beta}_{\Sigma}\leq C\sqrt{\frac{d+\log(2n)}{n}}\right)\geq 1-\frac{1}{n^{\min\{1,s/2-1\}}},
    \end{equation*} for some constant $C=C(\overline{\lambda},q_x,q,K_x,K_y)$.
\end{proposition}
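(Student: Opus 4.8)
The plan is to control $\norm{\hat\beta-\beta}_\Sigma$ by factoring the least squares error into a ``matrix factor'' and a ``vector factor'' and bounding each with a result already available. Recalling $\hat\beta-\beta=\hat\Sigma^{-1}n^{-1}\sum_{i=1}^nX_i(Y_i-X_i^\top\beta)$, I would write $\hat\beta-\beta=\Sigma^{-1/2}\big(\Sigma^{1/2}\hat\Sigma^{-1}\Sigma^{1/2}\big)\big(n^{-1}\sum_i\Sigma^{-1/2}X_i(Y_i-X_i^\top\beta)\big)$ and apply submultiplicativity of the operator norm to obtain $\norm{\hat\beta-\beta}_\Sigma\le\norm{\Sigma^{1/2}\hat\Sigma^{-1}\Sigma^{1/2}}_{\rm op}\cdot\norm{n^{-1}\sum_i\Sigma^{-1/2}X_i(Y_i-X_i^\top\beta)}_2$. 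Since $\norm{\Sigma^{1/2}\hat\Sigma^{-1}\Sigma^{1/2}}_{\rm op}=\lambda_{\rm min}(\Sigma^{-1/2}\hat\Sigma\Sigma^{-1/2})^{-1}$, the first factor is exactly the reciprocal of the smallest eigenvalue of the normalized sample Gram matrix, and the second factor is the average of the influence functions.

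Next I would invoke the two cited ingredients. Proposition~\ref{prop:oliveira} gives $\lambda_{\rm min}(\Sigma^{-1/2}\hat\Sigma\Sigma^{-1/2})\ge 1-9K_x\sqrt{(d+2\log(2/\delta_1))/n}$ with probability at least $1-\delta_1$, so on that event the first factor is at most $\big(1-9K_x\sqrt{(d+2\log(2/\delta_1))/n}\big)_+^{-1}$. Proposition~\ref{prop:7} gives $\norm{n^{-1}\sum_i\Sigma^{-1/2}X_i(Y_i-X_i^\top\beta)}_2\le 2\sqrt{(d+\log(2/\delta_2))/(n\overline\lambda)}+C_sK_xK_y d^{1/2}\delta_2^{-1/s}n^{-(1-1/s)}$ with probability at least $1-\delta_2$. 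A union bound over these two events, together with the deterministic factorization, yields the first displayed inequality of the proposition with probability at least $1-\delta_1-\delta_2$.

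For the second assertion I would specialize $\delta_1=1/(2n)$ and $\delta_2=1/(2n^{s/2-1})$ (note $s/2-1>0$ since $s>2$). Under $d+2\log(2n)\le n/(18K_x)^2$ the quantity $9K_x\sqrt{(d+2\log(2/\delta_1))/n}$ is bounded by a constant strictly below one, so the prefactor is $O(1)$. On the vector side, $\log(2/\delta_2)=\log4+(s/2-1)\log n\lesssim_s\log(2n)$, and $\delta_2^{-1/s}=2^{1/s}n^{1/2-1/s}$, whence $d^{1/2}\delta_2^{-1/s}n^{-(1-1/s)}=2^{1/s}\sqrt{d/n}$; thus both summands of the vector factor are $\lesssim\sqrt{(d+\log(2n))/n}$ with a constant depending only on $\overline\lambda,s,K_x,K_y$. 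Multiplying the two bounds gives $\norm{\hat\beta-\beta}_\Sigma\le C\sqrt{(d+\log(2n))/n}$, and the failure probability is $\delta_1+\delta_2=\tfrac1{2n}+\tfrac1{2n^{s/2-1}}\le n^{-\min\{1,s/2-1\}}$, using $\min\{n,n^{s/2-1}\}=n^{\min\{1,s/2-1\}}$.

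I do not anticipate a genuine obstacle: the substantive probabilistic content is entirely carried by Proposition~\ref{prop:oliveira} (the sub-Gaussian lower tail of $\hat\Sigma$ under a fourth-moment assumption, from \cite{oliveira2016lower}) and Proposition~\ref{prop:7} (an Einmahl–Li-type concentration of the influence-function average), both established earlier. The remaining steps are the deterministic operator-norm factorization, a union bound, and the arithmetic of substituting the stated $\delta_1,\delta_2$. The only point requiring a little care is reconciling the $2\log(2n)$ in the hypothesis with the $2\log(4n)$ produced by $\delta_1=1/(2n)$; since $2\log(4n)=2\log(2n)+2\log2$, one loses at most an absolute additive constant inside the radicand, which at worst enlarges $C$.
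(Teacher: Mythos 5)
Your proposal is correct and follows essentially the same route as the paper: factor $\Sigma^{1/2}(\hat\beta-\beta)$ into $\Sigma^{1/2}\hat\Sigma^{-1}\Sigma^{1/2}$ times the normalized influence-function average, bound the operator-norm factor by $\lambda_{\rm min}(\Sigma^{-1/2}\hat\Sigma\Sigma^{-1/2})^{-1}$ via Proposition~\ref{prop:oliveira}, bound the vector factor via Proposition~\ref{prop:7}, and union-bound. Your arithmetic for the specialization $\delta_1=1/(2n)$, $\delta_2=1/(2n^{s/2-1})$ (including the harmless $2\log 2$ slack in the radicand) is the detail the paper leaves implicit, and it checks out.
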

\begin{proof}[proof of Proposition~\ref{prop:11}]
    We follow the proof of Proposition~26 of \cite{kuchibhotla2020berry} or Theorem~1.2 of \cite{oliveira2016lower}. We note that $$\Sigma^{1/2}(\hat\beta-\beta)=\Sigma^{1/2}\hat\Sigma^{-1}\Sigma^{-1/2}\frac{1}{n}\sum_{i=1}^n\Sigma^{-1/2}X_i(Y_i-X_i^\top\beta).$$ This implies that
    \begin{eqnarray*}
        \norm{\hat\beta-\beta}_{\Sigma}&\leq&\Norm{\Sigma^{1/2}\hat\Sigma^{-1}\Sigma^{-1/2}}_{\rm op}\Norm{\frac{1}{n}\sum_{i=1}^n\Sigma^{-1/2}X_i(Y_i-X_i^\top\beta)}_2\\
        &=&\frac{1}{\lambda_{\rm min}(\Sigma^{-1/2}\hat\Sigma\Sigma^{-1/2})}\Norm{\frac{1}{n}\sum_{i=1}^n\Sigma^{-1/2}X_i(Y_i-X_i^\top\beta)}_2.
    \end{eqnarray*} The quantities on the right-hand side are controlled using Proposition~\ref{prop:oliveira} and Proposition~\ref{prop:7}, respectively. 
\end{proof}

\section{Auxilary Results}
\begin{lemma}\label{lem:aux:1}
    Under the data generating process described in Section~\ref{sec:5.2}, the projection parameter $\beta$ is given by $\beta = 3(1+2\rho^2)\norm{\theta}_2^2\theta + 6(1-\rho^2)\theta^{\odot 3}$.
\end{lemma}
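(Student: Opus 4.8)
The plan is to exploit the Gaussian structure of the data generating process, which reduces everything to fourth-moment computations handled by the Isserlis (Wick) formula. First I would record two elementary consequences of the construction. Since $Z$ and $W$ are independent, $\Eb[X_kX_\ell]=\Eb[Z_kZ_\ell]\,\Eb[W_kW_\ell]=\delta_{k\ell}(\Sigma_\rho)_{k\ell}$; as $(\Sigma_\rho)_{kk}=1$ and $\Eb[Z_kZ_\ell]=0$ for $k\neq\ell$, this gives $\Sigma=\Eb[XX^\top]=I_d$, which is invertible, so $\beta=\Sigma^{-1}\Gamma=\Gamma=\Eb[XY]$. Because $\epsilon$ is independent of $(Z,W)$ with mean zero and all the moments in play are finite (polynomials in jointly Gaussian variables), $\Gamma=\Eb[X\,(X^\top\theta)^3]$.

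Next I would compute the $k$-th coordinate $\Gamma_k=\Eb\big[Z_kW_k\,(\sum_j\theta_jZ_jW_j)^3\big]$. Expanding the cube and factoring over the independent vectors $Z$ and $W$ yields
\[
\Gamma_k=\sum_{j_1,j_2,j_3}\theta_{j_1}\theta_{j_2}\theta_{j_3}\,\Eb[Z_kZ_{j_1}Z_{j_2}Z_{j_3}]\,\Eb[W_kW_{j_1}W_{j_2}W_{j_3}].
\]
Applying Isserlis' formula to each centered Gaussian fourth moment gives $\Eb[Z_kZ_{j_1}Z_{j_2}Z_{j_3}]=\delta_{kj_1}\delta_{j_2j_3}+\delta_{kj_2}\delta_{j_1j_3}+\delta_{kj_3}\delta_{j_1j_2}$ and $\Eb[W_kW_{j_1}W_{j_2}W_{j_3}]=(\Sigma_\rho)_{kj_1}(\Sigma_\rho)_{j_2j_3}+(\Sigma_\rho)_{kj_2}(\Sigma_\rho)_{j_1j_3}+(\Sigma_\rho)_{kj_3}(\Sigma_\rho)_{j_1j_2}$. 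Since both $\theta_{j_1}\theta_{j_2}\theta_{j_3}$ and the $W$-moment are symmetric under permutations of $(j_1,j_2,j_3)$, the three terms of the $Z$-moment contribute equally, so I may keep only $\delta_{kj_1}\delta_{j_2j_3}$ and multiply by $3$. Forcing $j_1=k$ and $j_2=j_3=:m$, this collapses to
\[
\Gamma_k=3\theta_k\sum_m\theta_m^2\Big[(\Sigma_\rho)_{kk}(\Sigma_\rho)_{mm}+2(\Sigma_\rho)_{km}^2\Big]=3\theta_k\sum_m\theta_m^2\big[1+2(\Sigma_\rho)_{km}^2\big].
\]

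Finally I would evaluate this sum by splitting off the $m=k$ term, using $(\Sigma_\rho)_{kk}^2=1$ and $(\Sigma_\rho)_{km}^2=\rho^2$ for $m\neq k$:
\[
\sum_m\theta_m^2\big[1+2(\Sigma_\rho)_{km}^2\big]=\norm{\theta}_2^2+2\theta_k^2+2\rho^2\big(\norm{\theta}_2^2-\theta_k^2\big)=(1+2\rho^2)\norm{\theta}_2^2+2(1-\rho^2)\theta_k^2,
\]
so that $\Gamma_k=3(1+2\rho^2)\norm{\theta}_2^2\,\theta_k+6(1-\rho^2)\theta_k^3$, which is precisely the $k$-th coordinate of $3(1+2\rho^2)\norm{\theta}_2^2\theta+6(1-\rho^2)\theta^{\odot 3}$; since $k$ was arbitrary this proves the claim. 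There is no real obstacle in this argument — the only point requiring care is organizing the Wick expansion cleanly and justifying the symmetry reduction, which the steps above do.
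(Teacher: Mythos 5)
Your proof is correct and follows essentially the same route as the paper's: both verify $\Sigma=I_d$, reduce to $\Gamma_k=\Eb[X_k(X^\top\theta)^3]$, and evaluate the resulting Gaussian fourth moments by factoring over the independent vectors $Z$ and $W$. The only difference is bookkeeping — you organize the computation through the Isserlis/Wick formula with a symmetry reduction, whereas the paper expands the cube multinomially and tabulates the needed mixed moments — and both yield the same coefficients $3(1+2\rho^2)\norm{\theta}_2^2$ and $6(1-\rho^2)$.
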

\begin{proof}
    Let $X=(X(1),\ldots,X(d))^\top$, $Z=(Z(1),\ldots,Z(d))^\top$, and $W=(W(1),\ldots,W(d))^\top$, so that $X(j)=Z(j)W(j)$ for $1\leq j\leq d$. We note that $\Eb XX^\top =I_d$ since
    \begin{eqnarray*}
        \Eb X(1)^2 &=& \Eb Z(1)^2\Eb W(1)^2=1\quad\mbox{ and }\\
        \Eb X(1)X(2) &=& \Eb Z(1)\Eb Z(2)\Eb W(1)W(2)=0.
    \end{eqnarray*} Consequently, the projection parameter is $\beta = \Eb XY$. Write $\theta = (\theta_1,\ldots,\theta_d)^\top$, and the $i$th coordinate $\beta(i)$ of $\beta$ can be written as
    \begin{eqnarray*}
        \beta(i) &=& \Eb X(i)(X^\top\theta)^3 +\Eb X(i)\epsilon\\
        &=& \Eb X(i)\left(\sum_{j=1}^d\theta_jX(j)\right)^3,
    \end{eqnarray*} where the second equality follows from the independence of $\epsilon$ and $X$. Unfolding the last expression, we get
    \begin{eqnarray*}
        \beta(i) &=& \Eb\left[X(i)\left\{\sum_{j=1}^d\theta_j^3X(j)^3+3\sum_{j\neq k}\theta_j^2\theta_k X(j)^2X(k)^2+6\sum_{j<k<l}\theta_j\theta_k\theta_l X(j)X(k)X(l)\right\}\right].
    \end{eqnarray*} Each individual term on the right-hand side can be simply computed using the following moments;
    \begin{eqnarray*}
        \Eb X(1)^4 &=& \Eb Z(1)^4\Eb W(1)^4 =9,\\
        \Eb X(1)^3 X(2) &=& \Eb Z(1)^3\Eb Z(2)\Eb W(1)^3W(2)=0,\\
        \Eb X(1)^2 X(2)^2 &=& \Eb Z(1)^2\Eb Z(2)^2\Eb W(1)^2 W(2)^2 = 1+2\rho^2,\\
        \Eb X(1)^2X(2)X(3) &=& \Eb Z(1)^2\Eb Z(2)\Eb Z(3)\Eb W(1)^2W(2)W(3)=0,\\
        \Eb X(1)X(2)X(3)X(4) &=& \Eb Z(1)\Eb Z(2)\Eb Z(3)\Eb Z(4)\Eb W(1)W(2)W(3)W(4)=0.
    \end{eqnarray*}
    This leads to that
\begin{eqnarray*}
    \beta(i) = 9\theta_i^3 + 3(1+2\rho^2)\theta_i \sum_{j\neq i}\theta_j^2 = (6-6\rho^2)\theta_i^3 + 3(1+2\rho^2)\norm{\theta}_2^2\theta_i.
\end{eqnarray*} This completes the proof.
\end{proof}
\section{Additional Numerical Results}\label{sec:D}
In this section, we provide the simulation results in addition to those presented in Section~\ref{sec:5}. These encompass the implementation under various combinations of sample sizes and model parameters. Furthermore, we also compare the confidence interval obtained by the OLS (without bias correction) based on the resampling bootstrap and the wild bootstrap. Due to the computational limitation, the resampling bootstraps are only implemented for small sample size cases, specifically when $n\in\set{1000, 2000}$. 

\subsection{Additional Figures for Well-specified Model}

Here we adhere to the well-defined linear setting outlined in Section~\ref{sec:5.1}. Figures~\ref{fig:D.1.1} and \ref{fig:D.1.2} present the empirical coverages and the lengths of confidence intervals for the first coefficient of the projection parameter $\beta$ attained from various inferential methods. These results are based on a sample size of $n=1000$ and $2000$, respectively.

\begin{figure}[t]
     \centering
     \centering
     \begin{subfigure}[b]{\textwidth}
         \centering
         \includegraphics[width=\textwidth]{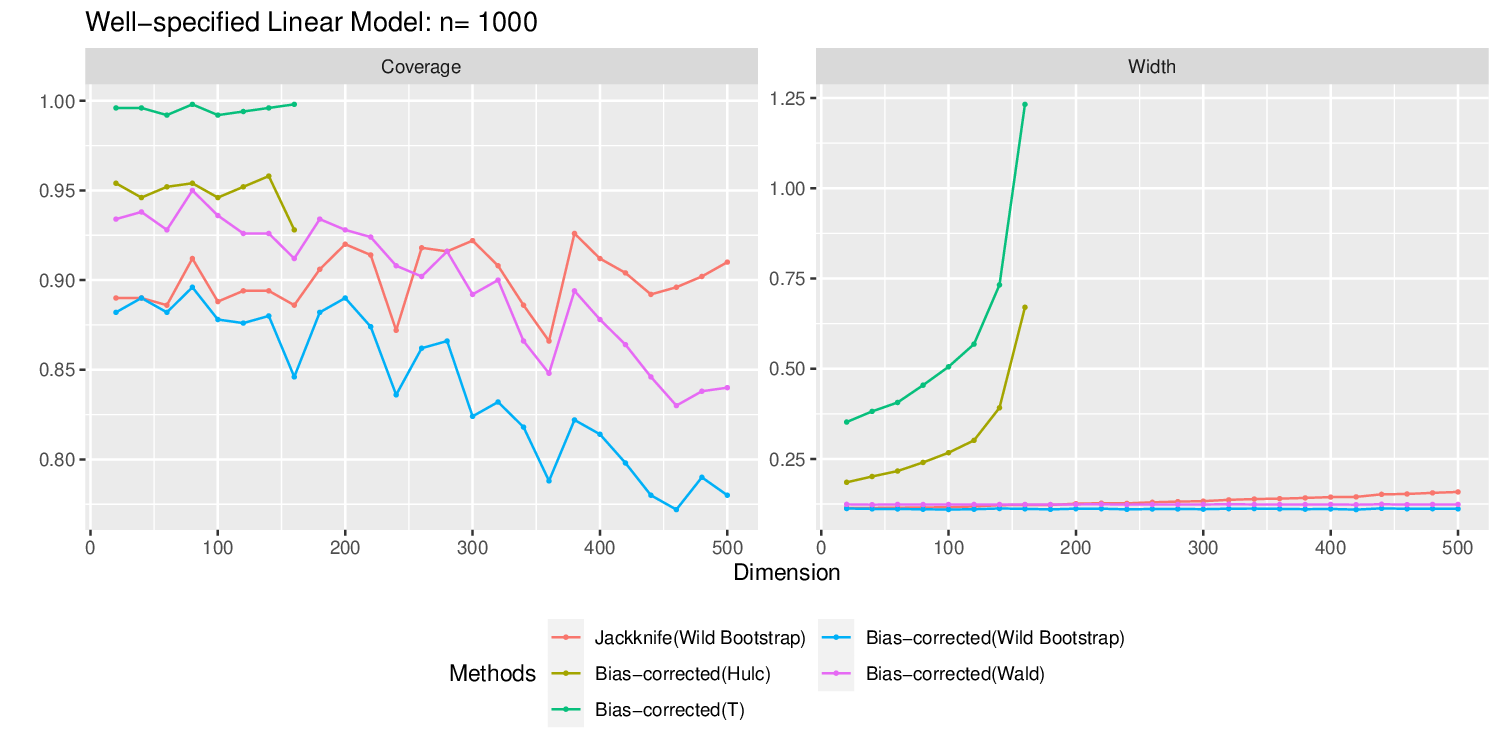}
         \caption{Proposed vs. Jackknife }
     \end{subfigure}\\
     \centering
     \begin{subfigure}[b]{\textwidth}
         \centering
         \includegraphics[width=\textwidth]{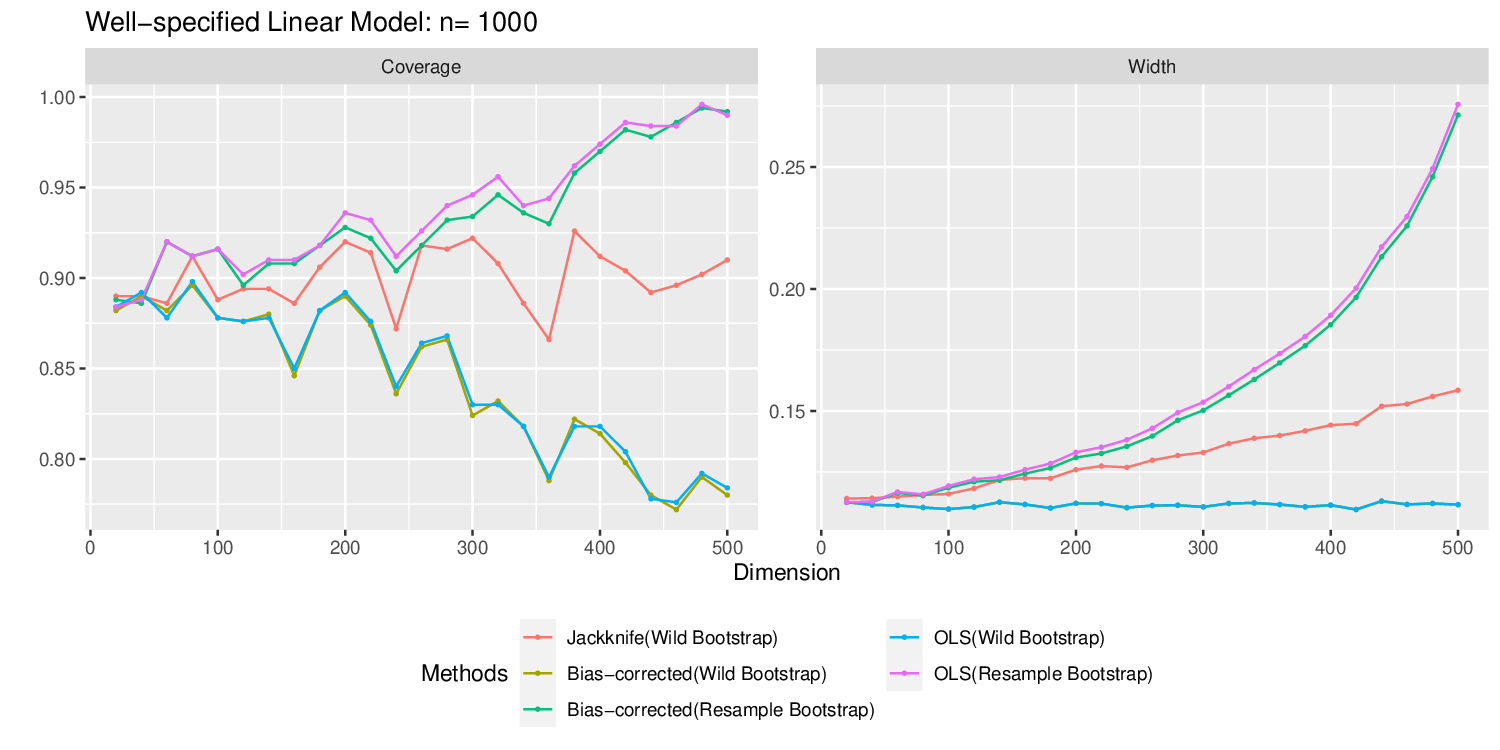}
         \caption{Comparison of Bootstrap-based inferential methods}
     \end{subfigure}
     \caption{Empirical coverages and widths of 95\% confidence intervals under a well-specified case with $n=1000$. \textbf{Top row}: Comparison of coverages and widths of confidence intervals; wild bootstrap based on Jackknife estimators, wild bootstrap based on proposed bias-corrected estimators, HulC using proposed bias-corrected estimators, and $t$-statistic based inference using proposed bias-corrected estimators, and Wald confidence interval based on proposed estimators. \textbf{Bottom row}: Comparison of coverages and widths of confidence intervals obtained from five different bootstrap methods; wild bootstrap CI based on Jackknife estimators, wild bootstrap CI based on proposed estimators, resample bootstrap CI based on proposed estimators, wild bootstrap CI based on OLS, resample bootstrap CI based on OLS. The empirical coverages and widths of CI are computed based on 1000 replications.}
     \label{fig:D.1.1}
\end{figure}

\begin{figure}[t]
     \centering
     \begin{subfigure}[b]{\textwidth}
         \centering
         \includegraphics[width=\textwidth]{Figures/CI_n=2000_simple.eps}
         \caption{Proposed vs. Jackknife}
     \end{subfigure}\\
     \centering
     \begin{subfigure}[b]{\textwidth}
         \centering
         \includegraphics[width=\textwidth]{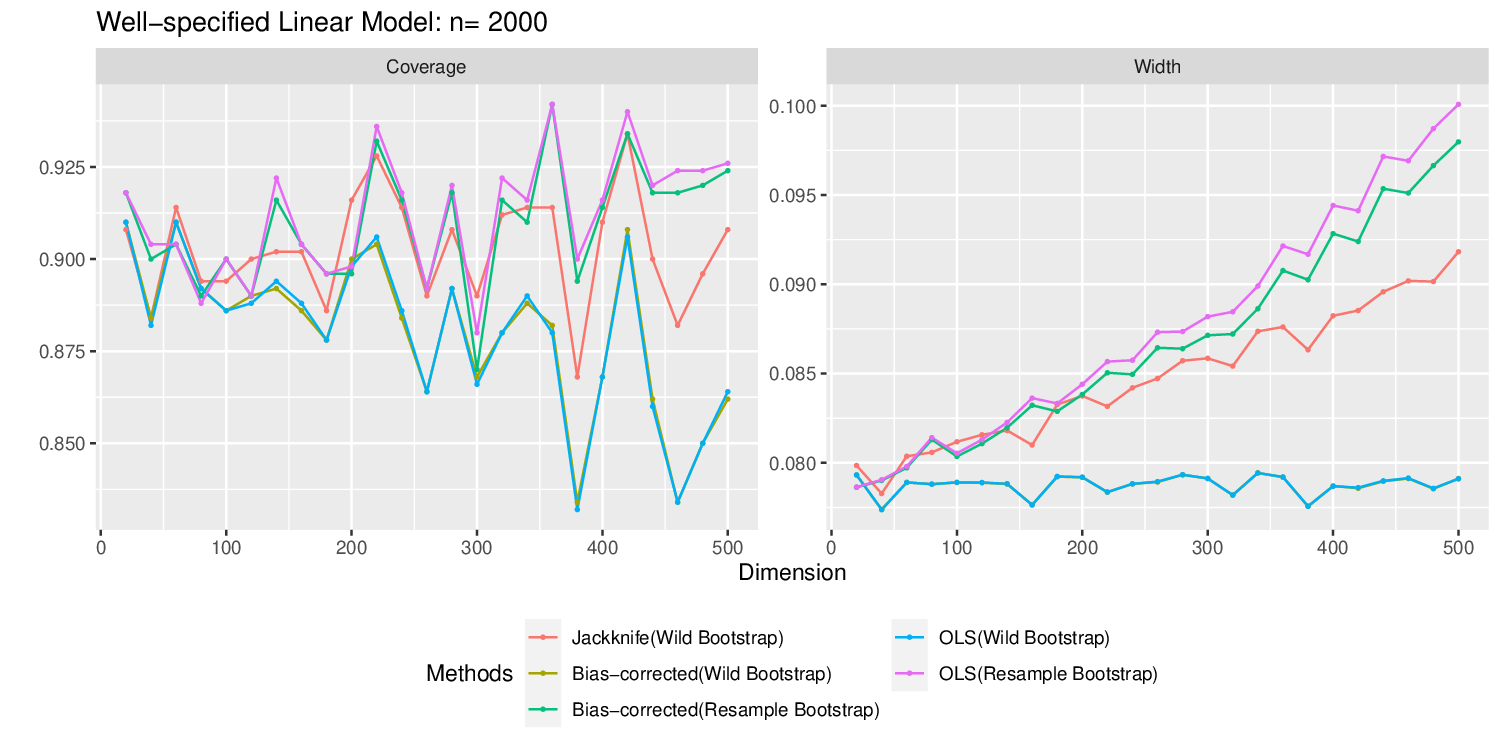}
         \caption{Comparison of Bootstrap-based inferential methods}
     \end{subfigure}
     \caption{Empirical coverages and widths of proposed confidence intervals under a well-specified case with $n=2000$. See Figure~\ref{fig:D.1.1} for the abbreviations of inferential methods.}
     \label{fig:D.1.2}
\end{figure}

\subsection{Additional Figures for Misspecified Model}
This section presents the additional numerical results on comparing confidence intervals for the projection parameter under the misspecified model. The simulation settings are summarized in Table~\ref{tab:1}.

\begin{table}[ht]
    \centering
    \begin{tabular}{|l|c|c|c|c|c|c|}
    \hline
    \multicolumn{1}{|c|}{$\theta$} & \multicolumn{3}{c|}{$\ev_1$} & \multicolumn{3}{c|}{$1_d^\top/\sqrt{d}$}\\
    \hline
    \multicolumn{1}{|c|}{$\rho$} & $0.0$ & $0.2$ & $0.5$ & $0.0$ & $0.2$ & $0.5$ \\
    \hline
    $n\in\{1000, 2000\}$ & Figure~\ref{fig:D.2.1.1},\ref{fig:D.2.1.2} &
    Figure~\ref{fig:D.2.2.1},\ref{fig:D.2.2.2} &
    Figure~\ref{fig:D.2.3.1},\ref{fig:D.2.3.2} &
    Figure~\ref{fig:D.2.4.1},\ref{fig:D.2.4.2} &
    Figure~\ref{fig:D.2.5.1},\ref{fig:D.2.5.2} &
    Figure~\ref{fig:D.2.6.1},\ref{fig:D.2.6.2} \\
    \hline
    $n\in\{5000, 10000, 20000\}$ & Figure~\ref{fig:D.2.7} & Figure~\ref{fig:D.2.8} & Figure~\ref{fig:D.2.9} & $\cdot$ & $\cdot$ & $\cdot$\\
    \hline
    \end{tabular}
    \caption{A table of figures corresponding to different combinations of $n$, $\theta$, and $\rho$}
    \label{tab:1}
\end{table}
\begin{figure}
     \centering
     \begin{subfigure}[b]{\textwidth}
         \centering
         \includegraphics[width=\textwidth]{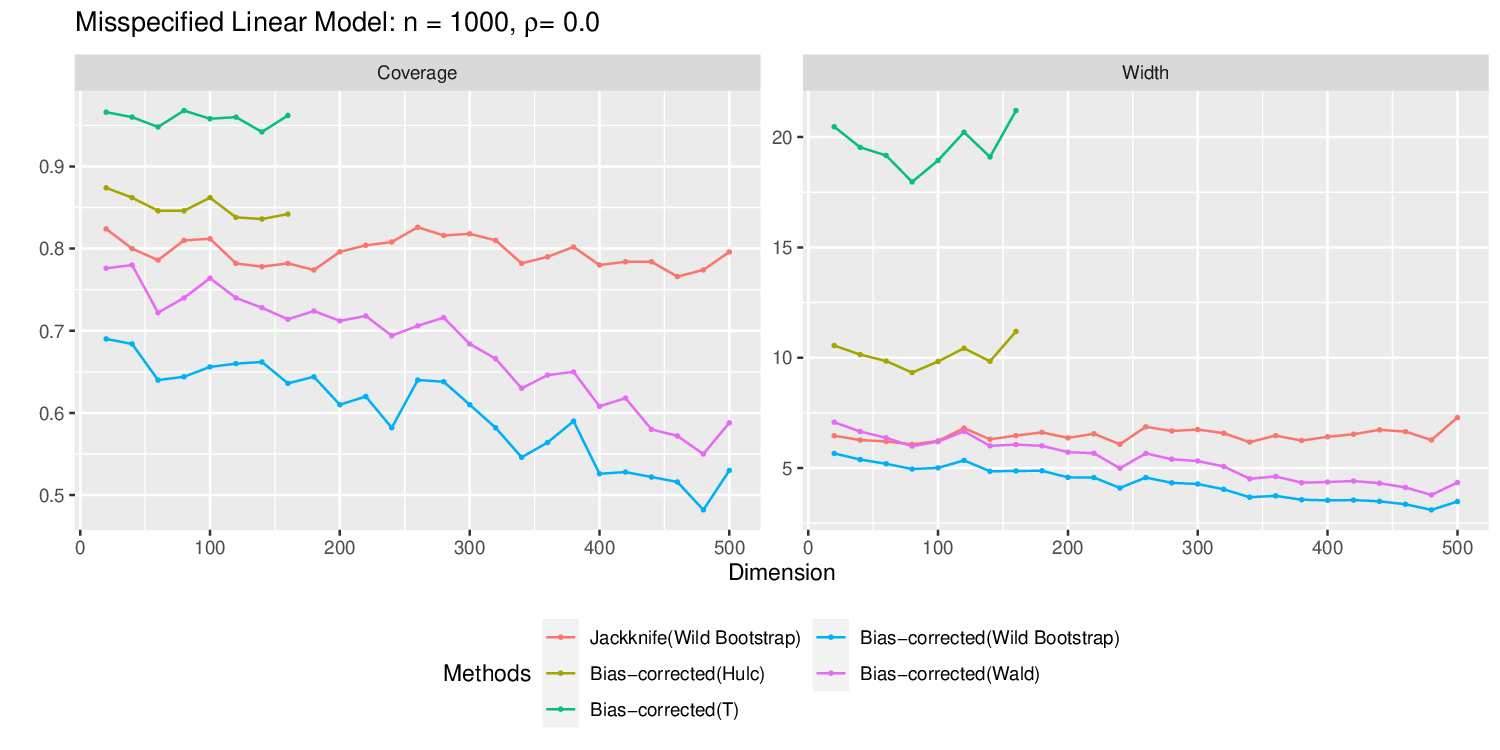}
         \caption{Proposed vs. Jackknife}
     \end{subfigure}\\
     \begin{subfigure}[b]{\textwidth}
         \centering
         \includegraphics[width=\textwidth]{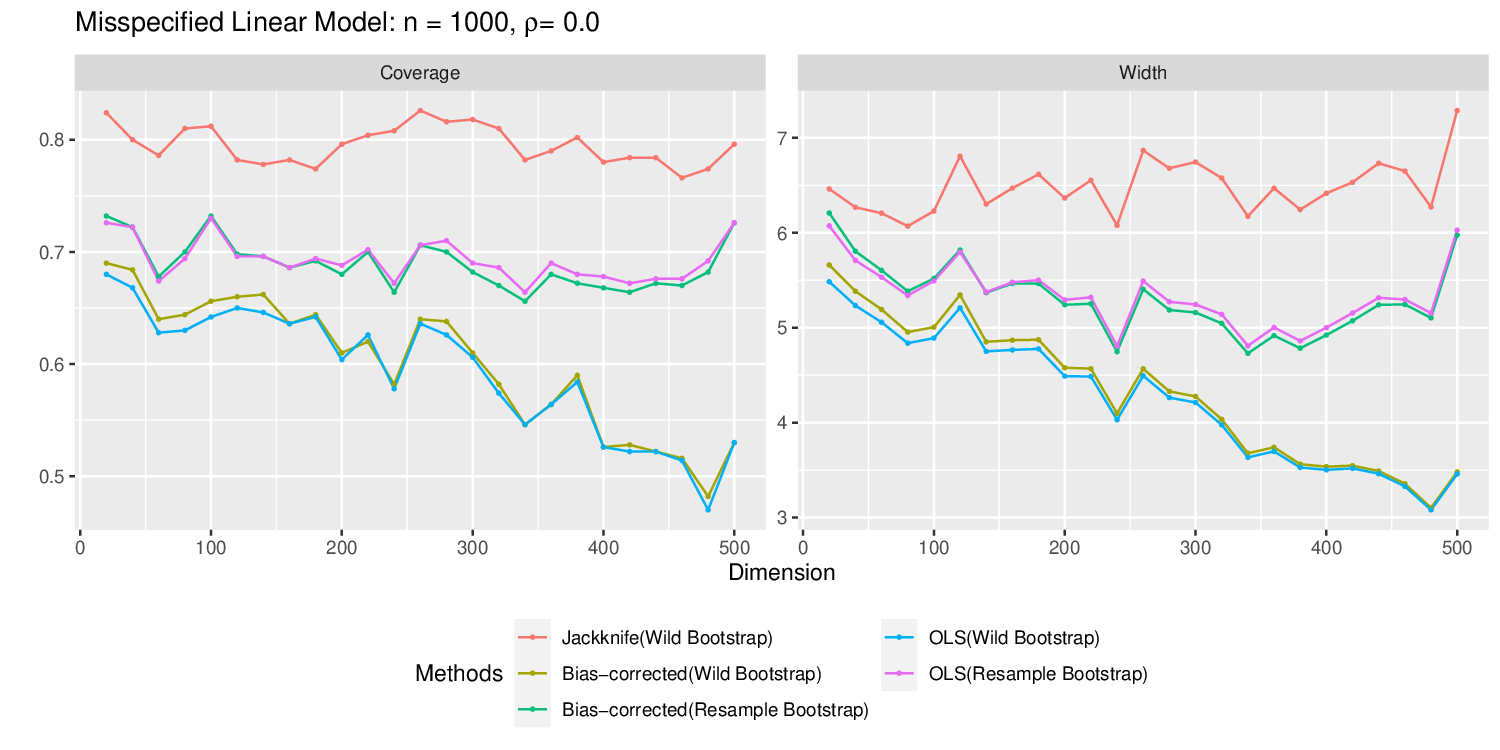}
         \caption{Comparison of Bootstrap-based inferential methods}
     \end{subfigure}     
     \caption{Comparison of coverages and widths of confidence intervals for $\theta^\top\beta$ under the misspecified model with $n=1000$, $\theta=\ev_1$, and $\rho=0.0$.}
     \label{fig:D.2.1.1}
\end{figure}

\begin{figure}
     \centering
     \begin{subfigure}[b]{\textwidth}
         \centering
         \includegraphics[width=\textwidth]{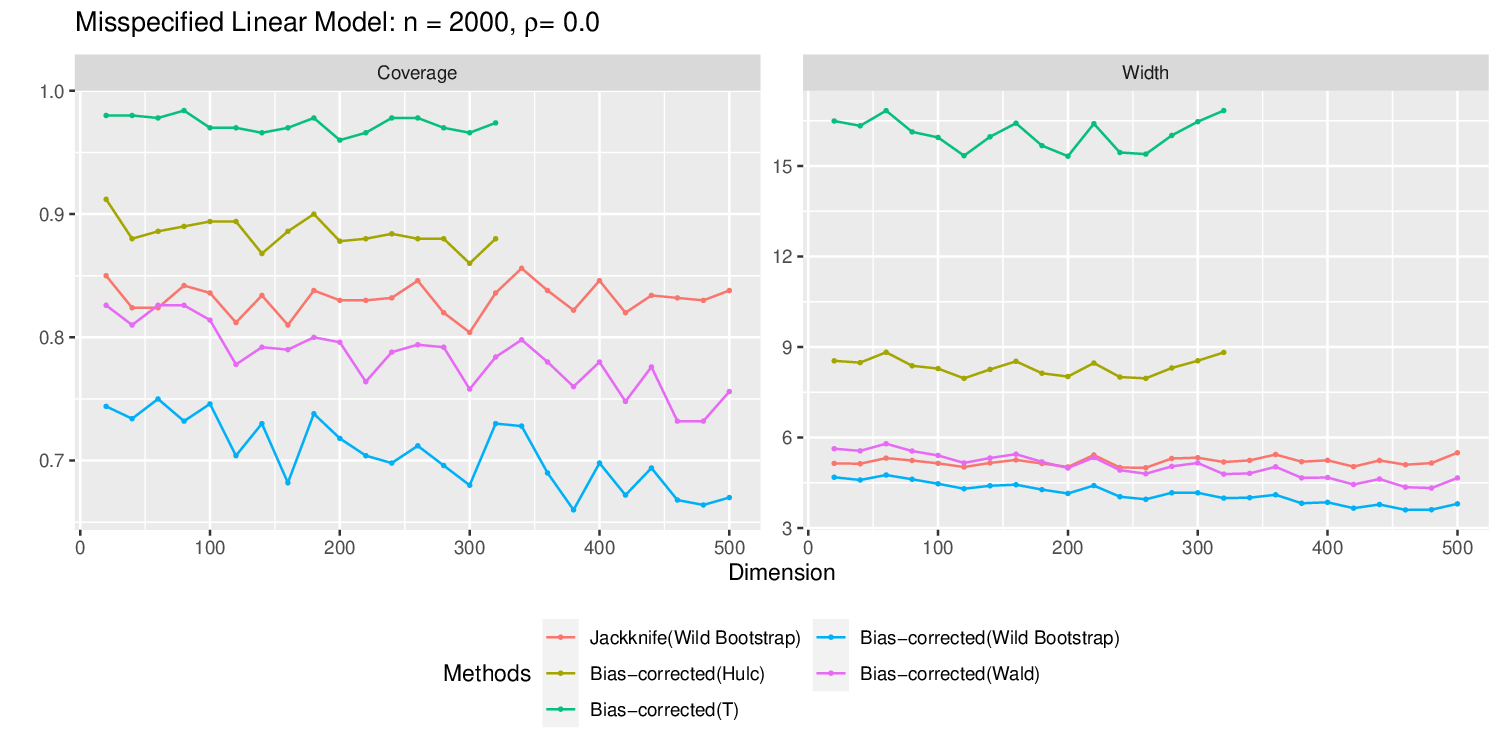}
         \caption{Proposed vs. Jackknife}
     \end{subfigure}\\
     \begin{subfigure}[b]{\textwidth}
         \centering
         \includegraphics[width=\textwidth]{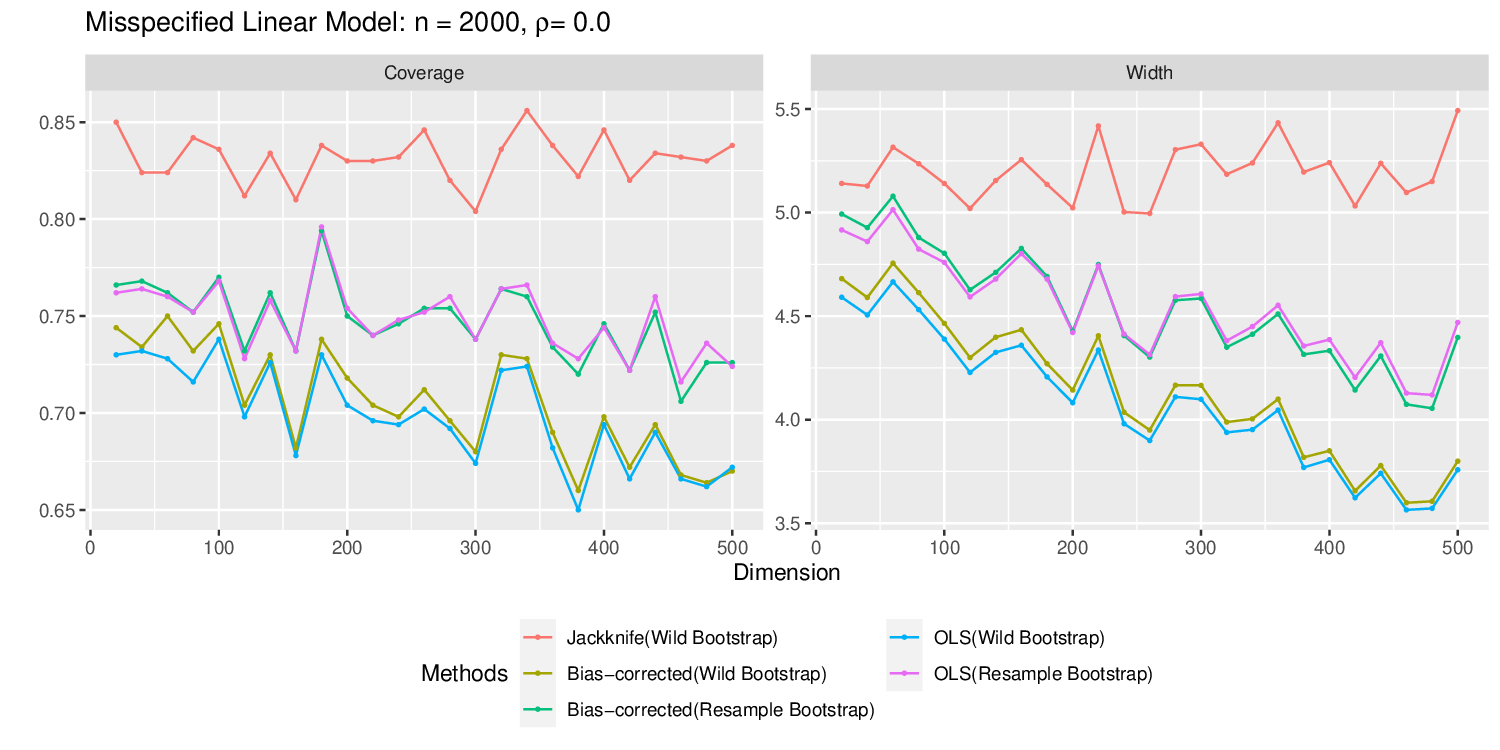}
         \caption{Comparison of Bootstrap-based inferential methods}
     \end{subfigure}     
     \caption{Comparison of coverages and widths of confidence intervals for $\theta^\top\beta$ under the misspecified model with $n=2000$, $\theta=\ev_1$, and $\rho=0.0$.}
     \label{fig:D.2.1.2}
\end{figure}

\begin{figure}
     \centering
     \begin{subfigure}[b]{\textwidth}
         \centering
         \includegraphics[width=\textwidth]{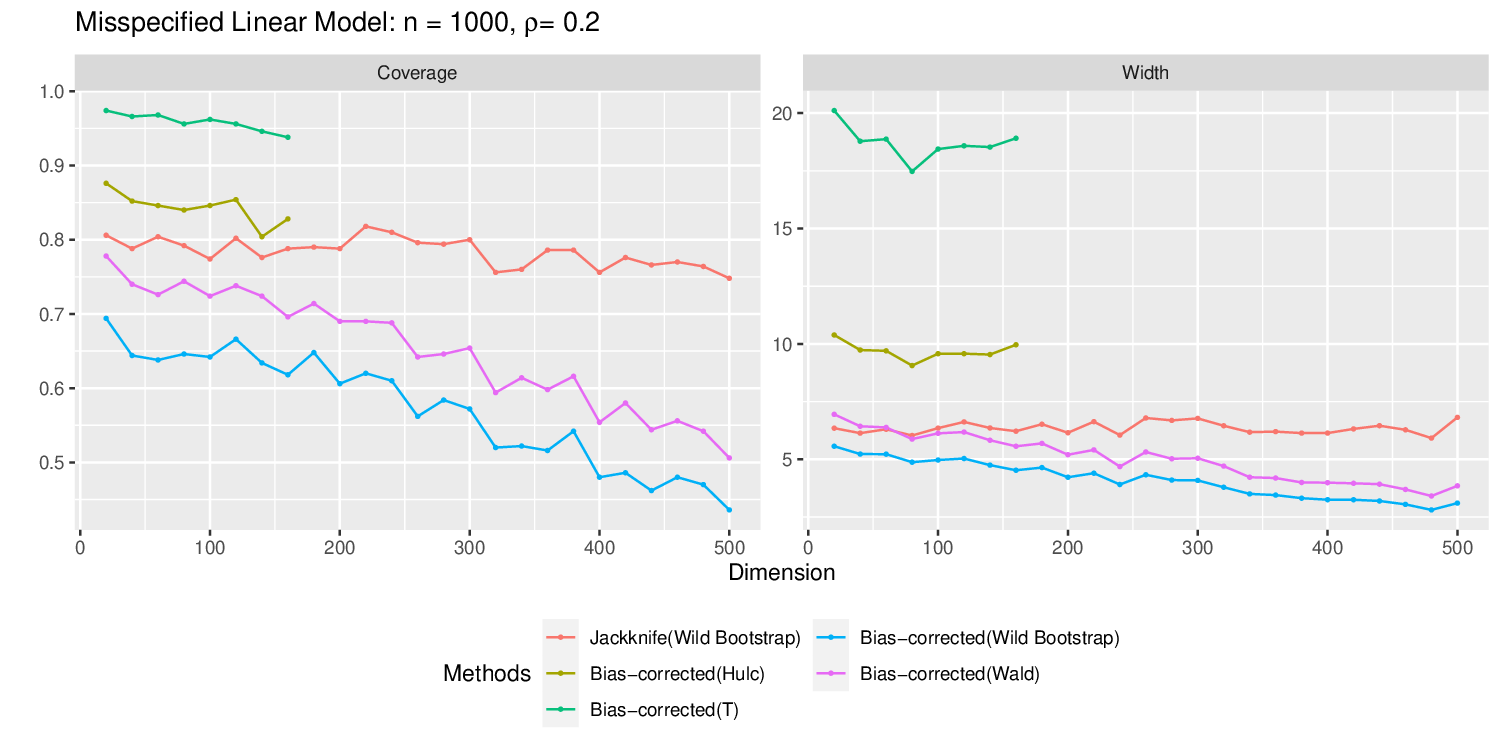}
        \caption{Proposed vs. Jackknife}
     \end{subfigure}\\
     \begin{subfigure}[b]{\textwidth}
         \centering
         \includegraphics[width=\textwidth]{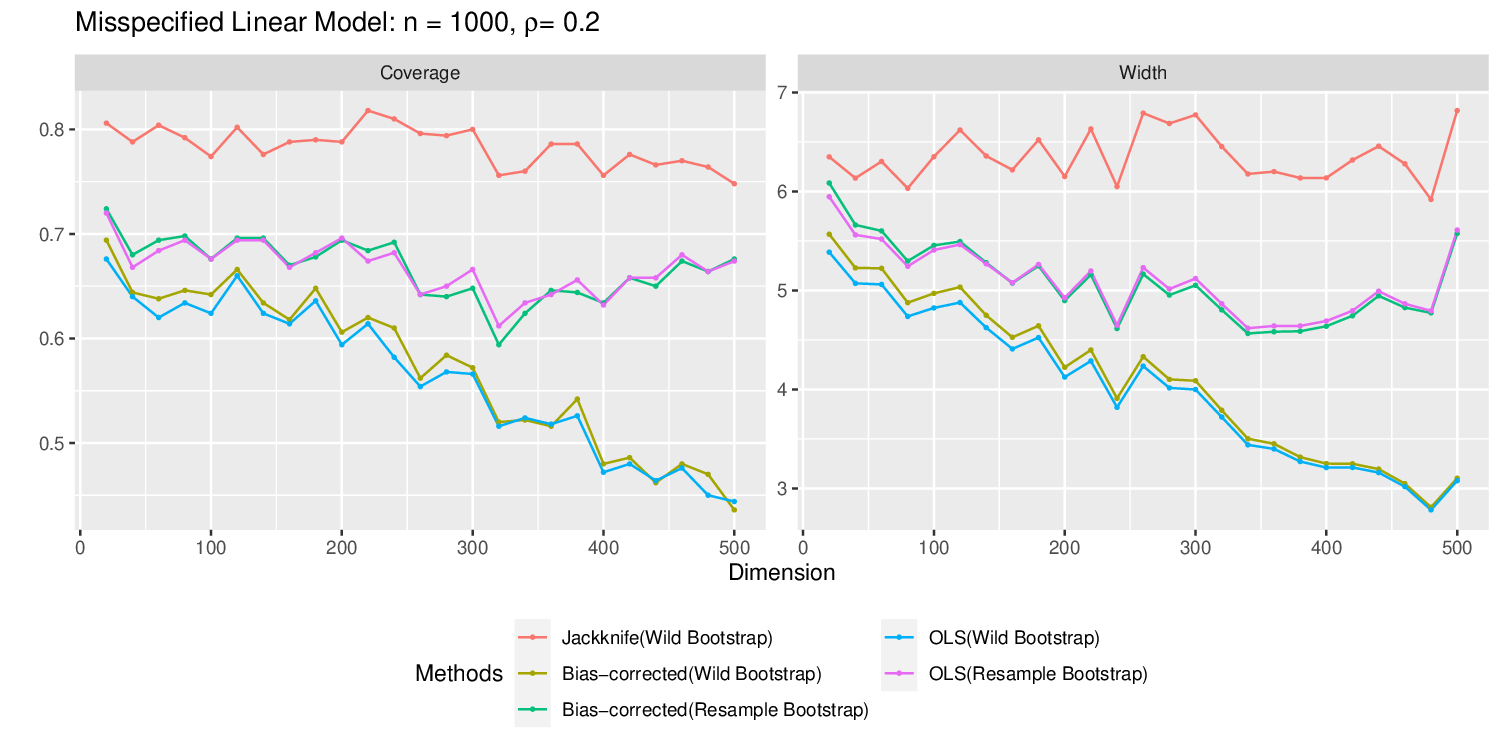}
         \caption{Comparison of Bootstrap-based inferential methods}
     \end{subfigure}     
     \caption{Comparison of coverages and widths of confidence intervals for $\theta^\top\beta$ under the misspecified model with $n=1000$, $\theta=\ev_1$, and $\rho=0.2$.}
     \label{fig:D.2.2.1}
\end{figure}

\begin{figure}
     \centering
     \begin{subfigure}[b]{\textwidth}
         \centering
         \includegraphics[width=\textwidth]{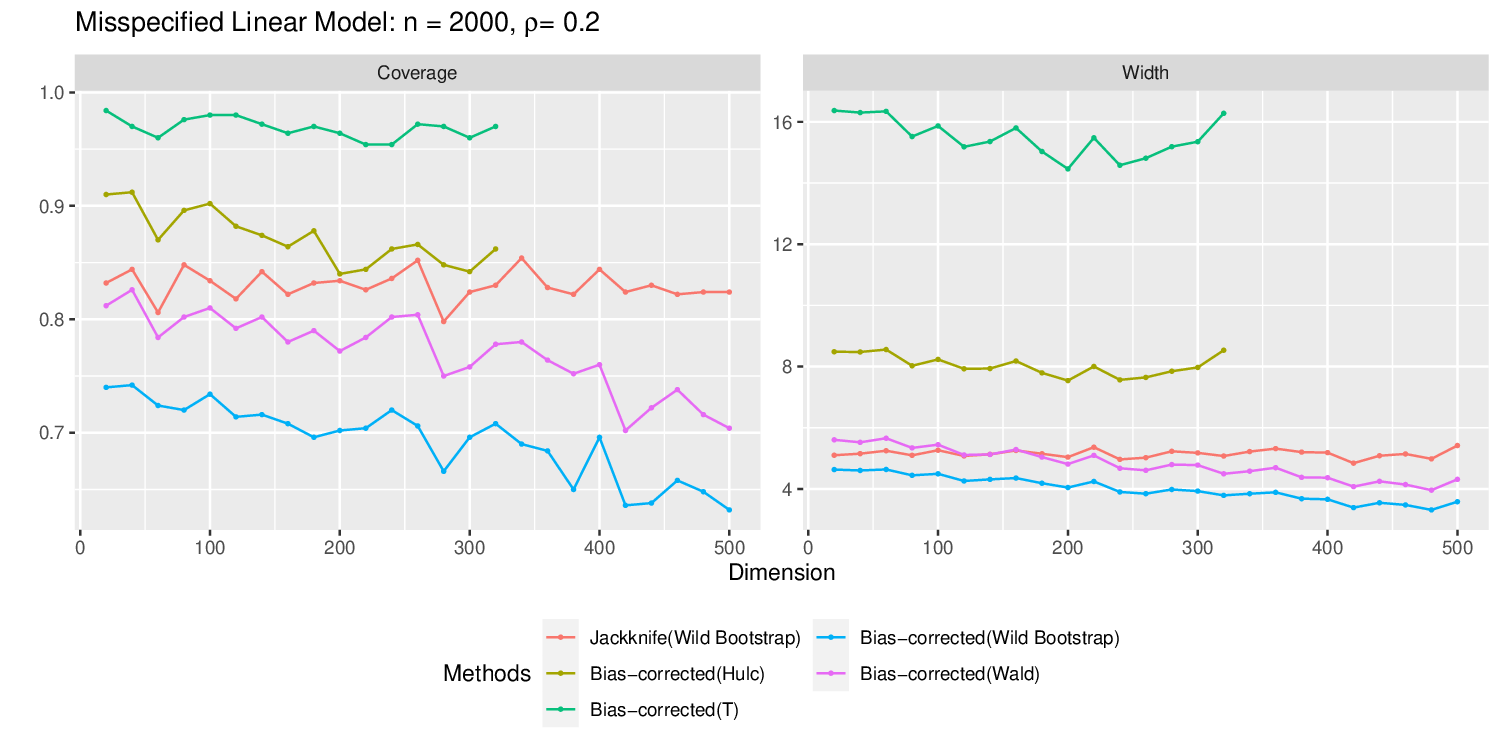}
         \caption{Proposed vs. Jackknife}
     \end{subfigure}\\
     \begin{subfigure}[b]{\textwidth}
         \centering
         \includegraphics[width=\textwidth]{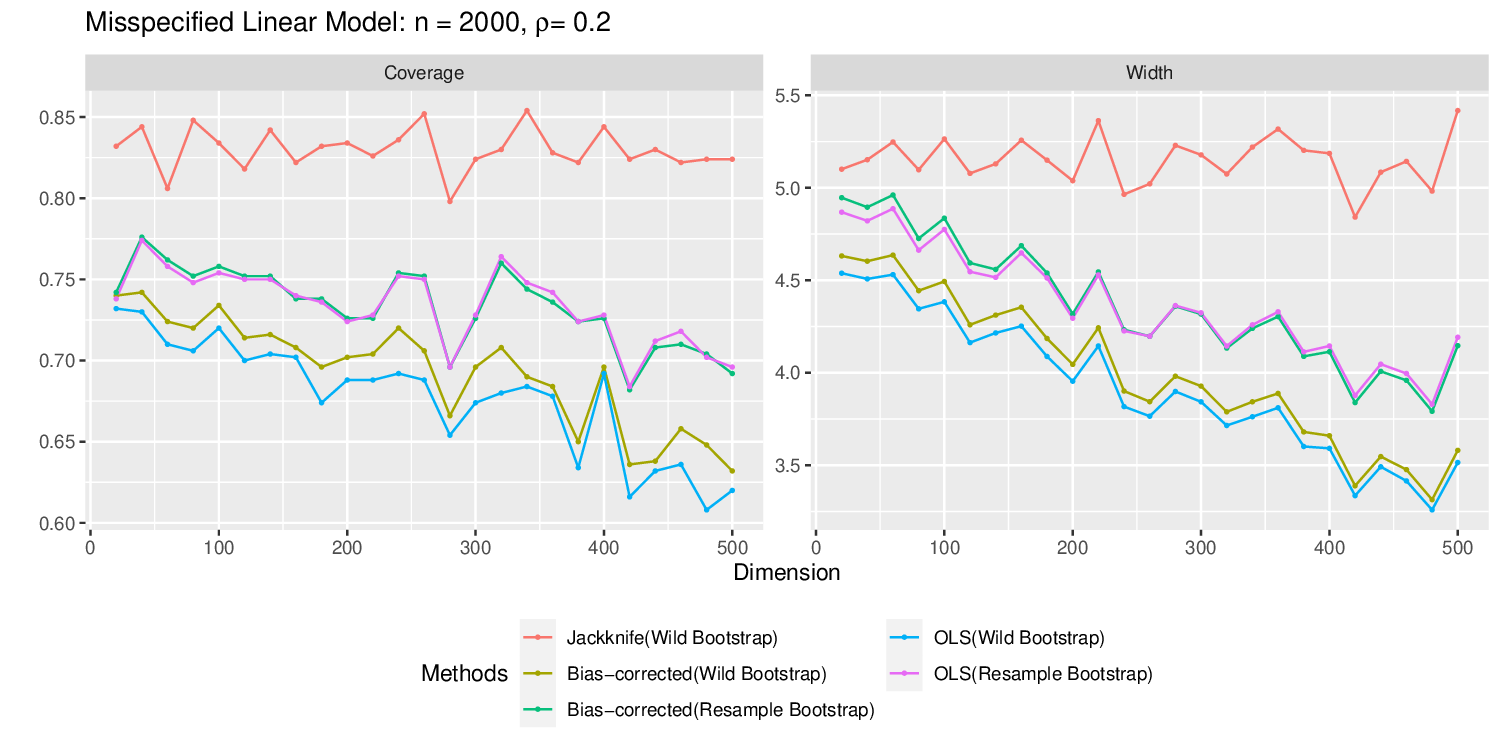}
         \caption{Comparison of Bootstrap-based inferential methods}
     \end{subfigure}     
     \caption{Comparison of coverages and widths of confidence intervals for $\theta^\top\beta$ under the misspecified model with $n=2000$, $\theta=\ev_1$, and $\rho=0.2$.}
     \label{fig:D.2.2.2}
\end{figure}

\begin{figure}
     \centering
     \begin{subfigure}[b]{\textwidth}
         \centering
         \includegraphics[width=\textwidth]{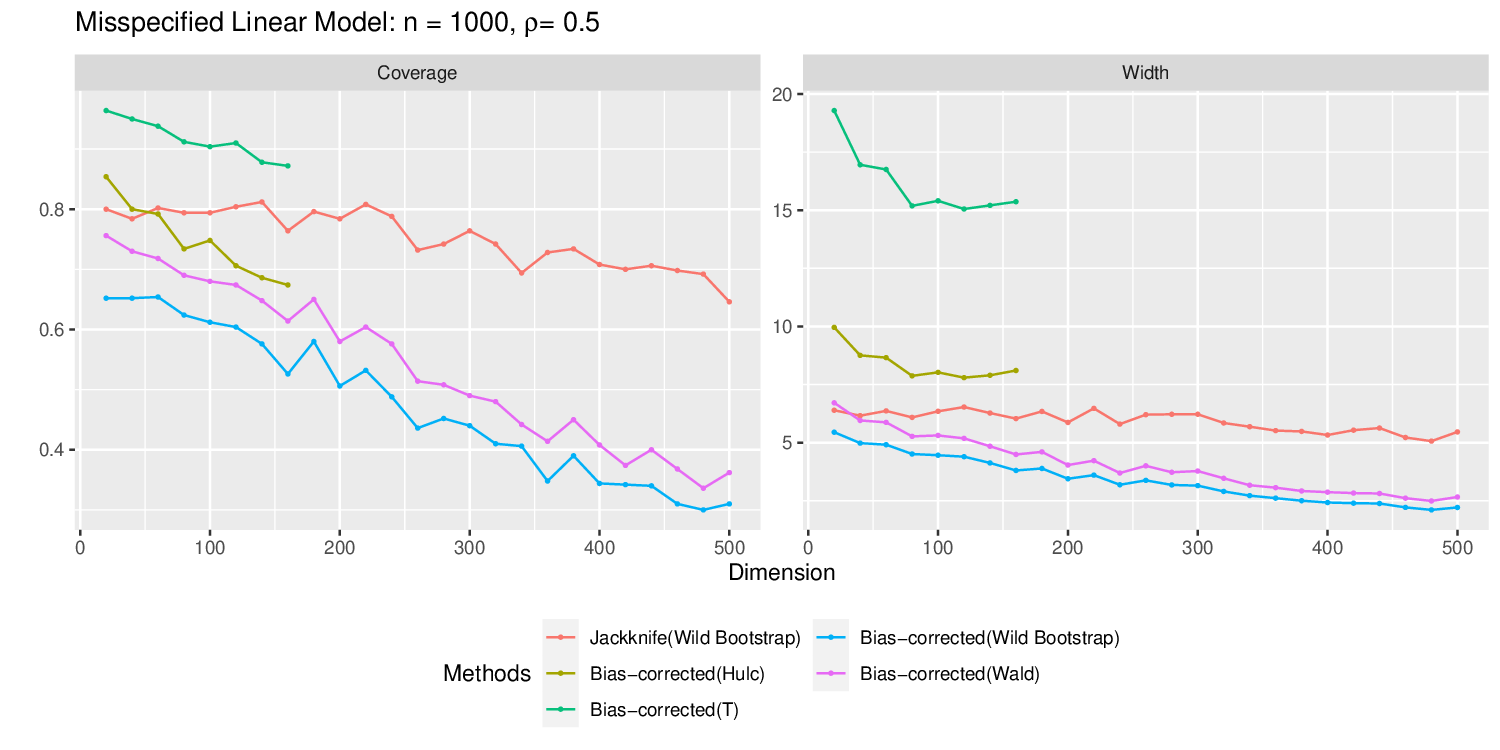}
         \caption{Proposed vs. Jackknife}
     \end{subfigure}\\
     \begin{subfigure}[b]{\textwidth}
         \centering
         \includegraphics[width=\textwidth]{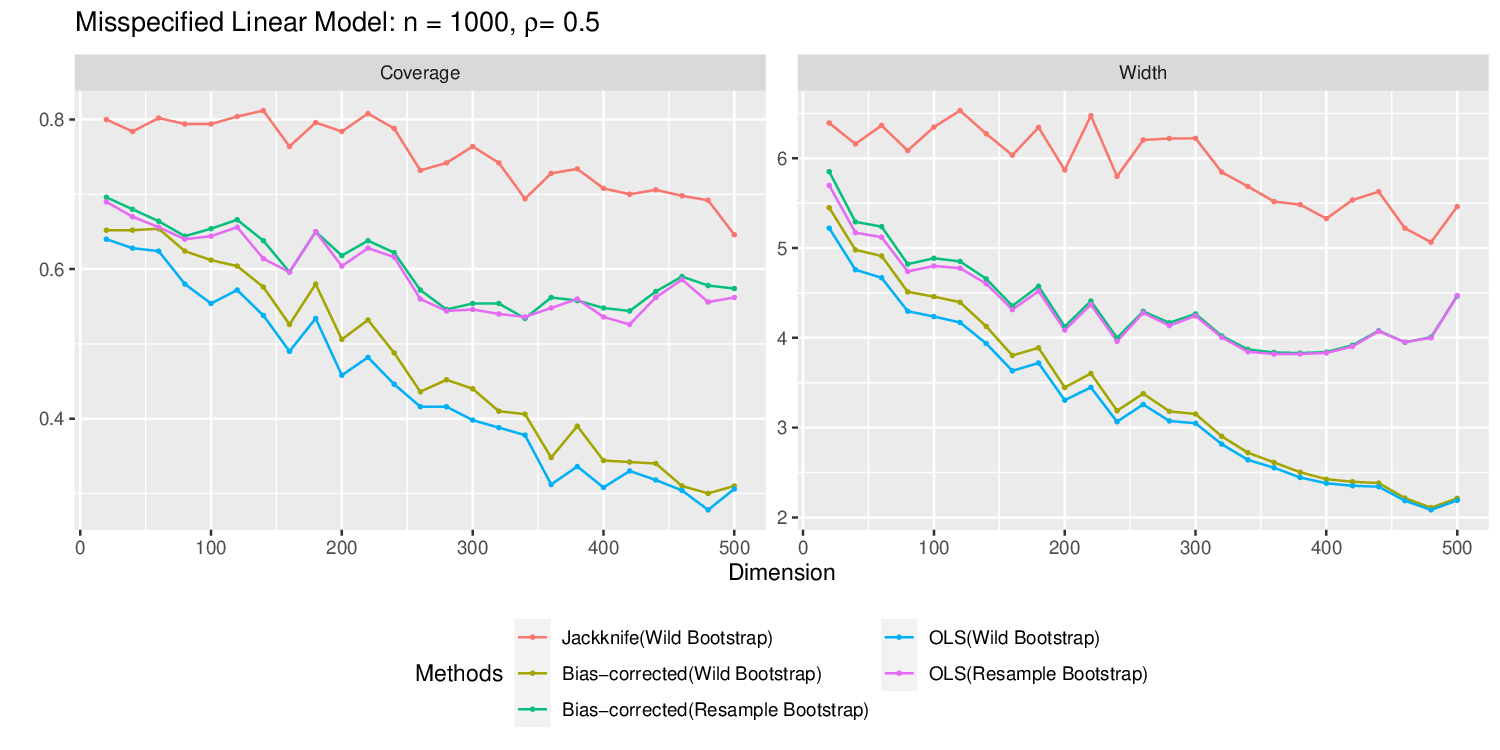}
         \caption{Comparison of Bootstrap-based inferential methods}
     \end{subfigure}     
     \caption{Comparison of coverages and widths of confidence intervals for $\theta^\top\beta$ under the misspecified model with $n=1000$, $\theta=\ev_1$, and $\rho=0.5$.}
     \label{fig:D.2.3.1}
\end{figure}

\begin{figure}
     \centering
     \begin{subfigure}[b]{\textwidth}
         \centering
         \includegraphics[width=\textwidth]{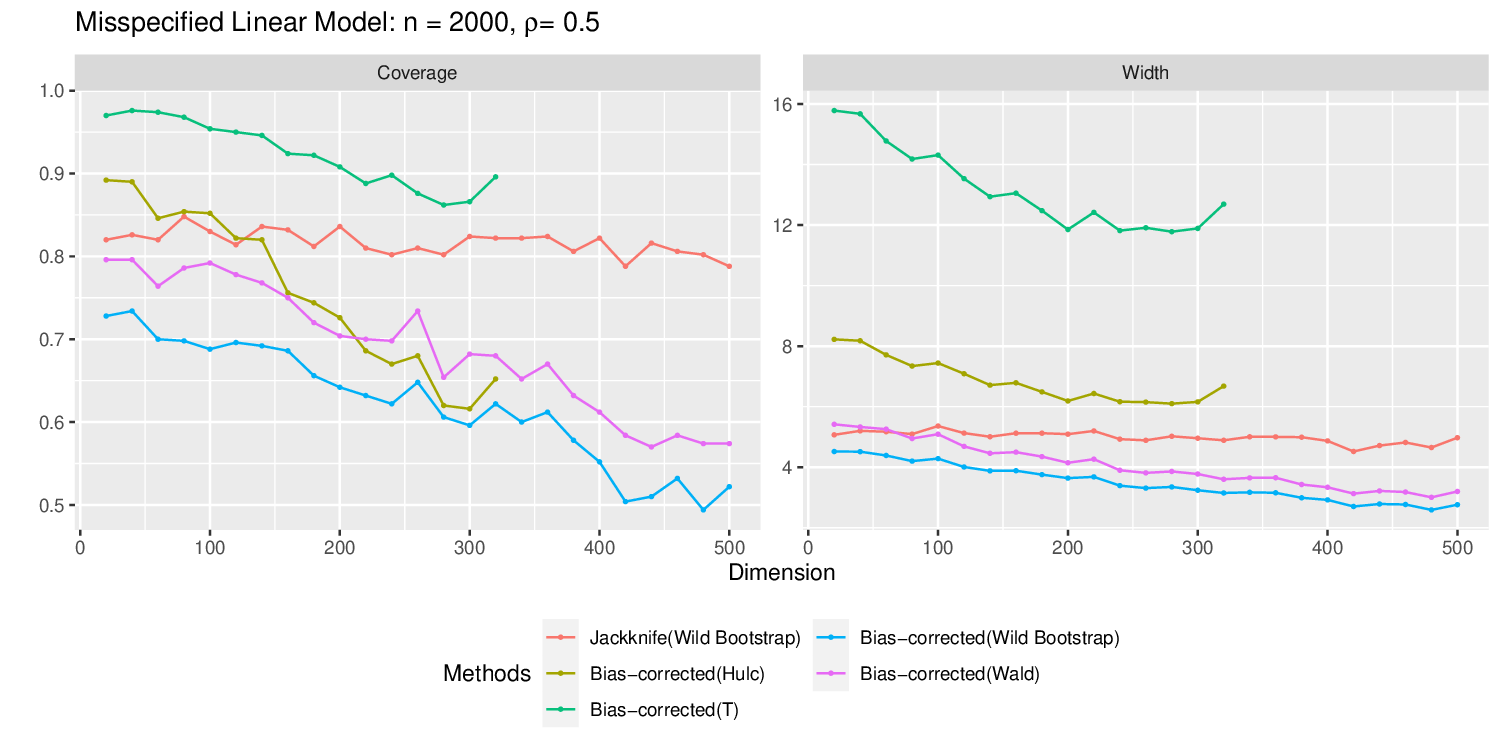}
         \caption{Proposed vs. Jackknife}
     \end{subfigure}\\
     \begin{subfigure}[b]{\textwidth}
         \centering
         \includegraphics[width=\textwidth]{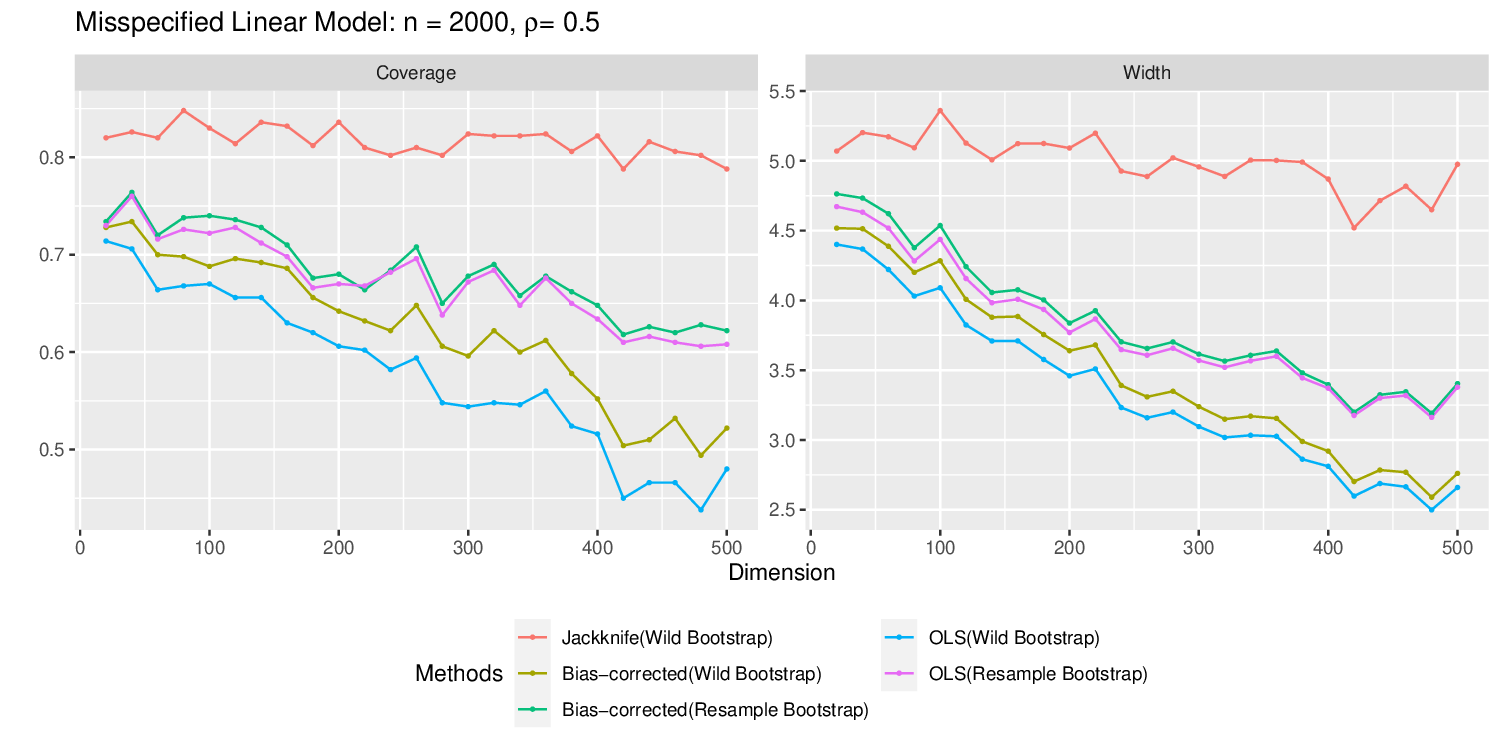}
         \caption{Comparison of Bootstrap-based inferential methods}
     \end{subfigure}     
     \caption{Comparison of coverages and widths of confidence intervals for $\theta^\top\beta$ under the misspecified model with $n=2000$, $\theta=\ev_1$, and $\rho=0.5$.}
     \label{fig:D.2.3.2}
\end{figure}

\begin{figure}
     \centering
     \begin{subfigure}[b]{\textwidth}
         \centering
         \includegraphics[width=\textwidth]{Figures/CI_null_n=1000_rho=0.0_complex_small.eps}
         \caption{Proposed vs. Jackknife}
     \end{subfigure}\\
     \begin{subfigure}[b]{\textwidth}
         \centering
         \includegraphics[width=\textwidth]{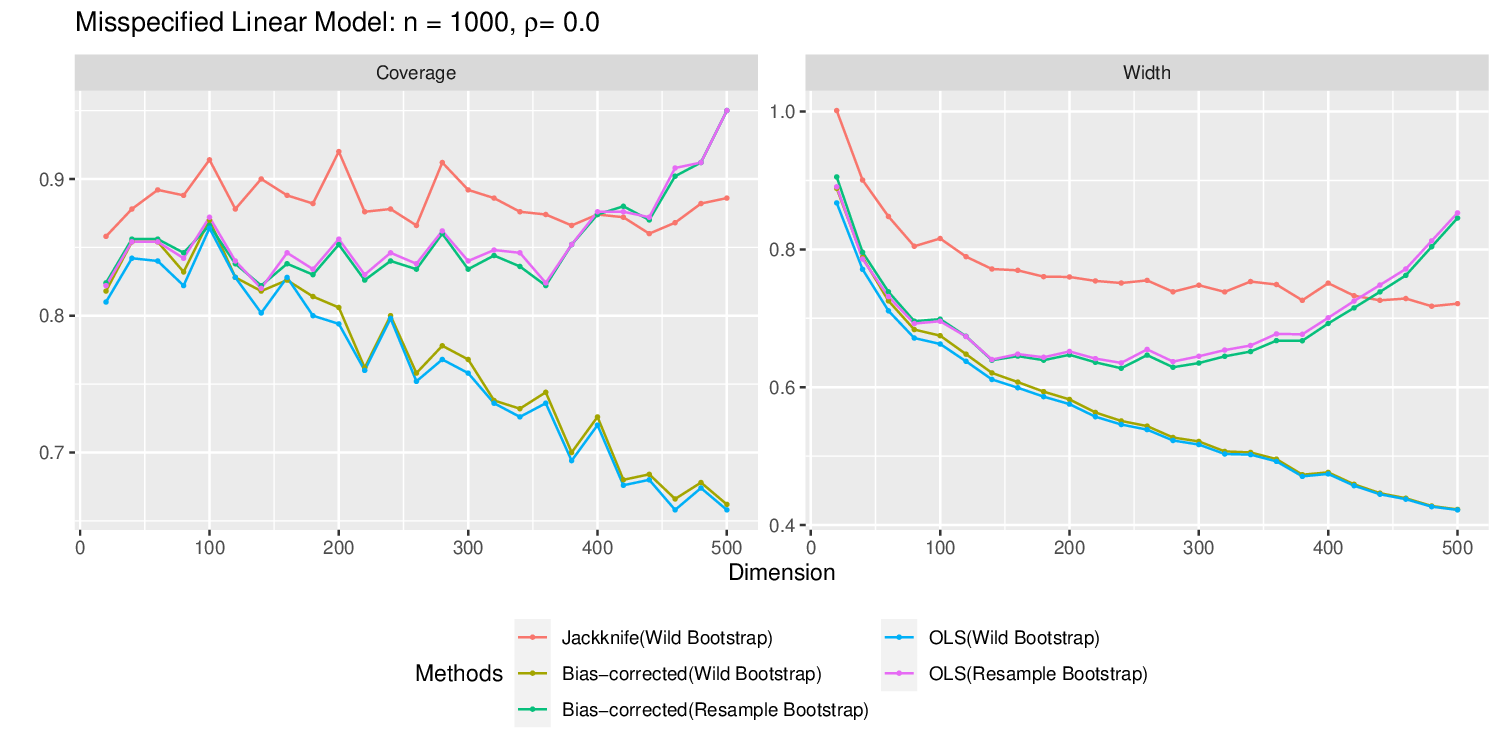}
         \caption{Comparison of Bootstrap-based inferential methods}
     \end{subfigure}     
     \caption{Comparison of coverages and widths of confidence intervals for $\theta^\top\beta$ under the misspecified model with $n=1000$, $\theta=1_d^\top/\sqrt{d}$, and $\rho=0.0$.}
     \label{fig:D.2.4.1}
\end{figure}

\begin{figure}
     \centering
     \begin{subfigure}[b]{\textwidth}
         \centering
         \includegraphics[width=\textwidth]{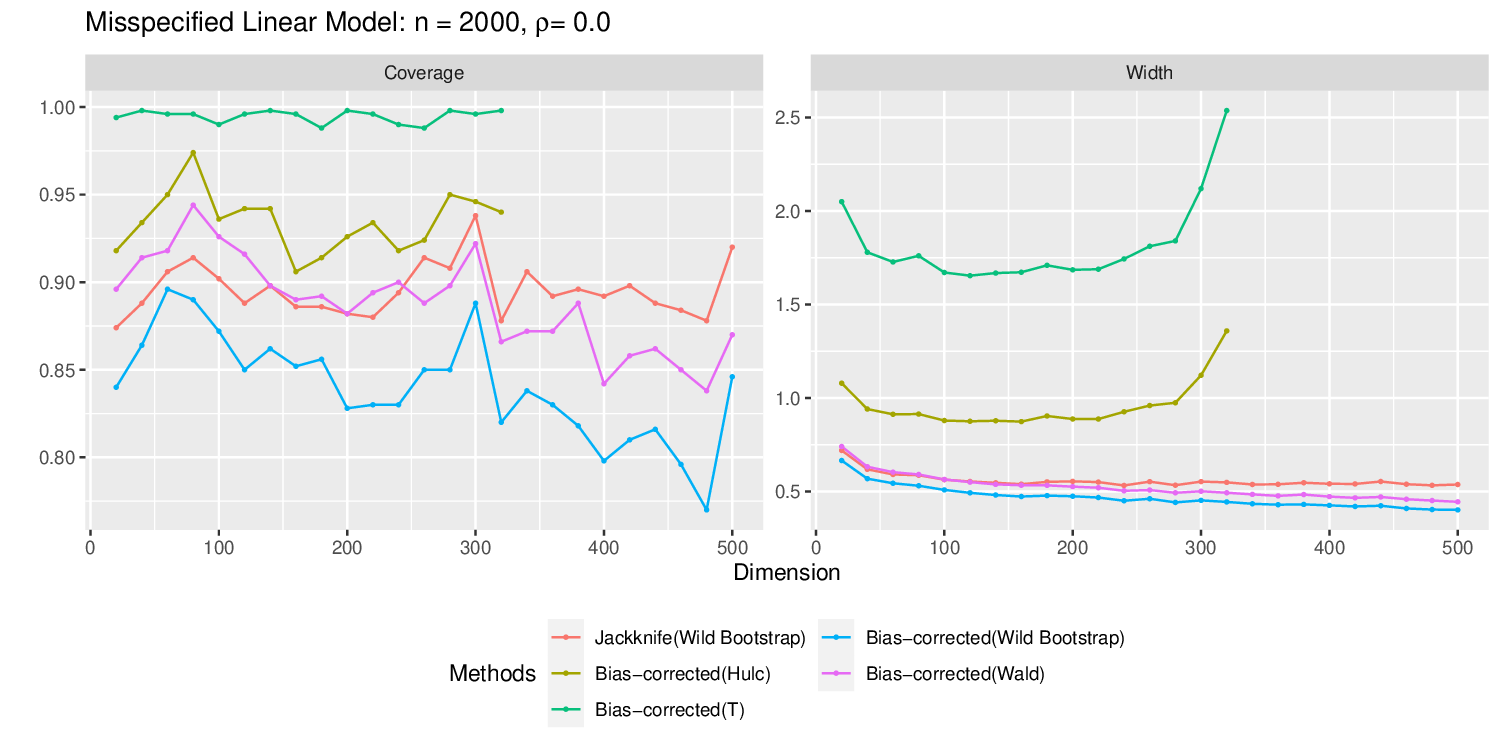}
         \caption{Proposed vs. Jackknife}
     \end{subfigure}\\
     \begin{subfigure}[b]{\textwidth}
         \centering
         \includegraphics[width=\textwidth]{Figures/CI_null_n=2000_rho=0.0_complex_small_bs.eps}
         \caption{Comparison of Bootstrap-based inferential methods}
     \end{subfigure}     
     \caption{Comparison of coverages and widths of confidence intervals for $\theta^\top\beta$ under the misspecified model with $n=2000$, $\theta=1_d^\top/\sqrt{d}$, and $\rho=0.0$.}
     \label{fig:D.2.4.2}
\end{figure}
\begin{figure}
     \centering
     \begin{subfigure}[b]{\textwidth}
         \centering
         \includegraphics[width=\textwidth]{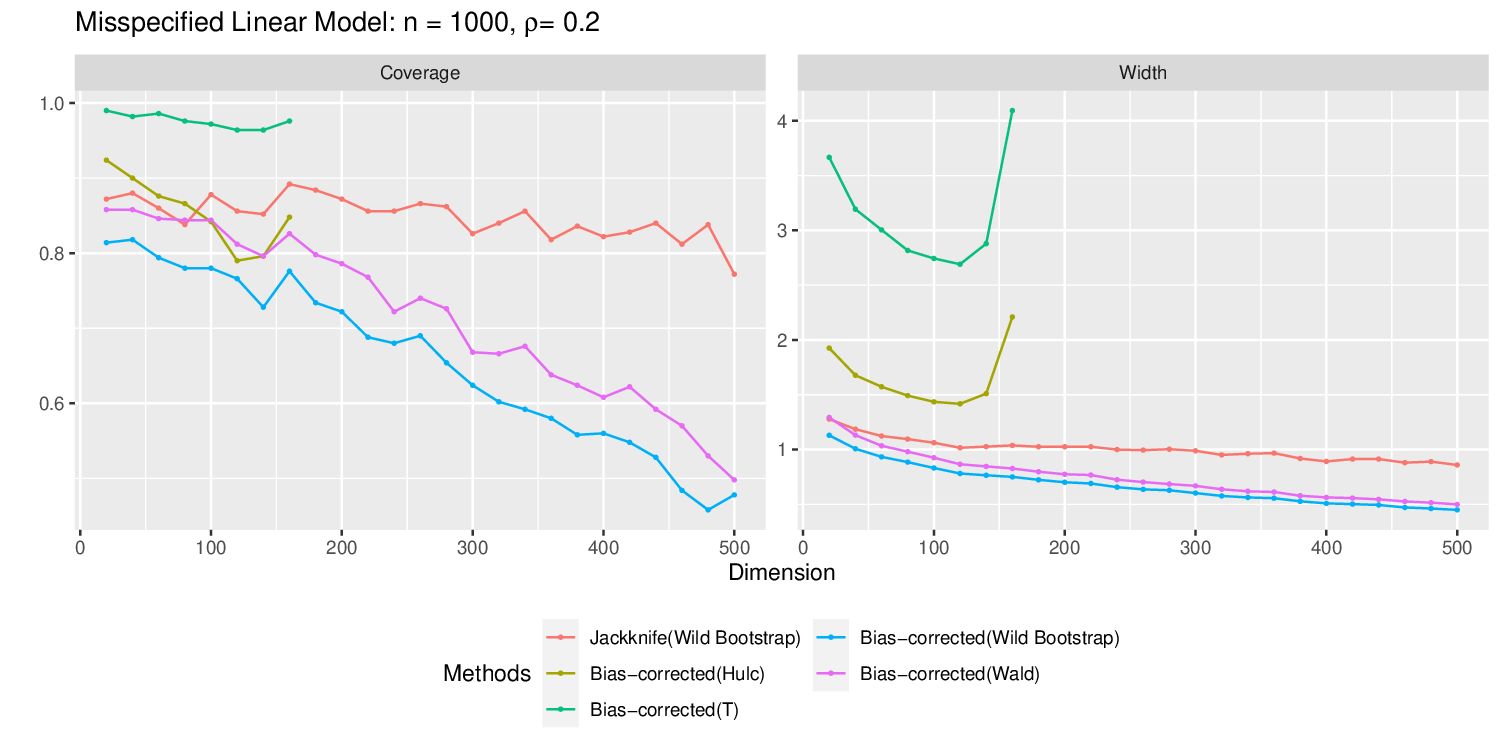}
         \caption{Proposed vs. Jackknife}
     \end{subfigure}\\
     \begin{subfigure}[b]{\textwidth}
         \centering
         \includegraphics[width=\textwidth]{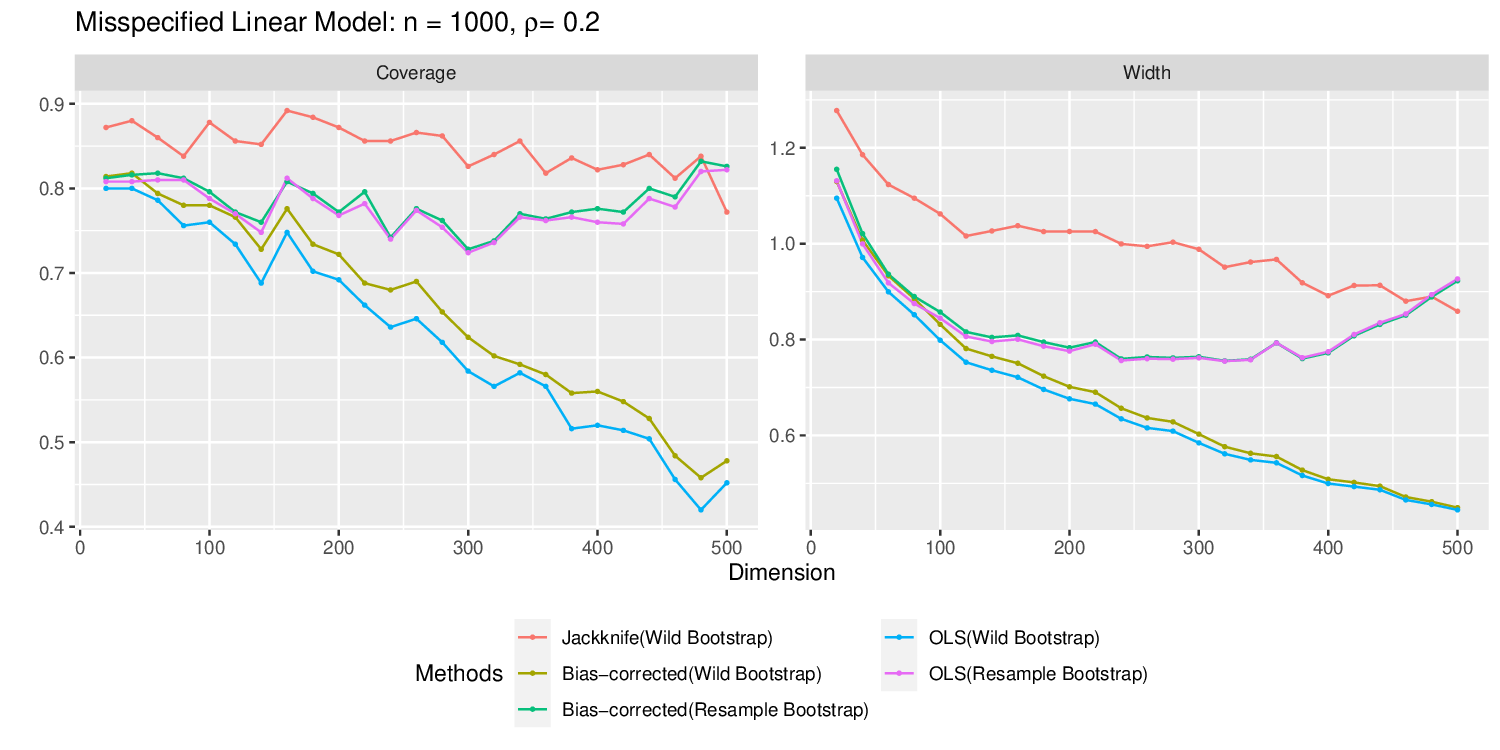}
         \caption{Comparison of Bootstrap-based inferential methods}
     \end{subfigure}     
     \caption{Comparison of coverages and widths of confidence intervals for $\theta^\top\beta$ under the misspecified model with $n=1000$, $\theta=1_d^\top/\sqrt{d}$, and $\rho=0.2$.}
     \label{fig:D.2.5.1}
\end{figure}

\begin{figure}
     \centering
     \begin{subfigure}[b]{\textwidth}
         \centering
         \includegraphics[width=\textwidth]{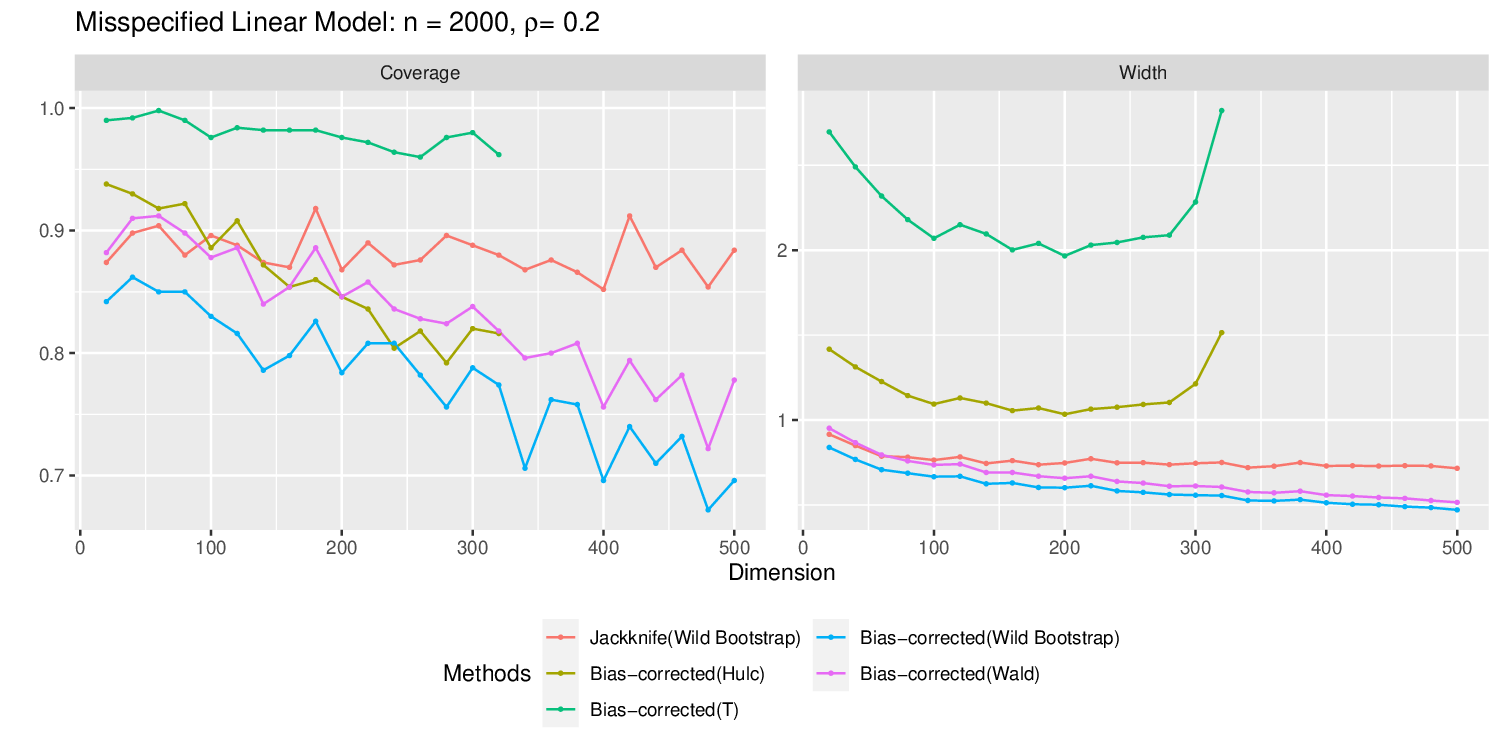}
         \caption{Proposed vs. Jackknife}
     \end{subfigure}\\
     \begin{subfigure}[b]{\textwidth}
         \centering
         \includegraphics[width=\textwidth]{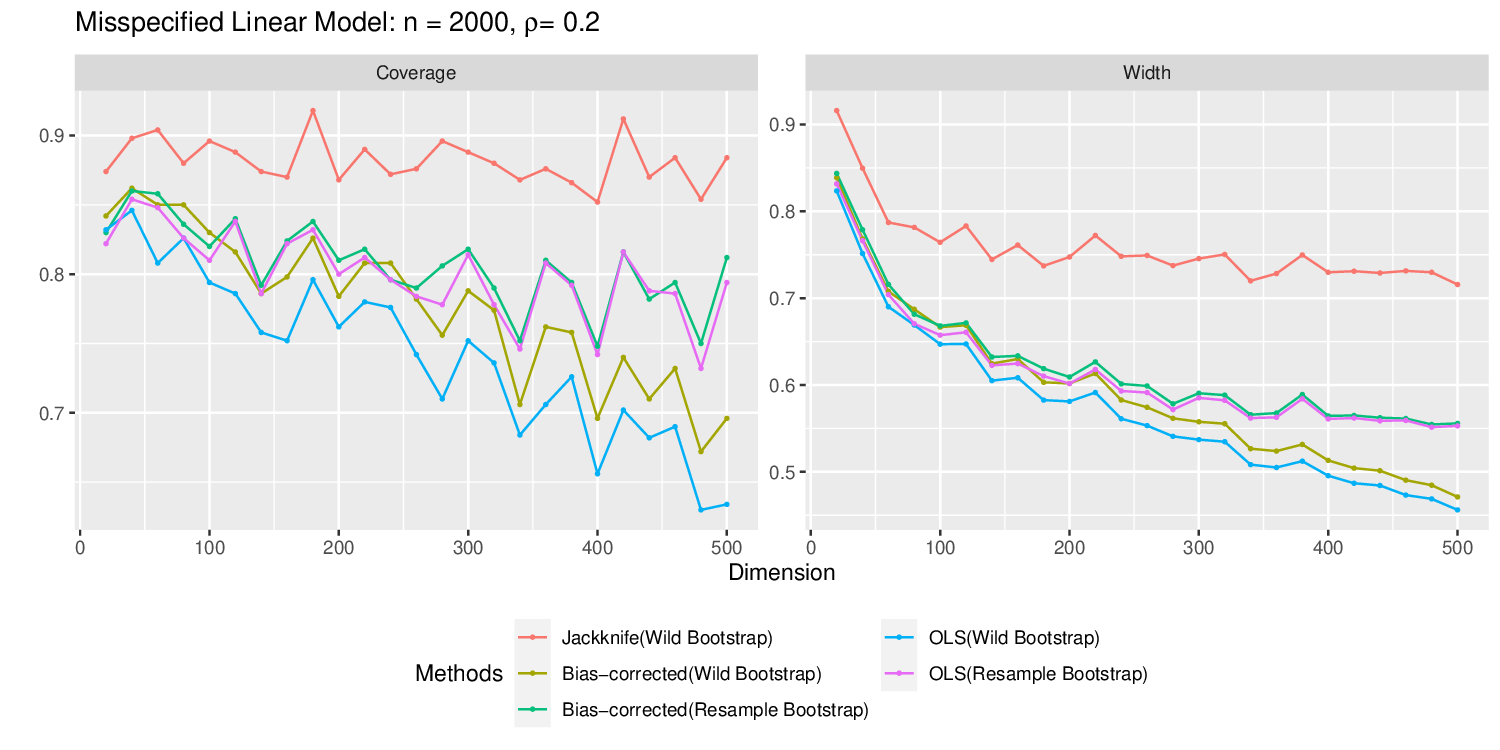}
         \caption{Comparison of Bootstrap-based inferential methods}
     \end{subfigure}     
     \caption{Comparison of coverages and widths of confidence intervals for $\theta^\top\beta$ under the misspecified model with $n=2000$, $\theta=1_d^\top/\sqrt{d}$, and $\rho=0.2$.}
     \label{fig:D.2.5.2}
\end{figure}
\begin{figure}
     \centering
     \begin{subfigure}[b]{\textwidth}
         \centering
         \includegraphics[width=\textwidth]{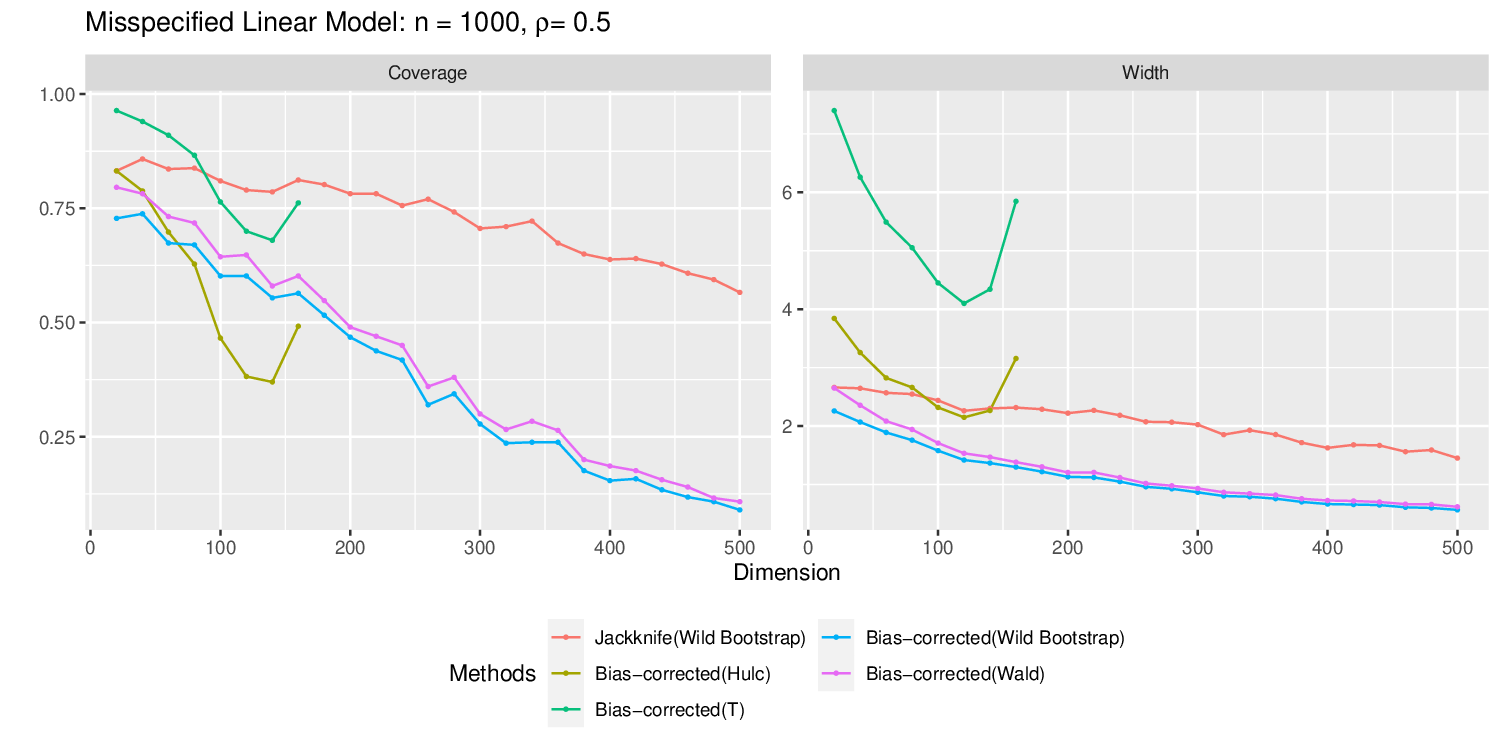}
         \caption{Proposed vs. Jackknife}
     \end{subfigure}\\
     \begin{subfigure}[b]{\textwidth}
         \centering
         \includegraphics[width=\textwidth]{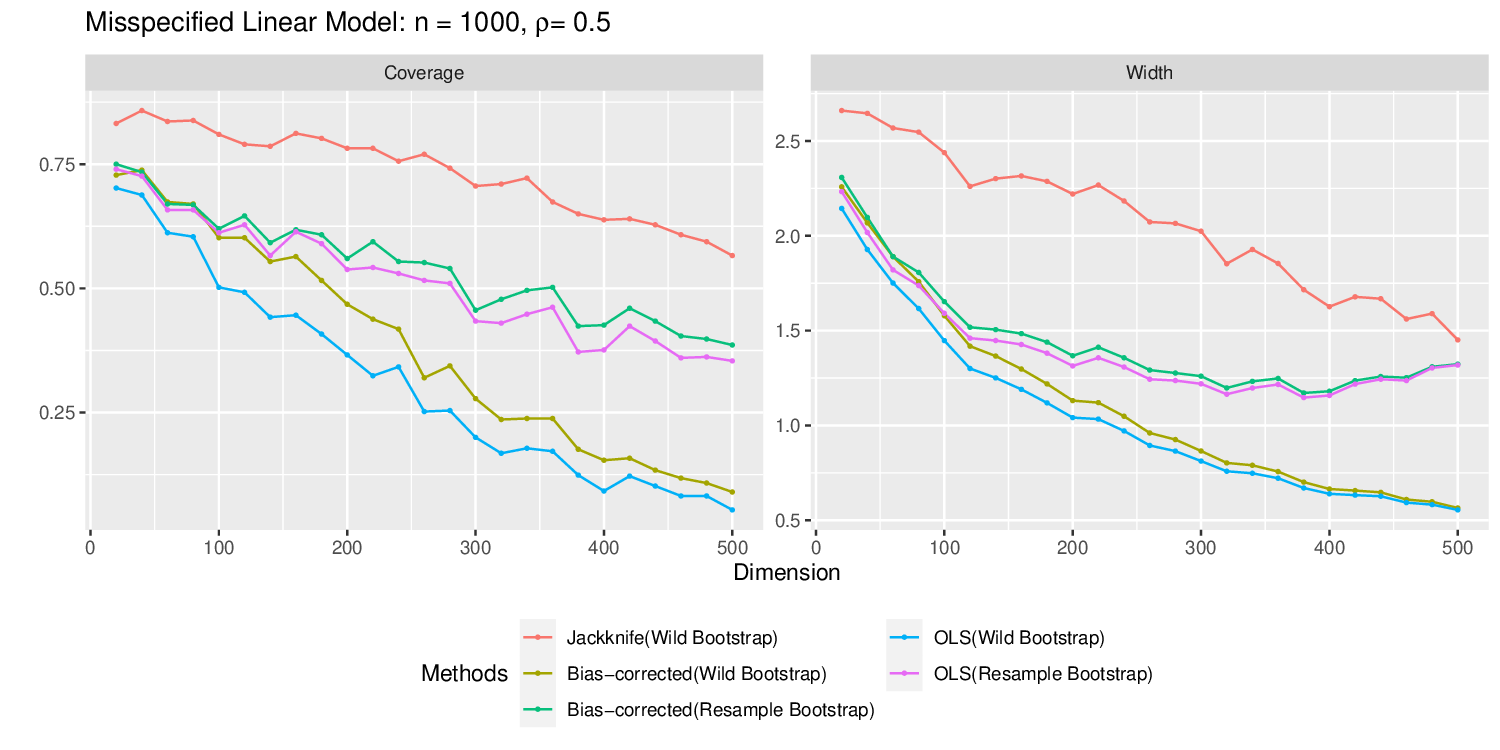}
         \caption{Comparison of Bootstrap-based inferential methods}
     \end{subfigure}     
     \caption{Comparison of coverages and widths of confidence intervals for $\theta^\top\beta$ under the misspecified model with $n=1000$, $\theta=1_d^\top/\sqrt{d}$, and $\rho=0.5$.}
     \label{fig:D.2.6.1}
\end{figure}

\begin{figure}
     \centering
     \begin{subfigure}[b]{\textwidth}
         \centering
         \includegraphics[width=\textwidth]{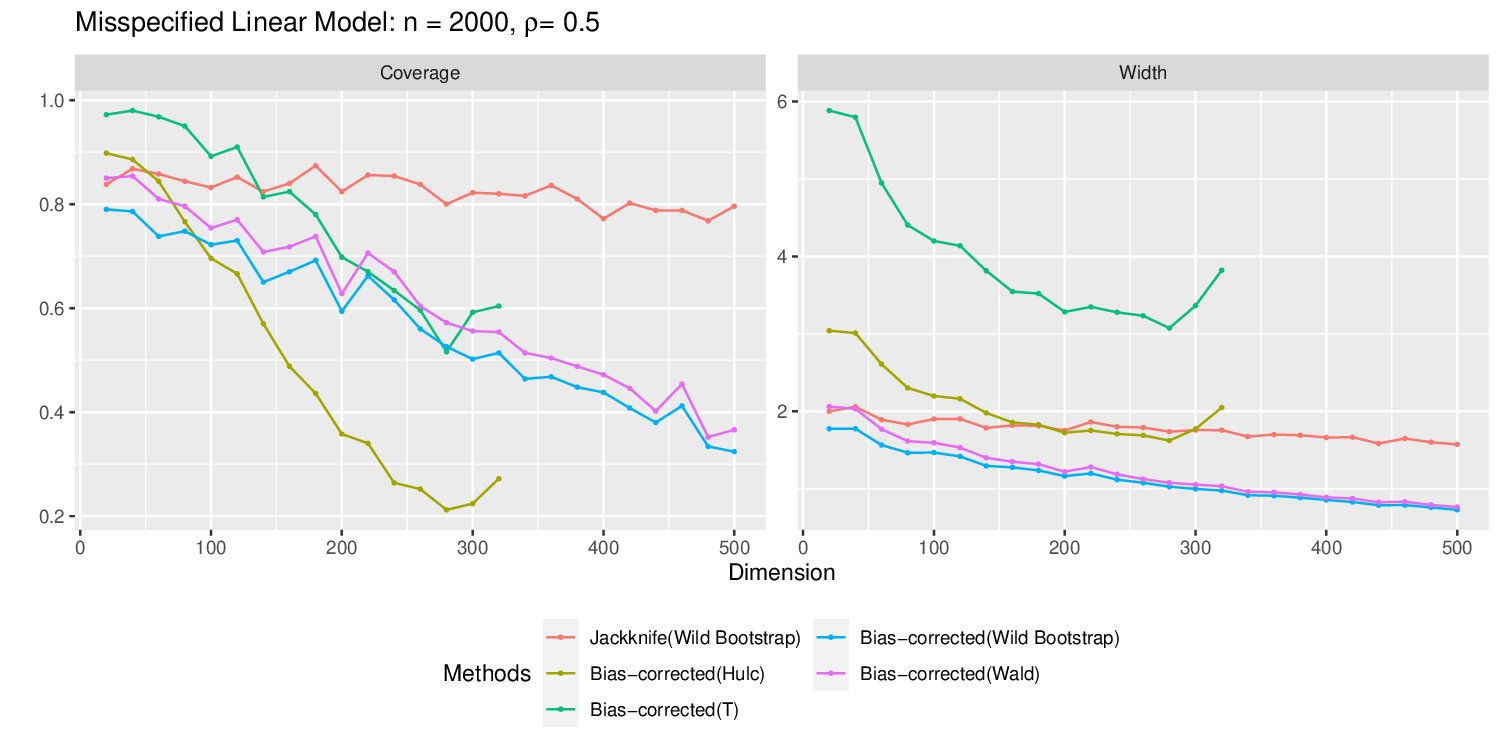}
         \caption{Proposed vs. Jackknife}
     \end{subfigure}\\
     \begin{subfigure}[b]{\textwidth}
         \centering
         \includegraphics[width=\textwidth]{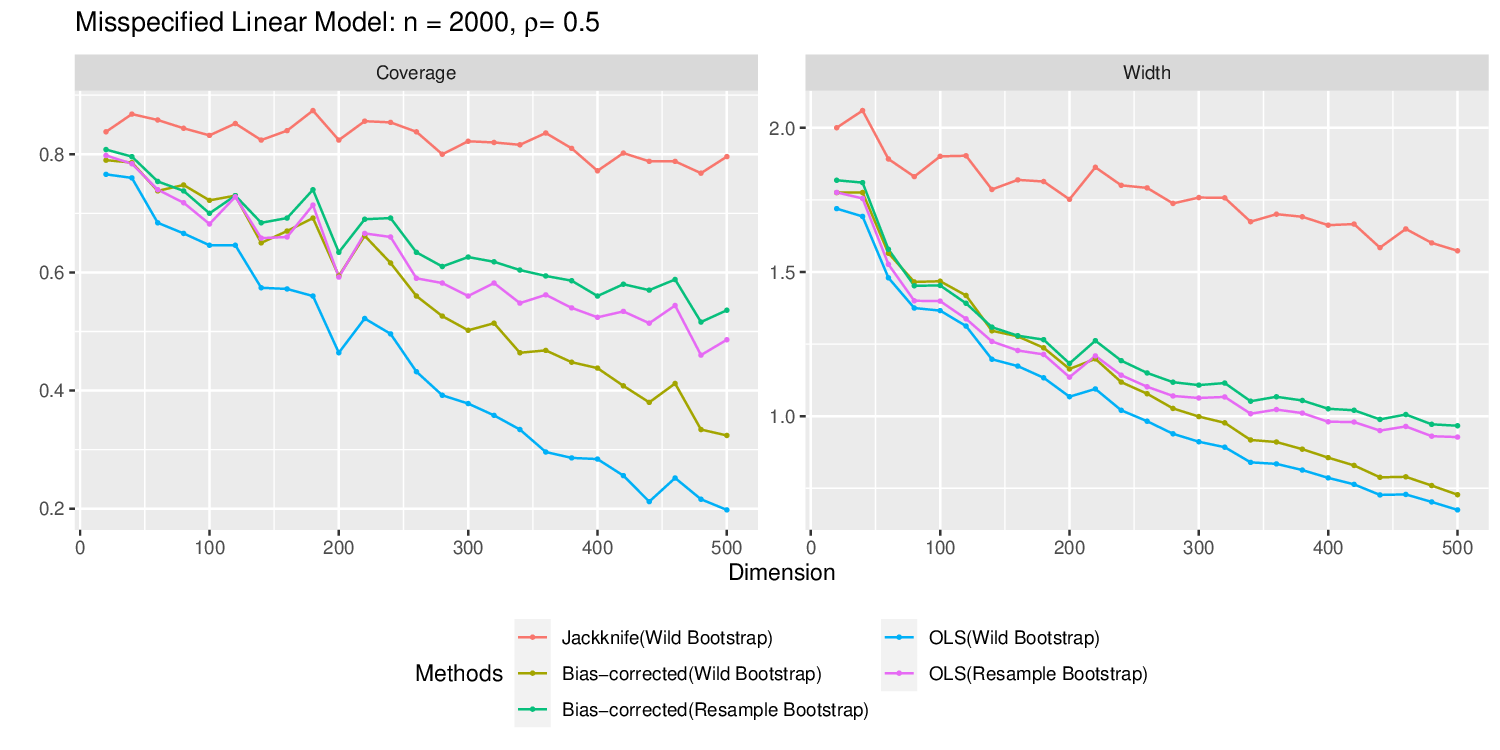}
         \caption{Comparison of Bootstrap-based inferential methods}
     \end{subfigure}     
     \caption{Comparison of coverages and widths of confidence intervals for $\theta^\top\beta$ under the misspecified model with $n=2000$, $\theta=1_d^\top/\sqrt{d}$, and $\rho=0.5$.}
     \label{fig:D.2.6.2}
\end{figure}

\begin{figure}
     \centering
     \begin{subfigure}[b]{\textwidth}
         \centering
         \includegraphics[width=.7\textwidth]{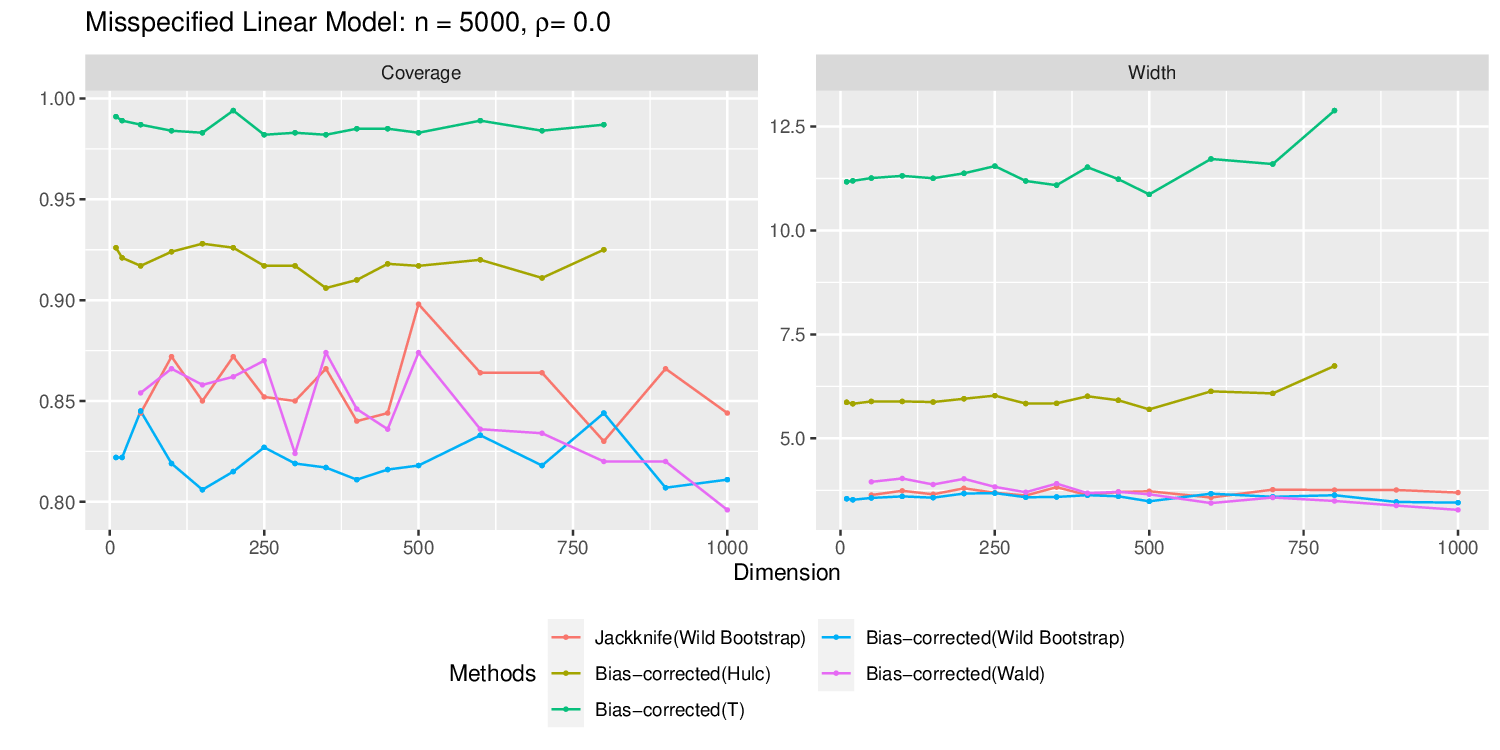}
         \caption{Proposed vs. Jackknife}
     \end{subfigure}\\\begin{subfigure}[b]{\textwidth}
         \centering
         \includegraphics[width=.7\textwidth]{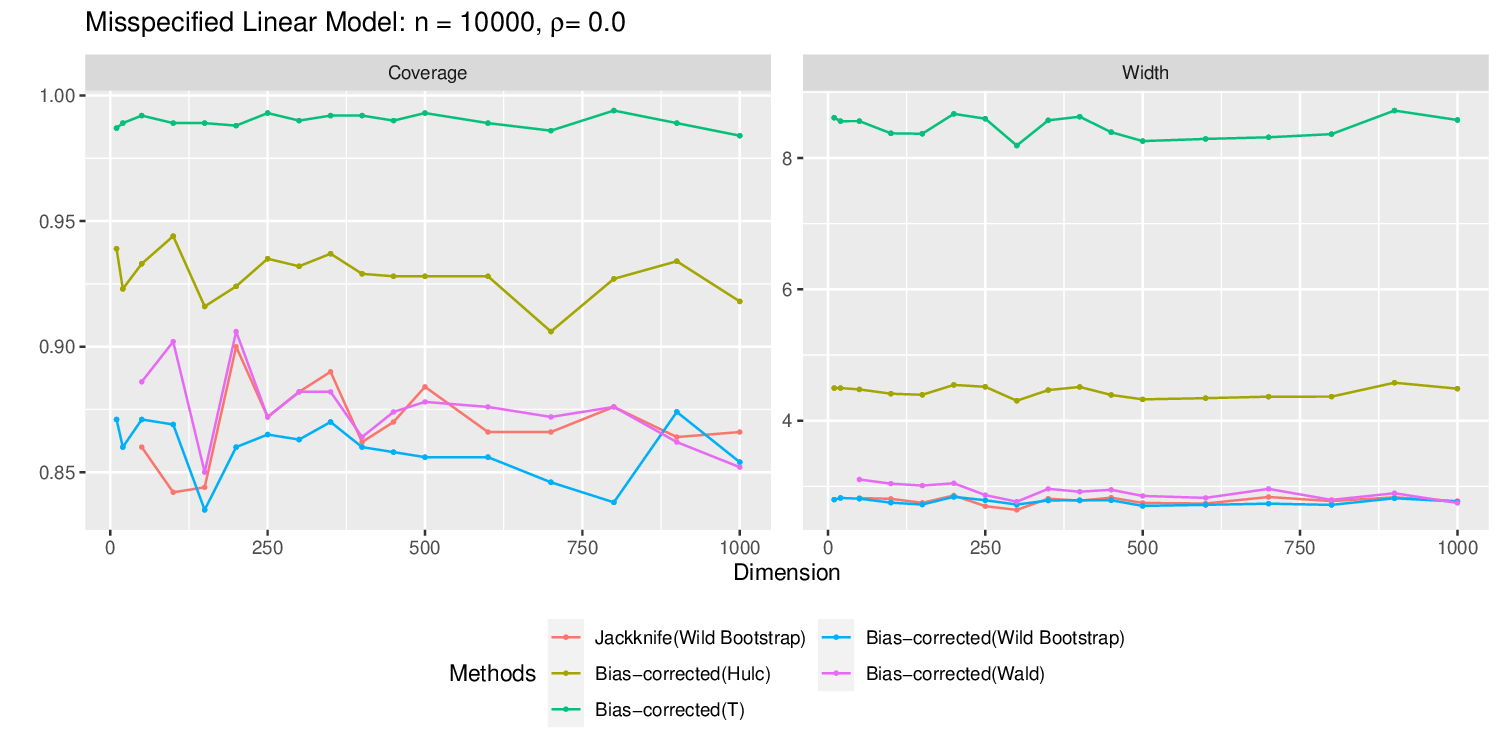}
         \caption{Proposed vs. Jackknife}
     \end{subfigure}\\
     \begin{subfigure}[b]{\textwidth}
         \centering
         \includegraphics[width=.7\textwidth]{Figures/CI_null_n=20000_rho=0.0_complex_large.eps}
         \caption{Proposed vs. Jackknife}
     \end{subfigure}\\
     
     \caption{Comparison of coverages and widths of confidence intervals for $\theta^\top\beta$ under the misspecified model with $n\in\set{5000, 10000, 20000}$, $\theta=\ev_1$, and $\rho=0.0$.}
     \label{fig:D.2.7}
\end{figure}
\begin{figure}
     \centering
     \begin{subfigure}[b]{\textwidth}
         \centering
         \includegraphics[width=.7\textwidth]{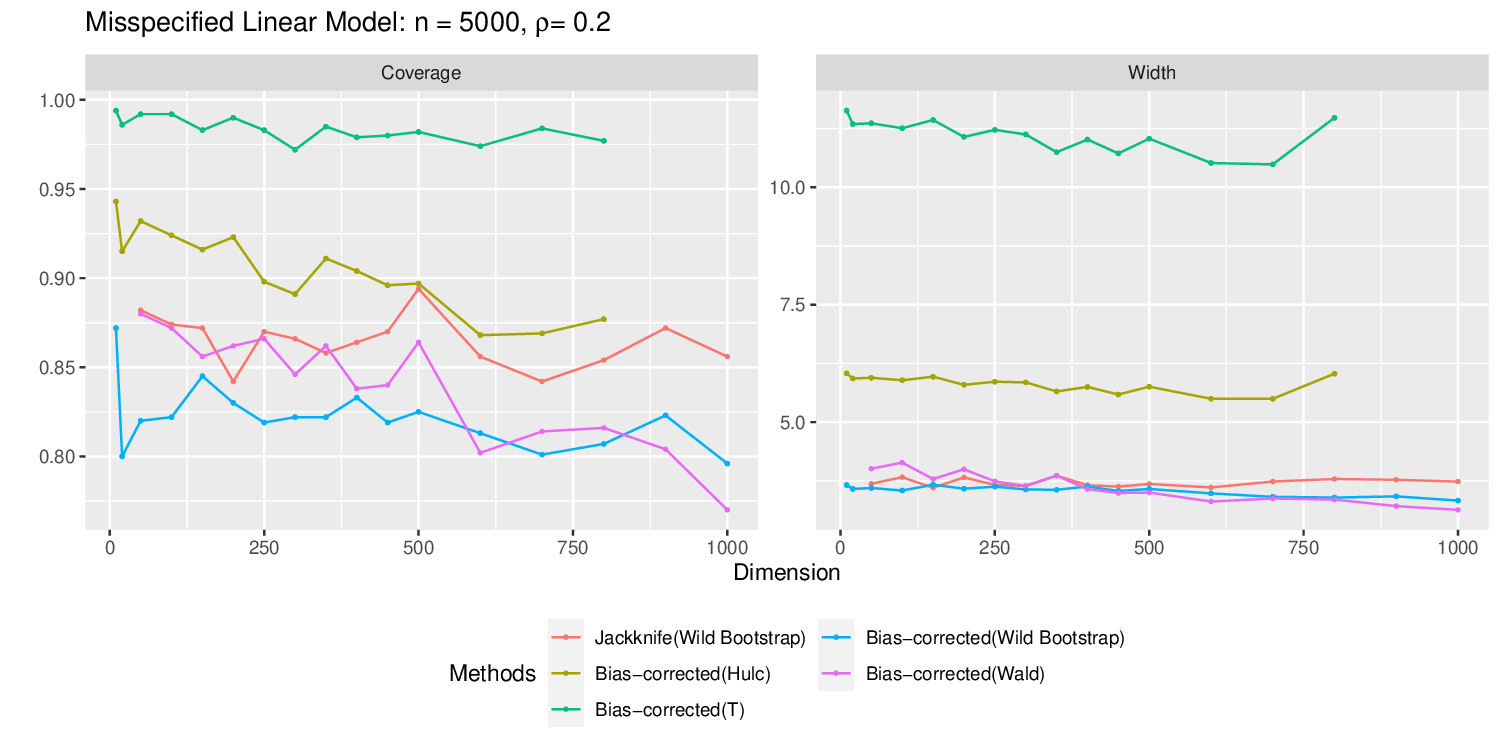}
         \caption{Proposed vs. Jackknife}
     \end{subfigure}\\\begin{subfigure}[b]{\textwidth}
         \centering
         \includegraphics[width=.7\textwidth]{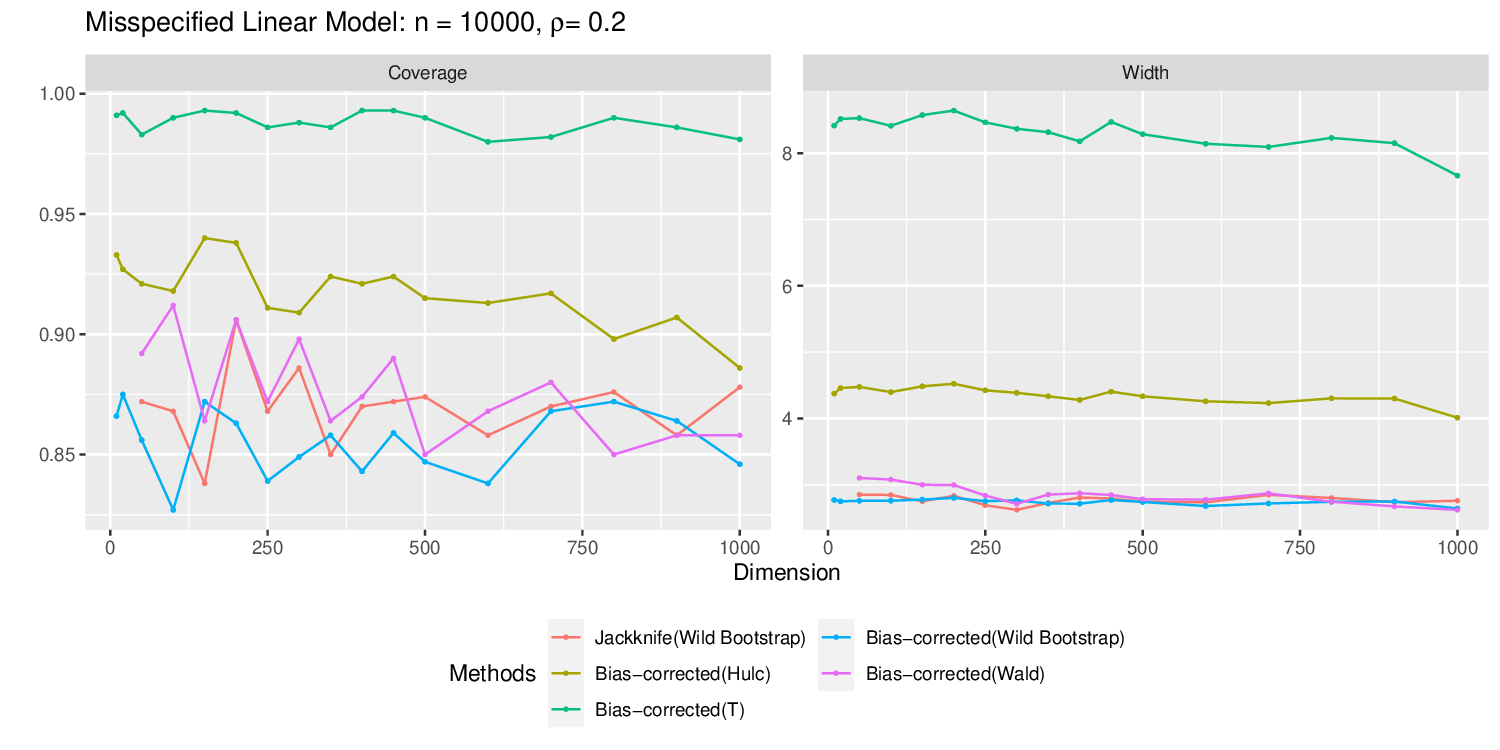}
         \caption{Proposed vs. Jackknife}
     \end{subfigure}\\
     \begin{subfigure}[b]{\textwidth}
         \centering
         \includegraphics[width=.7\textwidth]{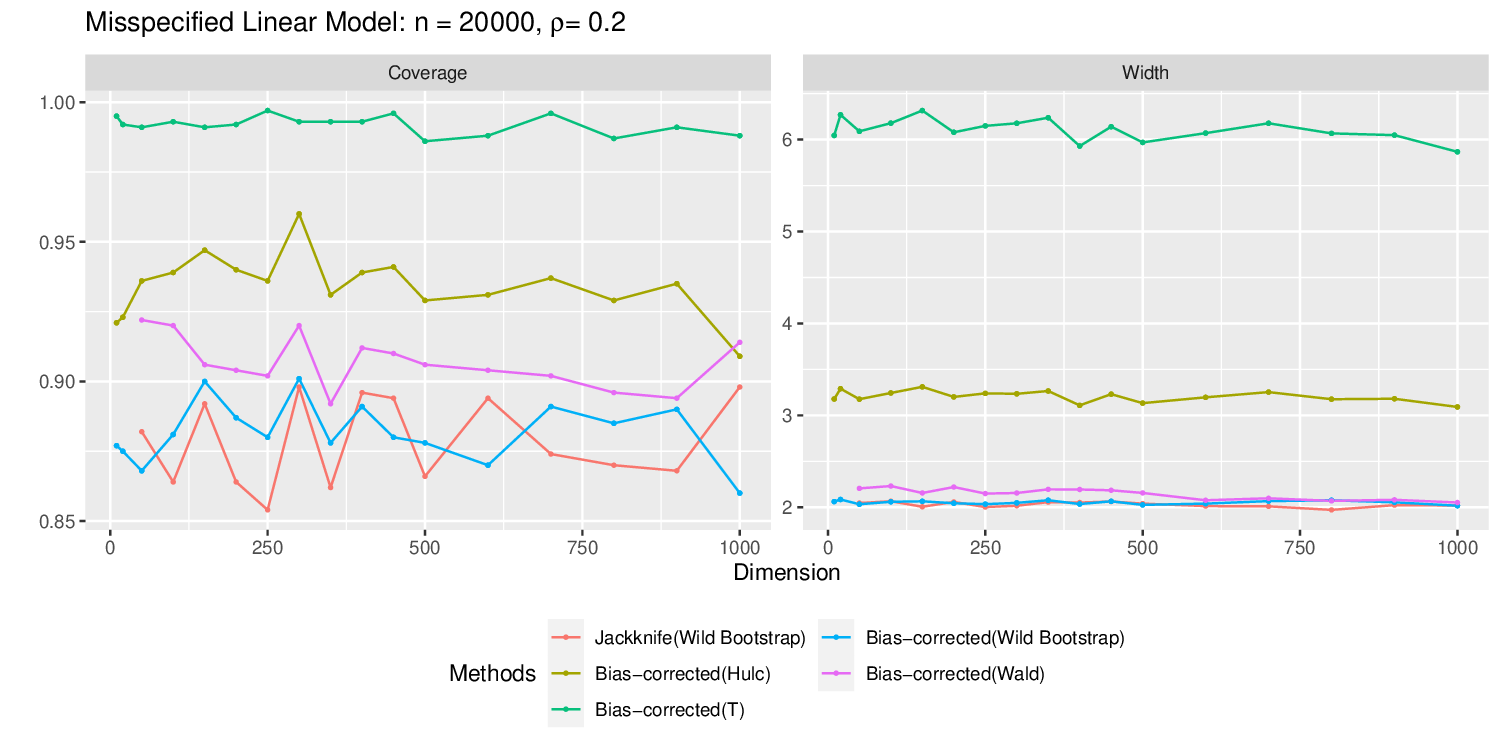}
         \caption{Proposed vs. Jackknife}
     \end{subfigure}\\
     
     \caption{Comparison of coverages and widths of confidence intervals for $\theta^\top\beta$ under the misspecified model with $n\in\set{5000, 10000, 20000}$, $\theta=\ev_1$, and $\rho=0.2$.}
     \label{fig:D.2.8}
\end{figure}

\begin{figure}
     \centering
     \begin{subfigure}[b]{\textwidth}
         \centering
         \includegraphics[width=.7\textwidth]{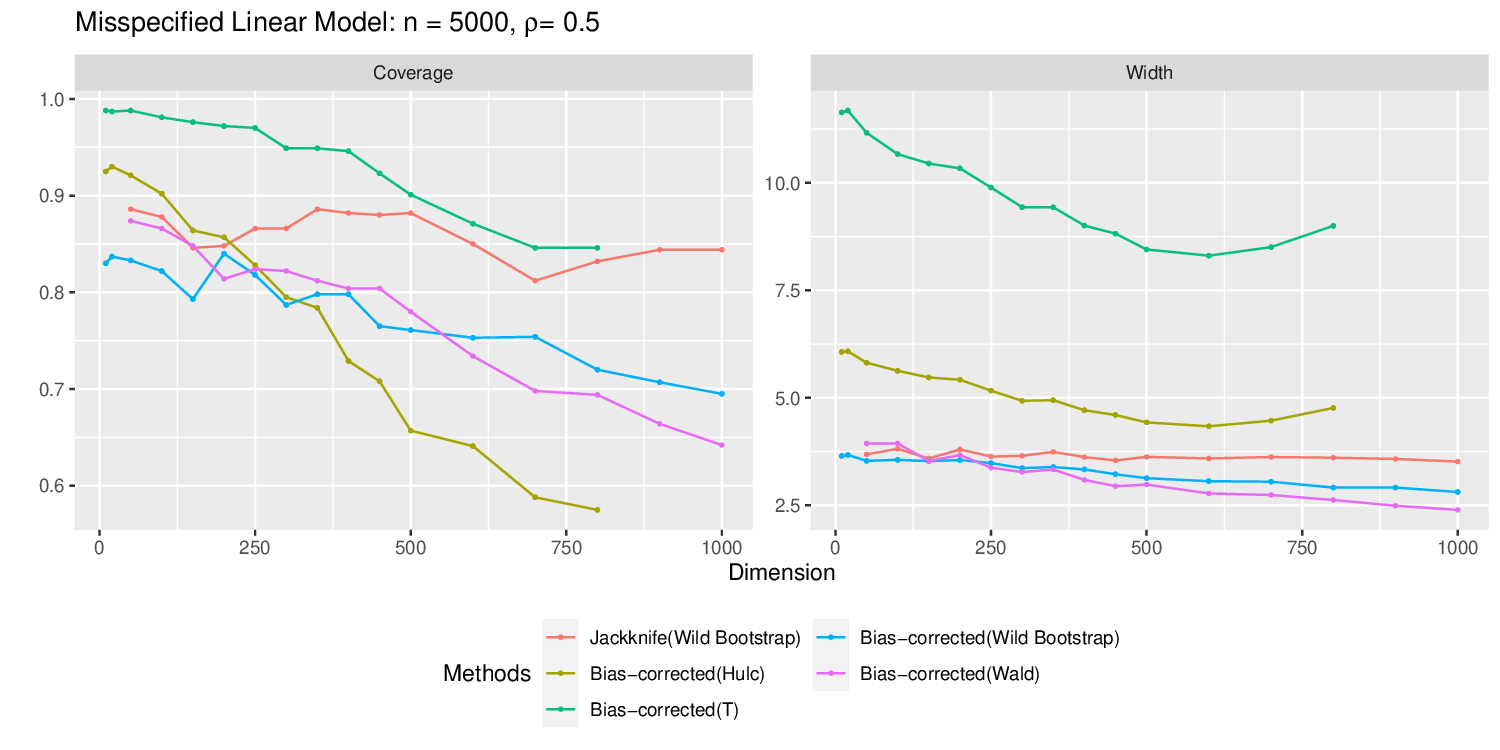}
         \caption{Proposed vs. Jackknife}
     \end{subfigure}\\\begin{subfigure}[b]{\textwidth}
         \centering
         \includegraphics[width=.7\textwidth]{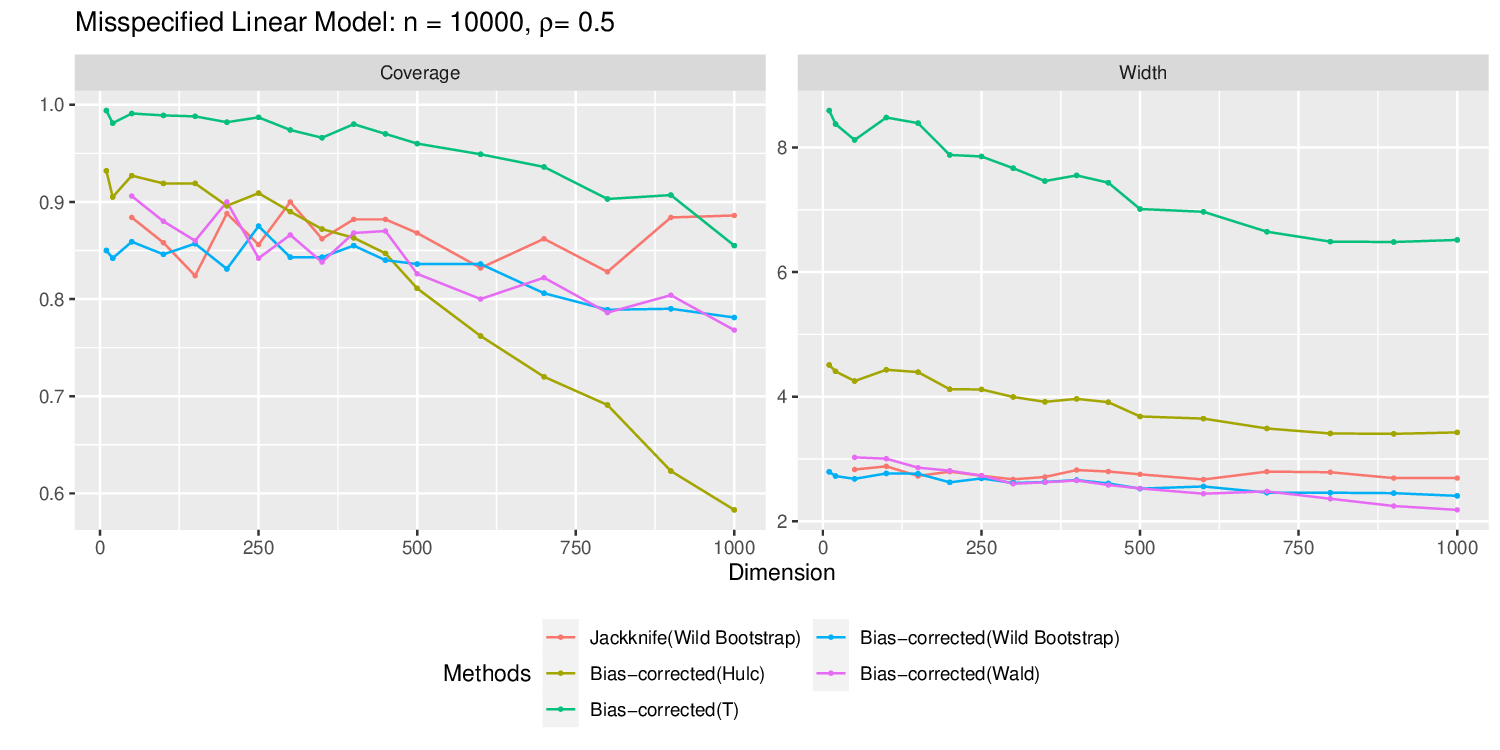}
         \caption{Proposed vs. Jackknife}
     \end{subfigure}\\
     \begin{subfigure}[b]{\textwidth}
         \centering
         \includegraphics[width=.7\textwidth]{Figures/CI_null_n=20000_rho=0.5_complex_large.eps}
         \caption{Proposed vs. Jackknife}
     \end{subfigure}\\
     
     \caption{Comparison of coverages and widths of confidence intervals for $\theta^\top\beta$ under the misspecified model with $n\in\set{5000, 10000, 20000}$, $\theta=\ev_1$, and $\rho=0.5$.}
     \label{fig:D.2.9}
\end{figure}

\end{document}